\DeclareTextFontCommand{\emph}{\color{RoyalBlue}\em} 
\newcommand{\RR}{\mathbb{R}}
\newcommand{\NN}{\mathbb{N}}
\newcommand{\cC}{\mathcal{C}}
\newcommand{\ZZ}{\mathbb{Z}}
\newcommand{\QQ}{\mathbb{Q}}
\newcommand{\Class}{\mathcal{C}}
\newcommand{\bfrac}[2]{{
  \raisebox{0.1em}{\scalebox{0.85}{$#1$}}/
  \raisebox{-0.1em}{\scalebox{0.85}{$#2$}}
}}
\newcommand{\vertiii}[1]{{\left\vert\kern-0.25ex\left\vert\kern-0.25ex\left\vert #1 
    \right\vert\kern-0.25ex\right\vert\kern-0.25ex\right\vert}}
\newcommand{\wb}{\overline}
\newcommand{\wt}{\widetilde}
\newcommand{\wh}{\widehat}
\newcommand{\set}[1]{\left\{#1\right\}}
\newcommand{\lint}{\llbracket}
\newcommand{\rint}{\rrbracket}
\newcommand{\intint}[1]{{\lint #1 \rint}}        
\newcommand{\intintl}[1]{{\rint #1 \rint}}        
\newcommand{\intintr}[1]{{\lint #1 \lint}}        
\def\@tvsp{\mathchoice{{}\mkern-4.5mu}{{}\mkern-4.5mu}{{}\mkern-2.5mu}{}}
\def\ltrivert{\left|\@tvsp\left|\@tvsp\left|}
\def\rtrivert{\right|\@tvsp\right|\@tvsp\right|}
\def\ldrivert{\left|\@tvsp\left|}
\def\rdrivert{\right|\@tvsp\right|}
\newcommand{\recto}{\rightsquigarrow}              
\newcommand{\rectot}{\leftrightsquigarrow}
\newcommand{\Rec}{\mathcal{R}}      
\newcommand{\MM}{\mathcal{M}}
\DeclareMathOperator{\cste}{cste}
\DeclareMathOperator{\im}{im}
\DeclareMathOperator{\len}{len}
\DeclareMathOperator{\QL}{QL}
\DeclareMathOperator{\diam}{diam}
\DeclareMathOperator{\rad}{rad}
\DeclareMathOperator{\e}{e}
\DeclareMathOperator{\clos}{clos}
\DeclareMathOperator{\conv}{conv}
\DeclareMathOperator{\Leb}{Leb}             
\newtheoremstyle{colorplain}%
{\topsep}   
{\topsep}   
{\itshape}  
{0pt}       
{} 
{.}         
{5pt plus 1pt minus 1pt} 
{\textbf{\textcolor{RoyalBlue}{\textbf{\thmname{#1} \thmnumber{#2}}}}\thmnote{ (#3)}}
{}
\newtheoremstyle{colorremark}%
{\topsep}   
{\topsep}   
{}  
{0pt}       
{\itshape} 
{.}         
{5pt plus 1pt minus 1pt} 
{\textcolor{RoyalBlue}{\thmname{#1} \thmnumber{#2}}\thmnote{ (#3)}}
{}
\newtheoremstyle{colordefinition}%
{\topsep}   
{\topsep}   
{}  
{0pt}       
{} 
{.}         
{5pt plus 1pt minus 1pt} 
{\textcolor{RoyalBlue}{\textbf{\thmname{#1} \thmnumber{#2}}}\thmnote{ (#3)}}
{}
\theoremstyle{colorplain}
\newtheorem{theorem}{Theorem}
\numberwithin{theorem}{section}
\newtheorem{maintheorem}{Theorem}
\newtheorem{remark}[theorem]{Remark}
\newtheorem{example}[theorem]{Example}
\newtheorem{lemma}[theorem]{Lemma}
\newtheorem{proposition}[theorem]{Proposition}
\newtheorem{corollary}[theorem]{Corollary}
\theoremstyle{colorremark}
\newtheorem{question}{Question}
\newenvironment{proofabstract}[1][\proofname]{
  \par
  \pushQED{\qed}%
  \normalfont \topsep6\p@\@plus6\p@\relax
  \trivlist
  \item\relax
  {\itshape
    #1\@addpunct{.}}\hspace\labelsep\ignorespaces
}{%
  \popQED\endtrivlist\@endpefalse
}
\renewenvironment{proof}[1][Proof]{
  \setcounter{claim}{0}
  \setcounter{claimproof}{0}
  \par
  \pushQED{\qed}%
  \normalfont\topsep6\p@\@plus6\p@\relax
  \trivlist
  \item\relax
  {\itshape\color{RoyalBlue}#1\@addpunct{.}}\hspace\labelsep\ignorespaces
}{%
  \popQED\endtrivlist\@endpefalse
}
\newcounter{claimproof} 
\theoremstyle{colordefinition}
\newtheorem{definition}[theorem]{Definition}
\title{Partial section I: $\alpha$-recurrence \\and equivariant Lyapunov maps}
\author{Théo Marty}
\date{}
\begin{document}

\maketitle
\begin{abstract}
	This is the first article in a series that aims at classifying partial sections of flows, that is a general family of transverse surfaces.
	In this part, we deal with the dynamical aspect of the question.
	
	Given a flow on a compact manifold~$M$ and a cohomology class~$\alpha$ of rank 1, we give a criterion for the existence of an $\alpha$-equivariant Lyapunov map on an Abelian covering of~$M$ associated to $\alpha$. 
	
	One important aspect of the existence of such Lyapunov maps, and of the classification of partial sections, is a type of recurrence set relative to $\alpha$. We describe how that set depends on~$\alpha$.
\end{abstract}

\section*{Introduction}
\addcontentsline{toc}{section}{Introduction}

Surfaces transverse to a flow received renewed attention in recent years, to help characterize dynamical and topological properties of flows. Transverse surfaces exist with several flavors: transverse everywhere, transverse except on the boundary, that intersect every flow line, or not. The most well-understood are \emph{global cross-sections}: compact hyper-surfaces transverse to the flow and that intersect every flow line. Fried \cite{Fried82} classified the set of global cross-sections up to isotopy along the flow, using a cohomological criterion\footnote{Schwartzman \cite{Schwartzman1957} and Sullivan \cite{Sullivan1976} contributed to the classification, but Fried's work is the most complete of the three.}. 
\emph{Partial cross-sections} are, similarly, compact hyper-surface transverse to the flow, with no assumption regarding the intersection with all orbits. Only partial results have been shown for partial cross-sections, and only in restricted contexts. We refer to \cite{Mosher1989,Mosher1990} and \cite{Landry2024} for articles on the matter, for Anosov and pseudo-Anosov flows in dimension~3.

We aim to give a general picture of partial cross-sections\footnote{Partial Birkhoff-sections (with boundary tangent to the flow) will be considered in a later entry in the series.} in a series of articles. As this is the first, we describe here only the dynamical part of that question.

We fix a compact topological manifold~$M$ and a continuous flow $\varphi$ on~$M$. 
Let us summarize some of Fried's ideas. Take a global cross-section $S$ of the flow $\varphi$. It can be represented as a level set of a smooth fibration $f\colon M\to\bfrac{\RR}{\ZZ}$ over the circle, so that additionally, all level sets are global cross-sections. In particular $f$ increases along the flow. Fried defined a compact subset $D^F_\varphi$ in $H_1(M,\RR)$, that captures the asymptotic homological direction of very long orbits. Since $f$ is increasing along the flow, we have $df(D^F_\varphi)>0$. 

Conversely, take a smooth map $f\colon M\to\bfrac{\RR}{\ZZ}$ that satisfies $df(D^F_\varphi)>0$. One can average $f$ along the flow, to transform the homological criterion into a topological condition. That is if $g$ is the average of $f$ along the flow for a long enough time, then any level set of $g$ is a global cross-section. With a little more work, he proved that any two global cross-sections that are homologous are isotopic along the flow. So global cross-sections are parametrized by a subset of the cohomological module $H^1(M,\ZZ)$.

For partial cross-sections, the picture is more complicated, one needs to consider the weaker condition $df(D^F_\varphi)\geq 0$. We know that under this condition, the existence of a partial cross-section homologous to $df$ is not guaranteed in general, and two partial cross-sections may be homologous but not isotopic along the flow. So the general picture is more complicated. To parametrize partial cross-sections, we go back to the idea of finding a good map $f\colon M\to\bfrac{\RR}{\ZZ}$. Some level sets of $f$ are partial cross-sections if $f$ satisfies a Lyapunov-like property. 
To express that property, we lift $f$ to a good $\ZZ$-covering of~$M$.
Let $\alpha$ be in $H^1(M,\ZZ)$ and non-zero, representing $-df$. There is a natural $\ZZ$-covering space of~$M$ associated to $\alpha$, denoted by $\wh M_\alpha$. Note that $\varphi$ lifts to a flow on $\wh M_\alpha$. 


\begin{question}
	When is there a $\ZZ$-equivariant and Lyapunov map $h\colon\wh M_\alpha\to\RR$?
\end{question}

We say that $\alpha$ is \emph{quasi-Lyapunov} if, roughly speaking, every pseudo-orbit~$\gamma$ with small jumps satisfies $\alpha(\gamma)\leq 0$.


\begin{maintheorem}[Equivariant spectral decomposition]\label{mainthm-spectral-decomp}
	Let $\alpha$ be in $H^1(M,\ZZ)$.
	Then there exists a $\ZZ$-equivariant Lyapunov map $h\colon\wh M_\alpha\to\RR$ if and only if $\alpha$ is quasi-Lyapunov.
\end{maintheorem}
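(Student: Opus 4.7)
My plan is to handle the two implications separately.

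\textbf{Necessity.} Suppose $h\colon\wh M_\alpha\to\RR$ is $\ZZ$-equivariant and Lyapunov. Cocompactness of the $\ZZ$-action on $\wh M_\alpha$ gives a uniform modulus of continuity for $h$ and a uniform lower bound $c_T>0$ on the increment $h(\varphi_T\tilde x)-h(\tilde x)$ for each fixed flow time $T>0$. Lifting a closed $\delta$-pseudo-orbit $\gamma$ of $M$ to a path $\tilde\gamma$ in $\wh M_\alpha$ produces an endpoint of the form $\tau^{\alpha(\gamma)}\tilde x_0$ by construction of the covering, so equivariance pins the net $h$-change to $\pm\alpha(\gamma)$. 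Comparing the definite flow contribution (at least $Nc_T$ for $N$ flow segments of duration $T$) against the jump contribution (at most $N\omega(\delta)$) forces $\alpha(\gamma)\leq 0$ once $\delta$ is small enough that $\omega(\delta)<c_T$, so $\alpha$ is quasi-Lyapunov.

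\textbf{Sufficiency.} For the converse I would adapt Conley's Lyapunov function construction equivariantly. On $\wh M_\alpha$ introduce the chain-reachability preorder $\tilde x\preceq\tilde y$ meaning that for every $\epsilon>0$ an $\epsilon$-pseudo-orbit of the lifted flow goes from $\tilde x$ to $\tilde y$. The quasi-Lyapunov hypothesis, read through the projection to $M$, asserts that $\tilde x\preceq\tau^k\tilde x\Rightarrow k\leq 0$: a violation would descend to a closed small-jump pseudo-orbit in $M$ with positive $\alpha$-value. Consequently every chain-equivalence class in $\wh M_\alpha$ is $\tau$-bounded, hence projects to a compact subset of $M$, and the chain-recurrent set $\Rec$ is $\tau$-equivariant with compact quotient.

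With this properness in hand I would build $h$ in two stages: first, a continuous equivariant function strictly monotone along the flow outside $\Rec$, obtained by summing weighted Urysohn separators along a nested $\tau$-equivariant exhaustion by attractor-repeller pairs (local finiteness coming from compactness of the $\tau$-quotient); second, a refinement inside each chain-equivalence class via the standard intra-class Conley perturbation, consistent across $\tau$-orbits because no $\tau$-cycle meets a single class. The main obstacle is precisely the properness claim---that chain-equivalence classes in the non-compact cover project to compact subsets of $M$---which is the only non-formal use of the hypothesis; once it is in place, the rest is an essentially standard equivariant reprise of Conley's theorem.
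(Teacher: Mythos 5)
Your necessity argument contains a genuine gap. You invoke ``a uniform lower bound $c_T>0$ on the increment $h(\varphi_T\tilde x)-h(\tilde x)$,'' but no such bound exists (and the sign is backwards, since $h$ decreases along the flow, but that is not the core issue). A Lyapunov map is constant on each recurrence chain, and the recurrent set in $\wh M_\alpha$ need not be empty: for instance it contains the lift of any closed orbit in $M$ with zero $\alpha$-value. So the per-segment flow decrease $h(\tilde x)-h(\varphi_T\tilde x)$ can be zero, and is arbitrarily small near the recurrent set; without a positive $c_T$, your comparison $\alpha(\gamma)\leq N(\omega(\delta)-c_T)$ yields nothing, because the number of jumps $N$ is unbounded. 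The paper's necessity proof is instead a soft compactness argument: assuming $\alpha$ is not quasi-Lyapunov, take $\epsilon_n$-pseudo-orbits $\gamma_n$ in $\wh M_\alpha$ from $\gamma_n(0)$ to $\gamma_n(0)+1$; translate by the deck action so the initial points stay in a fundamental domain, pass to a limit as $\epsilon_n\to 0$ to obtain $x\recto x+1$, and then equivariance forces $h(x+1)=h(x)+1>h(x)$, contradicting the pre-Lyapunov inequality (which any $\alpha$-equivariant Lyapunov map satisfies by Lemma~\ref{lem-L-def}).

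Your sufficiency sketch captures the right preliminary step --- quasi-Lyapunov makes the recurrence chains in $\wh M_\alpha$ compact and injectively projected to $M$ (Proposition~\ref{prop-alpha-rec-chain}) --- but the construction of $h$ is too vague at the points where the real difficulties lie. The paper sidesteps a general equivariant exhaustion by compactifying to $\wb M_\alpha=\wh M_\alpha\cup\{\pm\infty\}$, where quasi-Lyapunov becomes $-\infty\not\recto+\infty$, Conley's attractor--repeller machinery applies on a genuine compactum, and $\alpha$-equivariance is then manufactured by the explicit averaging $H(x)=\sum_{k>0}(h(k\cdot x)-1)+\sum_{k\leq 0}h(k\cdot x)$ (Lemma~\ref{lem-Z-equiv}). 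More importantly, ``summing weighted Urysohn separators'' does not by itself give distinct values on distinct recurrence chains; the paper needs geometrically decaying weights and the Cantor-set injectivity estimate of Lemma~\ref{lem-injective-cantor} to make the countable sum separating. Finally, your ``refinement inside each chain-equivalence class via the standard intra-class Conley perturbation'' does not correspond to any step of the construction: a Lyapunov function is constant on each recurrence chain, so there is nothing to refine inside a class --- the delicate point is separating distinct classes, and your outline does not engage with it.
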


We state a stronger version in Theorem~\ref{thm-spectral-decomp}.
The proof utilizes Conley's arguments, adapted to build equivariant Lyapunov functions. To classify partial cross-sections, we mostly need this theorem and a variation of it.

\vline

It is well known that global cross-sections of a flow span a convex open cone in cohomology, which is related to the unit ball of the Thurston norm of the ambient manifold. We upgrade this by characterizing the set of cohomology class of partial cross-sections. 
For that, we characterize quasi-Lyapunov classes with real coefficients too. 

Let $\alpha$ be in $H^1(M,\RR)$, not necessarily with integer coefficients. Similarly, it corresponds an Abelian covering $\wh M_\alpha$ over~$M$, not a $\ZZ$-covering in general. 
Denote by $\Rec_\alpha$ the image in~$M$ of the recurrent set on $\wh M_\alpha$, which we call the $\alpha$-recurrent set. 
It plays a crucial role when understanding Lyapunov functions and thus partial cross-sections. Therefore, it is important to characterize it here.

Denote by $\QL_\varphi$ the set of quasi-Lyapunov classes. It is a convex cone, not necessarily closed nor polyhedral. For simplicity, we assume in the introduction that $\QL_\varphi$ is polyhedral and closed. As a polyhedral cone, it admits faces of various dimensions.

\begin{maintheorem}\label{mainth-dependence-arec}
	Assume that $\QL_\varphi$ is closed and polyhedral. For every face $F$ of $\QL_\varphi$, there exists a $\varphi$-invariant compact subset $\Rec_F\subset M$ that satisfies the following. For every quasi-Lyapunov class $\alpha$ in the interior of $F$, we have $\Rec_\alpha=\Rec_F$. Additionally, if $F'$ is a proper sub-face of $F$, then $\Rec_F$ is a proper subset of $\Rec_{F'}$. 
\end{maintheorem}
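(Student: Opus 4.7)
The plan is to characterize $\Rec_\alpha$ in pseudo-orbit language and then exploit the polar duality between $\QL_\varphi$ and a cone of pseudo-orbit homology classes to reduce the dependence of $\Rec_\alpha$ on $\alpha$ to a purely combinatorial one on faces.

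First I would establish a pseudo-orbit characterization of $\Rec_\alpha$: a point $x \in M$ lies in $\Rec_\alpha$ iff for every $\varepsilon > 0$ there is a long closed $\varepsilon$-pseudo-orbit passing within $\varepsilon$ of $x$ whose homology class is evaluated by $\alpha$ to something $\varepsilon$-close to $0$. This is the natural translation of recurrence on $\wh M_\alpha$ back to the base $M$. Let $\PS_\varphi \subset H_1(M,\RR)$ be the closed convex cone generated by limits of (suitably normalized) homology classes of such long, small-jump closed pseudo-orbits. By the definition of quasi-Lyapunov, $\QL_\varphi$ is exactly the polar cone $\PS_\varphi^\circ$. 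Faces of $\QL_\varphi$ then correspond via polar duality to faces of $\PS_\varphi$: to a face $F$ of $\QL_\varphi$, assign $F^\flat \defi \{\gamma \in \PS_\varphi : \alpha(\gamma) = 0 \text{ for all } \alpha \in F\}$.

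The key observation is that for $\alpha$ in the relative interior of $F$ and $\gamma \in \PS_\varphi$, one has $\alpha(\gamma) = 0$ iff $\gamma \in F^\flat$. Indeed, given $\beta \in F$, relative interiority of $\alpha$ gives $\alpha \pm \varepsilon \beta \in F$ for small $\varepsilon > 0$, and both being quasi-Lyapunov forces $\beta(\gamma)=0$ whenever $\alpha(\gamma)=0$. The converse is immediate. Consequently the set of pseudo-orbit classes witnessing $\alpha$-recurrence depends only on the face $F$ through $F^\flat$, not on the particular $\alpha \in \mathrm{int}(F)$. Defining $\Rec_F$ as this common value yields the first half of the theorem; closedness, compactness and $\varphi$-invariance are inherited from those of the $\Rec_\alpha$'s.

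For the strict containment when $F' \subsetneq F$, polar duality for closed convex cones reverses inclusion strictly on faces, so $F^\flat \subsetneq F'^\flat$. I would pick a class $\gamma_0$ in the relative interior of $F'^\flat$ but outside $F^\flat$, realized as the limit of normalized homology classes of concrete closed pseudo-orbits $\gamma_n$. Any accumulation point $x$ of the supports of the $\gamma_n$ automatically lies in $\Rec_{F'}$, since the $\alpha'$-values of the $\gamma_n$ tend to $0$ for $\alpha' \in \mathrm{int}(F')$. The delicate point is to arrange that $x \notin \Rec_F$: this amounts to ruling out other long, small-jump closed pseudo-orbits through $x$ with $\alpha$-value tending to $0$ for $\alpha \in \mathrm{int}(F)$. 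Here I would invoke the equivariant Lyapunov map $h$ on $\wh M_\alpha$ supplied by Theorem~\ref{mainthm-spectral-decomp}: on $\Rec_\alpha$, $h$ must be constant along both the flow and along small jumps, so $\Rec_F$ is confined to its level sets; since a lift of the $\gamma_n$ to $\wh M_\alpha$ escapes in the $\alpha$-direction (as $\alpha(\gamma_n)$ stays bounded away from $0$), any lift of $x$ cannot lie in any level set of $h$, hence $x \notin \Rec_F$.

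The main obstacle I foresee is precisely this last step: promoting the convex-geometric duality into a genuine dynamical separation between $\Rec_F$ and $\Rec_{F'}$. Once the pseudo-orbit characterization of $\Rec_\alpha$ is secured and $\QL_\varphi = \PS_\varphi^\circ$ is established, the equality $\Rec_\alpha = \Rec_F$ for $\alpha \in \mathrm{int}(F)$ is bookkeeping with polyhedral cones; the real work is to exhibit honest points of $M$ realizing $F'$-recurrence without $F$-recurrence, which forces us to lift the construction carefully into $\wh M_\alpha$ and to exploit the Lyapunov function from Theorem~\ref{mainthm-spectral-decomp} in an essential way.
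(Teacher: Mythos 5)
Your proposal rests on the claim that $\QL_\varphi$ is the polar cone of a cone $\PS_\varphi$ generated by normalized homology classes of long, small-jump closed pseudo-orbits, and that the face correspondence $F\leftrightarrow F^\flat$ then handles the dependence of $\Rec_\alpha$ on $\alpha$. This claim is false, and it fails even under the theorem's polyhedrality hypothesis. The characterization in Lemma~\ref{lem-quasi-L-char} is $\alpha(D_{\varphi,\epsilon,T})\leq 0$ \emph{for some} $\epsilon>0$: so $\QL_\varphi$ is an increasing union of polar cones over $\epsilon$, and that union can be a strict subset of $-D_\varphi^*=\bigl(\conv(D_\varphi)\bigr)^\circ$. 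In Example~\ref{ex-not-quasiL-1}, a flow on $\bfrac{\RR}{\ZZ}$ with one fixed point has $D_\varphi=\{0\}$, hence $-D_\varphi^*$ is all of $H^1(M,\RR)$, yet $\QL_\varphi=\{0\}$ — closed, polyhedral, and not the polar of any pseudo-orbit cone. The underlying obstruction is exactly the one your own pseudo-orbit characterization misses: witnesses of $\alpha$-recurrence may have lengths that blow up while their normalized homology classes $\tfrac{1}{\len\gamma}[\gamma]$ converge to $0$, so they are invisible in $\PS_\varphi$.

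The paper's proof (Theorem~\ref{thm-dep-face}) works entirely differently, avoiding polar duality. The engine is Proposition~\ref{prop-rec-set-inclusion}: $\Rec_\alpha\subset\Rec_\beta$ if and only if $\alpha+t(\alpha-\beta)$ is quasi-Lyapunov for some $t>0$. Its easy direction is Lemma~\ref{lem-Rec-comp}; its hard direction uses Lemma~\ref{lem-close-rec-is-ker}, which says that for small $\epsilon$, every periodic $\epsilon$-pseudo-orbit in an $\epsilon$-neighborhood of $\Rec_\alpha$ has $\alpha$-evaluation \emph{exactly} zero. That lemma is proved by a blocking argument from the equivariant Lyapunov map of Theorem~\ref{thm-spectral-decomp} (the image $f(\wh\Rec_\alpha)$ has empty interior, so a complementary gap $J$ blocks all nearby pseudo-orbits), first for rational classes, then by the rational approximation of Corollary~\ref{cor-rat-L-span}. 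Theorem B then follows from Proposition~\ref{prop-rec-set-inclusion} together with the combinatorics of open faces (Lemma~\ref{lem-unique-quasiL-face}, Corollary~\ref{cor-quasiL-conv-comb}, Lemma~\ref{lem-quasiL-face-inclusion}). Your final paragraph correctly identifies the strict containment $\Rec_F\subsetneq\Rec_{F'}$ as the delicate step and correctly reaches for the equivariant Lyapunov map, but the sketch as written ("any lift of $x$ cannot lie in any level set of $h$") is incoherent: every point of $\wh M_\alpha$ lies in some level set. What you need is the blocking mechanism of Lemma~\ref{lem-close-rec-is-ker}, not a statement about level sets.
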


The general statement can be found in Theorem~\ref{thm-dep-face}, without the assumptions on $\QL_\varphi$.
Our last main result is an extension of Theorem~\ref{mainthm-spectral-decomp} to the real coefficients case. In the real coefficients case, $H_1(M,\ZZ)$ acts on $\wh M_\alpha$ though the deck transformation, and on $\RR$ through $\alpha$. That is we have $\delta\cdot t=t+\alpha(\delta)$ for any $t$ in $\RR$ and $\delta$ in $H_1(M,\ZZ)$. An equivariant map for these actions is said \emph{$\alpha$-equivariant}. 

\begin{maintheorem}[Equivariant spectral decomposition, real case]\label{mainthm-strongL-rat}
	Let~$M$ be a compact manifold, $\varphi$ be a continuous flow on~$M$ and $\alpha$ be in $H^1(M,\RR)$.
	If $\alpha$ is quasi-Lyapunov, then there exists a Lyapunov and $\alpha$-equivariant map $f\colon\wh M_\alpha\to\RR$..
\end{maintheorem}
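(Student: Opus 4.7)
My plan is to bootstrap Theorem~\ref{mainthm-spectral-decomp} from integer to real coefficients by writing $\alpha$ as a positive linear combination of rational quasi-Lyapunov classes and then taking the corresponding combination of their Lyapunov maps. It is convenient to work on the maximal abelian cover $\wh M$ of $M$, with deck group $G = H_1(M,\ZZ)/\torsion$; the cover $\wh M_\alpha$ is then $\wh M$ quotiented by $\ker(\alpha|_G)$, and a Lyapunov $\alpha$-equivariant map on $\wh M_\alpha$ is exactly the same data as a Lyapunov map $\wh M \to \RR$ that is equivariant for the $G$-action $\delta\cdot t = t + \alpha(\delta)$.

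The key elementary observations are that a positive linear combination $\lambda_1 f_1 + \lambda_2 f_2$ of Lyapunov maps $f_i\colon \wh M\to\RR$ that are $\alpha_i$-equivariant is itself Lyapunov and $(\lambda_1\alpha_1+\lambda_2\alpha_2)$-equivariant, and that any rational quasi-Lyapunov class $\alpha = \beta/n$ is handled by applying Theorem~\ref{mainthm-spectral-decomp} to the integer class $\beta$ (still quasi-Lyapunov, as the property scales) and rescaling the resulting map by $1/n$. Together these show that the set of $\alpha\in H^1(M,\RR)$ admitting an equivariant Lyapunov map is a convex cone containing all rational quasi-Lyapunov classes. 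To reach a general real $\alpha\in\QL_\varphi$, I would then realize $\alpha$ as a finite positive combination of rational quasi-Lyapunov classes: when $\alpha$ lies in the relative interior of $\QL_\varphi$, density of $\QQ$-points together with convexity produces a rational simplex inside $\QL_\varphi$ with $\alpha$ in its relative interior, yielding such a decomposition; when $\alpha$ lies on the relative boundary, I would pass to the smallest face containing $\alpha$ and use Theorem~\ref{thm-dep-face} to restrict to a smaller, well-behaved subcone in which $\alpha$ becomes interior.

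The main obstacle is the remaining configuration in which the minimal face of $\QL_\varphi$ containing $\alpha$ has no rational points at all, so the convex-combination strategy cannot produce $\alpha$. For this case I would fall back on a direct Conley-style construction on $\wh M_\alpha$: the argument of Theorem~\ref{mainthm-spectral-decomp}, which builds a Lyapunov function from the chain-recurrent decomposition and then averages along the deck action to achieve equivariance, does not really use that the deck group is $\ZZ$. The delicate point will be the averaging step, which has to be normalized correctly when the image of $\alpha|_G$ in $\RR$ is a dense subgroup rather than a lattice, while still preserving strict monotonicity of $f$ outside the $\alpha$-recurrent set $\Rec_\alpha$.
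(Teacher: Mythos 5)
Your decomposition strategy is the right starting point and matches the paper's (Corollary~\ref{cor-rat-L-span} gives exactly the rational decomposition $\alpha=\sum_i\lambda_i\beta_i$), but there is a genuine gap at the central step: the assertion that a positive linear combination $\lambda_1 f_1 + \lambda_2 f_2$ of $\alpha_i$-equivariant Lyapunov maps is itself Lyapunov is false. Being Lyapunov requires that distinct recurrence chains be sent to distinct values, and this is not stable under summation: if $f_1(P)-f_1(R)$ and $f_2(P)-f_2(R)$ are nonzero but $\lambda_1(f_1(P)-f_1(R))+\lambda_2(f_2(P)-f_2(R))=0$, the sum collapses two distinct chains. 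Worse, the relevant summands in the actual construction are not $\lambda_i f_i$ on a common space but $\lambda_i\, f_i\circ\pi_i$ where $\pi_i\colon\wh M_\alpha\to\wh M_{\beta_i}$ is a covering; each $f_i\circ\pi_i$ is only pre-Lyapunov on $\wh M_\alpha$ (many chains in $\wh M_\alpha$ can project to the same chain in $\wh M_{\beta_i}$), so even each individual summand already fails to separate chains. The paper explicitly flags this difficulty (``the difficulty is that this property does not behave well under sum'') and spends most of Section~\ref{sec-spect-decomp-real} overcoming it: the $\beta_i$ and $\lambda_i$ are chosen so that both families are free over $\QQ$, the integer-coefficient Lyapunov maps $f_i=\sum_k a_k H^i_k$ are built with a \emph{shared} weight sequence $(a_k)$, and $(a_k)$ is constructed by a delicate inductive Cantor-set argument (Lemma~\ref{lem-precise-Cantor}) so that the combined sum in Equation~\ref{eq-strongL-cond} can only vanish in the trivial way, forcing all $\wt H^i_k(P)=\wt H^i_k(R)$ and hence $P=R$. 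Without something of this kind your ``elementary observation'' does not close.

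Two secondary points. First, your worry about a face of $\QL_\varphi$ with no rational points does not arise: Lemma~\ref{lem-rat-L-dense} (a consequence of Corollary~\ref{cor-rat-L-span}) shows that every open face of $\QL_\varphi$ contains classes in $H^1(M,\QQ)$, so the rational-simplex decomposition is always available. Second, the fallback ``direct Conley-style construction on $\wh M_\alpha$ with averaging along the deck action'' is exactly the route the paper considers and rejects: when $\alpha$ is irrational the deck group has rank greater than one, the only natural compactification of $\wh M_\alpha$ adds a \emph{connected} visual boundary on which the extended flow is constant (hence chain recurrent), so one cannot build an attractor containing part but not all of the boundary, and the integer-case argument breaks down. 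That obstruction is precisely why the paper must go through the rational decomposition and the careful Cantor-weighted summation rather than a direct analogue of Theorem~\ref{thm-spectral-decomp}.
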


We state a stronger version in Theorem~\ref{thm-quasiL-map-non-rat}. We ask the following question, which we could not answer.

\begin{question}
	Is there a form of converse to Theorem~\ref{mainthm-strongL-rat}?
\end{question}

The Theorems~\ref{mainthm-spectral-decomp} and~\ref{mainthm-strongL-rat} are stated with no regularity. Their stronger counter-parts add a conclusion: when $M$ is smooth and $\varphi$ satisfies a minimal regularity assumption, then the Lyapunov maps $f$ can additionally be taken smooth and satisfying $df(X)<0$ outside the recurrent set.

Let us describe the structure of the article. In Section~\ref{sec-preliminary}, we introduce our notation and conventions about Abelian covering and about Conley's formalism. Then we define a set similar to $D^F_\varphi$. The set $D^F_\varphi$ by Fried does not really fit our needs, so we define a second set $D_\varphi$, called the set of asymptotic pseudo-directions. The definition can be found in Section~\ref{sec-ass-ps-dir}. We prove in Appendix~\ref{app-asymptotic-cone} that the two sets span the same convex set. So they may essentially be exchange in many applications.

Section~\ref{sec-reduction-pso} contains several technical results, about reducing pseudo-orbits to shorter ones. 

In Section~\ref{sec-spectral-decomposition}, we introduce the two important notions of quasi-Lyapunov maps and $\alpha$-recurrence. We give several characterizations of these notions. 

Section~\ref{sec-QL-int-coeff} provides the main steps toward the classification of partial cross-sections. We prove the equivariant spectral decomposition in the integer case. We give a second result for a relaxed version of Lyapunov map. A \emph{pre-Lyapunov} map is a map that is non-increasing along the flow, constant on recurrent chains, plus one condition. We prove that when $\alpha$ is quasi-Lyapunov, there exist  $\alpha$-equivariant pre-Lyapunov maps whose restriction to the recurrent set, roughly speaking, can be any non-forbidden $\alpha$-equivariant function with values in $\ZZ$.

In Section~\ref{sec-general-quasiL}, we study quasi-Lyapunov classes with real coefficients. Quasi-Lyapunov classes with real coefficients are more delicate to work with, because we lose some compactness arguments. To compensate, we approximated any quasi-Lyapunov by ones with rational coefficients, and that satisfies similar dynamical properties. These approximations lead us to a proof of Theorem~\ref{mainth-dependence-arec} in Section~\ref{sec-dependence-reca}, and a quite technical proof of Theorem~\ref{mainthm-strongL-rat} in Section~\ref{sec-spect-decomp-real}.

\paragraph{Acknowledgments.} I thank Christian Bonatti for the precisions on Conley's theory, among other things. I thank Pierre Dehornoy for the discussions that motivated this project. The research was conducted during my stay at the Université de Bourgogne, in Dijon, funded in honor of Marco Brunella.
\newpage

{
	\hypersetup{linkcolor=black}
	\tableofcontents \label{ToC}
}

\section{Preliminary} \label{sec-preliminary}

In the article,~$M$ is a topological manifold, assumed connected, compact, possibly with boundary, and equipped with a compatible metric. Most of our arguments are valid without assuming much regularity on~$M$.

We fix a continuous flow $\varphi_t\colon M\to M$. All flows are assumed complete, that is defined at all time. In particular, $\varphi$ preserves $\partial M$. We allow~$M$ to have a non-empty boundary, so that the theory can be applied to manifolds obtained after blowing-up finitely many closed curves.


\subsection{Abelian covering}\label{sec-Z-covering}

Fried \cite{Fried82} showed the strong connection between global cross-sections, cohomology classes (of rank 1) and $\ZZ$-covers. 

Let us denote by $\wh M\xrightarrow{\wh\pi}M$ the universal Abelian covering of~$M$, given by the quotient of the universal covering of~$M$ under the action of the kernel of the morphism $\pi_1(M)\to H_1(M,\ZZ)$. The module $H_1(M,\ZZ)$ acts on $\wh M$ as the deck transformation of the covering map. We denote by $\delta\cdot x$ the image of~$x$ in $\wh M$ under the action of an element $\delta$ in $H_1(M,\ZZ)$.

Let $\alpha$ be in $H^1(M,\RR)$. 
We denote by $\ker_\ZZ(\alpha)\subset H_1(M,\ZZ)$ the kernel of $\alpha$ inside $H_1(M,\ZZ)$, that is the set of $\delta$ in $H_1(M,\ZZ)$ that satisfies $\alpha(\delta)=0$. We define the Abelian covering map
$\wh M_\alpha\xrightarrow{\pi_\alpha}M$
as the quotient of $\wh M\to M$ by the action of $\ker_\ZZ(\alpha)$. Its deck transformation is $\bfrac{H_1(M,\ZZ)}{\ker_\ZZ(\alpha)}$. 
We call it the \emph{covering associated to $\alpha$}. 
It is characterized as follows: a closed curve $\gamma\subset M$ lifts to a closed curve in $\wh M_\alpha$ if and only if we have $\alpha(\gamma)=0$.

The flow $\varphi$ lifts to two flows on $\wh M$ and $\wh M_\alpha$.

Recall that $H^1(M,\ZZ)$ can be identified as a subset of $H^1(M,\RR)$. 
We say that $\alpha$ has integer coefficient if it lies inside $H^1(M,\ZZ)$.
When $\alpha$ has integer coefficients and is not zero, $\bfrac{H_1(M,\ZZ)}{\ker_\ZZ(\alpha)}$ is naturally isomorphic to~$\ZZ$, so $\wh M_\alpha$ is a $\ZZ$-covering.

\paragraph{Descripiton of the rank 1 cohomology.} The cohomology set $H^1(M,\ZZ)$ plays a central role in this article. 
It is well-known that $H^1(M,\ZZ)$ is isomorphic (as a group) to the set $[M,\bfrac{\RR}{\ZZ}]$ of homotopy classes of continuous maps from~$M$ to $\bfrac{\RR}{\ZZ}$. The isomorphism can be though as follows. Given a smooth map $f\colon M\to\bfrac{\RR}{\ZZ}$, its differential $df=f^*dt$ is a closed 1-form on~$M$ (not exact in general), so it induces a cohomology class $[df]$. Note that $[df]$ paired with a closed curve $\gamma\subset M$ is equal to the number of turns made by $f\circ\gamma$ inside the circle $\bfrac{\RR}{\ZZ}$, so is an integer. Thus, $[df]$ has integer coefficient.
Given a class $\alpha$ in $H_1(M,\ZZ)$ and a continuous map $f\colon M\to\bfrac{\RR}{\ZZ}$, the map $f$ is \emph{cohomologous to~$\alpha$} when there exists a smooth map $g$ homotopic to $f$ which satisfies $[dg]=\alpha$. 

\paragraph{Equivariance.} 
A function $f\colon\wh M\to\RR$ or $f\colon\wh M_\alpha\to\RR$ is said \emph{$\alpha$-equivariant} if for all~$x$ and $\delta$ in $H_1(M,\ZZ)$, we have $f(\delta\cdot x)=\alpha(\delta)+f(x)$. 
When $\alpha$ has integer coefficients, $f$ is $\alpha$-equivariant if and only it projects down on~$M$ to a function $h\colon M\to\bfrac{\RR}{\ZZ}$ cohomologous to~$\alpha$, that is with  $h\circ\pi_\alpha= f\pmod 1$.

Assume that $\alpha$ has integer coefficients, and write $\alpha=n\beta$ for some $n\geq 1$ and some primitive class $\beta$. Then a map $f\colon\wh M_\alpha\to\RR$ is $\alpha$-equivariant if and only if $\tfrac{1}{n}f$ is $\ZZ$-equivariant. 
For simplicity, we used the $\ZZ$-equivariance formulation in Theorem~\ref{mainthm-spectral-decomp}, but we will not use it again.


\paragraph{Compactification.}
When $\wh M_\alpha\to M$ is a $\ZZ$-covering, it is convenient to compactify $\wh M_\alpha$  with two points $\pm\infty$. We denote by $\wb M_\alpha=\wh M_\alpha\cup\{-\infty,+\infty\}$ the topological space with the following neighborhood of the points $\pm\infty$. 
Given a compact subset $K\subset\wh M_\alpha$ for which $\pi_\alpha(K)=M$ holds, the sets $\{-\infty\}\bigcup_{n\leq 0}n\cdot K$ and $\{+\infty\}\bigcup_{n\geq0}n\cdot K$ are closed neighborhoods of respectively $-\infty$ and $+\infty$. We extend the flow to $\wb M_\alpha$ to be constant on $\pm\infty$. Note that for any point in $\wh M_\alpha$, the orbit $n\cdot x$, for~$n$ in $\ZZ$, goes to $\pm\infty$ when~$n$ goes to $\pm\infty$.

\paragraph*{Choice of metric.} The soon introduced notion of~$\epsilon$-pseudo-orbit depends on a choice of metric. The specific choice of metric usually does not matter, but taking a good metric makes the argument easier.

Let $N$ be a topological space, and $d,d'$ be two metrics on $N$ compatible with its equipped topology. We say that $d$ and $d'$ are \emph{comparable at small scale} if for all $\epsilon>0$, there exists $\epsilon'>0$ so that every $\epsilon'$-ball for one metric is included in an~$\epsilon$-ball for the other metric. When $N$ is compact, any two compatible metrics are comparable at small scale. 

For the compact manifold~$M$, choose any metric compatible with the topology, denoted here by $d_M$.
When $N$ is given by a covering space $N\xrightarrow{\pi}M$, we lift the metric from~$M$ to $N$ as follows. 
We call the \emph{radius of injectivity} of~$M$, denoted by $\diam(M)$, the supremum of the $\eta>0$ that satisfies that all balls of radius $\eta$ belong in a contractible open subset of~$M$. We choose a metric $d_N$ on $N$ which is invariant by the deck transformation of $\pi$, and so that for any ball $B\subset N$ of radius less than $\rad(M)$, the projection $B\xrightarrow{\pi}M$ is an isometry onto its image. Such a metric can be build for instance by taking chains of points in $N$, at distance less than $\rad(M)$, and summing the distance between successive points. Any two such metrics are comparable at small scale.

For the compactification $\wb M_\alpha$ any two metrics are comparable at small scale. So we chose any of them.

\subsection{Pseudo-orbit and recurrence chains}\label{sec-pso}

In this section, $N$ is a metric space of one of these two types: either $N$ equal~$M$, $N$ is a covering space over~$M$, with the induced metric, or~$N$ is the compactification $\wb M_\alpha$ of $\wh M_\alpha$ given in the previous subsection. Let $\psi\colon\RR\times N\to N$ be a continuous flow on $N$. If $N$ is equal to $\wb M_\alpha$, $\psi$ is a usual flow on $\wh M_\alpha$, and is constant on $\{+\infty,-\infty\}$. We fix some $T>0$.
an \emph{$(\epsilon,T)$-pseudo-orbit} is a curve $\gamma\colon[0,l[\to N$, for some $l>0$, that satisfies the following:
\begin{itemize}
	\item $\gamma$ is right-continuous,
	\item $\gamma$ is continuous outside a finite set $\Delta$ that is disjoint from $[0,T[$,
	\item the points in $\Delta$ are distant by at least $T$,
	\item for any $t,s$ for which $\gamma$ is continuous on $[t,s]$, we have $\gamma(s)=\psi_{s-t}\circ\gamma(t)$,
	\item for any $t$ in $\Delta$, the distance between $\gamma(t)$ and $\lim_{s\to t^-}\gamma(s)$ is smaller than~$\epsilon$.
\end{itemize}

A \emph{periodic $(\epsilon,T)$-pseudo-orbit} is similarly a function $\gamma\colon\bfrac{\RR}{l\ZZ}\to M$, with $l>0$, that satisfies the same four conditions. We fix some $T>0$ and we will, most of the time, simply write~$\epsilon$-pseudo-orbit instead of $(\epsilon,T)$-pseudo-orbit.

Note that any orbit arc, of positive length, is an~$\epsilon$-pseudo-orbit. Similarly, any periodic orbit is a periodic~$\epsilon$-pseudo-orbit.
From now on, we write $\gamma(u^-)=\lim_{t\to u^-}\gamma(t)$.
At a discontinuity time $u$ in $\Delta$, we say that $\gamma$ jumps from $\gamma(u^-)$ to $\gamma(u)$.
The scalar $l$ is called the \emph{length} of the pseudo-orbit, later denoted by $\len(\gamma)$. Additionally, using $x=\gamma(0)$ and $y=\gamma(\len(\gamma)^-)$, we say that $\gamma$ goes from the point~$x$ to the point $y$.

\begin{definition}
	Let $x,y$ be two points in $N$. We write $x\recto y$ if for all $\epsilon>0$, there exists an~$\epsilon$-pseudo-orbit from~$x$ to $y$. 
\end{definition}

It does not depend on the value of $T>0$ (once fixed), not on the specific choice of metric on~$M$ and $\wb M_\alpha$. 

A point~$x$ in $N$ is said \emph{recurrent} when we have $x\recto x$. The set of recurrent point is called the \emph{recurrent set}. We denote by $\Rec$ the recurrent set on the manifold~$M$. Given $\alpha$ in $H^1(M,\RR)$, we denote by $\wh\Rec_\alpha$ the recurrent set on~$\wh M_\alpha$. The relation $x\rectot y$, given by $x\recto y$ and $y\recto x$, is an equivalence relation. Its equivalence classes are called \emph{recurrence chain}. Similarly, for two recurrence chains $R,R'$, we denote by $R\recto R'$ if we have $x\recto y$ for some/all~$x$ in~$R$ and $y$ in $R'$. 

\begin{remark}
	It is well known that both the recurrence set and the recurrence chains are $\psi$-invariant and closed inside $N$.
\end{remark}

\subsection{Lyapunov functions}\label{sec-L-function}

Conley \cite{Conley1978,Conley1988} developed the notion of (complete) Lyapunov maps in dynamical systems. 
Let us recall these notions and introduce a variation of them.

A continuous map $f\colon N\to\RR$ is \emph{Lyapunov} if the three following properties are satisfied: $f$ is constant on every recurrence chain, two distinct recurrence chains have distinct values, and $f$ is decreasing along the flow outside the recurrent set. 


\begin{theorem}[Conley \cite{Conley1978}]\label{thm-Conley-L}
	Let~$M$ be a compact manifold. Any continuous flow on~$M$ admits a Lyapunov function.
\end{theorem}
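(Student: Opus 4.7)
The plan is to follow Conley's original argument, building the Lyapunov function as a countable weighted sum of elementary Lyapunov functions associated to attractor-repeller pairs. First I would introduce the notion of an \emph{attractor}: a compact $\varphi$-invariant set $A\subset M$ admitting an open neighborhood $U$ with $\varphi_t(\wb U)\subset U$ for all $t>0$ sufficiently large and $A=\bigcap_{t\geq 0}\varphi_t(U)$. Its dual \emph{repeller} is $A^\ast\defi\{x\in M\col \omega(x)\cap A=\emptyset\}$, where $\omega$ is taken in the pseudo-orbit (chain) sense. The key structural fact I would establish is that every recurrence chain is contained in either $A$ or $A^\ast$, and conversely two distinct recurrence chains can always be separated by some attractor-repeller pair: whenever $R\neq R'$ are two recurrence chains, either $R\not\recto R'$ or $R'\not\recto R$, and the set of points~$x$ with $R\recto x$ for all representatives generates an attractor containing one chain but not the other.

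Next, for a fixed attractor-repeller pair $(A,A^\ast)$, I would construct a continuous function $L_{A}\colon M\to[0,1]$ that vanishes on $A$, equals $1$ on $A^\ast$, and is strictly decreasing along every orbit that is not entirely contained in $A\cup A^\ast$. The standard recipe is to start from a continuous function $g\geq 0$ vanishing exactly on $A$ (e.g.\ the distance to $A$), form the time-average $\tilde g(x)=\int_0^\infty g(\varphi_t x)\,e^{-t}\,dt$, and then renormalize with a suitable monotone rescaling that takes the value $1$ on $A^\ast$. Strict monotonicity off $A\cup A^\ast$ follows from the fact that the orbit of such a point leaves every neighborhood of $A^\ast$ and enters every neighborhood of $A$ in positive time.

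The third step is a countability argument. Since~$M$ is a compact metric space, it has a countable basis; the collection of attractors, parametrized by pairs of basis open sets $(U,V)$ with $\wb V\subset U$ and $\varphi_t(\wb U)\subset V$ for some $t>0$, is therefore at most countable. Enumerate the attractors $\{A_n\}_{n\in\NN}$ and set
\[
	f(x)\defi\sum_{n\in\NN}\frac{1}{2^n}\,L_{A_n}(x).
\]
Uniform convergence and continuity are immediate since each $L_{A_n}$ takes values in $[0,1]$. By construction $f$ is non-increasing along orbits, and strictly decreasing at any point~$x\notin\Rec$ because at least one attractor-repeller pair separates $x$ from its $\omega$-limit. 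Moreover $f$ is constant on each recurrence chain (each $L_{A_n}$ is), and the separation property of attractor-repeller pairs guarantees that distinct recurrence chains take distinct values of $f$.

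The main obstacle is the construction in the second step, specifically verifying strict decrease along non-trivial orbits using only the abstract chain-recurrence definition (rather than differentiable structure), and making sure that the averaging trick genuinely yields a \emph{continuous} function that is constant on $A$ and $A^\ast$ while separating them. This is handled by combining Urysohn-type plateau functions adapted to the nested neighborhoods $\varphi_t(U)$ with the exponential time average; once this elementary case is in place, the countable summation is routine.
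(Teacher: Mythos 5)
The paper does not prove this statement; it cites it directly from Conley's monograph. What the paper \emph{does} prove, in Section~\ref{sec-quasi-L-criterion}, is an equivariant refinement built on exactly the skeleton you describe — separating recurrence chains by attractor-repeller pairs, producing an elementary Lyapunov function for each pair, enumerating the countably many attractors, and taking a weighted sum — and that parallel is precisely what exposes the gap in your proposal.

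The gap is the weight choice $2^{-n}$, together with the unsupported assertion that ``the separation property of attractor-repeller pairs guarantees that distinct recurrence chains take distinct values of $f$.'' On each recurrence chain $R$ you have $L_{A_n}(R)\in\{0,1\}$, so $f(R)=\sum_n 2^{-n}L_{A_n}(R)$ is a binary expansion, and binary expansions collide at dyadic rationals: if $L_{A_{n_0}}(R_1)=1$, $L_{A_n}(R_1)=0$ for $n>n_0$, while $L_{A_{n_0}}(R_2)=0$, $L_{A_n}(R_2)=1$ for $n>n_0$, then $f(R_1)=f(R_2)$ even though the attractor $A_{n_0}$ separates $R_1$ from $R_2$. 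The separation property guarantees that the $\{0,1\}$-valued sequences $\bigl(L_{A_n}(R_1)\bigr)_n$ and $\bigl(L_{A_n}(R_2)\bigr)_n$ differ; it does not guarantee that their images under the base-$2$ map do. The fix is to take weights $a_n$ with $a_{n+1}\leq\theta a_n$ for a ratio $\theta$ \emph{strictly} less than $\tfrac12$ — e.g. $a_n=3^{-n}$ — so that if the two sequences first differ at index $n_0$, then $|f(R_1)-f(R_2)|\geq a_{n_0}-\sum_{n>n_0}a_n>0$. This is exactly the point the paper handles explicitly: see the choice of $\theta$ with $\tfrac{\theta}{1-\theta}<1$ just before Lemma~\ref{lem-spec-tech-1}, and the injectivity Lemma~\ref{lem-injective-cantor}. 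In other words, the step you describe as ``routine'' is where the proof is actually incomplete as written.

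A smaller issue in your second paragraph: $\int_0^\infty g(\varphi_t x)e^{-t}\,dt$ applied directly to $g(x)=d(x,A)$ is not non-increasing along the flow (its derivative along orbits equals $\tilde g-g$, of no definite sign). One must first pass to a non-increasing majorant such as $\sup_{t\geq 0}g\circ\varphi_t$, establish its continuity, and only then apply the exponential average to force strict decrease off $A\cup A^*$. You flag this as the main obstacle, so the awareness is there, but plateau functions alone do not settle it.
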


We introduce the variations of this notion.
A continuous map $f\colon N\to\RR$ is called \emph{pre-Lyapunov} if for any $x,y$ in $N$, $x\recto y$ implies $f(x)\geq f(y)$. It implies that $f$ is constant on each recurrence chain. The function $f$ is called \emph{strongly-Lyapunov} if it is pre-Lyapunov, if distinct recurrence chains have distinct values, and if additionally $x\recto y$ and $y\not\recto x$ imply $f(x)>f(y)$. 

We define pre-Lyapunov functions for flexibility reasons, and strongly-Lyapunov because we will work in a non-compact setting. We believe that being "non-increasing" for the relation $\recto$ is a valid enough property to add it to the conclusion of our results.


\begin{lemma}\label{lem-L-def}
	A strongly-Lyapunov function is Lyapunov. The converse holds true when $N$ is compact.

	Take~$M$ compact, $\alpha$ in $H^1(M,\ZZ)$ and $N=\wh M_\alpha$. Then any $\alpha$-equivariant and Lyapunov map $f\colon\wh M_\alpha\to\RR$ is strongly-Lyapunov.
\end{lemma}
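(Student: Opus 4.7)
The plan is to handle the three claims in turn, with the equivariant statement absorbing the most work.

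First, for strongly-Lyapunov $\Rightarrow$ Lyapunov, I would check the three defining properties directly. If $x,y$ lie in a common chain then $x\recto y$ and $y\recto x$, so the pre-Lyapunov inequality used in both directions gives $f(x)=f(y)$. The distinct-values condition is inherited verbatim. For strict decrease along the flow outside $\Rec$: if $x\notin\Rec$ and $t>0$, the orbit arc is a pseudo-orbit giving $x\recto\psi_t(x)$, while $\psi_t(x)\recto x$ would force $x\in\Rec$ by transitivity of $\recto$; hence the strict clause in the strongly-Lyapunov definition applies and yields $f(x)>f(\psi_t(x))$.

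For the converse on compact $N$, I first observe that if $x\recto y$ with $y\not\recto x$ then $x$ and $y$ sit in distinct chains, so once pre-Lyapunov is established the Lyapunov distinct-values clause upgrades $f(x)\ge f(y)$ to the strict inequality required by strongly-Lyapunov. Hence it suffices to prove pre-Lyapunov. My plan is to use the standard Conley-type decomposition: on compact $N$, two distinct chains $R\ne R'$ with $R\recto R'$ admit a finite sequence of chains $R=S_0,S_1,\dots,S_n=R'$ linked by non-recurrent orbit arcs $z_i$ whose past limit set (as $t\to-\infty$) lies in $S_{i-1}$ and whose $\omega$-limit set (as $t\to+\infty$) lies in $S_i$. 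This is obtained from $\epsilon_k$-pseudo-orbits with $\epsilon_k\to 0$ by extracting convergent subsequences of jump points using compactness. Continuity of $f$ combined with its strict decrease along $z_i\notin\Rec$ and its constancy on the chains $S_i$ then yields $f(S_{i-1})\ge f(z_i)\ge f(S_i)$ with strict inequalities (via distinct values on distinct chains); telescoping gives $f(R)>f(R')$.

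For the equivariant statement on the non-compact $\wh M_\alpha$, my plan is to push the previous argument down to the compact base $M$. Given $\wh x\recto\wh y$ in $\wh M_\alpha$, I would project a sequence of pseudo-orbits to $M$ obtaining $x\recto y$, apply the decomposition above on $M$ to produce chains $S_0^M,\dots,S_n^M$ and connecting non-recurrent orbits $z_i^M$, and then lift back to $\wh M_\alpha$: start in the chain containing $\wh x$, and lift each $z_i^M$ so that its past limit set sits in the current lifted chain; its $\omega$-limit set then determines the next chain in $\wh M_\alpha$. The lifted arcs $\wh z_i$ are non-recurrent in $\wh M_\alpha$ (since recurrence projects), so applying the Lyapunov properties of $f$ on $\wh M_\alpha$ (constancy on chains, distinct values, strict decrease outside $\wh\Rec_\alpha$) telescopes to $f(\wh x)>f(\wh y)$ once the final lifted chain is shown to be the one containing $\wh y$. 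Pre-Lyapunov and the strict clause of strongly-Lyapunov then follow exactly as in the compact case.

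The hard part will be the endpoint-matching in the equivariant case: the lifted chain of orbits must end precisely at the chain of $\wh y$, not merely at some deck-translate of $y$'s chain. Here $\alpha$-equivariance enters decisively: I plan to use the lift of the original pseudo-orbit on $\wh M_\alpha$, which is uniquely determined and ends at $\wh y$, to guide the choice of each $\wh z_i$; arriving at a wrong deck-translate $\delta\cdot\wh R$ would, by $\alpha$-equivariance, shift $f$ by $\alpha(\delta)\ne 0$, contradicting the telescoping equality obtained by reading off $f$ at the limit sets of the connecting orbits.
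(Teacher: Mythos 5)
Your argument for strongly-Lyapunov $\Rightarrow$ Lyapunov is fine and essentially matches what the paper treats as immediate from the definitions. The gap is in the converse. You rely on the claim that on compact $N$, two distinct recurrence chains with $R\recto R'$ are joined by a \emph{finite} sequence of chains $S_0,\dots,S_n$ linked by non-recurrent connecting orbits, extracted as a limit of $\epsilon_k$-pseudo-orbits. This is false. Take the flow on $N=[0,1]$ whose fixed-point set is exactly the middle-thirds Cantor set $C$ and which otherwise points leftward: each point of $C$ is its own recurrence chain and $\{1\}\recto\{0\}$, but every non-recurrent orbit traverses a single complementary interval of $C$, and no finite concatenation of such orbits connects $1$ to $0$ (in fact no orbit accumulates on $1$ in backward time). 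Independently, the proposed extraction breaks because the number of jumps in the $\epsilon_k$-pseudo-orbits is unbounded as $\epsilon_k\to 0$, so there is no finite limiting configuration. The paper's mechanism is Lemma~\ref{lem-blocking-box}: convolve $f$ along the flow, take a level set at a value $r\notin f(\Rec)$, show it is a compact cross-section, and argue directly that $\epsilon$-pseudo-orbits starting on the low side cannot cross it. That gives $y\not\recto x$ whenever $f(x)>r\ge f(y)$, hence the pre-Lyapunov inequality, with no chain decomposition whatsoever.

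The equivariant part inherits the same missing ingredient, and the project-to-$M$-and-lift-back scheme would in addition have to negotiate how recurrence chains in $M$ relate to recurrence chains in $\wh M_\alpha$, which is a genuine subtlety. The paper instead restores compactness by compactifying the cover: extend $f$ to $\wb f\colon\wb M_\alpha\to\wb\RR$, use Corollary~\ref{cor-rec-disconnect} to see that $\wb f(\wb\Rec_\alpha)$ is totally disconnected and hence has empty interior (so a regular level $r$ exists strictly between any $f(x)<f(y)$), and apply Lemma~\ref{lem-blocking-box} on the compact space $\wb M_\alpha$. One further inaccuracy in your write-up: your remark that $x\recto y$ with $y\not\recto x$ forces $x,y$ into distinct chains fails when one of them is not recurrent; the strict inequality in that case is obtained, once pre-Lyapunov is in hand, from $f(x)>f(\varphi_1(x))\ge f(y)$, exactly the short case analysis the paper carries out.
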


The first claim follows from the definitions.
The converse is not true in general, without assumption on $N$, as illustrated in Example~\ref{ex-L-no-sL}. 
We prove Lemma~\ref{lem-L-def} after Lemma~\ref{lem-blocking-box}. 

Let us recall some important tools that were developed to prove Theorem~\ref{thm-Conley-L}.
Given a subset $X\subset N$, denote by $\omega(X)$ the \emph{$\omega$-limit} of $X$ which is defined by 
$$\omega(X)=\bigcap_{t\geq 0}\wb{\psi_{[t,+\infty[}(X)}.$$
Similarly, the \emph{$\alpha$-limit} of $X$ is the $\omega$-limit of the time-reversed flow.
A non-empty subset $A\subset N$, is said to be an \emph{attractor} of $\psi$ if $A$ is compact, $\psi$-invariant and if there exists a neighborhood $U$ of $A$ that satisfies $\omega(U)=A$. A \emph{repeller} is an attractor of the time-reversed flow. Given an attractor $A$, we denote its domain of attraction by $D(A)=\{x\in N,\omega(x)\subset A\}$. The set $A^*=N\setminus D(A)$ is a repeller. We call $A,A^*$ a pair of attractor-repeller.

There are at most countably many attractors, and for any pair of recurrence chains $R_1,R_2$ with $R_1\not\recto R_2$, there exists an attractor $A$ that satisfies $R_1\subset A$ and $R_2\subset A^*$. So pairs of attractor-repellers can be used to recover individual recurrence chains.

The following lemma is a key technical tool that we will use several times. It is a variation of well-known result of Conley \cite{Conley1978}.

\begin{lemma}\label{lem-preL-atr-rep}
	Let $\alpha$ be in $H^1(M,\ZZ)$, and $A,A^*$ be a pair of attractor/repeller in $\wb M_\alpha$, $U,U^*$ be neighborhoods of respectively $A$ and $A^*$. Then there exists a pre-Lyapunov function $f\colon\wb M_\alpha\to[0,1]$ which satisfies the following properties:
	\begin{itemize}
		\item $f\equiv 0$ on a neighborhood of $A$,
		\item $f\equiv 1$ on a neighborhood of $A^*$,
		\item $f$ is decreasing along the flow outside $U\cup U^*$.
	\end{itemize}
	When~$M$ is smooth and $\psi$ is generated by a continuous and uniquely integrable vector field $X$, $f$ can be taken smooth, and so that $df(X)<0$ holds outside~$U\cup U^*$.
\end{lemma}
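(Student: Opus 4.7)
The plan is to reduce the construction to a standard strict Lyapunov function for the attractor-repeller pair $(A, A^*)$, and then reparametrize it via a non-decreasing function of one real variable.

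First, I would invoke (a variant of) Conley's classical construction on the compact space $\wb M_\alpha$ to produce a continuous pre-Lyapunov function $L \colon \wb M_\alpha \to [0,1]$ with $L^{-1}(0) = A$, $L^{-1}(1) = A^*$, strictly decreasing along non-constant orbit segments outside $A \cup A^*$, and satisfying $x \recto y \Rightarrow L(x) \geq L(y)$. The pre-Lyapunov property is the crucial feature, and follows from the chain-stability of attractor-repeller pairs: an $\epsilon$-pseudo-orbit starting $\delta$-close to $A$ stays in any prescribed neighborhood of $A$ for sufficiently small $\epsilon, \delta$, and dually for $A^*$. In the smooth setting with a continuous, uniquely integrable generating vector field $X$, this construction can be carried out so that $L$ is smooth with $dL(X) < 0$ outside $A \cup A^*$.

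Next, since $L$ is continuous and $A = L^{-1}(0) \subset U$, $A^* = L^{-1}(1) \subset U^*$ with $U, U^*$ open, I pick $0 < a < b < 1$ such that $\{L \leq a\} \subset U$ and $\{L \geq b\} \subset U^*$. Then choose a continuous (or smooth, as required) non-decreasing $\phi \colon [0, 1] \to [0, 1]$ with $\phi \equiv 0$ on $[0, a]$, $\phi \equiv 1$ on $[b, 1]$, and $\phi' > 0$ on $(a, b)$, and define $f \defi \phi \circ L$. Then $f \equiv 0$ on the open neighborhood $\{L \leq a\}$ of $A$ (contained in $U$), $f \equiv 1$ on the open neighborhood $\{L \geq b\}$ of $A^*$ (contained in $U^*$); outside $U \cup U^*$, $L$ lies in $(a, b)$ where $\phi$ is strictly increasing, and combined with the strict decrease of $L$ along orbits outside $A \cup A^*$, this yields strict decrease of $f$ along orbits outside $U \cup U^*$ (in the smooth case, $df(X) = \phi'(L) \, dL(X) < 0$). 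Pre-Lyapunov of $f$ passes through composition: $x \recto y$ implies $L(x) \geq L(y)$, hence $f(x) = \phi(L(x)) \geq \phi(L(y)) = f(y)$ since $\phi$ is non-decreasing.

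The main obstacle is the existence of the auxiliary pre-Lyapunov $L$, which requires a careful sup-along-orbit construction averaged via the flow for continuity, combined with the chain-isolation of $A$ and $A^*$ to ensure decrease along all pseudo-orbits rather than merely along orbits; the reparametrization step is then routine.
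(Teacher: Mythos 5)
Your proposal follows essentially the same route as the paper: start from Conley's continuous Lyapunov pair function $g$ for $(A,A^*)$ on the compact space $\wb M_\alpha$, then post-compose with a smooth non-decreasing step $\phi$ that is constantly $0$ (resp.\ $1$) below (resp.\ above) the levels that $g$ attains outside $U$ (resp.\ $U^*$), so that $f=\phi\circ g$ plateaus near $A$ and $A^*$ and strictly decreases outside $U\cup U^*$; the smooth case is treated the same way (the paper makes the convolution-along-the-flow plus Fathi-smoothing explicit, you leave it implicit). One small point worth making precise: your chain-stability remark alone shows that pseudo-orbits starting near $A$ stay near $A$, which controls the endpoints $0$ and $1$, but the full pre-Lyapunov inequality $x\recto y\Rightarrow g(x)\geq g(y)$ at intermediate levels is really obtained from the blocking-box argument that the paper isolates as Lemma~\ref{lem-blocking-box} (equivalently from Lemma~\ref{lem-po-nincrease-preL}, since the chain recurrent set of $\wb M_\alpha$ lies in $A\cup A^*$ so $g(\Rec)\subset\{0,1\}$); once that is in place your monotone-composition step is exactly right.
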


Traditionally, $f$ is chosen decreasing along the flow outside $A\cup A^*$. We choose a different convention: $f$ is constant on a neighborhood of $A\cup A^*$, and decreasing along the flow outside. The reason is that we will consider series of these functions, and our convention makes the convergence trivial.

Though the result is very classic, it is not stated in this form, so let us give us a short proof.

Given a continuous map $f\colon N\to\RR$ and a continuous map $h\colon\RR\to\RR$ that decreases fast enough to zero, we denote by $f*_\varphi h\colon N\to\RR$ (when well-defined) the \emph{convolution along the flow} between $f$ and $h$, defined by $f*_\varphi(x)=\int_\RR f\circ\varphi_s(x)h(t-s)ds$. 

\begin{proof}
	Conley \cite{Conley1978} proved the existence of a continuous map $g\colon\wb M_\alpha\to\RR$, with $g^{-1}(0)<A$ and $g^{-1}(1)=A^*$ and that is decreasing outside $A\cup A^*$. e $g$ smooth and satisfying $dg(X)<0$ outside $A\cup A^*$.

	Take some small $\epsilon>0$ and a smooth map $h\colon[0,1]\to[0,1]$ that satisfies $h([0,\epsilon])=0$, $h([1-\epsilon,1])=1$, $h$ is increasing on $[\epsilon,1-\epsilon]$, and $dh(X)<0$ holds on $]\epsilon,1-\epsilon[$ under the regulatity assumption. Then, when~$\epsilon$ is small enough, $f(M\setminus(U\cup U^*))$ lies in $]\epsilon,1-\epsilon[$. So the map $f=h\circ g$ satisfies the conclusion of the lemma, apart from the regularity.

	We now take the regularity assumptions on~$M$ and $\varphi$. Up to replacing $g$ by the convolution along the flow of $g$ by a Gaussian map, we may assume that $g$ is of class $\Class^1$ along the flow, and that $dg(X)<0$ holds outside $A\cup A^*$.	
	Then, using Corollary 5.7 in~\cite{Fathi2019}, we may assume that $g$ is smooth outside $A\cup A^*$. Then $f=h\circ g$ is smooth everywhere, and satisfies $df(X)<0$ outside~$U\cup U^*$.
\end{proof}

Denote by $\Rec^*$ the set of recurrence chains of $\psi$. We view $\Rec^*$ as the quotient of $\Rec$ by the equivalence relation $x\rectot y$, and equip $\Rec^*$ with the quotient topology. The previous lemma has a well-known consequence:

\begin{corollary}\label{cor-rec-disconnect}
	The set $\Rec^*$ is compact and totally disconnected.
\end{corollary}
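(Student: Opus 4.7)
The plan is to handle compactness and total disconnectedness separately. For compactness, I'll use the remark that $\Rec$ is closed and $\psi$-invariant: in the ambient compact space $N$ (the relevant cases being $N=M$ or $N=\wb M_\alpha$), this makes $\Rec$ compact, and the continuous surjection $\Rec\to\Rec^*$ from the quotient map immediately gives compactness of $\Rec^*$.

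For total disconnectedness, given two distinct classes $[R_1]\neq[R_2]$ in $\Rec^*$, I will separate them by a clopen partition. Since the classes differ, at least one of the two chain-reachability relations must fail, say $R_1\not\recto R_2$, so the attractor/repeller fact recalled just before Lemma~\ref{lem-preL-atr-rep} provides an attractor $A$ with $R_1\subset A$ and $R_2\subset A^*$. I then apply Lemma~\ref{lem-preL-atr-rep} with some choice of neighborhoods $U,U^*$ to obtain a pre-Lyapunov map $f\col N\to[0,1]$ equal to $0$ on a neighborhood of $A$ and to $1$ on a neighborhood of $A^*$.

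The key step, which I expect to be the one requiring most care, is to verify that $f|_{\Rec}$ takes only the two values $0$ and $1$, i.e.\ that every recurrence chain lies entirely inside $A$ or entirely inside $A^*$. For a recurrent point $x\in D(A)$ one has $x\in\omega(x)\subset A$, so $\Rec\subset A\cup A^*$; and a single chain cannot meet both sides because attractors trap small pseudo-orbits: for $\epsilon$ small enough, any $\epsilon$-pseudo-orbit starting in $A$ remains in an arbitrarily small neighborhood of $A$ and hence cannot reach the disjoint compact set $A^*$, contradicting the reachability inside a chain.

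Since $f$ is pre-Lyapunov, it is constant on each chain, and so $f|_{\Rec}$ descends to a continuous map $\bar f\col\Rec^*\to\{0,1\}$ with $\bar f([R_1])=0$ and $\bar f([R_2])=1$. The preimages $\bar f^{-1}(0)$ and $\bar f^{-1}(1)$ are then disjoint clopen subsets of $\Rec^*$ separating $[R_1]$ from $[R_2]$, yielding total disconnectedness (and in particular Hausdorffness) of $\Rec^*$.
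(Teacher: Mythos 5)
Your approach is essentially the paper's: obtain compactness from the quotient of a compact set, and for total disconnectedness pick an attractor $A$ separating the two chains, apply Lemma~\ref{lem-preL-atr-rep} to get a pre-Lyapunov $f\colon N\to[0,1]$ pinned to $0$ near $A$ and $1$ near $A^*$, and use level sets of $f$ on $\Rec$ to produce a clopen partition of $\Rec^*$ separating $[R_1]$ from $[R_2]$.

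However, your justification of the step you yourself flag as the delicate one is incorrect. You write that for a recurrent point $x\in D(A)$ one has $x\in\omega(x)$, but in this paper ``recurrent'' means chain recurrent ($x\recto x$), and chain recurrence does \emph{not} imply $x\in\omega(x)$: on a circle flow with a single fixed point, every point is chain recurrent, yet the $\omega$-limit of any non-fixed point is just the fixed point. So $x\in\omega(x)$ characterizes Poincaré recurrence, a strictly stronger property than what the paper's $\Rec$ encodes. The inclusion $\Rec\subset A\cup A^*$ is nevertheless true and standard in Conley theory: on $D(A)\setminus A$ one has a Lyapunov function for the pair $(A,A^*)$ (for instance the one produced in the proof of Lemma~\ref{lem-preL-atr-rep}) which is strictly decreasing along orbits, and this rules out small pseudo-orbits returning to such a point, so no point of $D(A)\setminus A$ is chain recurrent, and symmetrically for the repeller side. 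The paper's own proof uses this fact implicitly as well (it is needed for the two sets $\Rec\cap f^{-1}([0,\tfrac12[)$ and $\Rec\cap f^{-1}(]\tfrac12,1])$ to actually cover $\Rec$), so once you replace the $\omega$-limit argument with the Lyapunov-function argument, your proof matches the paper's.
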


\begin{proof}
	The set $\Rec^*$ is compact as the quotient of a compact space. Take two distinct recurrence chains $R_1,R_2$. Then there exists an attractor $A$ that contains one $R_i$ but not the other one. Let $f\colon N\to[0,1]$ be the pre-Lyapunov function given by Lemma~\ref{lem-preL-atr-rep}. The two sets $\Rec\cap f^{-1}([0,\tfrac{1}{2}[)$ and $\Rec\cap f^{-1}(]\tfrac{1}{2},1])$ yield a partition of $\Rec^*$ in two open sets, each of them contains one of the sets $R_i$. Thus, $\Rec^*$ is totally disconnected.
\end{proof}

We now prove that our definition of strongly-Lyapunov coincide with the standard definition in the compact case.

\begin{lemma}\label{lem-blocking-box}
	Assume $N$ compact. Let $f\colon N\to\RR$ be a continuous function, non-increasing along the flow lines and constant on each recurrence chain. Take $r$ outside $f(\Rec)$. Then for any $x,y$ in $N$ with $f(x)>r\geq f(y)$, we have $y\not\recto x$. 
\end{lemma}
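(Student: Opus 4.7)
The plan is to argue by contradiction and produce a recurrent point at the forbidden level $r$. Suppose $y \recto x$ with $f(y) \leq r < f(x)$ and $r \notin f(\Rec)$. For each $\epsilon > 0$, fix an $\epsilon$-pseudo-orbit $\gamma_\epsilon$ from $y$ to $x$. Since $f$ is non-increasing on every flow segment yet $f \circ \gamma_\epsilon$ goes from a value $\leq r$ at time $0$ to $f(x) > r$ at the endpoint, some jump of $\gamma_\epsilon$ must cross $r$ upward: writing $a_\epsilon$ and $b_\epsilon$ for the left and right limits at this jump, one has $f(a_\epsilon) \leq r < f(b_\epsilon)$ and $d(a_\epsilon, b_\epsilon) < \epsilon$, and uniform continuity of $f$ forces both values to converge to $r$ as $\epsilon \to 0$. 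Cutting $\gamma_\epsilon$ at the chosen jump yields two honest pseudo-orbits, realizing $y \recto a_\epsilon$ and $b_\epsilon \recto x$.

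Using compactness of $N$ I extract a subsequence along which $a_{\epsilon_n}$, and hence $b_{\epsilon_n}$, converge to the same point $a^\ast$ with $f(a^\ast) = r$. Invoking that the relation $\recto$ is closed in $N \times N$ (a standard fact in Conley's theory, proved by grafting a short flow segment from the new basepoint onto the given pseudo-orbit via a small jump that exploits uniform continuity of the time-$T$ map, and symmetrically at the endpoint), I pass to the limit and obtain $y \recto a^\ast \recto x$. Setting $a^0 = a^\ast$ and iterating the same crossing argument on the pair $(a^k, x)$, which still satisfies $f(a^k) \leq r < f(x)$, I construct a sequence $(a^k)_{k \geq 0}$ in the compact set $f^{-1}(r)$ with $a^k \recto a^{k+1}$ for every $k$.

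To conclude, I extract a convergent subsequence $a^{k_j} \to a^\infty \in f^{-1}(r)$; transitivity of $\recto$ along $a^{k_j} \recto a^{k_j+1} \recto \cdots \recto a^{k_{j+1}}$ gives $a^{k_j} \recto a^{k_{j+1}}$, and closedness of $\recto$ then yields $a^\infty \recto a^\infty$. Hence $a^\infty \in \Rec$ with $f(a^\infty) = r$, contradicting $r \notin f(\Rec)$. The main technical point, and the only real obstacle, is the closedness of $\recto$ (or at least its one-sided variants used here): this is a standard but not entirely trivial property of chain reachability, and I would either cite it explicitly or include a careful concatenation argument with the $T$-parameter chosen small enough to absorb the inserted jumps at both ends.
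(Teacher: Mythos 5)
Your proof is correct, but it follows a genuinely different route from the paper's. The paper's argument is analytic and constructive: it convolves $f$ along the flow with a Gaussian kernel to produce $g = f*_\varphi h$, which agrees with $f$ on the recurrent set and is \emph{strictly} decreasing along every orbit meeting the level set $S = g^{-1}(r)$; then $S$ together with the sublevel set $E$ forms a compact ``blocking'' region which no small pseudo-orbit starting below $r$ can escape upward, and one finishes by pushing $g(x)$ above $r$ and shrinking the flow-box so that $x$ is excluded. Your argument is instead a direct contradiction via compactness: locate, in each $\epsilon$-pseudo-orbit from $y$ to $x$, the jump crossing level $r$; pass to a limit to get $a^* \in f^{-1}(r)$ with $y \recto a^* \recto x$; iterate on the pair $(a^k, x)$ to build a $\recto$-chain inside the compact set $f^{-1}(r)$; and extract once more, using transitivity and closedness of $\recto$ on $N\times N$, to land a recurrent point at level $r$. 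Both are sound. The paper's route has the virtue of producing an explicit transverse blocking set (handy elsewhere, and giving the quantitative ``$\epsilon$-pseudo-orbits stay in $E\cup K$'' statement), while yours is more elementary but shifts the weight onto the fact that $\recto$ is a closed, transitive relation on compact $N$ --- which you correctly flag as the one real technical burden. One small precision worth making when you write this up: in the step deducing $y \recto a^*$ from a sequence of $\epsilon_n$-pseudo-orbits $y \rightsquigarrow a_{\epsilon_n}$ with $\epsilon_n \to 0$ and $a_{\epsilon_n}\to a^*$, what you need is not literally the closedness of the relation $\recto$ (you do not yet have $y\recto a_{\epsilon_n}$ for each $n$) but the diagonal variant, proved by the same endpoint-grafting/uniform-continuity mechanism; your parenthetical shows you see this, but the lemma statement you'd cite should be phrased to cover this case.
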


Although the result is well-known for experts, we express it in a flexible way to apply it for pre-Lyapunov-like functions in the future.



\begin{proof}[Proof of Lemma~\ref{lem-blocking-box}]
	Let $f,r,x,y$ be as in the lemma. 
	We take a Gaussian map $h$ given by $h(t)=\sqrt{\tfrac{a}{\pi}}\e^{-at^2}$ for some $a>0$, and $S=(f*_\varphi h)^{-1}(r)$. Note that $f*_\varphi h$ and $f$ coincide on the recurrent set, so not point in $S$ is recurrent.
	For every point~$z$ in $S$, its orbit accumulates on two distinct recurrence chains $R^-,R^+$ respectively in the past and in the future. Then, we have $f(R^-)>f(z)>f(R^+)$. Thus, $f$ is not constant on $\varphi_\RR(z)$. It follows from standard convolution arguments that $f*_\varphi h$ is decreasing along the flow on any orbit that intersects $S$. 
	Therefore, $S$ is a topological hyper-surface, compact, and transverse to the flow. 
	
	Take some $T'>0$ smaller than $T$. Denote by $E$ and $K$ the compact sets $E=(f*_\varphi h)^{-1}([-\infty,r[)$ and $K=\varphi_{[-T',T']}(S)$.
	Then $E\cup K$ is a neighborhood of $E$ inside $N$, so there exists $\epsilon>0$ for which the~$\epsilon$-neighborhood of $E$ lies inside $E\cup K$. The image $\varphi_T(E\cup K)$ lies inside $E$. It follows that every~$\epsilon$-pseudo-orbits that starts in $E$ remains in $E\cup K$. 
	
	Up to taking $a$ larger, we may assume that $f*_\varphi h(x)>r$ holds. And up to taking $T'$ smaller, we may assume that $K$ does not contain~$x$. Then~$x$ lies outside $E\cup K$, so there is no~$\epsilon$-pseudo-orbit from $y$ to~$x$. Therefore, we have $y\not\recto x$.
\end{proof}

We can now prove that under a compactness assumption, a Lyapunov function is strongly-Lyapunov.

\begin{proof}[Proof of Lemma \ref{lem-L-def}]
	Take $\alpha$ in $H^1(M,\ZZ)$, $N=\wh M_\alpha$ and $f\colon\wh M_\alpha\to\RR$ an $\alpha$-equivariant and Lyapunov function. It contains the first case for $\alpha=0$. 
	In the case $\alpha\neq0$, notice that $f$ converges toward $\pm\infty$ in $\pm\infty$. So we can extend $f$ as a continuous map $\wb f\colon\wb M_\alpha\to\wb\RR$. Denote by~$\wb\Rec_\alpha$ the recurrent set on $\wb M_\alpha$.
	We claim that $\wb f$ is Lyapunov.

	Let $x,y$ in $\wh M_\alpha$ be two points and assume that $f(x)<f(y)$ holds.
	Corollary~\ref{cor-rec-disconnect} implies that $\wb f(\wb\Rec_\alpha)$ is totally disconnected, and so its interior is empty.
	Hence, there exists some $r$ in $]f(x),f(y)[$ that lies outside $\wb f(\wb\Rec_\alpha)$. 
	Then Lemma~\ref{lem-blocking-box} implies that, for the flow on $\wb M_\alpha$, we have $x\not\recto y$. It implies $x\not\recto y$ for the flow on $\wh M_\alpha$ too.
	Therefore, by contraposition, $x\recto y$ implies $f(x)\geq f(y)$.

	We now assume that $x\recto y$ and $y\not\recto x$ hold true. We consider several cases on $x,y$ to prove the inequality $f(x)>f(y)$. In the first case,~$x$ is not recurrent and $y$ lies in the positive orbit of $y$. Then $f(x)>f(y)$ follows from the decreasing along the flow hypothesis. In the second case,~$x$ is not recurrent and $y$ is not in the same orbit. Then we have $x\recto\varphi_1(x)\recto y$ and $f(x)>f\circ\varphi_1(x)\geq f(y)$ follows from the two previous arguments. In the third case,~$x$ and $y$ are recurrent. Then~$x$ and $y$ belong to two distinct recurrent chains, which have distinct values, so $f(x)>f(y)$ holds. The last case, that is~$x$ is recurrent and $y$ is not recurrent, is similar to the second case by reversing the flow. It follows that $f(x)>f(y)$ holds true in every case. So $f$ is strongly-Lyapunov.
\end{proof}

Let us discuss an example where the previous result fail: a map that is Lyapunov but not strongly-Lyapunov in a non-compact space. 

\begin{example}\label{ex-L-no-sL}	
	We take the manifold $N=[0,3]\times]0,+\infty[$ equipped with the coordinate system $(x,y)$ and the metric $ydx^2+dy^2$. Let $\varphi$ be the flow generated by the vector field $u(x,y)=(\sin(\pi x),-\pi y\cos(\pi x))$. 
	For $i$ in $\intint{0,3}$, the curve $\delta_i=\{i\}\times\RR$ is an orbit of $\varphi$.
	Every other orbit is asymptotic to two~$\delta_i$. Every pseudo-orbit remains close to one of the $\delta_i$ after some time, so it escapes every compact in finite time. Thus, the recurrence set of $\varphi$ is empty.
	
	We build a Lyapunov function as follows. Take a smooth non-decreasing map $g\colon[0,3]\to[-2,2]$ that satisfies $g([0,1])=-2$ and $g([2,3])=2$. Let $f\colon N\to\RR$ be the map defined by:
	$$f(x,y)=\cos(\pi x)y+g(x).$$
	The derivative of $f$ in the direction of the flow is
	$$df\circ u(x,y)=-\pi y + g'(x)\sin(\pi x)<0$$
	which implies that $f$ is decreasing along the flow, so it is Lyapunov.
	
	We consider the two points $p=(0,1)$ and $q=(3,1)$ in $N$, which satisfy $f(p)=-1<1=f(q)$.
	For any $\epsilon>0$ and $t>\tfrac{\ln(3)-\ln(\epsilon)}{\pi}$, the concatenation of $\varphi_{[0,t]}(p)$ with $\varphi_{[-t,0]}(q)$ yields an~$\epsilon$-pseudo-orbit from $p$ to $q$. It has one jump from $\varphi_t(p)=(1,\e^{-\pi t})$ to $\varphi_{-t}(q)=(3,\e^{-\pi t})$, which are at distance at most $3\e^{-\pi t}<\epsilon$. It follows that $p\recto q$ holds. Together with $f(p)<f(q)$, it implies that $f$ is not strongly-Lyapunov.
\end{example}

Another direct consequence of Lemma~\ref{lem-blocking-box} is the following.

\begin{lemma}\label{lem-po-nincrease-preL}
	Assume that $N$ is compact. Let $f\colon N\to\RR$ be a continuous function, that is constant on each recurrence chain, non-increasing along the flow, and so that $f(\Rec)$ has at most countably many values.
	Then $f$ is pre-Lyapunov.
\end{lemma}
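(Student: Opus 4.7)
The proof plan is to reduce to Lemma~\ref{lem-blocking-box} by a simple contrapositive argument, using the countability of $f(\Rec)$ to slip a scalar $r$ between two putative bad values.

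Concretely, I would argue by contraposition. Suppose $x, y \in N$ are such that $f(x) < f(y)$; the goal is to show $x \not\recto y$. The hypotheses on $f$ (continuous, non-increasing along the flow, constant on recurrence chains) are exactly those needed to invoke Lemma~\ref{lem-blocking-box}. That lemma requires a threshold value $r$ lying outside $f(\Rec)$. Since $f(\Rec)$ is at most countable and the open interval $(f(x), f(y))$ is uncountable, we can pick some $r \in (f(x), f(y))$ with $r \notin f(\Rec)$.

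Now apply Lemma~\ref{lem-blocking-box} with the roles of $x$ and $y$ swapped: we have $f(y) > r \geq f(x)$, so the lemma concludes that $x \not\recto y$. This is precisely the contrapositive of the desired implication: whenever $x \recto y$ holds, we must have $f(x) \geq f(y)$. Hence $f$ is pre-Lyapunov.

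There is essentially no hard step here; the only thing to check is that the hypotheses of Lemma~\ref{lem-blocking-box} (which asked for $f$ continuous, non-increasing along flow lines, constant on recurrence chains, together with $N$ compact) match those assumed here, which they do verbatim. The countability hypothesis on $f(\Rec)$ is used only to guarantee that a separating level $r$ outside $f(\Rec)$ exists between any two prescribed values $f(x) < f(y)$; without it one could imagine $f(\Rec)$ being dense in some interval and the separation step failing. No further convolution or construction is needed beyond what Lemma~\ref{lem-blocking-box} already supplied.
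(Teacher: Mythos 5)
Your proof is correct and is essentially the argument the paper has in mind: the paper simply states that Lemma~\ref{lem-po-nincrease-preL} is a ``direct consequence'' of Lemma~\ref{lem-blocking-box} without writing it out, and your contrapositive argument---using the at-most-countability of $f(\Rec)$ to pick a separating threshold $r\in\,]f(x),f(y)[\,\setminus f(\Rec)$ and then invoking Lemma~\ref{lem-blocking-box}---is exactly that direct consequence. The role-swapping of $x$ and $y$ to match the inequalities in Lemma~\ref{lem-blocking-box} is handled correctly.
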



\subsection{Asymptotic pseudo-directions}\label{sec-ass-ps-dir}

Take an~$\epsilon$-pseudo-orbit $\gamma$ and $\Delta$ its discontinuity set. We call \emph{$\epsilon$-realization} of $\gamma$ any curve obtained by cutting $\gamma$ along $\Delta$, and for any $u$ in $\Delta$, inserting in the place of $u$ a continuous curve from $\gamma(u^-)$ to $\gamma(u)$ that remains in the~$\epsilon$-ball centered at $\gamma(u)$. 
When~$\epsilon$ is small enough, every ball of radius~$\epsilon$ is contractible. So any two curves that have the same ends, and that remain in a common~$\epsilon$-ball, are homotopic.
Then the homology class of any~$\epsilon$ realization of $\gamma$ (relatively to its endpoints) depends only on $\gamma$ itself.
For convenience, we say that~$\epsilon$ is \emph{small} when every~$\epsilon$-ball is contractible.

Given $\epsilon>0$ small and a periodic~$\epsilon$-pseudo-orbit~$\gamma$, we denote by $[\gamma]$ the homology class of any~$\epsilon$-realization of $\gamma$. 
Denote by $\len(\gamma)$ is the length of~$\gamma$.
We define the set
$$D_{\varphi,\epsilon,T}=\clos\left(\left\{\tfrac{1}{\len(\gamma)}[\gamma]\in H_1(M,\RR), \gamma\text{ a periodic $(\epsilon,T)$-pseudo-orbit}\right\}\right)$$ 
to be the closure of the set of elements $\tfrac{1}{\len(\gamma)}[\gamma]$. Note that $\tfrac{1}{\len(\gamma)}[\gamma]$ remains in a bounded region of $H_1(M,\RR)$, so $D_{\varphi,\epsilon,T}$ is compact.

\begin{definition}
	We define $D_\varphi=\bigcap_{\epsilon>0}D_{\varphi,\epsilon,T}$, which we call the \emph{set of asymptotic pseudo-directions} of $\varphi$.
\end{definition}

Notice that $D_\varphi$ does not depend on $T$, nor on the choice of metric on~$M$. The definition differ from the one given by Fried \cite{Fried82}. We prove in Appendix~\ref{app-asymptotic-cone} that Fried's set of asymptotic directions and $D_\varphi$ span the same convex set. Our main results depend on the convex hull of $D_\varphi$, so we may switch $D_\varphi$ and Fried's set.

\section{Reducing pseudo-orbits}\label{sec-reduction-pso}

In order to characterize quasi-Lyapunov classes, especially in the view of the real coefficient case, we need to a technical tool. 
We describe how to reduction a pseudo-orbit, of any length, into finitely many pseudo-orbits of bounded length. Recall that when~$\epsilon$ is small enough, any two~$\epsilon$-realizations of a given~$\epsilon$-pseudo-orbit $\gamma$ are homologous, and $[\gamma]$ denote their common homology class. For shortness, we say that~$\epsilon$ is small when it satisfies this condition.

\begin{lemma}\label{lem-cut-pso-2}
	For any small $\epsilon>0$, there exist $\epsilon'>0$ and $L>0$ which satisfy the following. For any periodic $\epsilon'$-pseudo-orbit $\gamma$, there exist finitely many periodic~$\epsilon$-pseudo-orbits $\delta_1\cdots\delta_r$ which satisfy:
	\begin{itemize}
		\item $\gamma$ is homologous to $\sum_i\delta_i$ inside $H_1(M,\ZZ)$,
		\item we have $\len(\delta_i)\leq L$ for all $i$, and $\len(\gamma)=\sum_i\len(\delta_i)$.
	\end{itemize}
\end{lemma}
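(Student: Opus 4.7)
I proceed by a recursive shortcut procedure: whenever $\gamma$ is longer than a chosen threshold $L$, I cut it at two close-by visits to a common small region of $M$, producing one short piece $\delta_1$ and a strictly shorter remainder $\delta_2$, on which I iterate.

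Given small $\epsilon > 0$, by compactness cover $M$ by $N$ balls of radius $\epsilon/4$, and set $\epsilon' := \epsilon/4$ and $L := (N+1)(N+2)T$. Let $\gamma$ be a periodic $\epsilon'$-pseudo-orbit of length $\ell > L$ with jumps at $j_1 < \cdots < j_k$, and distinguish two cases. If some arc $[j_i, j_{i+1})$ has length $\geq (N+2)T$, I place $N+1$ sample times in that arc, spaced by $T$ and each at time-distance $\geq T$ from $j_i$ and $j_{i+1}$; pigeonholing these points in the $N$ balls yields two sample times $u < u'$ with $\gamma(u), \gamma(u')$ in a common ball and $u' - u \leq NT$. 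Otherwise every arc has length $< (N+2)T$, so $k > N+1$; pigeonhole on the post-jump values $\gamma(j_1), \ldots, \gamma(j_{N+1})$ yields $a < b$ with $b - a \leq N$, the values $\gamma(j_a), \gamma(j_b)$ in a common ball, and $j_b - j_a < N(N+2)T \leq L$; set $u := j_a$ and $u' := j_b$.

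In either case, let $\delta_1$ be the periodic pseudo-orbit of length $u' - u$ that follows $\gamma|_{[u, u']}$ and closes with a jump $\gamma(u'^-) \to \gamma(u)$, and let $\delta_2$ be the periodic pseudo-orbit of length $\ell - (u' - u)$ that follows $\gamma$ from $u'$ around to $u$ and closes with a jump $\gamma(u^-) \to \gamma(u')$. The triangle inequality gives new jumps of magnitude at most $2 \cdot (\epsilon/4) + 2\epsilon' = \epsilon$, and the $T$-spacing of jumps is preserved (in the first case because cut times are at time-distance $\geq T$ from every jump of $\gamma$; in the second because the new jumps replace the original ones at $j_a, j_b$, inheriting the adjacent spacings). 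Since $\gamma(u), \gamma(u^-), \gamma(u'), \gamma(u'^-)$ all lie in a common $\epsilon$-ball---hence in a contractible region---the $1$-chain $[\delta_1] + [\delta_2] - [\gamma]$ is supported in this ball with zero boundary, so it vanishes in $H_1(M, \ZZ)$. Moreover $\len(\delta_1) + \len(\delta_2) = \ell$ and $T \leq \len(\delta_1) \leq L$.

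I iterate on $\delta_2$ whenever its length still exceeds $L$; since each step strips off a piece of length at least $T$, the process terminates in at most $\ell/T$ steps. The main technical care lies in the iteration: each cut introduces one new jump of magnitude up to $\epsilon$ rather than $\epsilon'$, so subsequent pigeonhole arguments must avoid choosing such inflated jumps as cut points. Since only one inflated jump is added per step while all others remain $\epsilon'$-small, the triangle-inequality bound $< \epsilon$ persists throughout, and the procedure terminates with the desired decomposition.
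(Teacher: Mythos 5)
Your iterative cut-and-close strategy is genuinely different from the paper's proof, which performs a \emph{one-shot} decomposition: the paper samples $\gamma$ at times spaced in $[T,2T]$ (including all jump times), maps each sample to one of finitely many $\epsilon'/2$-balls, records the resulting cyclic sequence of balls as an Eulerian cycle in a finite directed graph, and decomposes that Eulerian cycle into simple cycles of length at most the number of balls $p$. Each simple cycle then yields a periodic $\epsilon$-pseudo-orbit of length $\leq 2pT$, with every new jump bounded once and for all by $\epsilon'+\mathrm{diam}(B)\leq 2\epsilon'$. Because the decomposition happens all at once, the new jumps never compound.

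This is precisely where your proof has a gap. Each iteration on the remainder $\delta_2$ adds one new closing jump whose magnitude may be close to $\epsilon$ rather than $\epsilon'$, while the previously created inflated jump is \emph{not} in general removed: nothing in your construction forces the next cut interval $[u,u']$ to contain the old wrap-around jump, so it can persist into the new $\delta_2$. Thus the number of inflated jumps in the remainder can grow with the iteration count, and your sentence ``only one inflated jump is added per step while all others remain $\epsilon'$-small'' is false. Concretely: in Case~2 you have $k>N+1$ jumps and want to pigeonhole on $N+1$ non-inflated post-jump values, but after $j$ iterations up to $j$ of the $k$ jumps may be inflated, and you cannot guarantee $k-j\geq N+1$ while $\len(\delta_2)>L$ --- indeed for $\len(\delta_2)$ only slightly above $L$ one has $k$ only slightly above $N+1$, whereas $j$ can be on the order of $\len(\gamma)/T$, arbitrarily large. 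Once the remaining non-inflated jumps are too few, the pigeonhole fails, or one is forced to cut at an inflated jump, and then the triangle inequality gives a new jump of size up to $3\epsilon/4+\epsilon/2>\epsilon$. To rescue this one would have to argue that each cut range $[u,u']$ can be chosen to absorb every currently inflated jump into $\delta_1$, or to bound the number of iterations uniformly; neither is addressed. The paper's Eulerian-cycle decomposition sidesteps the issue entirely, which is why it is the cleaner route here.
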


The lemma admits a non-periodic counter-part.

\begin{lemma}\label{lem-cut-pso-1}
	For any small $\epsilon>0$, there exist $\epsilon'>0$ and $L>0$ which satisfy the following. For any $\epsilon'$-pseudo-orbit $\gamma$, there exist finitely many~$\epsilon$-pseudo-orbits $\delta_0,\delta_1\cdots\delta_r$ which satisfy:
	\begin{itemize}
		\item for all $i>0$, $\delta_i$ is periodic,
		\item $\delta_0$ starts and ends at the same points as $\gamma$,
		\item $\gamma$ is homologous to $\sum_i\delta_i$ relatively to $\partial\gamma$,
		\item we have $\len(\delta_i)\leq L$ for all $i$, and $\len(\gamma)=\sum_i\len(\delta_i)$.
	\end{itemize}
\end{lemma}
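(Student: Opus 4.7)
\begin{sketchproof}
The plan is to mimic Lemma~\ref{lem-cut-pso-2} while keeping track of the two open endpoints of $\gamma$. I will iteratively excise periodic sub-loops from $\gamma$ until the remaining arc, still joining the original start and end, has bounded length. The excised loops will play the role of $\delta_1, \ldots, \delta_r$, and the leftover arc will be $\delta_0$.

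First, I will fix parameters. Given small $\epsilon > 0$, cover $M$ by $N$ open balls of radius $\epsilon/3$. Pick $\epsilon' \in (0, \epsilon/3)$ small enough that any two $\epsilon'$-realizations of a common $\epsilon'$-pseudo-orbit are homologous, and set $L \defi (N+2)T$.

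Next, I will describe one excision step. Let $\gamma \colon [0, \ell) \to M$ be an $\epsilon'$-pseudo-orbit with $\ell > L$. Among the $N+1$ times $T, 2T, \ldots, (N+1)T$, pigeonhole gives $s < t$ whose images $\gamma(s)$ and $\gamma(t)$ lie in a common ball of the cover, so $d(\gamma(t^-), \gamma(s)) < \epsilon$. The restriction of $\gamma$ to $[s, t)$, closed by a jump from $\gamma(t^-)$ back to $\gamma(s)$, will be the new periodic $\epsilon$-pseudo-orbit $\delta$, of length at most $(N+1)T$. The remainder, namely $\gamma|_{[0,s)}$ followed by the shifted $\gamma|_{[t, \ell)}$, with a new jump of size $< \epsilon$ inserted at time $s$, forms an $\epsilon$-pseudo-orbit $\gamma'$ from $\gamma(0)$ to $\gamma(\ell^-)$ of length $\ell - (t-s)$. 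Additivity of homology along $\epsilon$-realizations then gives $[\gamma] = [\gamma'] + [\delta]$ in $H_1(M, \ZZ)$ relative to $\partial\gamma$, and $\len(\gamma) = \len(\gamma') + \len(\delta)$.

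Then I will iterate on $\gamma'$. Each excision strictly shrinks the length by at least $T$, so the procedure terminates in at most $\ell/T$ steps with a remainder $\delta_0$ of length $\leq L$; telescoping the intermediate homology and length identities yields the lemma.

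The main obstacle will be aligning the cuts with the $(\epsilon, T)$-pseudo-orbit conventions: the two new jumps (at $s$ in $\gamma'$ and at time $t-s$ in $\delta$) must lie at distance $\geq T$ from all neighbouring jumps of $\gamma$, and $\gamma'$ must remain jump-free on its initial segment $[0, T)$. To handle this, I will allow each sample time $iT$ to float inside a short window and select it outside the $T$-neighbourhood of the (finitely many) jumps of $\gamma$ meeting that window; after a slight refinement of the cover of $M$ and an additive $O(T)$ enlargement of $L$, the pigeonhole and excision arguments go through verbatim.
\end{sketchproof}
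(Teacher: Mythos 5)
Your iterative excision strategy is a legitimate alternative to the paper's Eulerian-graph decomposition: both ultimately rely on a pigeonhole among a bounded number of samples, and excising one loop at a time is essentially the cycle-by-cycle unwinding of the Eulerian argument. However, the fix you propose for the crucial $(\epsilon,T)$-pseudo-orbit spacing condition does not work, and this is the heart of the matter.

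You correctly identify that the new jump (introduced at the cut) must lie at distance $\geq T$ from the neighbouring jumps of $\gamma$, and that $\gamma'$ must be jump-free on $[0,T)$. But the remedy you suggest --- letting the sample times $iT$ float in a short window and choosing them \emph{outside the $T$-neighbourhood of the nearby jumps of $\gamma$} --- is in general impossible. If the jumps of $\gamma$ occur at times exactly $T$ apart (which the $(\epsilon',T)$-pseudo-orbit definition permits), then \emph{every} time in $[T,\infty)$ lies within $T$ of some jump; shrinking the window and refining the cover of $M$ does not change this, because the obstruction is metric on the time axis, not on $M$. So the floating-window fix cannot make the excision produce valid pseudo-orbits.

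The correct remedy is the opposite of what you attempted: rather than placing cut times \emph{away} from the jumps, place them \emph{on} (a refinement of) the jump set. As in the paper's proof of Lemma~\ref{lem-cut-pso-2}, one chooses an increasing sequence $t_1<\cdots<t_n$ in $[0,\len(\gamma))$ that contains every discontinuity time of $\gamma$, with $t_{i+1}-t_i\in[T,2T]$, and one only ever cuts at some $t_a,t_b$. Then each piece $\gamma|_{[t_i,t_{i+1})}$ is a genuine orbit arc of length $\geq T$, and any cut at a $t_i$ simply replaces the old jump (or inserts a new one) while keeping all jump-to-jump gaps $\geq T$; the pigeonhole on $\sigma(a)=\sigma(b)$ and the homology bookkeeping (the four connecting arcs $A+B-C-D$ bound inside a small ball) go through as you describe. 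With this change your single-excision-per-step scheme terminates after $\leq\len(\gamma)/T$ steps and yields the lemma, with $L$ of order $NT$ (a multiplicative, not merely additive, inflation of your original $L$).
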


\begin{proof}[Proof of Lemma~\ref{lem-cut-pso-2}]
	Take $\epsilon>\epsilon'>0$. Up to taking them smaller, we may assume that every ball of radius $\epsilon+\epsilon'$ is contractible.
	Cover~$M$ with finitely many open balls of radius $\tfrac{\epsilon'}{2}$, and denote them by $B_1\cdots B_p$. We set $L=2pT$.

	Let $\gamma$ be an~$\epsilon$-pseudo-orbit, $l=\len(\gamma)$ its length, which we assume larger than $L$. Since the discontinuity times of $\gamma$ are distant by at least $T$, we may choose a cyclically increasing sequence $t_1\cdots t_n$ in $\bfrac{\RR}{l\ZZ}$ that satisfies the following:
	\begin{itemize}
		\item every discontinuity time $\gamma$ is one of the $t_i$,
		\item $t_{i+1}$ belongs to $[t_i+T,t_i+2T]$.
	\end{itemize}
	For all index $i$, choose some $\sigma(i)$ for which $t_i$ belongs to $B_{\sigma(i)}$. Denote by $I_k$ the interval $[t_k,t_{k+1}[$, seen as a subset of $\bfrac{\RR}{l\ZZ}$. 
	We build an oriented graph $G$ whose set of vertices is $\intint{1,p}$, with an oriented edge $e_k$ from $\sigma(k)$ to $\sigma(k+1)$ for all $k$ in $\intint{1,n}$. The sequence $e_1\cdots e_n$ induces an Eulerian oriented cycle inside $G$, so $G$ is Eulerian. It follows that there exists finitely many cycles $c_1\cdots c_r$ inside $G$, which satisfies the following:
	\begin{itemize}
		\item each cycle $c_k$ goes through any vertex at most once, thus $\len(c_k)\leq p$ holds,
		\item each edge of $G$ is travel exactly once by the union of the $c_k$.
	\end{itemize} 
	Given a cycle $c_k$, we define $\gamma_k$ to be the periodic $\epsilon'$-pseudo-orbit obtained as follows. Let $e_{i_1}\cdots e_{i_r}$ be the sequence of edges in $c_k$, and define $\gamma_i$ to be the concatenation, in cyclic order, of the orbit arcs $\gamma_{|I_{i_1}}\cdots\gamma_{|I_{i_r}}$. It is indeed a~$\epsilon$-pseudo-orbit since every $\gamma_{|I_k}$ is an orbit arc of length at least $T$, and every jump are by a distance no more than the diameter of the $B_k$.

	The length of $\gamma_i$ is at most $2T\len(c_k)\leq 2pT$, and the sum of their lengths is equal to $l=\len(\gamma)$. 
	It remains to prove that $\gamma$ is homologous to $\sum_i\gamma_i$. 
	
	Recall that the homology class of $\gamma$ is equal to the homology class of any~$\epsilon$-realization of $\gamma$. And an~$\epsilon$-realization is obtained by inserting~$\epsilon$-small arcs at the discontinuity time of $\gamma$, to make it continuous. Call \emph{connecting arcs} the added curves for an arbitrary choice of~$\epsilon$-realization, and similarly for $\gamma_i$.
	The difference $[\gamma]-\sum_i[\gamma_i]$ is homologous to the sum of connecting arcs in $\gamma$ minus the connecting arcs in the $\gamma_i$. 

	Take a vertex $v$ in $\intint{1,p}$ of $G$ and $k$ with $\sigma(k)=v$. Denote by $\delta_k$ the connecting arc in $\gamma$ that lies just after $\gamma(I_{k-1})$ and just before $\gamma(I_k)$. There is a unique cycle $c_l$ that contains $e_{k-1}$, and there is an edge $e_{\tau(k)}$ that comes just after $e_{k-1}$ in $c_l$. The corresponding pseudo-orbit $\gamma_l$ admits a connecting arc $\delta'_k$ that lies just after $\gamma(I_{k-1})$ and just before $\gamma(I_{\tau(k)})$. Note that $k\mapsto\tau(k)$ yields a permutation of $\sigma^{-1}(\{v\})$.	
	We claim that the sum $\sum_k(\delta_k-\delta'_k)$, for $k$ with $\sigma(k)=v$, is homologous to zero. Indeed, the boundary of the sum is zero, and every arc $\delta_k,\delta'_k$ remains in a common ball or radius $\epsilon+\epsilon'$, which is contractible. Thus, the sum is null homologous in that ball and in~$M$ too. 	
	
	Every connecting arcs in $[\gamma]-\sum_i[\gamma_i]$ is represented exactly once in the above discussion, for one vertex $v$ of $G$. So adding the sums $\sum(\delta_k-\delta'_k)$ over all values of $v$ yields that $[\gamma]-\sum_i[\gamma_i]$ is null homologous.
\end{proof}

\begin{proof}[Proof of Lemma~\ref{lem-cut-pso-1}]
	The proof of the non-periodic case is very similar to the periodic one. One needs to replace $\bfrac{\RR}{\len(\gamma)\ZZ}$ with $[0,\len(\gamma)[$, adapt the Eulerian graph part and the discussion about the connecting arcs. We let the details to the reader.
\end{proof}


The discussion above has an interesting finitness consequence.

\begin{lemma}\label{lem-cut-pso-finite}
	For all small $\epsilon>0$ and all $c>1$, there exist some $\epsilon'>0$ and a finite set of periodic~$\epsilon$-pseudo-orbits $\delta_1\cdots\delta_n$ which satisfy the following. For any periodic $\epsilon'$-pseudo-orbits $\gamma$, there exists $a_1\cdots a_n$ in $\NN$ so that:
	\begin{itemize}
		\item $\gamma$ is homologous to the sum $\sum_ia_i\delta_i$,
		\item $c^{-1}\sum_ia_i\len(\delta_i)<\len(\gamma)<c\sum_ia_i\len(\delta_i)$.
	\end{itemize}
\end{lemma}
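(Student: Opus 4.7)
The plan is to combine Lemma~\ref{lem-cut-pso-2}, which chops any periodic $\epsilon'$-pseudo-orbit into periodic pieces of length at most $L$, with a compactness argument that reduces the pool of such short pieces to a finite list of representatives. The finite set $\delta_1,\ldots,\delta_n$ will then be chosen as one representative per ball of an appropriate finite cover.

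I would first pick an auxiliary threshold $\epsilon_0\in(0,\epsilon)$ and apply Lemma~\ref{lem-cut-pso-2} to $\epsilon_0$, obtaining some $\epsilon_1>0$ and $L>0$ such that every periodic $\epsilon_1$-pseudo-orbit $\gamma$ decomposes homologically as a sum of periodic $\epsilon_0$-pseudo-orbits $\delta'_1,\ldots,\delta'_r$, each of length at most $L$, with $\sum_i\len(\delta'_i)=\len(\gamma)$. Next, for each integer $k\le L/T$, consider the space $\mathcal{P}_k$ of periodic $\epsilon_0$-pseudo-orbits with exactly $k$ jump points and total length at most $L$, parameterised by tuples $(x_1,t_1,\ldots,x_k,t_k)\in (M\times[T,L])^k$ satisfying $\sum t_i\le L$ and $d(\varphi_{t_i}(x_i),x_{i+1})\le\epsilon_0$ cyclically. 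Each $\mathcal{P}_k$ is a closed subset of a compact product space, hence compact. For a parameter $\mu>0$ to be fixed below, cover each $\mathcal{P}_k$ by finitely many $\mu$-balls and pick a centre per ball; taking the union over $k$ gives the finite list $\delta_1,\ldots,\delta_n$ of periodic pseudo-orbits. Set $\epsilon':=\epsilon_1$.

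The threshold $\mu$ must be chosen small enough to ensure three things: \emph{(a)} $\mu+\epsilon_0<\epsilon$, so every pseudo-orbit within $\mu$ of some centre $\delta_j$ is an $\epsilon$-pseudo-orbit; \emph{(b)} by uniform continuity of $\varphi$ on $M\times[0,L]$, two $\mu$-close tuples yield pseudo-orbits admitting $\epsilon$-realisations that are pointwise close as loops in $M$, hence homotopic and homologous; \emph{(c)} $\mu/T<\min(1-c^{-1},c-1)$, which will deliver the length ratio. Then, given any periodic $\epsilon'$-pseudo-orbit $\gamma$, I would cut $\gamma$ via Lemma~\ref{lem-cut-pso-2} into pieces $\delta'_1,\ldots,\delta'_r$, replace each $\delta'_i$ by the centre $\delta_{j_i}$ of whichever $\mu$-ball contains it, and let $a_j\in\NN$ be the number of indices $i$ with $j_i=j$. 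Property~(b) gives $[\gamma]=\sum_i[\delta'_i]=\sum_j a_j[\delta_j]$ in $H_1(M,\ZZ)$. Since $|\len(\delta_{j_i})-\len(\delta'_i)|\le\mu$ and each piece has length at least $T$ (so the number of pieces is at most $\len(\gamma)/T$),
$$\Big|\sum_j a_j\len(\delta_j)-\len(\gamma)\Big|\le \mu\cdot\frac{\len(\gamma)}{T},$$
and property~(c) yields $c^{-1}\sum_j a_j\len(\delta_j)<\len(\gamma)<c\sum_j a_j\len(\delta_j)$.

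The main obstacle is property~\emph{(b)}: that sufficiently close periodic pseudo-orbits are homologous. I expect this to follow by building the $\epsilon$-realisations coherently, namely, pairing up the orbit arcs (which depend continuously on starting point and duration) and choosing the connecting arcs of the two pseudo-orbits inside a common small contractible ball at each jump. The two resulting loops then cobound an annulus in $M$ built from these small homotopies, so they represent the same element of $H_1(M,\ZZ)$, as needed. The bookkeeping on the number of jumps $k$ (bounded by $L/T$) keeps the construction finite and manageable.
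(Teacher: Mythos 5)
Your argument coincides with the paper's proof: both cut $\gamma$ via Lemma~\ref{lem-cut-pso-2} into periodic pieces of length at most $L$, use compactness of the space of such pieces to extract finitely many representatives $\delta_i$, match each piece with a representative of nearly the same homology class and length, and sum. You spell out the compactness slightly more explicitly by stratifying on the number of jumps and derive the multiplicative length bound from an additive bound $\lvert\len(\delta_{j_i})-\len(\delta'_i)\rvert\le\mu$ via the floor $T$ on piece lengths, whereas the paper asserts the multiplicative bound $c^{-1}\len(\delta_{\sigma(i)})<\len(\gamma_i)<c\len(\delta_{\sigma(i)})$ piece-by-piece directly from compactness; the substance is identical.
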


It has the immediate corollary:

\begin{corollary}\label{cor-pso-decomposition}
	For any small $\epsilon>0$, there exist an $\epsilon'$ in $]0,\epsilon[$ and a subset~$\Delta$ of $H_1(M,\RR)$ that satisfy:
	$$D_\varphi\subset D_{\varphi,\epsilon',T}\subset[1-\epsilon,1+\epsilon]\cdot\conv(\Delta)\subset[1-\epsilon,1+\epsilon]\cdot\conv(D_{\varphi,\epsilon,T}),$$
	and so that $\Delta$ is made of the homology classes of finitely many periodic~$\epsilon$-pseudo-orbits.
\end{corollary}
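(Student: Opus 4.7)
The plan is to invoke Lemma~\ref{lem-cut-pso-finite} directly, with the control constant chosen as $c := 1+\epsilon$. This produces some $\epsilon' \in \,]0,\epsilon[$ and finitely many periodic $\epsilon$-pseudo-orbits $\delta_1,\ldots,\delta_n$. I then set
$$\Delta := \left\{\tfrac{1}{\len(\delta_i)}[\delta_i] : i = 1,\ldots,n\right\} \subset H_1(M,\RR),$$
which already matches the description required in the corollary. The inclusion $D_\varphi \subset D_{\varphi,\epsilon',T}$ is immediate from the definition $D_\varphi = \bigcap_{\epsilon > 0} D_{\varphi,\epsilon,T}$, and the inclusion $[1-\epsilon,1+\epsilon]\cdot\conv(\Delta) \subset [1-\epsilon,1+\epsilon]\cdot\conv(D_{\varphi,\epsilon,T})$ is equally immediate, since each $\delta_i$ is a periodic $\epsilon$-pseudo-orbit, so $\Delta$ lies inside the (unclosed) set that defines $D_{\varphi,\epsilon,T}$.

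The real content is the middle inclusion $D_{\varphi,\epsilon',T} \subset [1-\epsilon,1+\epsilon]\cdot\conv(\Delta)$. For any periodic $\epsilon'$-pseudo-orbit $\gamma$, the lemma supplies integers $a_1,\ldots,a_n$ so that $[\gamma]$ is homologous to $\sum_i a_i[\delta_i]$. Setting $s := \sum_i a_i\len(\delta_i)$ and $\lambda_i := a_i\len(\delta_i)/s$, I would rewrite
$$\frac{[\gamma]}{\len(\gamma)} = \frac{s}{\len(\gamma)} \sum_{i=1}^n \lambda_i \cdot \frac{[\delta_i]}{\len(\delta_i)},$$
which exhibits the normalized class of $\gamma$ as a positive scalar multiple of an element of $\conv(\Delta)$. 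The length estimate $c^{-1}s < \len(\gamma) < cs$ from the lemma yields $s/\len(\gamma) \in [c^{-1},c]$. With the choice $c = 1+\epsilon$, the elementary inequality $(1-\epsilon)(1+\epsilon) = 1-\epsilon^2 \leq 1$ gives $c^{-1} \geq 1-\epsilon$, so $[c^{-1},c] \subset [1-\epsilon,1+\epsilon]$. This puts $[\gamma]/\len(\gamma)$ inside $[1-\epsilon,1+\epsilon]\cdot\conv(\Delta)$.

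Finally, since $\Delta$ is finite, $\conv(\Delta)$ is compact and hence $[1-\epsilon,1+\epsilon]\cdot\conv(\Delta)$ is closed; the inclusion therefore passes to the closure $D_{\varphi,\epsilon',T}$ that defines this set. There is no genuine obstacle: all the combinatorial work has already been absorbed into Lemma~\ref{lem-cut-pso-finite}, and the corollary is essentially a repackaging of that lemma as an inclusion between the sets $D_{\varphi,\epsilon,T}$. The only delicate point is pinning down a constant $c$ small enough that $[c^{-1},c]$ fits inside $[1-\epsilon,1+\epsilon]$, which the choice $c = 1+\epsilon$ handles cleanly.
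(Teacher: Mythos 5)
Your proof is correct and matches the paper's intent: the paper regards the corollary as ``immediate'' from Lemma~\ref{lem-cut-pso-finite}, and you simply make that derivation explicit by working out the arithmetic with $c=1+\epsilon$ (including the closure step and the elementary bound $c^{-1}\geq 1-\epsilon$). One detail worth flagging is that you correctly take $\Delta$ to consist of the \emph{normalized} classes $[\delta_i]/\len(\delta_i)$ rather than the raw classes $[\delta_i]$; this is what the inclusion $\conv(\Delta)\subset\conv(D_{\varphi,\epsilon,T})$ actually requires, and it silently repairs a slight imprecision in the corollary's own wording, which speaks of ``the homology classes'' of the pseudo-orbits.
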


\begin{proof}
	Apply Lemma~\ref{lem-cut-pso-2} to $\tfrac{\epsilon}{2}$, and denote by $\epsilon'>0$ and $L>0$ the constants in the lemma. Denote by $E$ the set of $\tfrac{\epsilon}{2}$-pseudo-orbits of length at most $L$, and $\wb E$ its closure, inside the set of~$\epsilon$-pseudo-orbits of length at most $L$. Then $\wb E$ is compact. So extract finitely many~$\epsilon$-pseudo-orbits $\delta_1\cdots\delta_n$ in $\wb E$ that satisfies that any $\tfrac{\epsilon}{2}$-pseudo-orbit $\gamma$ of length at most $L$ is homologous to some $\delta_i$ with $c^{-1}\len(\delta_i)<\len(\gamma)<c\len(\delta_i)$.

	Given an $\epsilon'$-pseudo-orbits $\gamma$, apply Lemma~\ref{lem-cut-pso-2} to $\gamma$. It yields finitely many periodic $\tfrac{\epsilon}{2}$-pseudo-orbits $\gamma_i$ of length at most $L$, and so that $\gamma$ is homologous to $\sum_i\gamma_i$. Each $\gamma_i$ is homologous to some $\delta_{\sigma(i)}$ with 
	$$c^{-1}\len(\delta_{\sigma(i)})<\len(\gamma_i)<c\len(\delta_{\sigma(i)}).$$ 
	Then $\gamma$ is homologous to the sum of the $\delta_{\sigma(i)}$ and 
	$$c^{-1}\sum_i\len(\delta_{\sigma(i)})<\len(\gamma)<c\sum_i\len(\delta_{\sigma(i)})$$
	holds true.
\end{proof}

\section{Quasi-Lyapunov classes}\label{sec-spectral-decomposition}

Let $\alpha$ be in $H^1(M,\RR)$ and $f\colon\wh M_\alpha\to\RR$ be an $\alpha$-equivariant and continuous map. Given $C\geq 0$, we say that $f$ is \emph{$C$-quasi-Lyapunov} if there exists $\epsilon>0$ which satisfies the following. For any~$\epsilon$-pseudo-orbit $\gamma\colon[0,\len(\gamma)[\to\wh M_\alpha$ and any $t$ in $[0,\len(\gamma)[$, we have $f\circ\gamma(0)\geq f\circ\gamma(t)-C$. We say that $f$ is \emph{quasi-Lyapunov} if it is $C$-quasi-Lyapunov for some~$C$. 

Given two $\alpha$-equivariant continuous map $f,g\colon\wh M\to\RR$, $f-g$ is invariant under the action by $H_1(M,\ZZ)$, so it is bounded. Thus $f$ is quasi-Lyapunov if and only $g$ is quasi-Lyapunov.
A cohomology class $\alpha$ in $H^1(M,\RR)$ is said \emph{quasi-Lyapunov} if any $\alpha$-equivariant and continuous map $f\colon\wh M\to\RR$ is quasi-Lyapunov. Note that $\alpha=0$ is always quasi-Lyapunov. For shortness reasons, we gave a different definition in the introduction. Lemma~\ref{lem-quasi-L-char}, stated further down, states that the two definitions are equivalent.

Consider the following property on $f$: we have $f(x)\geq f(y)-C$ whenever $x\recto y$ is satisfied (without~$\epsilon$). It is implied by being $C$-quasi-Lyapunov, and even equivalent when $\alpha$ has integer coefficients. But in the general case, the second property is too weak for our applications.

In comparaision with Fried's work \cite{Fried82} on global section, we have the following result. The sign difference is due to a convention difference.

\begin{lemma}\label{lem-quasiL-hom-inequality}
	If a class $\alpha$ in $H^1(M,\RR)$ is quasi-Lyapunov, then $\alpha(D_\varphi)\leq 0$ holds.
\end{lemma}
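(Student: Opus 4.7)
The plan is to take a quasi-Lyapunov function $f\colon \wh M_\alpha \to \RR$, apply its defining inequality to lifts of periodic pseudo-orbits iterated many times, and extract the bound $\alpha(v) \leq 0$ in the limit. So fix $\alpha$ quasi-Lyapunov and a continuous $\alpha$-equivariant $f$ witnessing the property with constants $C \geq 0$ and $\epsilon_0 > 0$. Since $D_\varphi \subset D_{\varphi,\epsilon,T}$ for every $\epsilon > 0$ and $\alpha\colon H_1(M,\RR) \to \RR$ is continuous, it suffices to prove the inequality $\alpha(\tfrac{1}{\len(\gamma)}[\gamma]) \leq 0$ for every periodic $\epsilon$-pseudo-orbit $\gamma$ in $M$, whenever $\epsilon \leq \epsilon_0$ is small.

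First I fix such a $\gamma\colon \bfrac{\RR}{\ell\ZZ} \to M$ of length $\ell = \len(\gamma)$, and for every integer $N \geq 1$ I consider the pseudo-orbit $\gamma^N$ obtained by concatenating $\gamma$ with itself $N$ times, viewed as an $\epsilon$-pseudo-orbit on $[0, N\ell[$. I then lift $\gamma^N$ to an $\epsilon$-pseudo-orbit $\tilde\gamma^N$ in $\wh M_\alpha$ starting at some $\tilde x_0 \in \pi_\alpha^{-1}(\gamma(0))$: on each maximal continuous piece the lift is unique, and at each jump I pick the unique nearby lift (which exists because $\epsilon$ is small enough so that the projection is an isometry on balls of that size). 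Comparing with the lift of an $\epsilon$-realization of $\gamma^N$, which is a closed curve in $M$ of homology class $N[\gamma]$ whose lift ends at $(N\delta)\cdot \tilde x_0$ for $\delta \in H_1(M,\ZZ)/\ker_\ZZ(\alpha)$ the image of $[\gamma]$, the endpoint $\lim_{t\to (N\ell)^-} \tilde\gamma^N(t)$ lies within a distance bounded by $\epsilon$ of $(N\delta)\cdot \tilde x_0$, \emph{independently of $N$}.

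Now I apply the quasi-Lyapunov inequality to $\tilde\gamma^N$: for every $t \in [0, N\ell[$,
\[
	f(\tilde x_0) \;\geq\; f(\tilde\gamma^N(t)) - C.
\]
Letting $t \to (N\ell)^-$ and using the continuity of $f$, the $H_1(M,\ZZ)$-equivariance $f(\delta\cdot x) = f(x) + \alpha(\delta)$, and the uniform bound on the distance between $\tilde\gamma^N((N\ell)^-)$ and $(N\delta)\cdot \tilde x_0$, I get
\[
	f(\tilde x_0) \;\geq\; f(\tilde x_0) + N\,\alpha([\gamma]) - C - \eta,
\]
where $\eta$ is an error bounded uniformly in $N$ (it comes from the modulus of continuity of $f$ on a compact fundamental domain, applied to a bounded distance $\epsilon$). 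Rearranging, $N\,\alpha([\gamma]) \leq C + \eta$, and dividing by $N\ell$ and letting $N \to \infty$ yields $\alpha(\tfrac{1}{\len(\gamma)}[\gamma]) \leq 0$.

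Finally, since this holds for every periodic $\epsilon$-pseudo-orbit (with $\epsilon \leq \epsilon_0$), the linear functional $\alpha$ is non-positive on the generating set of $D_{\varphi,\epsilon_0,T}$, hence on its closure, and a fortiori on $D_\varphi$. The only delicate point is the uniform control on how far $\tilde\gamma^N((N\ell)^-)$ sits from $(N\delta)\cdot \tilde x_0$: this has to come from a single jump (the one closing the $N$-th copy), not accumulated over the $N$ iterations, which is why one iterates the lift and uses the inequality \emph{after} all iterations rather than summing it piece by piece.
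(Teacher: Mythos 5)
Your argument is correct and follows essentially the same route as the paper. The paper dispatches this lemma as a one-line consequence of Lemma~\ref{lem-quasi-L-char}, whose forward direction is proved exactly by your concatenate--lift--iterate--bound argument: lift a periodic $\epsilon$-pseudo-orbit, concatenate $n$ translates, apply the $C$-quasi-Lyapunov bound and $\alpha$-equivariance at the far endpoint, and let $n\to\infty$. The paper lifts to $\wh M$ where you lift to $\wh M_\alpha$, and the paper sidesteps your $\eta$-bookkeeping by implicitly choosing the starting time of the periodic pseudo-orbit in the middle of a continuity interval (so the lifted endpoint is exactly $[\gamma]\cdot x$); your explicit error term handles the general case just as well.
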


This is a direct consequence of Lemma~\ref{lem-quasi-L-char} stated further down. 
For hyperbolic enough flow, the converse to Lemma~\ref{lem-quasiL-hom-inequality} should be true, but it is not true in general. 
Let us give two examples of 1-forms $\alpha$ which satisfy $\alpha(D_\varphi)\leq 0$ but which are not quasi-Lyapunov.

\begin{example}\label{ex-linear-flow}
	Take $n\geq 2$. Let $\varphi$ be a linear flow on $M=\bfrac{\RR^n}{\ZZ^n}$ directed by some non-zero vector $X$ in $\RR^n$. Let $\alpha$ in $H^1(M,\RR)$ be a class that contains $X$ in its kernel. Then we have $D_\varphi=\{X\}$, so $\alpha(D_\varphi)=0$ holds. But $\alpha$ is not quasi-Lyapunov, as one can build a periodic~$\epsilon$-pseudo-orbit that only jumps in the positive direction of $\alpha$. 
\end{example}

\begin{example}\label{ex-not-quasiL-1} 
	Let $\varphi$ be a flow on $\bfrac{\RR}{\ZZ}$ that has one fixed points at $0$ and which is increasing outside $0$. Let $\alpha$ be the cohomology class of $dt$. 

	For any small $\epsilon>0$ and any $n>0$, there is a periodic $\epsilon$-pseudo-orbits $\gamma$ that turns $n$ times around~$\bfrac{\RR}{\ZZ}$, in the positive direction. Thus, $\alpha([\gamma])=n>0$ holds. So $\alpha$ is not quasi-Lyapunov. 
	
	Take a periodic $\epsilon$-pseudo-orbit $\gamma$ that turns $n$ times around $\bfrac{\RR}{\ZZ}$. It jumps $n$ times across 0. So it spends a time at least $n\times t(\epsilon)$ on the interval $]0,\tfrac{1}{2}]$, with $t(\epsilon)\xrightarrow[\epsilon\to 0]{}+\infty$. So $\tfrac{1}{\len(\gamma)}[\gamma]$ goes to zero when $\epsilon$ goes to zero. It follows that $D_\varphi$ is reduced to $\{0\}$, so $\alpha(D_\varphi)=0$ holds.
\end{example}

Not all counterexamples satisfy $\alpha(D_\varphi)=0$; this is an artifact of taking the simplest counterexamples.

\subsection{Characterizations of quasi-Lyapunov classes}\label{sec-quasiL}

We give several characterizations of the quasi-Lyapunov property. Under a regularity assumption and with integer coefficients, they can be gathered into the following:

\begin{theorem}\label{thm-quasiL-char-general}
	Assume~$M$ smooth and $\varphi$ generated by a $\Class^1$ vector field. For any continuous flow $\psi$ close enough to $\varphi$, Given $\alpha$ in $H^1(M,\ZZ)\setminus\{0\}$, the following statement are equivalent:
	\begin{enumerate}[i.]
		\item $\alpha$ is quasi-Lyapunov,
		\item there exists a Lyapunov and $\alpha$-equivariant map $f\colon\wh M_\alpha\to\RR$,
		\item we have $-\infty\not\recto+\infty$ in $\wb M_\alpha$
		\item for any $T>0$, there exists $\epsilon>0$ for which $\alpha(D_{\varphi,\alpha,\epsilon,T})\leq 0$ holds, 
		\item we have $\alpha(D_\psi)\leq 0$.
	\end{enumerate}
\end{theorem}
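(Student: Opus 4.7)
I would run the cycle
(ii) $\Rightarrow$ (i) $\Rightarrow$ (iv) $\Rightarrow$ (v) $\Rightarrow$ (iii) $\Rightarrow$ (ii),
in which (iv)-(v) is the only link requiring robustness of $D_{\cdot,\epsilon,T}$ under perturbation. For (ii) $\Rightarrow$ (i): promote $f$ to strongly-Lyapunov via Lemma~\ref{lem-L-def} and extend to $\wb f\colon\wb M_\alpha\to\wb\RR$ with $\wb f(\pm\infty) = \pm\infty$. The image $\wb f(\wb\Rec_\alpha)$ is totally disconnected (Corollary~\ref{cor-rec-disconnect} combined with the bijectivity of $\wb f$ on chains) and $\ZZ$-invariant by equivariance, so there is a non-recurrent value $r$, and all shifts $r + n$ are non-recurrent. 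Applying Lemma~\ref{lem-blocking-box} at $r$ yields $\epsilon > 0$ blocking every upward crossing of the level $r$, and by equivariance the same $\epsilon$ blocks every level $r + n$. Since these levels are $1$-spaced, any $\epsilon$-pseudo-orbit can increase $f$ strictly less than $1$, so $\alpha$ is $1$-quasi-Lyapunov.

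For (i) $\Rightarrow$ (iv): if a periodic $\epsilon$-pseudo-orbit $\gamma$ in $M$ satisfies $\alpha([\gamma]) > 0$, concatenating $n$ of its lifts in $\wh M_\alpha$ gives an $\epsilon$-pseudo-orbit along which any $\alpha$-equivariant continuous $f$ increases by $n\alpha([\gamma]) \to \infty$, contradicting quasi-Lyapunov. For (iv) $\Rightarrow$ (v): when $\psi$ is sufficiently $C^0$-close to $\varphi$, every $\epsilon$-pseudo-orbit of $\psi$ is an $\epsilon'$-pseudo-orbit of $\varphi$ with $\epsilon'$ slightly larger than $\epsilon$, hence $D_\psi \subseteq D_{\varphi,\epsilon',T}$ and the conclusion transfers. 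For (v) $\Rightarrow$ (iii): arguing by contradiction from $-\infty \recto +\infty$ in $\wb M_\alpha$, for every $\epsilon > 0$ and every $N$ one finds an $\epsilon$-pseudo-orbit in $\wh M_\alpha$ from level $\leq -N$ to level $\geq +N$; projecting to $M$ and closing up via an $\epsilon$-chain produces a periodic $\epsilon$-pseudo-orbit of $\alpha$-value at least $N - O_\epsilon(1)$. Lemma~\ref{lem-cut-pso-finite} then decomposes it over a fixed finite family of bounded-length periodic $\epsilon$-pseudo-orbits, and at least one $\delta_i$ in the decomposition must have $\alpha([\delta_i]) \geq 1$ (integer coefficients), so $\tfrac{1}{\len(\delta_i)}[\delta_i] \in D_{\psi,\epsilon,T}$ has $\alpha$-value at least $1/L$ for some uniform $L$. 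Accumulating as $\epsilon \to 0$ yields $v \in D_\psi$ with $\alpha(v) > 0$, contradicting (v).

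The remaining implication (iii) $\Rightarrow$ (ii) is the announced equivariant spectral decomposition (Theorem~\ref{mainthm-spectral-decomp}) and is the most technical step. The hypothesis provides an attractor-repeller pair in $\wb M_\alpha$ separating $+\infty$ from $-\infty$, and combined with Corollary~\ref{cor-rec-disconnect} any two distinct recurrence chains in $\wb M_\alpha$ can be separated by attractors. For each separating pair I would use Lemma~\ref{lem-preL-atr-rep} (with its smoothness clause under the $C^1$-hypothesis) to produce a pre-Lyapunov bump, and assemble these into a function $f\colon\wh M_\alpha \to \RR$ of telescoping deck-indexed shape $f(x) = \sum_{n\in\ZZ}[g_n(x) - g_n(x_0)]$ designed so that $f(\delta \cdot x) = f(x) + \alpha(\delta)$. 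The main obstacle is to arrange the weights so as to simultaneously ensure (a) summability, and hence continuity of $f$; (b) strict decrease across every ordered pair of distinct chains in the non-compact $\wh M_\alpha$; (c) exact $\alpha$-equivariance, which forces a careful matching of the deck orbits of chains with the parameters in the Conley sum; and (d) smoothness with $df(X) < 0$ off $\wh\Rec_\alpha$ under the $C^1$-hypothesis, via a convolution-along-the-flow smoothing analogous to the proof of Lemma~\ref{lem-preL-atr-rep}.
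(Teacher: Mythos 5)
The cycle you propose is broken at the link $(v)\Rightarrow(iii)$. The crux of your argument there is the claim that the length bound $L$ arising from Lemma~\ref{lem-cut-pso-finite} is uniform in $\epsilon$, so that the extracted pieces $\delta_i$ with $\alpha([\delta_i])\geq 1$ yield $\tfrac{1}{\len(\delta_i)}[\delta_i]$ bounded away from $\ker\alpha$ as $\epsilon\to 0$. This is false: in the proof of Lemma~\ref{lem-cut-pso-2} one has $L=2pT$ where $p$ is the number of balls of radius $\epsilon'/2$ needed to cover $M$, so $L=L(\epsilon)\to\infty$ as $\epsilon\to 0$. Accumulating the classes $\tfrac{1}{\len(\delta_i)}[\delta_i]$ therefore only gives a point of $D_\varphi$ with $\alpha$-value $\geq 0$, not $>0$. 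Example~\ref{ex-not-quasiL-1} shows exactly this phenomenon: $-\infty\recto+\infty$ holds, yet $D_\varphi=\{0\}$ and $\alpha(D_\varphi)=0$. So from $\neg(iii)$ you cannot reach $\alpha(D_\varphi)>0$, let alone $\alpha(D_\psi)>0$.

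This gap is not cosmetic but reflects the real content of the theorem: the condition $\alpha(D_\varphi)\leq 0$ is strictly weaker than quasi-Lyapunov, and the entire point of the robustness hypothesis (the $\Class^1$ vector field and the quantifier ``for any $\psi$ close enough'') is to bridge that gap. The paper's route for the $(v)$-direction is Proposition~4.5: if $\alpha$ is not quasi-Lyapunov one uses the regularity to build an honest perturbed flow $\psi$ near $\varphi$ in which a suitable $U$-pseudo-orbit becomes a genuine periodic orbit with $\alpha$-value $>0$, whence $\alpha(D_\psi)>0$. Your argument never perturbs the flow, so it cannot detect this. The correct shape of the proof is hub-and-spoke around $(i)$: $(i)\Leftrightarrow(iii)$ is Lemma~\ref{lem-quasiL-infty-char}, $(i)\Leftrightarrow(iv)$ is Lemma~\ref{lem-quasi-L-char}, $(i)\Leftrightarrow(v)$ is Proposition~4.5, and $(i)\Leftrightarrow(ii)$ is Theorem~\ref{thm-spectral-decomp}. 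Your links $(ii)\Rightarrow(i)$, $(i)\Rightarrow(iv)$, $(iv)\Rightarrow(v)$ are essentially correct (though $(ii)\Rightarrow(i)$ can be done more quickly by the paper's compactness argument: take $\epsilon$-pseudo-orbits with jump of homological value $1$, translate them into a fixed fundamental domain, and pass to the limit), and your $(iii)\Rightarrow(ii)$ sketch is in the right spirit, matching Section~\ref{sec-quasi-L-criterion}; but the cycle cannot close without replacing $(v)\Rightarrow(iii)$ by an argument that actually uses the freedom to move $\psi$.
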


The equivalence with $ii$ is the subject of Section~\ref{sec-quasi-L-criterion}. The other equivalences are consequences of the next results, each stated with less hypotheses. 


\begin{lemma}\label{lem-quasiL-infty-char}
	A class $\alpha$ in $H^1(M,\ZZ)\setminus\{0\}$ is quasi-Lyapunov if and only if we have $-\infty\not\recto+\infty$ for the flow on $\wb M_\alpha$, if and only we have $x\not\recto+\infty$ for all~$x$ in $\wh M_\alpha$.
\end{lemma}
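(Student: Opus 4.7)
The three conditions---$(A)$ $\alpha$ is quasi-Lyapunov, $(B)$ $-\infty\not\recto+\infty$, $(C)$ $x\not\recto+\infty$ for every $x\in\wh M_\alpha$---will be linked by proving the two equivalences $(A)\Leftrightarrow(C)$ and $(B)\Leftrightarrow(C)$. I fix any $\alpha$-equivariant continuous $f\colon\wh M_\alpha\to\RR$: since two such maps differ by a $\ZZ$-invariant bounded function, whether $f$ is quasi-Lyapunov depends only on $\alpha$, and $f$ extends continuously to $\wb f\colon\wb M_\alpha\to\wb\RR$ with $\wb f(\pm\infty)=\pm\infty$.

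For $(A)\Rightarrow(C)$: if $x\recto+\infty$ for some $x\in\wh M_\alpha$, I truncate an $\epsilon$-pseudo-orbit from $x$ to $+\infty$ just before its final jump to $+\infty$, obtaining an $\epsilon$-pseudo-orbit in $\wh M_\alpha$ from $x$ to a point $y$ with $f(y)$ arbitrarily large, violating quasi-Lyapunov. For $(C)\Rightarrow(B)$: an $\epsilon$-pseudo-orbit from $-\infty$ to $+\infty$ (for $\epsilon$ too small to jump directly between the two ends) must leave $-\infty$ through some $x\in\wh M_\alpha$, and the restriction from $x$ to the first visit to $+\infty$ gives $x\recto+\infty$, contradicting $(C)$. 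For $(B)\Rightarrow(C)$: by $\ZZ$-equivariance (with $\pm\infty$ deck-fixed), $x\recto+\infty$ for some $x\in\wh M_\alpha$ yields $n\cdot x\recto+\infty$ for every deck element $n$; for $n$ very negative, $n\cdot x$ lies in arbitrarily small neighborhoods of $-\infty$, so prepending a single jump produces pseudo-orbits from $-\infty$ to $+\infty$ of arbitrarily small mesh.

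For the main direction $(C)\Rightarrow(A)$, I argue by contrapositive via a compactness argument. Supposing $\alpha$ not quasi-Lyapunov, I pick $\epsilon_n\to 0$ and $\epsilon_n$-pseudo-orbits $\gamma_n$ in $\wh M_\alpha$ from $x_n$ to $y_n$ with $f(y_n)-f(x_n)>n$. Using $\alpha$-equivariance, I translate each $\gamma_n$ by a deck element so that $f(x_n)\in[0,K]$, where $K>0$ generates $\alpha(H_1(M,\ZZ))\subset\ZZ$. The set $f^{-1}([0,K])$ is compact in $\wh M_\alpha$ (bounded in the $\ZZ$-direction over the compact $M$), so up to extraction $x_n\to x_0\in\wh M_\alpha$; simultaneously $f(y_n)\to+\infty$ forces $y_n\to+\infty$ in $\wb M_\alpha$. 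For any $\delta>0$ and $n$ sufficiently large, I build a $\delta$-pseudo-orbit from $x_0$ to $+\infty$ in $\wb M_\alpha$: flow from $x_0$ for a time $T_n\in[T,2T]$; perform a switching jump onto $\gamma_n(T_n)$ of length bounded by $d(\varphi_{T_n}(x_0),\varphi_{T_n}(x_n))+\epsilon_n$ (using that $\gamma_n$ is a flow orbit on $[0,T)$), which tends to zero by uniform continuity of the flow on compacts; follow $\gamma_n$ to $y_n$; flow from $y_n$ for time $T$; and finally jump to $+\infty$ at distance $d(\varphi_T(y_n),+\infty)\to0$. This witnesses $x_0\recto+\infty$, contradicting $(C)$.

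The hardest part will be the careful choice of the switching time $T_n$ to respect the mandated minimum $T$-spacing between jumps in the concatenated pseudo-orbit: when $\gamma_n$'s first jump $s_1^{(n)}$ lies in $[T,2T)$, I take $T_n = s_1^{(n)}$ so the switching jump absorbs $\gamma_n$'s first jump; otherwise $T_n = T$ suffices. Uniform continuity of $\varphi_t$ on compacts for $t\in[T,2T]$ then controls the switching jump length, closing the argument.
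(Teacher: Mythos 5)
You split the equivalence into $(A)\Leftrightarrow(C)$ and $(B)\Leftrightarrow(C)$, where the paper proves the cycle $(A)\Rightarrow(C)\Rightarrow(B)\Rightarrow(A)$; that reorganization is legitimate, and three of your four implications are correct. Your $(C)\Rightarrow(A)$ contains the key compactness extraction, which is the same device the paper deploys in its $(C)\Rightarrow(B)$ step, and your $(B)\Rightarrow(C)$ equivariance argument is valid granting the (unstated but true) fact that each deck element acts on the compact space $\wb M_\alpha$ as a homeomorphism, hence uniformly continuously, so that translating an $\epsilon$-pseudo-orbit produces pseudo-orbits of arbitrarily small mesh.

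However, your $(C)\Rightarrow(B)$ step as written has a genuine gap. You restrict each $\epsilon$-pseudo-orbit from $-\infty$ to $+\infty$ at the point $x_\epsilon$ where it first leaves $-\infty$ and claim the restriction ``gives $x_\epsilon\recto+\infty$''. A single $\epsilon$-pseudo-orbit cannot witness $\recto$, which requires pseudo-orbits of arbitrarily small mesh emanating from a \emph{fixed} source; and worse, the points $x_\epsilon$ you chose lie within $\epsilon$ of $-\infty$, so they tend to $-\infty$ in $\wb M_\alpha$ as $\epsilon\to 0$ and have no accumulation point in $\wh M_\alpha$. The fix is to extract at a bounded stage: fix a compact $K\subset\wh M_\alpha$ with $\pi_\alpha(K)=M$, note that every pseudo-orbit from $-\infty$ to $+\infty$ must pass through $K$, take a subsequential limit $x_0\in K$ of those crossing points, and splice as in your $(C)\Rightarrow(A)$ paragraph to obtain $x_0\recto+\infty$. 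This is precisely what the paper does. As the proposal stands you have only $(A)\Leftrightarrow(C)$ and $(B)\Rightarrow(C)$, which does not close the three-way equivalence.
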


Recall that the metric on $\wb M_\alpha$ does not restrict to the one on $\wh M_\alpha$. But for any compact $K$ subsets on $\wh M_\alpha$, the two metrics are comparable at small scale on $K$. So essentially, pseudo-orbits for one metrics are pseudo-orbits for the other inside compact subsets of $\wh M_\alpha$.

\begin{proof}
	It is clear that if $x\not\recto+\infty$ holds for some~$x$, then $\alpha$ is not quasi-Lyapunov. 

	Assume that $-\infty\recto +\infty$ holds. Take a compact subset $K\subset\wh M_\alpha$ so that the projection $K\xrightarrow{\pi_\alpha}M$ is surjective. Any sequence of~$\epsilon$-pseudo-orbits from $-\infty$ to $+\infty$, with~$\epsilon$ that goes to zero, accumulates on a point~$x$ in $K$ that satisfies $x\recto+\infty$.

	Assume now $\alpha$ not quasi-Lyapunov. Take an $\alpha$-equivariant and continuous map $f\colon\wh M_\alpha\to\RR$. For any $C\geq0$ and $\epsilon>0$, there exists a~$\epsilon$-pseudo-orbit that goes from a point~$x$ to a point $y$, with $f(y)\geq f(x)+3C$. Up to assuming $C$ large enough, and replacing~$x$ and $y$ by $n\cdot x$ and $n\cdot y$ for some~$n$ in $\ZZ$, we may assume that $f(y)\geq C$ and $f(x)\leq C$ hold. Taking $C$ large enough, there exists a~$\epsilon$-pseudo-orbit from $-\infty$ to $+\infty$ that jumps from $-\infty$ to~$x$, follows the previous~$\epsilon$-pseudo-orbit, and jump back to $+\infty$. It follows that $-\infty\recto +\infty$ holds.
\end{proof}

\begin{lemma}\label{lem-quasi-L-char}
	A class $\alpha$ in $H^1(M,\RR)$ is quasi-Lyapunov, if and only if we have $\alpha(D_{\varphi,\epsilon,T})\leq 0$ for some/any $\epsilon>0$ small enough.
\end{lemma}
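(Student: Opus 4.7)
\emph{Proof plan.} My plan is to prove the two implications separately, with the backward direction using Lemma~\ref{lem-cut-pso-1} to decompose arbitrary pseudo-orbits into bounded-length pieces. I first note that the ``some/any'' alternative is harmless: if $\epsilon \leq \epsilon_0$, then any $\epsilon$-pseudo-orbit is also an $\epsilon_0$-pseudo-orbit, so $D_{\varphi,\epsilon,T} \subseteq D_{\varphi,\epsilon_0,T}$, and non-positivity of $\alpha$ on the larger cone propagates to every smaller one.

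For the forward implication, I would take an $\alpha$-equivariant continuous map $f\colon\wh M_\alpha \to \RR$ witnessing the $C$-quasi-Lyapunov property at some scale $\epsilon_0$, fix $\epsilon \leq \epsilon_0$, and consider any periodic $\epsilon$-pseudo-orbit $\gamma$ on~$M$. Iterating $\gamma$ $n$ times produces a periodic $\epsilon$-pseudo-orbit of homology class $n[\gamma]$ and length $n\len(\gamma)$. Lifting it to $\wh M_\alpha$ starting at some $\tilde x$, the endpoint is $\delta\cdot\tilde x$ for some $\delta$ with $\alpha(\delta) = n\,\alpha([\gamma])$. The quasi-Lyapunov inequality then gives $n\,\alpha([\gamma]) = f(\delta\cdot\tilde x) - f(\tilde x) \leq C$ for every $n\geq 1$, forcing $\alpha([\gamma]) \leq 0$. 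Taking the closure and using continuity of $\alpha$ yields $\alpha(D_{\varphi,\epsilon,T})\leq 0$.

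For the backward implication, I would fix a small $\epsilon>0$ with $\alpha(D_{\varphi,\epsilon,T})\leq 0$ and invoke Lemma~\ref{lem-cut-pso-1} to obtain constants $\epsilon'<\epsilon$ and $L$. To make the pairing with $\alpha$ concrete, I would pick a smooth closed 1-form $\beta$ on~$M$ representing $\alpha$; it lifts to $\wh M_\alpha$ as the differential of an $\alpha$-equivariant smooth map $\tilde F$. Any $\alpha$-equivariant continuous $f$ differs from $\tilde F$ by a $H_1(M,\ZZ)$-invariant function, which descends to the compact~$M$ and is therefore bounded, so controlling $\tilde F$ controls $f$. Given an $\epsilon'$-pseudo-orbit $\tilde\gamma$ on $\wh M_\alpha$ from $\tilde x$ to $\tilde y$, I would project it to $\gamma$ on~$M$ and apply Lemma~\ref{lem-cut-pso-1} to write $\gamma \sim \delta_0 + \sum_{i\geq 1}\delta_i$ relative to $\partial\gamma$, with $\delta_0$ sharing the endpoints of $\gamma$, each $\delta_i$ periodic, and all of length at most $L$. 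Integrating $\beta$ along compatibly chosen $\epsilon$-realizations gives
$$\tilde F(\tilde y) - \tilde F(\tilde x) = \int_{\gamma^\epsilon}\beta = \int_{\delta_0^\epsilon}\beta + \sum_i \alpha([\delta_i]).$$
The hypothesis forces each $\alpha([\delta_i])\leq 0$, and $\int_{\delta_0^\epsilon}\beta$ is bounded in absolute value by a constant depending only on $L$ and $\|\beta\|_\infty$. This bounds $\tilde F(\tilde y)-\tilde F(\tilde x)$, hence $f(\tilde y)-f(\tilde x)$, by a uniform constant. Applying the same reasoning to every initial sub-segment of $\tilde\gamma$ then produces the quasi-Lyapunov inequality.

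The main delicate point will be justifying the integral identity above: the relative homology relation $[\gamma]-[\delta_0] = \sum_i [\delta_i]$ supplied by Lemma~\ref{lem-cut-pso-1} must translate, after fixing $\epsilon$-realizations, into an actual equality of integrals of $\beta$ modulo controlled errors from the $\epsilon$-sized connecting arcs. This is routine given the smoothness and boundedness of $\beta$ on the compact~$M$ and the bounded size of the jumps, but requires some book-keeping to align the realizations of $\gamma$, $\delta_0$, and the $\delta_i$ coherently so that the cycle $\gamma^\epsilon - \delta_0^\epsilon - \sum_i \delta_i^\epsilon$ is genuinely null-homologous, and not merely homologous modulo bounded corrections.
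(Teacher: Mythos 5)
Your proposal follows essentially the same two-step structure as the paper's proof: the forward direction by iterating a periodic pseudo-orbit and using $\alpha$-equivariance to force $n\alpha([\gamma])\leq C$ for all $n$, and the backward direction by applying Lemma~\ref{lem-cut-pso-1} to decompose an arbitrary pseudo-orbit into a bounded-length piece $\delta_0$ plus periodic pieces $\delta_1,\ldots,\delta_r$, then bounding the contribution of each. The logical skeleton is identical.

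The one substantive stylistic difference is in the backward direction: you phrase the accounting via a smooth closed 1-form $\beta$ representing $\alpha$ and the integral identity
$\tilde F(\tilde y)-\tilde F(\tilde x)=\int_{\delta_0^\epsilon}\beta+\sum_i\alpha([\delta_i])$,
whereas the paper avoids picking a form entirely. It uses the relative homology relation $[\pi_\alpha(\gamma)]-[\delta_0]=\sum_{k\geq 1}[\delta_k]$ to conclude that the endpoint $y$ of the lifted $\gamma$ is exactly $\bigl(\sum_k[\delta_k]\bigr)\cdot y'$, where $y'$ is the endpoint of the lift $\wh\delta_0$, and then the $\alpha$-equivariance of $f$ gives $f(y)=f(y')+\sum_k\alpha([\delta_k])$ with no integration needed. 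Your integral version is fine in spirit, but it quietly imports a smoothness hypothesis on $M$ that the paper is explicitly working without (the ambient manifold is only topological, with a compatible metric). The ``delicate point'' you flag at the end — aligning the $\epsilon$-realizations so the cycle is genuinely null-homologous — is not actually delicate if you argue homologically rather than via integrals: the relative homology identity supplied by Lemma~\ref{lem-cut-pso-1} is already a statement about well-defined classes (since $\epsilon$ is small enough that all $\epsilon$-realizations are mutually homologous), so the pairing with $\alpha$ is unambiguous. If you rephrase that last step in terms of the action of $\sum_k[\delta_k]$ on $\wh M_\alpha$ and $\alpha$-equivariance of $f$, the concern evaporates and the proof works at the paper's level of generality.

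One small omission worth making explicit in either version: the $C$-quasi-Lyapunov condition requires $f(\gamma(0))\geq f(\gamma(t))-C$ for \emph{all} intermediate $t$, not merely for the endpoint. Both your argument and the paper's handle this by applying the estimate to the initial sub-segment $\gamma|_{[0,t[}$, which is again an $\epsilon'$-pseudo-orbit; you mention this in passing, and it is indeed all that is needed.
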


\begin{proof}
	The case $\alpha=0$ is clear, so we may assume $\alpha\neq 0$.
	Let $f\colon\wh M\to\RR$ be an $\alpha$-equivariant and continuous map.

	Assume first that $\alpha$ is quasi-Lyapunov. Take $\epsilon>0$ and $C\geq 0$ so that $f$ is $C$-quasi-Lyapunov. Let $\gamma$ be a periodic~$\epsilon$-pseudo-orbit in~$M$. In $\wh M$, $\gamma$ can be lifted into an~$\epsilon$-pseudo-orbit $\gamma'$ that goes from a point~$x$ in $\wh M$ to $[\gamma]\cdot x$. Concatenating the pseudo-orbits $k[\gamma]\cdot\gamma'$, for $0\leq k<n$ in increasing order, yields an~$\epsilon$-pseudo-orbit from~$x$ to $n[\gamma]\cdot x$. Since $f$ is $\alpha$-equivariant, we have $f(n[\gamma]\cdot x)=f(x)+n\alpha([\gamma])$. And since it is bounded above by $f(x)+C$ for all~$n$, we have $\alpha([\gamma])\leq 0$. The inequality $\alpha(D_{\varphi,\epsilon,T})\leq 0$ follows.

	We prove the converse. Assume that $\alpha(D_{\varphi,\epsilon,T})\leq 0$ holds for some $\epsilon>0$. Take $\epsilon'>0$ given by Lemma~\ref{lem-cut-pso-1}.
	Let $\gamma$ be an $\epsilon'$-pseudo-orbit in $\wh M$ from a point~$x$ to a point $y$. Then the image $\pi_\alpha(\gamma)$ of $\gamma$ in~$M$ is homologous, relatively to its endpoints, to an~$\epsilon$-pseudo-orbit $\delta_0$ of bounded length plus finitely many periodic~$\epsilon$-pseudo-orbit $\delta_1\cdots \delta_r$. The pseudo-orbit $\delta_0$ lifts inside~$\wh M$ to a~$\epsilon$-pseudo-orbit $\wh\delta_0$, that goes from~$x$ to a point $y'$.	

	We estimate $f(x)-f(y')$ and $f(y')-f(y)$ separately.
	The boundary of the 1-chain $\gamma-\wh\delta_0$ is equal to $y-y'$. So $y$ is the image of $y'$ under the action of the homology class:
	$$[\pi_\alpha(\gamma)]-[\delta_0]=\sum_{k\geq 1}[\delta_k].$$
	Thus, we have $f(y)=f(y')+\sum_{k\geq 1}\alpha([\delta_k])$.
	The homology class of $\delta_i$ belongs to $D_{\varphi,\epsilon,T}$ so we have $\alpha([\delta_k])\leq 0$ holds true. Thus, $f(y')\geq f(y)$ holds.
	Since the length of $\wh\delta_0$ is bounded, the difference $|f(x)-f(y')|$ is bounded by a constant $C\geq0$, independent on $\gamma$. It follows $f(x)\geq f(y)-C$ holds. Hence,~$f$ is $C$-quasi-Lyapunov, and $\alpha$ is quasi-Lyapunov. 
\end{proof}

It has another consequence.

\begin{corollary}
	Given $\alpha$ in $H^1(M,\RR)$, being quasi-Lyapunov for $\varphi$ is an open property on~$\varphi$, for the $\Class^0$ topology.
\end{corollary}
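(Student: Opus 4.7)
The plan is to apply Lemma~\ref{lem-quasi-L-char} in both directions: suppose $\alpha$ is quasi-Lyapunov for $\varphi$, fix $\epsilon>0$ small (so small that every $\epsilon$-ball in $M$ is contractible) with $\alpha(D_{\varphi,\epsilon,T})\leq 0$, and show that for any $\psi$ sufficiently $\Class^0$-close to $\varphi$ one has $D_{\psi,\eta,T}\subset D_{\varphi,\epsilon,T}$, so that $\alpha$ is quasi-Lyapunov for $\psi$ as well.

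More precisely, I would choose $\eta>0$ smaller than both $\epsilon/2$ and the injectivity radius of $M$, and consider any flow $\psi$ with $\sup_{x\in M,\,|t|\leq 2T}d(\psi_t(x),\varphi_t(x))<\eta$. Given a periodic $(\eta,T)$-pseudo-orbit $\gamma$ of $\psi$, I subdivide each continuous sub-arc of $\gamma$ of length exceeding $2T$ into equal sub-pieces of length in $[T,2T]$, producing a cyclic sequence of times $s_0<s_1<\cdots<s_n=s_0+\len(\gamma)$ that contains the original discontinuities of $\gamma$. Define $\gamma'$ to follow $\varphi$ on each interval $[s_i,s_{i+1})$ with $\gamma'(s_i)=\gamma(s_i)$. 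At each $s_i$, one estimates
$$d(\gamma'(s_i^-),\gamma(s_i))\leq d(\varphi_{s_i-s_{i-1}}(\gamma(s_{i-1})),\psi_{s_i-s_{i-1}}(\gamma(s_{i-1})))+d(\gamma(s_i^-),\gamma(s_i))\leq\eta+\eta\leq\epsilon,$$
so $\gamma'$ is a periodic $(\epsilon,T)$-pseudo-orbit of $\varphi$ of length $\len(\gamma)$, with the $s_i$ spaced in $[T,2T]$.

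The key point is that $[\gamma]=[\gamma']$ in $H_1(M,\ZZ)$. On each subdivision interval $[s_i,s_{i+1}]$, the orbit arcs $t\mapsto\psi_t(\gamma(s_i))$ and $t\mapsto\varphi_t(\gamma(s_i))$ for $t\in[0,s_{i+1}-s_i]\subset[0,2T]$ share the starting point $\gamma(s_i)$ and stay pointwise within $\eta$ of each other. Lifting both arcs to the universal cover at a common lift of $\gamma(s_i)$, our choice of $\eta$ forces the lifts to remain within a common contractible neighborhood throughout, yielding a continuous homotopy in $M$ relative to $\gamma(s_i)$. This homotopy provides a $2$-chain whose boundary, after absorbing the small connecting arcs used in the $\eta$- and $\epsilon$-realizations (all lying in a contractible ball around $\gamma(s_{i+1})$), is exactly the per-piece difference between the realizations of $\gamma$ and $\gamma'$. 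Summing over $i$ gives $[\gamma]=[\gamma']$, hence $\tfrac{1}{\len(\gamma)}[\gamma]\in D_{\varphi,\epsilon,T}$, and by passing to the closure $D_{\psi,\eta,T}\subset D_{\varphi,\epsilon,T}$. Lemma~\ref{lem-quasi-L-char} applied to $\psi$ then finishes the proof.

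The main subtlety is the homology comparison: one must bound the sub-piece lengths uniformly (by $2T$), so that the pointwise $\Class^0$-closeness of $\psi$ and $\varphi$ over a bounded time window forces the corresponding $\psi$- and $\varphi$-orbit arcs to bound a disk in $M$. Without such a bound, a long $\psi$-arc and a long $\varphi$-arc starting at the same point could represent different homology classes, which would break the conversion; conversely, the subdivision into pieces of length in $[T,2T]$ is precisely what allows both the jump control and the homology argument to go through simultaneously.
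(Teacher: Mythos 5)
Your proof is correct and follows essentially the same approach as the paper: shadow periodic pseudo-orbits of a $\Class^0$-nearby flow $\psi$ by homologous pseudo-orbits of $\varphi$, then invoke Lemma~\ref{lem-quasi-L-char}. The paper phrases this as the contrapositive (the set of $\psi$ for which $\alpha$ is not quasi-Lyapunov is closed) and elides the details of the subdivision into $[T,2T]$-pieces and the homology comparison, both of which you spell out carefully.
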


Given two map $\gamma,\delta\colon\bfrac{\RR}{l\ZZ}\to M$, $\gamma$ is said to \emph{$\eta$-shadows} $\delta$ if for all $t$, $\gamma(t)$ and $\delta(t)$ are at distance less than $\eta$.

\begin{proof}
	We prove that the set of flows $\psi$ flows for which $\alpha$ is not quasi-Lyapunov is closed. Let $\psi_n$ be a sequence of flows that converges toward~$\varphi$, $\epsilon_n>0$ be a sequence that converges toward zero, and for every~$n$, $\gamma_n$ be a periodic $T$-pseudo-orbit with $\alpha([\gamma_n])>0$. 
	
	For any $\epsilon>0$, and any~$n$ large enough, $\gamma_n$ can be~$\epsilon$-shadowed by an $\epsilon$-pseudo-orbit of $\varphi$. When~$\epsilon$ is small enough, $\gamma_n$ and $\delta$ are homologous, so $\alpha([\delta])>0$ holds. It follows that $\alpha$ is not quasi-Lyapunov for $\varphi$.
\end{proof}

We give a last characterization, that is useful when considering generic properties of flows. 

\begin{proposition}
	Assume~$M$ smooth, and that $\varphi$ is generated by a $\Class^1$ vector field. A class $\alpha$ in $H^1(M,\RR)$ is quasi-Lyapunov for $\varphi$ if and only if for any continuous flow~$\psi$ on a $\Class^0$-neighborhood of $\varphi$, $\alpha(D_\psi)\leq 0$ holds.
\end{proposition}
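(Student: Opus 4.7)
My plan is to use Lemma~\ref{lem-quasi-L-char} together with the openness statement just established (the preceding Corollary) for the forward direction, and a $\Class^0$-perturbation argument for the converse. If $\alpha$ is quasi-Lyapunov for $\varphi$, the Corollary provides a $\Class^0$-neighborhood $\mathcal{U}$ of $\varphi$ on which $\alpha$ remains quasi-Lyapunov; Lemma~\ref{lem-quasi-L-char} applied to each $\psi \in \mathcal{U}$ yields $\alpha(D_{\psi,\epsilon,T}) \leq 0$ for some small $\epsilon$, and since $D_\psi \subset D_{\psi,\epsilon,T}$ we conclude $\alpha(D_\psi) \leq 0$.

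\textbf{Converse, setup.} For the converse I argue contrapositively: assume $\alpha$ is not quasi-Lyapunov for $\varphi$, and construct, in any prescribed $\Class^0$-neighborhood of $\varphi$, a continuous flow $\psi$ with $\alpha(D_\psi)>0$. Pick small $\epsilon>0$ and let $\epsilon'$ and $L$ be furnished by Lemma~\ref{lem-cut-pso-2}. By Lemma~\ref{lem-quasi-L-char}, $\alpha(D_{\varphi,\epsilon',T})>0$, so there exists a periodic $\epsilon'$-pseudo-orbit with strictly positive $\alpha$-value; decomposing it via Lemma~\ref{lem-cut-pso-2} and keeping a summand whose $\alpha$-value is still positive, I obtain a periodic $\epsilon$-pseudo-orbit $\gamma$ of length at most $L$ with $\alpha([\gamma])>0$. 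In particular $\gamma$ has a uniformly bounded number of jumps $p_i = \gamma(u_i^-) \to q_i = \gamma(u_i)$, each of length less than $\epsilon$.

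\textbf{Perturbation.} Using the $\Class^1$-regularity of the generator $X$, at each $p_i$ where $X(p_i)\neq 0$ (the vanishing case forces $p_i=q_i$ and is trivial) the flow-box theorem provides coordinates in which $X=\partial_t$. In a flow box of fixed time-length $2\tau$ and transverse radius slightly greater than $\epsilon$, I add a compactly supported perturbation $Y_i$ with $\|Y_i\|_\infty = O(\epsilon/\tau)$, designed so that its time-integral along orbit segments of $X$ realises the drift $q_i-p_i$. The vector field $X_\epsilon = X + \sum_i Y_i$ differs from $X$ by $O(\epsilon/\tau)$ in $\Class^0$-norm, so with $\tau$ fixed and $\epsilon \to 0$, its continuous flow $\psi_\epsilon$ enters any prescribed $\Class^0$-neighborhood of $\varphi$. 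By construction, $\gamma$ is shadowed along $\psi_\epsilon$ by an honest closed orbit $\tilde\gamma$ in the same homology class, so $[\tilde\gamma]/\len(\tilde\gamma) \in D_{\psi_\epsilon}$ has strictly positive $\alpha$-value, contradicting $\alpha(D_{\psi_\epsilon}) \leq 0$.

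\textbf{Main obstacle.} The delicate point is the perturbation step itself. First, the boundedly many flow boxes must be arranged so that the sum $\sum_i Y_i$ remains $\Class^0$-small; choosing transverse radii of order $\epsilon$ ensures that, for $\epsilon$ small, any point lies in at most a bounded number of flow boxes, keeping the norm uniformly $O(\epsilon/\tau)$. Second, prescribing the drift at each jump produces an orbit of $\psi_\epsilon$ through $\gamma(0)$ that returns only approximately to $\gamma(0)$, so a short fixed-point argument on the first-return map near $\gamma(0)$ (well-defined as a small perturbation of the identity for $\epsilon$ small) is needed to extract the honest periodic orbit $\tilde\gamma$ close to $\gamma$ in the same homology class. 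The $\Class^1$-regularity of $X$ underpins both steps, providing the flow-box normal form and the continuous dependence required for the shadowing and the return-map argument.
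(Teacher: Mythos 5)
Your forward direction is correct and in fact cleaner than the paper's: the paper re-proves the $\Class^0$-openness of the quasi-Lyapunov property by an ad-hoc shadowing argument inside this proposition, whereas you invoke the already-established Corollary and conclude via Lemma~\ref{lem-quasi-L-char}. Both are valid.

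For the converse, you take a genuinely different route. The paper first upgrades a periodic pseudo-orbit with $\alpha$-value $>0$ to a \emph{continuous} piecewise-$\Class^1$ curve $\delta$ whose derivatives stay near $X$ (``$U$-pseudo-orbit'' via geodesic interpolation of the jumps), then crucially makes $\delta$ injective by cut-and-paste, and finally installs $\delta$ as an exact periodic orbit by prescribing a vector field $Y$ on a tubular neighbourhood of $\delta$ and interpolating $Z = hY + (1-h)X$. You instead keep $\gamma$ as a discontinuous pseudo-orbit and perturb $X$ locally at each jump inside a flow box, hoping the perturbed flow ``sews'' the jumps. This is an appealing and more local construction, and the bounded-length reduction via Lemma~\ref{lem-cut-pso-2} is a sensible preliminary that the paper does not need.

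However, there is a genuine gap that the paper's injectivity step is designed to close and that your argument leaves open. The flow box $B_i$ around the $i$-th jump must have transverse radius at least $\epsilon$ to contain both $p_i$ and $q_i$, and the pseudo-orbit $\gamma$ may pass through $B_i$ again on another arc, possibly within distance $O(\epsilon)$ of $p_i$. The perturbation $Y_i$ then deflects $\gamma$ on that unintended pass as well, so the orbit of $\psi_\epsilon$ through $\gamma(0)$ no longer follows $\gamma$, and the homology bookkeeping breaks down. You cannot avoid this by shrinking $B_i$, precisely because $|q_i - p_i|$ may be comparable to the desired transverse radius. Your bound that ``any point lies in at most a bounded number of flow boxes'' controls the $\Class^0$-norm of $\sum_i Y_i$ but does not address which passes of $\gamma$ the boxes intercept. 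Some surgery making $\gamma$ injective (or at least guaranteeing disjointness of the boxes from the other arcs of $\gamma$) is needed; the paper does exactly this.

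Two smaller points. First, the claim that the first-return map near $\gamma(0)$ is ``a small perturbation of the identity'' is not justified: the return map of $\psi_\epsilon$ over a loop of length up to $L$ is a small perturbation of the time-$L$ transition of $\varphi$ along $\gamma$, not of the identity, and that transition can be strongly expanding or contracting. In fact if the $Y_i$ are chosen constant in the transverse direction of the flow box (realising the displacement $q_i - p_i$ uniformly) and no interference occurs, the orbit of $\psi_\epsilon$ closes exactly, so no fixed-point argument is needed; your worry here is misplaced, but your proposed fix does not work either. Second, ``the vanishing case $X(p_i)=0$ forces $p_i = q_i$'' is false: a pseudo-orbit can sit at a fixed point for time $\geq T$ and then jump to a nearby non-fixed point. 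The paper handles fixed points by explicitly modifying $\delta$ near them before the cut-and-paste, and you need a similar special treatment.
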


The regularity assumptions are technical convenience and probably not optimal.

\begin{proof}
	We first assume that there exists a sequence of flows $\psi_n$ that converges toward~$\alpha$, and that satisfy $\alpha(D_{\psi_n})\not\leq 0$. For every~$n$, take a periodic $\tfrac{1}{n}$-pseudo-orbit $\gamma_n$ of $\psi_n$ that satisfies $\psi_n(\gamma_n)>0$. Fix some small $\epsilon>0$. For any~$n$ large enough, $\gamma_n$ can be~$\epsilon$-shadowed by a~$\epsilon$-pseudo-orbit $\delta$ of $\varphi$. When~$\epsilon$ is small enough, it follows that $\delta$ and~$\gamma_n$ are homologous, so $\alpha(\delta)>0$ holds. Therefore, $\alpha$ is not quasi-Lyapunov.
	
	We now assume that $\alpha$ is not quasi-Lyapunov. For convenience, we may assume that the metric on~$M$ is a smooth Riemannian metric.
	
	We shadow a pseudo-orbits with a variation of that notion. Denote by~$X$ the vector field that generates $\varphi$. Let $U$ be a small neighborhood of the image of $X$ in $TM$. A curve $\gamma\colon\bfrac{\RR}{l\ZZ}\to M$ is called a periodic \emph{$U$-pseudo-orbit} if it is continuous, piecewise $\Class^1$, and its left and right derivatives lie in~$U$. 

	We claim that for any $\eta>0$ there exists some $\epsilon>0$ for which any periodic~$\epsilon$-pseudo-orbit $\gamma$ can be $\eta$-shadowed by a periodic $U$-pseudo-orbit~$\delta$. For that, take some $t$ in $[T,2T]$, some points $x,y$ in~$M$ at distance less than~$\epsilon$, and $z=\varphi_{-t}(x)$. Denote by $c\colon[0,\epsilon]\to M$ a geodesic, at constant speed at most one, from~$x$ to $y$. Then the map $\delta\colon[0,t]\to M$ by:
	$$\delta(s)=\varphi_{s-t}\circ c\left(\frac{s\epsilon}{t}\right)$$
	$\eta$-shadows the orbit arc $\varphi_{[0,t]}(z)$ when~$\epsilon$ is small enough.	
	Note that $\delta'$ converges uniformly toward $X$ when~$\epsilon$ goes to zero, so it lies in $U$ if~$\epsilon$ is small enough. To prove the claim, we cut $\gamma$ in continuity intervals whose length are in $[T,2T]$, and we replace them with curves constructed similarly to~$\delta$.

	Take a small neighborhood $U$ of $X$ as above. From Lemma~\ref{lem-quasi-L-char} and from the claim above, there exists a periodic $U$-pseudo-orbit $\delta\colon\bfrac{\RR}{l\ZZ}\to M$ satisfies $\alpha(\delta)>0$. Up to changing $\delta$ close to the fixed points of $\varphi$, we may assume that $\delta$ is locally injective.
	We cut and past $\delta$ to make it injective. Take two distinct times $u,v$ in $\bfrac{\RR}{l\ZZ}$ which satisfy $\gamma(u)=\gamma(v)$. Denote by $I,J$ the two closed intervals separated by $u$ and $v$, and by $I^\circ$ and $J^\circ$ the circle obtained by identifying the end points in $I$ and $J$. Then $\delta$ restricts to two periodic $U$-pseudo-orbits $\gamma_{|I^\circ}$ and $\gamma_{|J^\circ}$, whose sum is homologous to $\delta$. Thus, one of them, let say the first one, satisfies $\alpha(\gamma_{|I^\circ})>0$. 
	Up to applying the previous procedure finitely many times, we may assume that $\delta$ is injective.

	We build a flow $\psi$ close to $\varphi$, for which $\delta$ is a periodic orbit. Since $\delta$ is injective and piecewise $\Class^1$, we can take a smooth tubular neighborhood $V$ of~$\delta$. And similarly there exists a continuous vector field $Y$ on $V$, piecewise~$\Class^1$, and that coincide with the derivatives of $\delta$. Up to making $V$ smaller, we may also assume that~$Y$ lies point-wise in $U$. Take a $\Class^1$ function $h\colon V\to [0,1]$, with $h\equiv 1$ on $\delta$, and~$h\equiv 0$ on a neighborhood of $\partial V$. Consider the vector field $Z$ on~$M$, given by $Z(p)=h(p)Y(p)+(1-h(p))X(p)$ for any $p$ in $V$, and $Z(p)=X(p)$ for $p$ outside $V$. It is piecewise of class $\Class^1$, so it is Lipschitz. 
	
	Denote by $\psi$ the generated flow. If $U$ is taken convex and small enough, then $Z$ lies in $U$ and is very close to $X$, Thus $\psi$ can be taken arbitrarily close to $\varphi$. By definition, $\delta$ is a periodic orbit of $\psi$, so $\alpha(D_\psi)\not\leq0$ holds. The conclusion follows.
\end{proof}

\subsection{The $\alpha$-recurrent set}

Fix a class $\alpha$ in $H^1(M,\RR)$, not necessarily quasi-Lyapunov. A point~$x$ in~$M$ is said \emph{$\alpha$-recurrent} if for every $\epsilon>0$ small and every $T>0$, there exist a periodic~$\epsilon$-pseudo-orbit $\gamma$ that passes through~$x$ and that satisfy $\alpha([\gamma])=0$. We denote by $\Rec_\alpha$ the set of $\alpha$-recurrent point, called \emph{$\alpha$-recurrent set}. Two $\alpha$-reccurent points $x,y$ in~$M$ are said to be $\alpha$-equivalent if for all $\epsilon,T>0$ there exists a periodic~$\epsilon$-pseudo-orbit $\gamma$ that passes through~$x$ and $y$ and with $\alpha([\gamma])=0$. The $\alpha$-equivalence relation is an equivalence relation. We call \emph{$\alpha$-recurrence chains} the $\alpha$-equivalence classes. Note that the $\alpha$-equivariance chains are invariant by the flow.

Recall the covering map notation $\wh M_\alpha\xrightarrow{\pi_\alpha} M$.
Given a recurrence chain~$R$ in $\wh M_\alpha$, $\pi_\alpha(R)$ is an $\alpha$-recurrence chain in~$M$. Conversely, any $\alpha$-recurrence chain can be obtained that way.

We continue with some elementary properties on the $\alpha$-recurrent set.

\begin{lemma}
	The $\alpha$-recurrence set and the $\alpha$-recurrence chains are closed inside~$M$.
\end{lemma}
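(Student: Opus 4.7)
The plan is to argue the two assertions separately.

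For the closedness of $\Rec_\alpha$, I would lift to the covering. The correspondence noted just before the lemma gives $\Rec_\alpha = \pi_\alpha(\wh{\Rec}_\alpha)$, and $\wh{\Rec}_\alpha$ is closed in $\wh M_\alpha$ by the remark in Section~\ref{sec-pso}. The deck transformation group $G = H_1(M,\ZZ)/\ker_\ZZ(\alpha)$ commutes with the lifted flow, so it preserves $\wh{\Rec}_\alpha$, which gives $\pi_\alpha^{-1}(\Rec_\alpha) = \wh{\Rec}_\alpha$. Since $\pi_\alpha$ is a covering map, hence open, the complement $M \setminus \Rec_\alpha = \pi_\alpha(\wh M_\alpha \setminus \wh{\Rec}_\alpha)$ is open.

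For an $\alpha$-recurrence chain $C$, the analogous covering argument would require showing that the $G$-orbit of a recurrence chain in $\wh M_\alpha$ is closed, which is not immediate because recurrence chains need not be compact. I would instead argue by pseudo-orbit splicing. Fix $x_0 \in C$ and a sequence $x_n \to x$ with $x_n \in C$; the goal is, for every small $\epsilon > 0$ and every $T > 0$, to build a periodic $\epsilon$-pseudo-orbit $\gamma$ through both $x$ and $x_0$ with $\alpha([\gamma]) = 0$. I would start from a periodic $(\epsilon/3, 3T)$-pseudo-orbit $\gamma_n$ through $x_n$ and $x_0$ with $\alpha([\gamma_n]) = 0$, reparameterize so that $\gamma_n(0) = x_n$, and if necessary concatenate a few copies so that the length exceeds $2T$ and one visit of $x_0$ occurs in $[T, l_n - T]$; these operations preserve the vanishing of $\alpha$. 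Then I would define $\gamma$ to follow the true orbit $\varphi_t(x)$ on $[0, T)$ and coincide with $\gamma_n$ on $[T, l_n)$. For $n$ large, equicontinuity of $\varphi_T$ on the compact $M$ controls the new jump at time $T$, and the wrap-around jump at time $l_n$ from $\gamma_n(l_n^-)$ to $x$ is controlled by the triangle inequality, both bounded by $\epsilon$.

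The main obstacle, and the reason the chain case is more subtle, is controlling the homology class of $\gamma$. I would show $[\gamma] = [\gamma_n]$ in $H_1(M,\ZZ)$ by choosing the connecting paths in the $\epsilon$-realizations coherently: fix a short path $p$ from $x$ to $x_n$, take $\varphi_T \circ p$ as the connector at time $T$, and take the wrap-around connector of $\gamma$ to be the wrap-around connector of $\gamma_n$ followed by $p^{-1}$. With these choices, the 1-cycle $[\gamma] - [\gamma_n]$ simplifies to the boundary of the continuous rectangle $H(t,s) = \varphi_t(p(s))$ on $[0,T] \times [0,1]$, hence is null-homologous. Thus $\alpha([\gamma]) = \alpha([\gamma_n]) = 0$, giving $x \sim_\alpha x_0$ and so $x \in C$.
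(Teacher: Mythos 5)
Your proposal is correct, but it takes a genuinely different route from the paper on the first half. For the closedness of $\Rec_\alpha$, the paper argues directly in $M$: take $x_n\to y$ with $x_n$ $\alpha$-recurrent, choose a periodic $\eta$-pseudo-orbit $\gamma$ through $x_n$ with $\alpha([\gamma])=0$, and splice in the true orbit of $y$ over an initial arc $[0,t_0[$, so that the modified curve $\delta$ is still a periodic $\epsilon$-pseudo-orbit with $\alpha([\delta])=0$. You instead observe that $\Rec_\alpha=\pi_\alpha(\wh\Rec_\alpha)$, that the deck group preserves $\wh\Rec_\alpha$ so $\pi_\alpha^{-1}(\Rec_\alpha)=\wh\Rec_\alpha$, and that a covering map is a quotient map; this is cleaner and more conceptual, though it leans on the (unproved but asserted) correspondence stated just before the lemma, and on the Remark in Section~\ref{sec-pso}. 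Your diagnosis of why the same trick fails for an individual chain is exactly right: a single recurrence chain in $\wh M_\alpha$ is not deck-invariant, and the $G$-orbit of a non-compact closed set need not be closed, so the covering argument gives nothing. For the chain case you then revert to the paper's pseudo-orbit splicing argument, with two useful improvements of presentation: you use $(\epsilon/3,3T)$-pseudo-orbits to leave explicit slack for the spacing condition on the new jump times, and you make explicit the homology identity $[\gamma]=[\gamma_n]$ via the null-homologous rectangle $H(t,s)=\varphi_t(p(s))$ with coherent choices of connectors, which the paper only asserts ("$\delta$ is a periodic $\epsilon$-pseudo-orbit that satisfies $\alpha([\delta])=0$"). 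One small technical point shared with the paper and glossed over in both: reparameterizing a periodic pseudo-orbit so that it takes the value $x_n$ at time $0$ may move a discontinuity into $[0,T[$ if $x_n$ sits near the end of an orbit arc; this is fixable (by choosing the splice time $t_0$ inside a long orbit arc containing $x_n$, which always exists when the original pseudo-orbit has gaps $\geq 3T$), and your use of the larger gap $3T$ essentially anticipates the fix, but the sentence "reparameterize so that $\gamma_n(0)=x_n$" should be justified.
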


\begin{proof}
	Let $(x_n)_n$ be a sequence of $\alpha$-recurrent points that converges toward some point $y$ in~$M$. 
	Fix some small $\epsilon>0$. Take~$n$ very large, $\eta>0$ very small, and $\gamma$ a periodic $\eta$-pseudo-orbit with $\gamma(0)=x_n$ and with $\alpha([\gamma])=0$. We may assume that the length of $\gamma$ is at least $2T$. Take $t_0$ in $[T,2T]$ to be either equal to the first discontinuity time of $\gamma$ if it is smaller than $2T$, or $t_0=T$ if $\gamma$ is continuous on $[0,2T]$. Define a pseudo-orbit $\delta$ of the same length as $\gamma$, that satisfies $\delta(t)=\varphi_t(x)$ for all $t$ in $[0,t_0[$, and that coincide with $\gamma$ outside $[0,t_0[$. If~$n$ is large enough and $\eta$ is small enough, $\delta$ is a periodic~$\epsilon$-pseudo-orbit that satisfies $\alpha([\delta])=0$. So~$x$ is $\alpha$-recurrent.
	
	The same argument, assuming all $x_n$ to belong to the same $\alpha$-recurrence chain, shows that any $\alpha$-recurrence chain is closed.
\end{proof}

\begin{lemma}\label{lem-rec-con-comp}
	The $\alpha$-recurrence chains are the connected components of the $\alpha$-recurrent set $\Rec_\alpha$.
\end{lemma}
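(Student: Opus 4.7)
The plan is to lift to the Abelian cover $\wh M_\alpha$ and appeal to Conley's classical theorem that, in a compact metric flow, recurrence chains coincide with connected components of the chain-recurrent set. First, a periodic $\epsilon$-pseudo-orbit $\gamma \subset M$ with $\alpha([\gamma])=0$ lifts to a closed $\epsilon$-pseudo-orbit in $\wh M_\alpha$, and conversely any closed pseudo-orbit in $\wh M_\alpha$ projects to a periodic pseudo-orbit in $M$ whose homology is annihilated by $\alpha$. I would use this to show that $\pi_\alpha^{-1}(\Rec_\alpha)=\wh\Rec_\alpha$ and that the $\alpha$-recurrence chain of $x\in\Rec_\alpha$ equals $\pi_\alpha(\wh R)$ for any recurrence chain $\wh R\subset\wh M_\alpha$ meeting $\pi_\alpha^{-1}(x)$: any two chains meeting $\pi_\alpha^{-1}(x)$ are deck translates of each other (since chains are disjoint and the deck group permutes them), so their projections agree.

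Next I would argue that each recurrence chain $\wh R\subset\wh M_\alpha$ is connected. In the integer-coefficient case Conley's theorem applies directly to the compact $\wb M_\alpha$, and a chain of $\wh M_\alpha$ not accumulating at $\pm\infty$ is a chain of $\wb M_\alpha$; in the real-coefficient case the same reasoning applies on a deck-invariant compact exhaustion of $\wh M_\alpha$, or via the attractor-repeller machinery from Lemma~\ref{lem-preL-atr-rep}. Being a continuous image of a connected set, each $\alpha$-chain $R=\pi_\alpha(\wh R)$ is therefore connected in $M$, so $R$ is contained in a single connected component of $\Rec_\alpha$.

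For the reverse inclusion, let $C$ be the connected component of $\Rec_\alpha$ containing a point $x\in R$ and fix a lift $\hat x\in\wh R$. The restriction $\pi_\alpha\colon\pi_\alpha^{-1}(C)\to C$ is a covering map over the connected base $C$, so the component $\wh C$ of $\pi_\alpha^{-1}(C)$ through $\hat x$ surjects onto $C$. As a connected subset of $\wh\Rec_\alpha$ containing $\hat x$, $\wh C$ lies inside the recurrence chain $\wh R$ of $\hat x$; projecting gives $C=\pi_\alpha(\wh C)\subseteq\pi_\alpha(\wh R)=R$, which combined with the previous inclusion yields $R=C$.

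The main obstacle is the connectedness of chains in $\wh M_\alpha$ when $\alpha$ has irrational coefficients, because $\wh M_\alpha$ is then neither compact nor equipped with a canonical two-point compactification, and Conley's argument does not apply off the shelf. I would handle this by running the attractor-repeller separation of Lemma~\ref{lem-preL-atr-rep} on compact invariant pieces of $\wh M_\alpha$, or by approximating real $\alpha$ by integer classes in the spirit of Section~\ref{sec-general-quasiL} and passing to the limit, reducing the general case to the one where Conley's theorem is directly available.
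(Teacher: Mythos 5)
Your proposal takes a genuinely different route from the paper, and unfortunately it has gaps that make it not work as stated.

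The paper's proof never leaves the compact manifold $M$. For the forward direction it argues that if an $\alpha$-recurrence chain $R$ split into two disjoint compact pieces $A,B$, then any sequence of periodic pseudo-orbits witnessing the $\alpha$-equivalence of points in $A$ and $B$ would accumulate only on $R$, hence would eventually be trapped in $V_\epsilon(A)$ alone, a contradiction. For the reverse direction it uses the elementary observation that if $A$ is a connected component of $\Rec_\alpha$, then $V_\epsilon(A)$ is path-connected (since $\epsilon$-balls are path-connected), so any two points of $A$ can be joined by a finite $\epsilon$-chain of points in $A$, and the periodic pseudo-orbits through these points can be concatenated into a single periodic $3\epsilon$-pseudo-orbit through both with $\alpha$-trivial class. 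All of this uses only compactness of $M$; no covering space, no Conley attractor-repeller machinery.

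Your plan has three real problems. First, to show that recurrence chains in $\wh M_\alpha$ are connected you propose (in the integer case) passing to $\wb M_\alpha$ and applying Conley there, claiming that a chain of $\wh M_\alpha$ not accumulating at $\pm\infty$ is a chain of $\wb M_\alpha$. That identification holds when $\alpha$ is quasi-Lyapunov (Lemma~\ref{lem-rec-identify}), but the present lemma carries no quasi-Lyapunov hypothesis, and the remark right after Lemma~\ref{lem-rec-identify} explains precisely why the identification fails without it: a pseudo-orbit in $\wb M_\alpha$ may approach $\pm\infty$, make a jump that is small in $\wb M_\alpha$ but large in $\wh M_\alpha$, and return, merging chains that are distinct in $\wh M_\alpha$. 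Second, you acknowledge that the real-coefficient case is the main obstacle but only gesture at fixes (compact exhaustion, approximation by integer classes). Approximation by rational quasi-Lyapunov classes via Corollary~\ref{cor-rat-L-span} again requires $\alpha\in\QL_\varphi$, which is not assumed here, and running attractor-repeller theory on a non-cocompact cover with no canonical compactification is not something Lemma~\ref{lem-preL-atr-rep} provides. Third, the covering-map argument in your reverse inclusion --- that the connected component $\wh C$ of $\pi_\alpha^{-1}(C)$ through $\hat x$ surjects onto $C$ --- relies on $C$ being locally connected (so that one can choose connected evenly-covered patches whose individual sheets stay inside $\wh C$). A connected component of $\Rec_\alpha$ need not be locally connected. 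The paper's $V_\epsilon(A)$ trick circumvents exactly this issue by working with an open, path-connected neighborhood rather than with $C$ itself.

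In short, the direct argument on $M$ that the paper uses is both shorter and free of the hypotheses your lifting strategy implicitly introduces. If you want to salvage the covering approach, you would at minimum need to prove connectedness of recurrence chains in $\wh M_\alpha$ from scratch (the paper's periodic-pseudo-orbit accumulation argument adapted upstairs would work) and replace the covering-surjectivity step by the finite-$\epsilon$-chain argument.
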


Denote by $V_\epsilon(X)$ the~$\epsilon$-neighborhood of a subset $X\subset M$.

\begin{proof}
	Let~$R$ be an $\alpha$-recurrence chain, assumed to be not connected. It is the union of two disjoint open subsets $A,B$. Then $A$ and $B$ are closed in~$R$, and even compact. Thus, they are separated by a positive distance $3\epsilon$. For~$n$ in $\NN$, let $\gamma_n$ be a periodic $\tfrac{1}{n}$-pseudo-orbits that passes through a point~$x$ in $A$. Any accumulation point of the sequence $\gamma_n$, when~$n$ goes to $+\infty$, is $\alpha$-equivariant to~$x$, so it belongs to~$R$. Thus, the $\gamma_n$ do not accumulate outside $V_\epsilon(A)\cup V_\epsilon(B)$. Since $V_\epsilon(A)$ and $V_\epsilon(B)$ are disjoint, it follows that~$\gamma_n$ remains inside $V_\epsilon(A)$ when~$n$ is large enough. It contradicts that $A$ and $B$ are $\alpha$-equivariant. Thus, any $\alpha$-recurrence chain is connected.

	Let $A$ be a connected component of $\Rec_\alpha$ and $x_1,x_2$ be two points in $A$. Fix some small $\epsilon>0$. Since $A$ is connected and since the~$\epsilon$-balls are path connected, the set $V_\epsilon(A)$ is path connected. It follows that there exists a sequence $y_k$ in $V_\epsilon(A)$, for $1\leq k\leq p$, which satisfies $y_1=x$ and $y_p=y$ and that the distance between $y_k$ and $y_{k+1}$ is not more than~$\epsilon$.
	Denote by $z_k$ a point in $A$ at distance at most~$\epsilon$ from $y_k$, and by $\gamma_k$ a periodic~$\epsilon$-pseudo-orbit that start at $z_k$, and with $\alpha([\gamma_k])=0$. We may choose $z_1=x_1$ and $z_p=x_2$. It follows from above than the distance between $z_k$ and $z_{k+1}$ is at most $3\epsilon$.
	
	We describe a periodic $3\epsilon$-pseudo-orbit $\delta$ that goes through $x_1$ and~$x_2$. We view $\gamma_k$ as a (non-periodic)~$\epsilon$-pseudo-orbit that starts and ends at $z_k$. The concatenation of the $\gamma_k$, in increasing order, is a $3\epsilon$-pseudo-orbit from~$x_1$ to $x_2$. Similarly, the concatenation of the $\gamma_k$, in decreasing order, is a $3\epsilon$-pseudo-orbit from $x_2$ to $x_1$. Denote by $\gamma$ the concatenation of these two. It is a periodic $3\epsilon$-pseudo-orbit that passes through $x_1$ and~$x_2$. It satisfies $[\gamma]=2\sum_k[\gamma_k]$, so $\alpha([\gamma])=0$ holds. Thus, $x_1$ and $x_2$ are $\alpha$-equivariant. Therefore, the connected components of $\Rec_\alpha$ are the $\alpha$-recurrence chains.	
\end{proof}

Lemma~\ref{lem-quasi-L-char} has the following consequence, which is a reinterpretation of Fried's results.

\begin{proposition}\label{prop-rec-empty}
	We have $\alpha(D_\varphi)<0$ if and only if $\alpha$ quasi-Lyapunov and $\Rec_\alpha$ is empty.
\end{proposition}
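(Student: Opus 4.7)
The plan is to derive both implications from Lemma~\ref{lem-quasi-L-char} (which characterizes $\alpha$ quasi-Lyapunov by $\alpha(D_{\varphi,\epsilon,T})\leq 0$ for some small $\epsilon$) combined with the pseudo-orbit decomposition of Lemma~\ref{lem-cut-pso-2}. For the forward direction, assume $\alpha(D_\varphi)<0$: since the compact sets $D_{\varphi,\epsilon,T}$ decrease to $D_\varphi$, a subsequential compactness argument gives $\alpha(D_{\varphi,\epsilon,T})\leq 0$ for some small $\epsilon$, so Lemma~\ref{lem-quasi-L-char} yields quasi-Lyapunov. If some $x\in\Rec_\alpha$ existed, periodic $(1/n)$-pseudo-orbits $\gamma_n$ through $x$ with $\alpha([\gamma_n])=0$ would give bounded normalized classes $v_n=[\gamma_n]/\len(\gamma_n)\in D_{\varphi,1/n,T}$ with $\alpha(v_n)=0$; any subsequential limit $v\in D_\varphi$ would satisfy $\alpha(v)=0$, contradicting $\alpha(D_\varphi)<0$.

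For the converse, assume $\alpha$ quasi-Lyapunov (so $\alpha(D_\varphi)\leq 0$) and $\Rec_\alpha=\emptyset$, and suppose for contradiction that $v_0\in D_\varphi$ satisfies $\alpha(v_0)=0$. Choose periodic $\epsilon_n$-pseudo-orbits $\gamma_n$ with $\epsilon_n\to 0$ and $[\gamma_n]/\len(\gamma_n)\to v_0$, hence $\alpha([\gamma_n])/\len(\gamma_n)\to 0$. Fix a small $\epsilon>0$ and apply Lemma~\ref{lem-cut-pso-2}: for $n$ large, $\gamma_n$ decomposes as $\sum_i\delta_{n,i}$ with the $\delta_{n,i}$ periodic $\epsilon$-pseudo-orbits of length in $[T,L]$. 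Since $H_1(M,\ZZ)$ is a discrete lattice and pseudo-orbits of bounded length have bounded homology, the classes $[\delta_{n,i}]$ all lie in a finite set $\Delta\subset H_1(M,\ZZ)$ depending only on $\epsilon$ and $L$. Quasi-Lyapunov then restricts $\alpha([\delta_{n,i}])$ to the finite subset $A:=\alpha(\Delta)\cap(-\infty,0]$.

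The main obstacle is the pigeonhole step that follows. Let $a^*:=\max A\leq 0$; if $a^*<0$, then $\alpha([\gamma_n])\leq a^*N_n$ combined with $N_n\geq\len(\gamma_n)/L$ yields $\alpha([\gamma_n])/\len(\gamma_n)\leq a^*/L<0$, contradicting convergence to $0$. Therefore $a^*=0$: for each small $\epsilon$, some piece $\delta$ is a periodic $\epsilon$-pseudo-orbit of length $\leq L$ with $\alpha([\delta])=0$. Pick a point $y_\epsilon$ on a continuity segment of such a $\delta$, extract a subsequential limit $y_{\epsilon_k}\to x\in M$, and perform a local perturbation: insert a short detour through $x$ inside a small ball around $y_{\epsilon_k}$, which adds only a null-homologous loop to the pseudo-orbit. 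The result is a periodic $\epsilon_k'$-pseudo-orbit through $x$ with $\alpha=0$, and $\epsilon_k'\to 0$, so $x\in\Rec_\alpha$, contradicting the assumption. The crucial ingredient throughout is the finiteness of $\alpha(\Delta)$—a consequence of the lattice structure of $H_1(M,\ZZ)$—which upgrades the asymptotic bound $\alpha([\gamma_n])/\len(\gamma_n)\to 0$ into the exact equality $\alpha([\delta])=0$ for some bounded-length piece, a step that would otherwise fail for real $\alpha$.
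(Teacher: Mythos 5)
Your proof is correct, and the overall route agrees with the paper's: both directions reduce to the characterization of Lemma~\ref{lem-quasi-L-char}, the nested compact sets $D_{\varphi,\epsilon,T}$, and an accumulation argument producing an $\alpha$-recurrent point. The genuine difference is in the converse direction: the paper simply asserts that $\alpha(D_\varphi)\nless 0$ yields, for every small $\epsilon$, a periodic $\epsilon$-pseudo-orbit $\gamma_\epsilon$ with $\alpha([\gamma_\epsilon])\geq 0$ (hence $=0$ by quasi-Lyapunov), without saying why the asymptotic condition $\alpha([\gamma])/\len(\gamma)\to 0$ can be upgraded to an exact equality $\alpha([\gamma])=0$ for some actual pseudo-orbit. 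You correctly identify this as the non-trivial step and fill it in with Lemma~\ref{lem-cut-pso-2} plus the finiteness of homology classes of bounded-length pieces inside the lattice $H_1(M,\ZZ)/\text{torsion}\subset H_1(M,\RR)$, which is exactly the right mechanism; the paper's own Lemma~\ref{lem-cut-pso-finite} or Corollary~\ref{cor-pso-decomposition} packages the same finiteness and could be cited in place of your pigeonhole. Two small points of phrasing you should tighten: interpret $\Delta$ as the set of homology classes of \emph{all} periodic $\epsilon$-pseudo-orbits of length $\leq L$ (not just those appearing in the chosen decompositions), so that $a^*=0$ really does produce a bounded pseudo-orbit with $\alpha=0$; and in the final perturbation, rather than ``insert a short detour'' (which risks placing two jumps closer than $T$), replace one orbit arc of $\delta$ by the orbit arc of $x$ of the same duration, exactly as in the paper's proof that $\Rec_\alpha$ is closed.
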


Note that $\alpha$ must be non-zero for the conditions to hold.

\begin{proof}
	Assume that $\alpha(D_\varphi)<0$ holds true. Since $D_\varphi$ is the decreasing intersection of the compact sets $D_{\varphi,\epsilon,T}$, we have $\alpha(D_{\varphi,\epsilon,T})<0$ for some $\epsilon>0$. From Lemma~\ref{lem-quasi-L-char}, $\alpha$ is quasi-Lyapunov. Additionally, there is no periodic~$\epsilon$-pseudo-orbit $\gamma$ with $\alpha([\gamma])=0$, so there is no $\alpha$-recurrent point.

	Assume $\alpha$ quasi-Lyapunov and $\Rec_\alpha$ empty. We assume that $\alpha(D_\varphi)\nless0$ holds. Then for all small $\epsilon>0$, there exists a periodic~$\epsilon$-pseudo-orbit $\gamma_\epsilon$ with $\alpha([\gamma_\epsilon])\geq0$. Since $\alpha$ is quasi-Lyapunov, we have $\alpha([\gamma_\epsilon])=0$ when~$\epsilon$ is small enough. Any accumulation point of $\gamma_\epsilon$ (when~$\epsilon$ goes to zero) is $\alpha$-recurrent. It contradicts that $\Rec_\alpha$ is empty, thus $\alpha(D_\varphi)<0$ holds.
\end{proof}

When $\alpha$ has integer coefficients, we additionally have.

\begin{proposition}\label{prop-alpha-rec-chain}
	Let $\alpha$ in $H^1(M,\ZZ)\setminus\{0\}$ be quasi-Lyapunov and~$R$ be a recurrence chain inside~$\wh M_\alpha$. Then~$R$ is compact and the map $R\xrightarrow{\pi_\alpha}\pi_\alpha(R)$ is a homeomorphism. 
\end{proposition}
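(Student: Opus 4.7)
The plan is to use any $\alpha$-equivariant continuous map $f\colon\wh M_\alpha\to\RR$ as a witness. Since $\alpha$ is quasi-Lyapunov, any such $f$ is $C$-quasi-Lyapunov for some $\epsilon>0$ and $C\geq 0$. Because $\alpha\neq 0$ has integer coefficients, $f$ diverges to $\pm\infty$ at $\pm\infty$ in $\wb M_\alpha$; concretely, for any compact interval $[a,b]\subset\RR$, the preimage $f^{-1}([a,b])$ meets only finitely many translates of a compact fundamental domain of the $\ZZ$-action, and is therefore compact.

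First I would prove that $R$ is compact. Since $R$ is closed in $\wh M_\alpha$, it suffices to bound $f(R)$. Suppose for contradiction that $f$ is unbounded above on $R$ and pick $y_n\in R$ with $f(y_n)\to+\infty$. Fix $x\in R$; the equivalence $x\rectot y_n$ gives in particular $x\recto y_n$ in $\wh M_\alpha$, so for every $\epsilon'\leq\epsilon$ there is an $\epsilon'$-pseudo-orbit from $x$ to $y_n$, and the quasi-Lyapunov inequality yields $f(y_n)\leq f(x)+C$, contradicting $f(y_n)\to+\infty$. The symmetric direction $y_n\recto x$ (also contained in $y_n\rectot x$) rules out $f(R)$ being unbounded below, since it forces $f(x)\leq f(y_n)+C$ with $f(y_n)\to-\infty$. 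Hence $R$ is contained in the compact set $f^{-1}([\inf f(R),\sup f(R)])$ and is itself compact.

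Next I would show that $\pi_\alpha|_R$ is injective. If $y\neq z$ in $R$ satisfy $\pi_\alpha(y)=\pi_\alpha(z)$, then $z=\delta\cdot y$ for some $\delta$ in the deck group with $\alpha(\delta)\neq 0$. The deck transformation by $\delta$ commutes with the flow and is an isometry of the chosen lifted metric, so it sends $\epsilon$-pseudo-orbits to $\epsilon$-pseudo-orbits; in particular $\delta\cdot R$ is again a recurrence chain. As it shares the point $z$ with $R$, the two chains coincide: $\delta\cdot R=R$. Iterating, $\delta^{k}\cdot y\in R$ for every $k\in\ZZ$; but $f(\delta^{k}\cdot y)=f(y)+k\,\alpha(\delta)\to\pm\infty$ as $k\to\pm\infty$, contradicting the compactness of $R$ just established.

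Finally, $\pi_\alpha|_R\colon R\to\pi_\alpha(R)$ is a continuous bijection from a compact space to a Hausdorff subspace of $M$, hence a homeomorphism. The main obstacle, I expect, is the boundedness step: quasi-Lyapunov is a one-sided condition in the direction of $\alpha$, so the compactness of $R$ is not automatic from a single-sided bound. The trick is to exploit the symmetry of the relation $\rectot$ defining recurrence chains: the direction $x\recto y_n$ bounds $f(R)$ from above, the direction $y_n\recto x$ bounds it from below, and only together do they confine $R$ to finitely many fundamental domains of the $\ZZ$-cover.
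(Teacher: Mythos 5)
Your proof is correct and follows essentially the same route as the paper: bound $f(R)$ using an $\alpha$-equivariant $C$-quasi-Lyapunov map (exploiting both directions of $\rectot$, which the paper phrases as ``$\wh f(R)$ is an interval of length at most $C$''), conclude compactness because $f^{-1}$ of a bounded interval is compact in the $\ZZ$-cover, and then rule out non-injectivity by producing an infinite deck orbit inside $R$. The only difference is that you spell out the intermediate step $\delta\cdot R=R$, which the paper leaves implicit when asserting that $R$ contains all $(kn)\cdot x$.
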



\begin{proof}
	Let $f\colon M\to\bfrac{\RR}{\ZZ}$ be continuous function, cohomologous to $\alpha$, and let $\wh f\colon\wh M_\alpha\to\RR$ be an $\alpha$-equivariant lift of $f$ to $\wh M_\alpha$. Then $\wh f$ is $C$-quasi-Lyapunov, for some $C\geq 0$. Let $\epsilon>0$ be given by the $C$-quasi-Lyapunov property of $\wh f$.
	Let~$R$ be a recurrence chain in $\wh M_\alpha$. Then $\wh f(R)$ is an interval $I$ of length at most $C$. Since $\wh M_\alpha\to M$ is a $\ZZ$-covering, $\wh f^{-1}(I)$ is compact. Therefore,~$R$ is compact.

	Now, assume that the projection $R\xrightarrow{\pi_\alpha} M$ is not injective. There exists two points in~$R$ sent by $\pi_\alpha$ to the same point $p$ in~$M$. That is there exist~$x$ in~$R$ and~$n$ in $\ZZ\setminus\{0\}$ for which $n\cdot x$ also belongs to~$R$. Then~$R$ contains all the points $(kn)\cdot x$ for all $k$ in $\ZZ$, which contradicts the compactness of~$R$. Therefore, $R\xrightarrow{\pi_\alpha} M$ is injective and thus it is an embedding. Its image is an $\alpha$-recurrence chain by definition, and the map $R\xrightarrow{\pi_\alpha}\pi_\alpha(R)$ is a homeomorphism.
\end{proof}

We end with a relation between the recurrent sets in $\wh M_\alpha$ and $\wb M_\alpha$.

\begin{lemma}\label{lem-rec-identify}
	When $\alpha$ is quasi-Lyapunov, a subset of $\wh M_\alpha$ is a recurrence chain for the flow on $\wh M_\alpha$ if and only if it is a recurrence chain for the flow on $\wb M_\alpha$. 
\end{lemma}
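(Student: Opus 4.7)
The plan is to identify recurrence chains on $\wh M_\alpha$ and $\wb M_\alpha$ by proving that, for $x, y \in \wh M_\alpha$, one has $x \rectot y$ in $\wh M_\alpha$ if and only if $x \rectot y$ in $\wb M_\alpha$, and that no $\rectot$-equivalence class in $\wb M_\alpha$ that meets $\wh M_\alpha$ also contains $\pm\infty$. The easy direction of the equivalence is immediate: any $\wh M_\alpha$-pseudo-orbit has compact image in $\wh M_\alpha$, on which the metrics $d_{\wh M_\alpha}$ and $d_{\wb M_\alpha}$ are comparable at small scale, so it is a $\wb M_\alpha$-pseudo-orbit of comparable size.

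For the converse, fix a continuous $\alpha$-equivariant $C$-quasi-Lyapunov function $\wh f \colon \wh M_\alpha \to \RR$. A preliminary symmetry: the map $-\wh f$ is $(-\alpha)$-equivariant and, by reversing $\varphi^{-1}$-pseudo-orbits and using the uniform continuity of $\wh f$ (obtained from $f \colon M \to \bfrac{\RR}{\ZZ}$ via a fundamental domain), one sees that $-\wh f$ is also quasi-Lyapunov for $\varphi^{-1}$. Since $\wh M_{-\alpha} = \wh M_\alpha$ and the end where $-\wh f \to +\infty$ is the end where $\wh f \to -\infty$, applying Lemma~\ref{lem-quasiL-infty-char} to both $(\alpha,\varphi)$ and $(-\alpha,\varphi^{-1})$ and translating the second statement back to $\varphi$ yields, for every $x, y \in \wh M_\alpha$, both $x \not\recto +\infty$ and $-\infty \not\recto y$ in $\wb M_\alpha$. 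Let $\epsilon_0 > 0$ be a common threshold below which none of the forbidden pseudo-orbits exists.

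The central step is to confine a $\wb M_\alpha$-pseudo-orbit. Set $r(N) = \sup\{d_{\wb M_\alpha}(z,+\infty) : z \in \wh M_\alpha,\ \wh f(z) \geq N\}$, which tends to $0$ as $N \to \infty$ by compatibility of the compactification, and define $r'$ analogously at $-\infty$; let $K(T) = \sup_z |\wh f(\varphi_T z) - \wh f(z)|$, which is finite by $\alpha$-equivariance and compactness of a fundamental domain. Choose $N_+$ so that $r(N_+ - K(T)) \leq \epsilon_0$, and $N_-$ symmetrically. I claim that for $\epsilon < \epsilon_0$ and any $\epsilon$-$\wb M_\alpha$-pseudo-orbit $\gamma \colon [0,l) \to \wb M_\alpha$ from $x$ to $y$, the values $\wh f\circ\gamma$ lie in $[-N_-,N_+]$. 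Indeed, if $\wh f(\gamma(t_0)) \geq N_+$, one concatenates $\gamma|_{[0,t_0]}$ with a $\varphi$-flow segment of duration $T$ followed by a jump at time $t_0+T$ from $\varphi_T(\gamma(t_0))$ to $+\infty$; the final jump has $\wb M_\alpha$-size at most $r(N_+ - K(T)) \leq \epsilon_0$, producing an $\epsilon_0$-pseudo-orbit from $x$ to $+\infty$, contradicting the symmetry step. A mirror construction, prepending a jump from $-\infty$ to $\varphi_{-T}(\gamma(s_0))$ at a low point $s_0$, rules out $\wh f(\gamma(s_0)) \leq -N_-$.

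Once $\gamma$ is confined to $K := \{|\wh f| \leq \max(N_+,N_-)\}$, which is compact in $\wh M_\alpha$ because $\wh M_\alpha \to M$ is a $\ZZ$-covering, the comparability of $d_{\wh M_\alpha}$ and $d_{\wb M_\alpha}$ on $K$ shows that $\gamma$ is an $\eta(\epsilon)$-pseudo-orbit in $\wh M_\alpha$ with $\eta(\epsilon) \to 0$; hence $x \recto y$ in $\wh M_\alpha$. Running the same argument with $x, y$ swapped gives $y \recto x$, so $x \rectot y$ in $\wh M_\alpha$. Combined with the symmetry step, which forbids $\pm\infty$ from the $\wb M_\alpha$-chain of any point of $\wh M_\alpha$, this identifies the two families of chains. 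The main obstacle is precisely the confinement step: a $\wb M_\alpha$-pseudo-orbit can a priori have jumps inside $\wh M_\alpha$ whose $\wh M_\alpha$-size is much larger than their $\wb M_\alpha$-size when both endpoints are close to an end of $\wh M_\alpha$, so the quasi-Lyapunov property cannot be applied to $\gamma$ directly; the trick of extending $\gamma$ by a single extra jump to infinity converts any high-$\wh f$ excursion into a direct violation of Lemma~\ref{lem-quasiL-infty-char}, bypassing this difficulty.
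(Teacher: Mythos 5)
Your proof is correct, and the overall strategy --- fix a $C$-quasi-Lyapunov $\alpha$-equivariant map $\wh f$, confine the pseudo-orbit to a compact region where the two metrics are comparable at small scale, and conclude --- is the same as the paper's. The genuine added value in your version is that you explicitly address the confinement of $\wb M_\alpha$-pseudo-orbits, which is the actual crux. The paper records the two-sided bound $\min(f(x),f(y))-C\leq f(z)\leq\max(f(x),f(y))+C$ and moves on, but as you observe, that estimate comes from the $C$-quasi-Lyapunov property, which by definition applies to $\wh M_\alpha$-pseudo-orbits; for a $\wb M_\alpha$-pseudo-orbit a small jump near an end of $\wh M_\alpha$ need not be small in $\wh M_\alpha$, so one cannot invoke the property until confinement is already known. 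Your resolution --- if $\wh f\circ\gamma$ reaches a high value $N_+$, flow for one more time $T$ and append a single jump to $+\infty$ of $\wb M_\alpha$-size at most $r(N_+-K(T))$, producing an admissible pseudo-orbit from $x$ to $+\infty$ and thereby contradicting Lemma~\ref{lem-quasiL-infty-char}, and symmetrically at $-\infty$ via the $(\varphi^{-1},-\alpha)$ reduction --- is clean and, unlike the paper's phrasing, makes it self-evident why the quasi-Lyapunov hypothesis is needed and why the lemma fails without it (which matches the remark the paper places immediately after the statement). Your note that $\pm\infty$ cannot join any $\wb M_\alpha$-chain that meets $\wh M_\alpha$ is also needed and the paper leaves it implicit. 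Minor points you wave at but should watch if you write this up: the time-reversal of a right-continuous pseudo-orbit is not literally right-continuous, so the symmetry step requires a small shift; and when applying the $C$-quasi-Lyapunov inequality to a tail $\gamma|_{[t,l)}$, the ``no jump in the first $[0,T[$'' requirement forces a truncation at the last jump time $\leq t$. Both are standard and do not affect the argument.
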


So when $\alpha$ is quasi-Lyapunov, we can identify the recurrence chains in $\wh M_\alpha$ with the ones in $\wb M_\alpha$ that are not $\{\pm\infty\}$.

Note that the conclusion does not necessarily hold when $\alpha$ is not quasi-Lyapunov. For instance for some pair of points $x,y$ in $\wh M_\alpha$, a pseudo-orbit can start at $x$, go very close to $\pm\infty$, do a jump that is small in $\wb M_\alpha$ but large in $\wh M_\alpha$, and then go back to $y$.

\begin{proof}
	Let $f\colon\wh M_\alpha\to\RR$ be a $C$-quasi-Lyapunov, $\alpha$-equivariant and continuous map.
	Take two points $x,y$ in $\wh M_\alpha$. Let $\epsilon$ be small enough, and $\gamma$ be an $\epsilon$-pseudo-orbit from $x$ to $y$. Then any point $z$ in $\gamma$ satisfies 
	$$\min(f(x),f(y))-C\leq f(z)\leq\max(f(x),f(y))+C.$$
	In particular, $\gamma$ remains in a compact region of $\wh M_\alpha$, where the metrics on $\wh M_\alpha$ and $\wb M_\alpha$ are comparable at small scales. As a consequence, $x$ and $y$ satisfy $x\recto y$ for the flow on $\wh M_\alpha$ if and only if they satisfy $x\recto y$ for the flow on~$\wb M_\alpha$. The conclusion follows.
\end{proof}

\section{Quasi-Lyapunov with integer coefficient}\label{sec-QL-int-coeff}

We build $\alpha$-equivariant maps that are either Lyapunov, or pre-Lyapunov with a control on the recurrent set.

\subsection{Spectral decomposition: the integer case}\label{sec-quasi-L-criterion}

Denote by $\wh\Rec_\alpha$ the recurrent set inside $\wh M_\alpha$.

\begin{theorem}\label{thm-spectral-decomp}
	Let $\alpha$ be in $H^1(M,\ZZ)\setminus\{0\}$.
	If there exists an $\alpha$-equivariant pre-Lyapunov map $f\colon\wh M_\alpha\to\RR$, then $\alpha$ is quasi-Lyapunov.

	If $\alpha$ is quasi-Lyapunov, then there exists an $\alpha$-equivariant Lyapunov map $f\colon\wh M_\alpha\to\RR$, and so that $f(\wh\Rec_\alpha)$ has Lebesgue measure zero.
	When~$M$ is smooth and $\varphi$ is generated by a uniquely integrable vector field $X$, then $f$ can be chosen smooth and satisfying $df(X)<0$ outside $\wh\Rec_\alpha$.
\end{theorem}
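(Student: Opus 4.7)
The plan is to handle the two directions separately. For the necessity direction, suppose that $f\colon\wh M_\alpha\to\RR$ is $\alpha$-equivariant pre-Lyapunov, and assume for contradiction that $\alpha$ is not quasi-Lyapunov. By Lemma~\ref{lem-quasi-L-char}, for every small $\epsilon>0$ there is a periodic $\epsilon$-pseudo-orbit $\gamma_\epsilon\subset M$ with $\alpha([\gamma_\epsilon])\geq 1$, and Lemma~\ref{lem-cut-pso-2} lets me assume $\len(\gamma_\epsilon)$ is uniformly bounded. An Arzel\`a--Ascoli extraction then produces a genuine periodic orbit $\gamma^*$ of $\varphi$ with $\alpha([\gamma^*])\geq 1$ as a uniform limit of a subsequence. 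Lifting $\gamma^*$ to $\wh M_\alpha$ and concatenating $n$ deck translates gives, for every $n\geq 1$, a continuous orbit arc from some $x$ to $n[\gamma^*]\cdot x$, which is an $\eta$-pseudo-orbit for every $\eta>0$. Hence $x\recto n[\gamma^*]\cdot x$, and pre-Lyapunov forces $f(x)\geq f(x)+n\alpha([\gamma^*])$ for all $n\geq 1$, absurd.

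For the sufficiency direction, the plan is to adapt Conley's construction of a complete Lyapunov function to the compactification $\wb M_\alpha$ and then fold the result back to $\wh M_\alpha$ so that it becomes $\alpha$-equivariant. Lemma~\ref{lem-quasiL-infty-char} gives $-\infty\not\recto+\infty$ in $\wb M_\alpha$, so $\{-\infty\}$ and $\{+\infty\}$ are distinct recurrence chains there, and by Lemma~\ref{lem-rec-identify} the remaining chains of $\wb M_\alpha$ are exactly those of $\wh M_\alpha$. Using Corollary~\ref{cor-rec-disconnect} to pick a countable separating family of attractor--repeller pairs $(A_k,A_k^*)$ and Lemma~\ref{lem-preL-atr-rep} to realize each as a pre-Lyapunov $f_k\colon\wb M_\alpha\to[0,1]$ that is constantly $0$ on a neighborhood of $A_k$ and constantly $1$ on a neighborhood of $A_k^*$, I build in standard Conley fashion a continuous Lyapunov $g\colon\wb M_\alpha\to[0,1]$ with $g(-\infty)=0$, $g(+\infty)=1$, $g(\wb{\Rec}_\alpha)$ of Lebesgue measure zero, and (under the regularity hypothesis) smooth on $\wh M_\alpha$ with $dg(X)<0$ outside $\wb{\Rec}_\alpha$.

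To $\alpha$-equivariantize $g$, let $\tau$ be the positive deck generator of $\wh M_\alpha\to M$, set $n=\alpha(\tau)>0$, and telescope:
\[
G(x)=\sum_{j<0}g(j\cdot x)+\sum_{j\geq 0}\bigl(g(j\cdot x)-1\bigr).
\]
Because $g\equiv 0$ near $\{-\infty\}$ and $g\equiv 1$ near $\{+\infty\}$, only finitely many summands are non-zero for $x$ in any compact subset of $\wh M_\alpha$, so the series converges uniformly on compacts. An index shift gives $G(\tau\cdot x)=G(x)+1$, hence $f\defi nG$ is $\alpha$-equivariant. Each summand is non-increasing along the lifted flow, and strictly decreasing as soon as $j\cdot x\notin\wb{\Rec}_\alpha$, that is as soon as $x\notin\wh{\Rec}_\alpha$; so $f$ decreases strictly off $\wh{\Rec}_\alpha$, $f(\wh{\Rec}_\alpha)$ inherits Lebesgue-null from $g(\wb{\Rec}_\alpha)$, and term-by-term differentiation of the finite sum gives the smooth version with $df(X)<0$ outside $\wh{\Rec}_\alpha$.

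The principal obstacle is separation: $f$ automatically distinguishes chains within a single deck orbit through the shift $f(\tau\cdot x)=f(x)+n$, but an infinite telescoped sum can accidentally cancel between chains in \emph{different} deck orbits, that is, between chains projecting to distinct $\alpha$-recurrence chains in $M$. I plan to resolve this by running the telescoping construction in parallel for each $f_k$: depending on the pair $(f_k(-\infty),f_k(+\infty))\in\{0,1\}^2$, the telescoped map $F_k$ is either $\alpha$-equivariant with coefficient $c_k\in\{-1,+1\}$ or deck-invariant ($c_k=0$), and each $F_k$ is pre-Lyapunov. A Conley-style geometrically weighted sum $f=\sum_k w_k F_k$, with the weights normalized so that $\sum_k w_k c_k=n$, is then $\alpha$-equivariant and pre-Lyapunov; the dominance of the weight at the pair separating any two given chains ensures $f$ separates every pair, upgrading pre-Lyapunov to Lyapunov by Lemma~\ref{lem-L-def}. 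The main technical care needed is precisely this weighting bookkeeping: enforcing the total equivariance coefficient $n$ while guaranteeing that each separation survives the summation.
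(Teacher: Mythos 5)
The necessity direction has a genuine gap. You invoke Lemma~\ref{lem-cut-pso-2} to claim the lengths $\len(\gamma_\epsilon)$ can be taken \emph{uniformly} bounded, but that lemma gives a bound $L$ depending on the target error $\epsilon$; as $\epsilon\to 0$ the bound $L(\epsilon)$ blows up (the finite cover $B_1\cdots B_p$ must be refined). So there is no uniform bound across all $\epsilon$, and the Arzel\`a--Ascoli extraction to a genuine periodic orbit $\gamma^*$ of $\varphi$ with $\alpha([\gamma^*])\geq 1$ cannot run. The conclusion is also simply false in examples: for a rational linear flow on $\TT^2$ with $\alpha$ vanishing on the flow direction, every genuine periodic orbit has $\alpha([\gamma^*])=0$, yet $\alpha$ is not quasi-Lyapunov; and for flows with no periodic orbits at all there is nothing to extract. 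The paper avoids periodic orbits entirely: it takes $\epsilon$-pseudo-orbits in $\wh M_\alpha$ from $x_\epsilon$ to $1\cdot x_\epsilon$, translates by the deck group so the $x_\epsilon$ stay in a compact fundamental region, and passes to a subsequential limit of the \emph{endpoints} alone, obtaining points $x,y=1\cdot x$ with $x\recto y$ and $f(y)=f(x)+1$, directly contradicting pre-Lyapunov.

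The sufficiency direction follows the same telescoping-plus-weighted-sum template as the paper and correctly identifies the separation problem, but the stated fix --- ``weighting bookkeeping'' --- is not enough and elides the essential structural step. The geometric-weight separation argument (the Cantor-set device of Lemma~\ref{lem-injective-cantor}) requires the differences $F_k(R_1)-F_k(R_2)$ to lie in a \emph{uniformly bounded} set, and the telescoped maps $F_k$ built from an arbitrary separating family of attractor--repeller pairs in $\wb M_\alpha$ do not satisfy this: for a chain $R$ with $g(R)=0$, the value $H_A(R)$ counts how many translates $k\cdot R$ fall on the ``wrong'' side of $A$, and that count is unbounded over attractors $A$. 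The paper's construction handles this by first producing a single $\alpha$-equivariant pre-Lyapunov $g$ sending recurrence chains to $\ZZ$, then restricting to the family $\mathbb{A}$ of attractors containing $\Rec^-\cup\{-\infty\}$ and disjoint from $\Rec^+\cup\{+\infty\}$, where $\Rec^\pm$ are the chains with $g\geq 1$ (resp.\ $g\leq-1$). That restriction is exactly what forces $H_n(R)\in\{g(R),g(R)+1\}$ (Lemmas~\ref{lem-L-estimate-1} and~\ref{lem-L-estimate}), so that the tail of the weighted sum is bounded by the lead term and the Cantor-set argument separates. It also makes every telescope coefficient $+1$, sidestepping your $\sum_k w_kc_k=n$ constraint with $c_k\in\{-1,0,1\}$ (which by itself does not yield pre-Lyapunov after rescaling unless the positive-coefficient family remains separating). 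In short, the missing idea is the normalization by $g$ and the restriction to $\mathbb{A}$; it is a structural ingredient, not bookkeeping.

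Your observations that the telescope is locally finite, that $\pm\infty$ belong to each $A_k$ or $A_k^*$ and hence the $f_k$ are constant near $\pm\infty$, that the telescope decreases strictly off $\wh\Rec_\alpha$, and that Lemma~\ref{lem-L-def} upgrades a separating pre-Lyapunov map to strongly-Lyapunov are all correct and match the paper's reasoning.
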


We prove the first claim here, which is easier.

\begin{proof}[Proof of the first claim]
	Assume that for all $\epsilon>0$, there exists an $\epsilon$-pseudo-orbit $\gamma_\epsilon\colon[0,l]\to\wh M_\alpha$ with $\gamma(l)=\gamma(0)+1$. Up to replacing $\gamma_\epsilon$ by $n\cdot\gamma_\epsilon$ for some $n$ in $\ZZ$, we may assume that $\gamma_\epsilon(0)$ remains in a bounded region of $\wh M_\alpha$.
	
	When $\epsilon$ goes to zero, $\gamma_\epsilon(0)$ and $\gamma_\epsilon(1)$ convergence toward two points $x$ and $y$, which satisfies $x\recto y$ and $f(y)=f(x)+1$. It contradicts that $f$ is pre-Lyapunov. Thus, $f$ is $1$-quasi-Lyapunov, and $\alpha$ is quasi-Lyapunov. 
\end{proof}


We dedicate the rest of the section to the second and third claims in the theorem. 
Fix a class $\alpha$ in $H^1(M,\ZZ)$, assumed quasi-Lyapunov. Recall that the recurrence chains in $\wh M_\alpha$ and the one in $\wb M_\alpha$ can be identified, apart from~$\{\pm\infty\}$ (see Lemma~\ref{lem-rec-identify}).

Let us sketch the proof. Take two recurrence chains $R_1,R_2$ in $\wh M_\alpha$ so that $R_1$ is far closer to $-\infty$ than $R_2$ is. Since $\alpha$ is quasi-Lyapunov, it implies $R_1\not\recto R_2$. We build an attractor $A$ in $\wb M_\alpha$, that contains $R_1$ and $-\infty$, but not $R_2$ nor $+\infty$. Lemma~\ref{lem-preL-atr-rep} yields a pre-Lyapunov map $f\colon\wb M_\alpha\to[0,1]$ which satisfies $f\equiv 1$ on $R_2\cup\{+\infty\}$ and $f\equiv 0$ on $R_1\cup\{-\infty\}$. Then, the sum of the functions $x\mapsto f(n\cdot x)$, for~$n$ in $\ZZ$, provides an $\alpha$-equivariant pre-Lyapunov function $H\colon\wh M_\alpha\to\RR$ with $H(R_1)<H(R_2)$. By summing carefully countably many functions like $H$, for different values of $R_1,R_2$, we obtain an $\alpha$-equivariant Lyapunov function.

We build an $\alpha$-equivariant and pre-Lyapunov map. 
Building a Lyapunov map is quite more technical. So we do it in a second time. 

\begin{lemma}\label{lem-attractor-existance}
	Let $R_1,R_2$ be two disjoint compact subsets of $\wb M_\alpha$. We assume that there exists no pair $(x_1,x_2)$ in $R_1\times R_2$ that satisfies $x_1\recto x_2$. Then there exists a pair of attractor-repeller $A,A^*\subset N$ that satisfies $R_1\subset A$ and $R_2\subset A^*$. 
\end{lemma}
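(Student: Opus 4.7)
The plan is to construct $A$ à la Conley as the maximal invariant subset $A = \bigcap_{t \geq 0} \psi_t(N)$ of a suitable trapping region $N \supset R_1$ disjoint from $R_2$.

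First I would use compactness to promote the pointwise hypothesis to a uniform separation: since $\recto$ is closed in $\wb M_\alpha \times \wb M_\alpha$ (a standard extraction argument on short-enough pseudo-orbits) and the closed set $R_1 \times R_2$ lies in its complement, one extracts $\epsilon_0 > 0$ and disjoint open neighborhoods $U_1 \supset R_1$, $U_2 \supset R_2$ such that no $(\epsilon_0, T)$-pseudo-orbit starting in $\overline{U_1}$ ever meets $\overline{U_2}$.

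Next I would define
$$P \defi \overline{U_1} \cup \set{y \in \wb M_\alpha \col \exists x \in \overline{U_1},\; \exists\,(\epsilon_0/2, T)\text{-pseudo-orbit from } x \text{ to } y}$$
and set $N \defi \overline{P}$. Any point in $N$ is $(\epsilon_0, T)$-reachable from $\overline{U_1}$, absorbing the limit accumulation into a terminal jump of size less than $\epsilon_0/2$, so the uniform separation forces $R_2 \cap N = \emptyset$; clearly $R_1 \subset N$. The technical heart is to show that $N$ is a trapping region, i.e.\ $\psi_{t_0}(N) \subset \operatorname{int}(N)$ for $t_0 = 2T$. For $y \in N$ approached by $y_n \in P$ via $(\epsilon_0/2, T)$-pseudo-orbits $\gamma_n$, extending $\gamma_n$ by a flow arc of length $t_0$ and then appending a final jump of size up to $\epsilon_0/4$ (permitted since the preceding jump lies at least $T$ earlier) yields an $(\epsilon_0, T)$-pseudo-orbit reaching an entire $\epsilon_0/4$-neighborhood of $\psi_{t_0}(y_n)$; passing to the limit $y_n \to y$ puts $\psi_{t_0}(y)$ in $\operatorname{int}(N)$. \textbf{The main obstacle lies in this bookkeeping step}: one must respect the minimum inter-jump spacing $T$ and the right-continuity convention so that the extended pseudo-orbit actually lands arbitrarily close to the intended target.

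Finally, $A = \bigcap_{t \geq 0} \psi_t(N)$ is a compact $\psi$-invariant attractor with $N \subset D(A)$. In the intended use $R_1$ and $R_2$ are recurrence chains, hence $\psi$-invariant, so $R_1 = \psi_t(R_1) \subset \psi_t(N)$ for all $t \geq 0$ gives $R_1 \subset A$; and if some $x_2 \in R_2$ lay in $D(A)$ then $\emptyset \neq \omega(x_2) \subset A \cap R_2 \subset N \cap R_2 = \emptyset$, a contradiction, so $R_2 \subset A^*$.
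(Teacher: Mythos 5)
Your construction is sound and lands on essentially the same idea as the paper: build the attractor out of the set of points reachable from $R_1$ by small pseudo-orbits, after first using compactness to extract a uniform $\epsilon$ below which no pseudo-orbit crosses from $R_1$ to $R_2$. The concrete packaging differs. The paper sets $U_\epsilon$ to be the $\epsilon$-pseudo-orbit forward-reachable set of $R_1$, asserts it is open, and takes $A=\omega(U_\epsilon)$ directly; you instead thicken $R_1$ to $\overline{U_1}$, take the closure $N=\overline{P}$ of the reachable set, verify the trapping-region property $\psi_{t_0}(N)\subset\operatorname{int}(N)$, and set $A=\bigcap_{t\geq0}\psi_t(N)$. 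Both are standard Conley-theoretic routes to an attractor, and yours is arguably the safer one: the paper's assertion that $U_\epsilon$ is open requires a short argument (perturbing the last jump target using the strict inequality in the jump bound), which your interior/thickening bookkeeping handles explicitly, at the cost of the inter-jump spacing caveats you flag. One point is worth making explicit in your write-up rather than leaving parenthetical: your final step genuinely needs $R_1$ and $R_2$ to be $\psi$-invariant (so that $R_1=\psi_t(R_1)\subset\psi_t(N)$, and so that $\omega(x_2)\subset R_2$). The lemma as stated omits this hypothesis, and it can fail without it — take the compactified line $\overline{\RR}$ with a flow that fixes each integer and flows forward in between, $R_1$ a singleton in the interior of a fundamental interval, and $R_2$ the fixed point just below: the hypothesis $R_1\not\recto R_2$ holds, yet no attractor contains $R_1$ while its repeller contains $R_2$. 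The paper's own "it is clear that $A$ contains $R_1$" silently relies on the same invariance, which does hold in every application (unions of recurrence chains and $\pm\infty$), so this is a gap in the statement of the lemma rather than in either proof; you were right to notice it.
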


\begin{proof}
	By assumption, there exists $\epsilon>0$ for which there is no~$\epsilon$-pseudo-orbit from $R_1$ to $R_2$. 
	We denote by $U_\epsilon$ the open subset of points $y$ for which there is an~$\epsilon$-pseudo-orbit that starts in $R_1$ and ends at $y$. Then define $A$ to be the $\omega$-limit of $U_\epsilon$. Since for all $t\geq T$, the closure of $\psi^\alpha([t,+\infty[)$ is included in $\psi^\alpha([t-T,+\infty[)$, the set $A$ is compact. It is invariant by the flow and is the $\omega$-limit of its neighborhood $U$, so it is an attractor. It is clear that $A$ contains $R_1$ and is disjoint from $R_2$. 
\end{proof}

\begin{lemma}\label{lem-Z-equiv}
	Assume $\alpha$ quasi-Lyapunov and primitive. Let $h\colon\wb M_\alpha\to[0,1]$ be a pre-Lyapunov map which satisfies $h\equiv 0$ on a neighborhood of $-\infty$ and $h\equiv 1$ on a neighborhood of~$+\infty$. Denote by $H\colon\wh M_\alpha\to\RR$ the map defined by 
	$$H(x)=\sum_{k>0}(h(k\cdot x)-1)+\sum_{k\leq 0}h(k\cdot x).$$
	Then $H$ is well-defined, $\alpha$-equivariant and pre-Lyapunov. If $h$ is additionally smooth inside $\wh M_\alpha$, then $H$ is smooth.
\end{lemma}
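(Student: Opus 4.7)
The plan is to verify well-definedness/smoothness, then $\alpha$-equivariance, and finally the pre-Lyapunov property, which I expect to be the main technical step. First I would observe that the series defining $H(x)$ is locally finite: since $h \equiv 1$ on a neighborhood of $+\infty$ and $h \equiv 0$ on a neighborhood of $-\infty$, the structure of these neighborhoods in $\wb M_\alpha$ yields a compact $K \subset \wh M_\alpha$ with $\pi_\alpha(K) = M$ and an integer $N$ such that $h \equiv 1$ on $\bigcup_{n \geq N} n \cdot K$ and $h \equiv 0$ on $\bigcup_{n \leq -N} n \cdot K$. For any $x$, writing $x \in m \cdot K$, one gets $h(k\cdot x) = 1$ for all $k \geq N - m$ and $h(k\cdot x) = 0$ for all $k \leq -N - m$, with the same bounds holding uniformly on a neighborhood of $x$. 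Both sums defining $H$ are therefore locally finite, which makes $H$ well-defined and continuous, and locally a finite sum of smooth functions (hence smooth) under the additional hypothesis on $h$.

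For $\alpha$-equivariance, since $\alpha$ is primitive the deck group of $\wh M_\alpha$ is infinite cyclic, generated by some $\sigma$ with $\alpha(\sigma) = 1$; it therefore suffices to show $H(\sigma \cdot x) = H(x) + 1$. Reindexing $j = k + 1$ in the expression for $H(\sigma \cdot x)$ and comparing term by term with $H(x)$, all indices $j \geq 2$ and $j \leq 0$ contribute identically, and only $j = 1$ behaves differently: it appears as $h(\sigma \cdot x) - 1$ in the upper sum for $H(x)$ but as $h(\sigma \cdot x)$ in the lower sum for $H(\sigma \cdot x)$, producing a difference of exactly $+1$.

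For the pre-Lyapunov property, which is the main technical point, fix $x, y \in \wh M_\alpha$ with $x \recto y$ in $\wh M_\alpha$. Since the deck action commutes with the flow, $k \cdot x \recto k \cdot y$ in $\wh M_\alpha$ for every $k \in \ZZ$. Here the quasi-Lyapunov hypothesis is essential: as in the proof of Lemma~\ref{lem-rec-identify}, an $\alpha$-equivariant quasi-Lyapunov map confines any small-$\epsilon$ pseudo-orbit between $k\cdot x$ and $k\cdot y$ to a compact region of $\wh M_\alpha$, where the metrics on $\wh M_\alpha$ and $\wb M_\alpha$ are comparable at small scale, so $k\cdot x \recto k\cdot y$ holds in $\wb M_\alpha$ too. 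The pre-Lyapunov property of $h$ on $\wb M_\alpha$ then gives $h(k\cdot x) \geq h(k\cdot y)$ for all $k \in \ZZ$, and summing these term-by-term (the constant shifts in the upper sum preserve the inequalities) yields $H(x) \geq H(y)$. The only delicate point in the whole argument is this transfer from $\wh M_\alpha$ to $\wb M_\alpha$: without the quasi-Lyapunov hypothesis, a pseudo-orbit in $\wh M_\alpha$ could escape toward $\pm\infty$ and make the term-by-term comparison fail.
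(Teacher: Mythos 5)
Your proof is correct and follows the same lines as the paper's: local finiteness for well-definedness (and, under the smoothness hypothesis, smoothness), the index-shift computation for $\alpha$-equivariance, and a term-by-term comparison for the pre-Lyapunov property. The paper declares the last two points clear; your filling-in of the pre-Lyapunov step via Lemma~\ref{lem-rec-identify} is the intended argument.

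One small remark: you describe the quasi-Lyapunov hypothesis as essential for the transfer from $\wh M_\alpha$ to $\wb M_\alpha$, because a pseudo-orbit in $\wh M_\alpha$ ``could escape toward $\pm\infty$.'' This is a slight over-claim for the direction you actually need. Since the metric on $\wb M_\alpha$ contracts distances near $\pm\infty$ relative to the $\ZZ$-invariant metric on $\wh M_\alpha$, an $\epsilon$-pseudo-orbit in $\wh M_\alpha$ is automatically an $\epsilon'$-pseudo-orbit in $\wb M_\alpha$ with $\epsilon'\to0$ as $\epsilon\to0$ (split into the compact part $\bigcup_{|n|\le N}n\cdot K$ and the tails, which are small in $\wb M_\alpha$ by choice of $N$). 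The delicate direction of Lemma~\ref{lem-rec-identify} is the converse, which you do not use here. The quasi-Lyapunov hypothesis is nonetheless necessary in the lemma's statement for the hypotheses on $h$ to be satisfiable at all: if $\alpha$ were not quasi-Lyapunov, $-\infty\recto+\infty$ would hold and no pre-Lyapunov $h$ with $h(-\infty)=0$, $h(+\infty)=1$ could exist. None of this affects the validity of your argument, which goes through as written.
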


\begin{proof}
	For all~$x$ within a compact region $K$ of $\wh M_\alpha$, we have $h(k\cdot x)=0$ for any small enough $k$ and $h(k\cdot x)=1$ for any large enough $k$, so the sum has finitely many non-zero terms on a neighborhood of~$x$. Thus, $H$ is well-defined and continuous. A direct computation yields 
	$$H(1\cdot x)=H(x)+1=H(x)+\alpha(1)$$ 
	for any~$x$ in $\wh M_\alpha$ and seeing $1$ as an element of $\bfrac{H_1(M,\ZZ)}{\ker_\ZZ(\alpha)}$. So $H$ is $\alpha$-equivariant. The pre-Lyapunov and smooth assertion are clear.
\end{proof}

We now start building pre-Lyapunov maps.

\begin{lemma}
	Let $\alpha$ in $H^1(M,\ZZ)\setminus\{0\}$ be quasi-Lyapunov and primitive. Then there exists an $\alpha$-equivariant pre-Lyapunov map $g\colon\wh M_\alpha\to\RR$ that sends the recurrent set in $\wh M_\alpha$ onto $\ZZ$.
\end{lemma}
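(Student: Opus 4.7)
The plan is to apply Lemma~\ref{lem-Z-equiv} to a pre-Lyapunov map $h\colon\wb M_\alpha\to[0,1]$ coming from an attractor--repeller pair separating $-\infty$ from $+\infty$, and then to verify that $h$ only takes the values $0$ and $1$ on the recurrence chains of $\wb M_\alpha$.

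Since $\alpha$ is quasi-Lyapunov, Lemma~\ref{lem-quasiL-infty-char} gives $-\infty\not\recto+\infty$ in $\wb M_\alpha$, so Lemma~\ref{lem-attractor-existance} applied to $R_1=\{-\infty\}$ and $R_2=\{+\infty\}$ produces an attractor--repeller pair $(A,A^*)$ in $\wb M_\alpha$ with $-\infty\in A$ and $+\infty\in A^*$. Lemma~\ref{lem-preL-atr-rep} then furnishes a continuous pre-Lyapunov $h\colon\wb M_\alpha\to[0,1]$ that vanishes on a neighborhood of $A$ (hence of $-\infty$) and equals $1$ on a neighborhood of $A^*$ (hence of $+\infty$), so Lemma~\ref{lem-Z-equiv} applies and yields an $\alpha$-equivariant pre-Lyapunov map $g\colon\wh M_\alpha\to\RR$.

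It remains to check $g(\wh\Rec_\alpha)\subset\ZZ$. The key claim is that $h$ sends every recurrence chain of $\wb M_\alpha$ into $\{0,1\}$: any recurrent point $x$ belongs to $A\cup A^*$, because either $x\in D(A)$, in which case $x\in\omega(x)\subset A$, or else $x\in A^*$ by definition; and if two recurrent points $x\rectot y$ satisfied $x\in A$ and $y\in A^*$ then pre-Lyapunovness would force $0=h(x)\geq h(y)=1$, a contradiction. Using Lemma~\ref{lem-rec-identify}, for $x\in\wh\Rec_\alpha$ every translate $k\cdot x$ is also recurrent in $\wb M_\alpha$, so $h(k\cdot x)\in\{0,1\}$, and the (finite) sum defining $g(x)$ in Lemma~\ref{lem-Z-equiv} is an integer. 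Since $\alpha$ is primitive, $\alpha$-equivariance reads $g(n\cdot x)=g(x)+n$, so $g(\wh\Rec_\alpha)$ is a $\ZZ$-invariant subset of $\ZZ$, namely $\ZZ$ itself whenever $\wh\Rec_\alpha\neq\emptyset$. The only nontrivial step is the $A$ vs $A^*$ dichotomy for recurrence chains; once that is settled the rest is an essentially formal combination of the cited lemmas.
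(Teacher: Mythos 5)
Your overall strategy is the same as the paper's: use Lemma~\ref{lem-quasiL-infty-char} to see $-\infty\not\recto+\infty$, build an attractor--repeller pair via Lemma~\ref{lem-attractor-existance}, feed the resulting $h$ from Lemma~\ref{lem-preL-atr-rep} into Lemma~\ref{lem-Z-equiv}. (The paper actually chooses larger sets $R_1,R_2$, built from a quasi-Lyapunov function $u$ and the recurrence chains in $u^{-1}(]-\infty,0])$ and $u^{-1}([C,+\infty[)$, whereas you take just $\{-\infty\}$ and $\{+\infty\}$; your simpler choice is fine for the existence of a suitable attractor--repeller pair.) Where you go further than the paper's written proof is in explicitly checking that $g(\wh\Rec_\alpha)\subset\ZZ$, which is good, but the argument you give for it has a real error.

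The flawed step is the claim ``$x\in D(A)$ and $x$ recurrent implies $x\in\omega(x)\subset A$.'' In this paper ``recurrent'' means \emph{chain}-recurrent ($x\recto x$), and chain-recurrence does \emph{not} imply $x\in\omega(x)$. A heteroclinic cycle gives a counterexample: a point $x$ on an orbit connecting two fixed points in a cycle is chain-recurrent, yet $\omega(x)$ is one of the fixed points, not containing $x$. So your Case~1 argument does not prove $x\in A$. The conclusion you want (every chain-recurrent point of $\wb M_\alpha$ lies in $A\cup A^*$) is nevertheless true, but it is a theorem of Conley theory, not a consequence of the definitions via $\omega$-limits. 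The correct route is to take Conley's Lyapunov function for the pair $(A,A^*)$ --- say the map $h$ itself from Lemma~\ref{lem-preL-atr-rep} --- and invoke Lemma~\ref{lem-blocking-box}: if $x\notin A\cup A^*$ then $h(x)\in\,]0,1[$ lies off $h(\Rec)$, so some level set of (a flow-convolution of) $h$ is a compact hypersurface transverse to the flow separating $x$ from its own basin, and thus $x\not\recto x$. With that substitution the rest of your proof (finiteness of the sum, integrality of $g$ on $\wh\Rec_\alpha$, and surjectivity onto $\ZZ$ via primitivity and equivariance) goes through.
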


\begin{proof}
	Since $\alpha$ is quasi-Lyapunov, there exist a constant $C\geq0$ and an $\alpha$-equivariant and $C$-pre-Lyapunov map $u\colon\wh M_\alpha\to\RR$. 
	Denote by $R_2\subset\wb M_\alpha$ the union of $\{+\infty\}$ and of the recurrence chains that remain in $u^{-1}([C,+\infty[)$. Also denote by $R_1$ the union of $\{-\infty\}$ and of the recurrence chains that remain in $u^{-1}(]-\infty,0])$. Since $g$ is $C$-quasi-Lyapunov, there is no pair $(x_1,x_2)$ in $R_1\times R_2$ that satisfies $x_1\recto x_2$. 
	
	We apply Lemma~\ref{lem-attractor-existance} to $N=\wb M_\alpha$. It yields a pair $A,A^*$ of attractor-repeller that satisfies $R_1\subset A$ and $R_2\subset A^*$. Then Lemma~\ref{lem-preL-atr-rep} provides us with a pre-Lyapunov map $h\colon\wb M_\alpha\to[0,1]$ which satisfies the condition of Lemma~\ref{lem-Z-equiv}. So the map $H$ defined from $h$ in the later lemma is $\alpha$-equivariant and pre-Lyapunov.
\end{proof}

From now on, we assume that $\alpha$ is primitive. The non-primitive case follows from the primitive one. 
To obtain the strongly-Lyapunov property, we sum countably many functions given above. 

Denote by $g\colon\wh M_\alpha\to\RR$ an $\alpha$-equivariant and pre-Lyapunov map constructed above, which sends the recurrent set of $\wh M_\alpha$ onto $\ZZ$.
Denote by $\Rec^+$ the union of the recurrence chains~$R$ in $\wh M_\alpha$ that satisfy $g(R)\geq 1$. Similarly denote by $\Rec^-$ the union of the recurrence chains~$R$ that satisfy $g(R)\leq -1$. Observe that for any $\alpha$-equivariant and pre-Lyapunov maps $h\colon\wh M_\alpha\to\RR$, its values on the recurrent set is determined by its values on the recurrence chains in $g^{-1}(\{0\})$. We use the sets $\Rec^\pm$ to build maps $h$ as above, so that the recurrence chains in $g^{-1}(\{0\})$ are sent onto $\{0,1\}$, with some level of control. 

Let us denote by $\mathbb{A}$ the set of attractors that contain $\Rec^-\cup\{-\infty\}$ and that are disjoint from~$\Rec^+\cup\{+\infty\}$. Lemma~\ref{lem-attractor-existance} implies that $\mathbb{A}$ is not empty.
There exists at most countably many attractors in $\wb M_\alpha$, so $\mathbb{A}$ is at most countable. 

Fix some $m\geq 1$, an attractor $A$ in $\mathbb{A}$ and $A^*$ its associated repeller. Denote by $V_m(A)$ the $\tfrac{1}{m}$-neighborhoods $A\cup A^*$. Using Lemma~\ref{lem-preL-atr-rep}, there exists a pre-Lyapunov map $h_{A,m}(A)\colon\wh M_\alpha\to[0,1]$ which satisfies $h_{A,m}\equiv0$ close to $A$, $h_{A,m}\equiv1$ close $A^*$, and $h_{A,m}$ is decreasing along the flow outside $V_m(A)$. When~$M$ is smooth $\varphi$ is generated by a continuous and uniquely integrable vector field $X$, we also assume $h_{A,m}$ smooth and that satisfies $dh_{A,m}(X)<0$ outside $V_m(A)$. 

Note that there are countably many possible choices of $A$ in $\mathbb{A}$ and $m$ in $\NN_{\geq 1}$. We enumerate them
using a sequence $A_n$ with values in $\mathbb{A}$, and a sequence of maps $h_n$ as above, so that each maps $h_{A,m}$ appears once as a $h_m$.

Let $H_n\colon M_\alpha\to\RR$ be $\alpha$-equivariant and pre-Lyapunov function given by Lemma~\ref{lem-Z-equiv} applied to $h_n$. We build a strongly-Lyapunov function as the sum $\sum_n a_nH_n$ for some scalars $a_n$ in $[0,1]$. We need technical estimations to prove that the sum has different values on different recurrence chains. 

\begin{lemma}\label{lem-L-estimate-1}
	For any recurrence chain $R\subset \wh M_\alpha$ with $g(R)=0$, and any~$n$ in $\NN$, we have $H_n(R)=h_n(R)$ and this value lies in $\{0,1\}$. 
\end{lemma}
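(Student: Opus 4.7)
The plan is to exploit the $\alpha$-equivariance of $g$ to locate each translate $k\cdot R$ relative to the attractor-repeller pair $(A_n, A_n^*)$, and then to check that all but one term in the sum defining $H_n$ vanish trivially.

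First I would fix $x \in R$ and use that $g$ is $\alpha$-equivariant: since $\alpha$ is primitive, the element $1$ generates the deck group $\bfrac{H_1(M,\ZZ)}{\ker_\ZZ(\alpha)} \cong \ZZ$, so $g(k\cdot x) = g(x) + k = k$ for every $k \in \ZZ$. Each translate $k\cdot R$ is again a recurrence chain in $\wh M_\alpha$ (the deck action commutes with the flow and with the relation $\recto$), and by Lemma~\ref{lem-rec-identify} it is also a recurrence chain for the flow on $\wb M_\alpha$. Consequently, for $k \geq 1$ we have $g(k\cdot R) = \{k\} \subset [1,+\infty[$, so $k\cdot R \subset \Rec^+$ by definition; symmetrically, for $k \leq -1$, $k\cdot R \subset \Rec^-$.

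Next I would invoke the standard Conley-theoretic fact that every recurrence chain lies entirely inside an attractor or entirely inside its complementary repeller. Since $A_n \in \mathbb{A}$ contains $\Rec^- \cup \{-\infty\}$ and is disjoint from $\Rec^+ \cup \{+\infty\}$, this dichotomy forces $k\cdot R \subset A_n^*$ for all $k \geq 1$ and $k\cdot R \subset A_n$ for all $k \leq -1$. Using that $h_n \equiv 1$ on a neighborhood of $A_n^*$ and $h_n \equiv 0$ on a neighborhood of $A_n$, we obtain $h_n(k\cdot x) = 1$ for $k\geq 1$ and $h_n(k\cdot x) = 0$ for $k \leq -1$.

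Finally, substituting these values into the definition of $H_n$ from Lemma~\ref{lem-Z-equiv} makes all summands vanish except the term at $k=0$:
\[
H_n(x) \;=\; \sum_{k>0}(h_n(k\cdot x) - 1) \;+\; \sum_{k\leq 0} h_n(k\cdot x) \;=\; 0 + h_n(x) + 0 \;=\; h_n(x).
\]
Because $h_n$ is pre-Lyapunov it is constant on the recurrence chain $R$, and the containment $R \subset A_n$ or $R \subset A_n^*$ (applying the same dichotomy to $R$ itself) forces that common value to belong to $\{0,1\}$. This gives $H_n(R) = h_n(R) \in \{0,1\}$, as required.

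The only step requiring care is the dichotomy ``every recurrence chain lies in $A_n$ or $A_n^*$''; this is classical for Conley's attractor-repeller pairs but should be cited or recalled, as it is the load-bearing input that lets the telescoping argument go through.
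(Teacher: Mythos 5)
Your proof is correct and takes essentially the same route as the paper's: use the $\alpha$-equivariance of $g$ (with $\alpha$ primitive) to place $k\cdot R$ in $\Rec^+$ for $k\geq 1$ and in $\Rec^-$ for $k\leq -1$, apply the attractor-repeller dichotomy to deduce $h_n(k\cdot x)\in\{0,1\}$ for all $k$, and observe that only the $k=0$ term in the definition of $H_n$ from Lemma~\ref{lem-Z-equiv} survives. The paper's three-line proof leaves the dichotomy, the passage from $\wh M_\alpha$ to $\wb M_\alpha$ via Lemma~\ref{lem-rec-identify}, and the reason $h_n(R)\in\{0,1\}$ all implicit; you are right to flag the dichotomy as the load-bearing classical fact.
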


\begin{proof}
	For any $k\geq 1$, $k\cdot R$ lies in $\Rec^+$ and $(-k)\cdot R$ lies in $\Rec^-$. Thus, we have $h(k\cdot R)=1$ and $h((-k)\cdot R)=0$. Hence, by construction of $H_n$, we have $H_n(R)=h_n(R)$ which lies in $\{0,1\}$.
\end{proof}

Recall that $\alpha$ is taken primitive, so for any recurrence chain~$R$ and $k=-g(R)$, we have $g(k\cdot R)=0$. The previous lemma has the immediate consequence.

\begin{lemma}\label{lem-L-estimate}
	For any recurrence chain $R\subset \wh M_\alpha$ and any $i,j$, the inequality $|H_i(R)-H_j(R)|\leq 1$ holds true. 
\end{lemma}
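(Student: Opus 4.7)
The plan is to reduce the general case to the case $g(R)=0$ already handled in Lemma~\ref{lem-L-estimate-1}, using the $\alpha$-equivariance of the $H_n$ and the fact that $\alpha$ is primitive.

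Concretely, since $g$ is $\alpha$-equivariant and sends the recurrent set onto $\ZZ$, for any recurrence chain $R$ the integer $k:=-g(R)$ is well-defined, and the translated chain $k\cdot R$ is again a recurrence chain satisfying $g(k\cdot R)=g(R)+\alpha(k)=0$ (using that $\alpha$ restricted to $\bfrac{H_1(M,\ZZ)}{\ker_\ZZ(\alpha)}\cong\ZZ$ is the identity because $\alpha$ is primitive). Applying Lemma~\ref{lem-L-estimate-1} to $k\cdot R$ then gives $H_n(k\cdot R)\in\{0,1\}$ for every $n$.

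Now I invoke the $\alpha$-equivariance of $H_n$, which reads $H_n(k\cdot x)=H_n(x)+\alpha(k)=H_n(x)+k$. Applied to any point of $R$ this yields
\[
H_n(R)=H_n(k\cdot R)-k\in\{-k,\,1-k\}.
\]
Since this set is independent of $n$, for any two indices $i,j$ both $H_i(R)$ and $H_j(R)$ lie in the two-point set $\{-k,1-k\}$, whence $|H_i(R)-H_j(R)|\le 1$, which is exactly the claim.

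There is really no hard step here: the lemma is purely a bookkeeping consequence of equivariance and of Lemma~\ref{lem-L-estimate-1}. The only point that requires a moment of care is checking that $\alpha(k)=k$ on the deck group $\bfrac{H_1(M,\ZZ)}{\ker_\ZZ(\alpha)}$, which is precisely the primitivity assumption placed on $\alpha$ just before the statement.
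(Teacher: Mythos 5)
Your proof is correct and follows exactly the route the paper intends: the remark immediately preceding the lemma ("Recall that $\alpha$ is taken primitive, so for any recurrence chain $R$ and $k=-g(R)$, we have $g(k\cdot R)=0$") is precisely the translation-by-$k$ reduction you carry out, after which Lemma~\ref{lem-L-estimate-1} gives $H_n(k\cdot R)\in\{0,1\}$ and $\alpha$-equivariance transports this back to $H_n(R)\in\{-k,1-k\}$ uniformly in $n$. The paper labels this an "immediate consequence" and gives no proof, so your write-up is a faithful and complete unpacking of the intended argument.
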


We fix a scalar $\theta$ in $]0,\tfrac{1}{2}[$, which thus satisfies $\tfrac{\theta}{1-\theta}<1$. Let $a_n^*>0$, for~$n$ in $\NN$, be any sequence that satisfies $a_{n+1}^*\leq \theta a_n^*$. We write $a^*=\sum_na_n^*$ and $a_n=\tfrac{a_n^*}{a^*}$. Note that $\sum_na_n=1$ and $a_{n+1}\leq\theta a_n$ hold for all~$n$. 
Then we define the function 
$$f=\sum_{n}a_nH_n$$
which is $\alpha$-equivariant and pre-Lyapunov. If the maps $h_n$ are smooth, and the sequence $a_k^*$ additionally goes to zero fast enough, then $f$ is smooth. 

\begin{lemma}\label{lem-spec-tech-1}
	Let $R_1,R_2\subset\wh M_\alpha$ be two recurrence chains which satisfy $f(R_1)=f(R_2)$. Then $H_n(R_1)=H_n(R_2)$ holds true for all~$n$ in $\NN$.
\end{lemma}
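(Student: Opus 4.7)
The plan is to exploit the discreteness of the functions $H_n$ on recurrence chains together with the geometric decay of the weights $a_n$. The starting observation is that Lemma~\ref{lem-L-estimate-1} gives $H_n(R) \in \{0,1\}$ on any recurrence chain with $g(R) = 0$; since $\alpha$ is primitive, both $H_n$ and $g$ are $\alpha$-equivariant with $\alpha(k) = k$ on $\bfrac{H_1(M,\ZZ)}{\ker_\ZZ(\alpha)}$, and this upgrades to $H_n(R) \in \{g(R), g(R)+1\}$ for every recurrence chain $R$ and every $n$. Set $D = g(R_2) - g(R_1)$ and $d_n = H_n(R_2) - H_n(R_1)$. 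Then $d_n \in \{D-1, D, D+1\}$ for all $n$, and the hypothesis $f(R_1) = f(R_2)$ rewrites as $\sum_n a_n d_n = 0$.

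Next I would run a case analysis on the integer $D$. If $|D| \geq 2$, every $d_n$ has the same strict sign, so $\sum_n a_n d_n \neq 0$, contradiction. If $D = 1$, then $d_n \in \{0,1,2\}$; since $a_n > 0$ the identity $\sum_n a_n d_n = 0$ forces $d_n = 0$ for every $n$, which is already the desired conclusion. The case $D = -1$ is symmetric. So the whole difficulty collapses onto the residual case $D = 0$, in which $d_n \in \{-1,0,1\}$ and cancellation between positive and negative terms is a priori possible.

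In the case $D = 0$ I would argue by contradiction. Suppose there is some index with $d_n \neq 0$; let $N$ be the smallest such index, and assume without loss of generality $d_N = 1$ (otherwise flip signs). Splitting $\sum_n a_n d_n = 0$ at $N$ gives $a_N = -\sum_{n > N} a_n d_n$, hence
\[
a_N \;\leq\; \sum_{n > N} a_n |d_n| \;\leq\; \sum_{n > N} a_n \;\leq\; \sum_{k \geq 1} \theta^k a_N \;=\; \tfrac{\theta}{1-\theta}\, a_N,
\]
using the geometric decay $a_{n+1} \leq \theta a_n$. Since $\theta < \tfrac{1}{2}$ we have $\tfrac{\theta}{1-\theta} < 1$, and this is a contradiction. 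Hence $d_n = 0$ for all $n$. I expect this last case to be the only real obstacle: the coarse bound $|d_n - d_N| \leq 2$ coming from Lemma~\ref{lem-L-estimate} alone is too weak to rule out mixed signs, and it is the conjunction of the integrality of $H_n$ on recurrence chains (via Lemma~\ref{lem-L-estimate-1}) with the sharp condition $\tfrac{\theta}{1-\theta} < 1$ that seals the argument.
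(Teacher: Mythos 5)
Your proof is correct, and it is in substance the same argument the paper uses: the paper disposes of the lemma by citing Lemma~\ref{lem-injective-cantor} with $n=1$, whose proof is exactly the comparison of a geometric series against the leading term, i.e.\ your estimate $a_N\leq\tfrac{\theta}{1-\theta}a_N<a_N$. What you do in addition, and what the paper's one-line reference quietly takes for granted, is reduce first to the case $g(R_1)=g(R_2)$: Lemma~\ref{lem-injective-cantor} requires the coordinate differences to lie in a fixed $\intint{-n,n}$, and that is only immediate after translating so that both chains sit at level $0$. Your case analysis on $D=g(R_2)-g(R_1)$ (ruling out $|D|\geq 2$ by monotone sign, $D=\pm1$ by positivity) does that reduction cleanly and makes the proof self-contained. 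One small bonus of working with $|d_n|\leq1$ directly rather than quoting the general Lemma~\ref{lem-injective-cantor} is that the hypothesis $\theta<\tfrac12$ suffices verbatim, whereas the cited lemma is stated with the coarser bound $|u_k-v_k|\leq 2n$ and therefore demands $\theta<\tfrac{1}{2n+1}$; your inline version sidesteps that mild mismatch.
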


The lemma is an immediate consequence of the following result, applied to $n=1$.

\begin{lemma}\label{lem-injective-cantor}
	Take~$n$ in $\NN_{\geq 1}$ and $\theta<\tfrac{1}{2n+1}$. Let $(a_k)_{k\in\NN}$ be a sequence which satisfies $a_{k+1}\leq\theta a_k$ for all $k$. Then the map $h\colon\intint{-n,n}^\NN\to\RR$, defined by $h(u)=\sum_k a_ku_k$, is injective and continuous for the cylindric topology on $\intint{-n,n}^\NN$. Additionally, the image of $h$ is a Cantor set of Lebesgue measure zero.
\end{lemma}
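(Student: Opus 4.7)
\begin{proofabstract}[Proof plan for Lemma~\ref{lem-injective-cantor}]
The plan is to handle the four conclusions in the natural order: convergence, continuity, injectivity, and the description of the image. Convergence and continuity follow immediately from the geometric decay $a_{k+1}\leq\theta a_k$, so these are just bookkeeping. The heart of the argument is the injectivity estimate, and the condition $\theta<\tfrac{1}{2n+1}$ is tailored exactly to make this work. The Cantor and measure-zero statements are then essentially formal.

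First I would observe that $|h(u)|\leq n\sum_k a_k\leq \tfrac{na_0}{1-\theta}$, so the series converges absolutely and uniformly in $u\in\intint{-n,n}^\NN$. If $u$ and $v$ agree on indices $k<N$, then $|h(u)-h(v)|\leq 2n\sum_{k\geq N}a_k\leq\tfrac{2na_N}{1-\theta}\leq\tfrac{2na_0\theta^N}{1-\theta}$, which goes to zero as $N\to\infty$. This gives continuity for the cylindric topology.

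For injectivity, I would take $u\neq v$ and let $k_0$ be the smallest index at which they differ. Then
\[
|h(u)-h(v)|\geq a_{k_0}|u_{k_0}-v_{k_0}|-\sum_{k>k_0}a_k|u_k-v_k|\geq a_{k_0}-2n\sum_{k>k_0}a_k\geq a_{k_0}\Bigl(1-\tfrac{2n\theta}{1-\theta}\Bigr).
\]
The assumption $\theta<\tfrac{1}{2n+1}$ rewrites as $(2n+1)\theta<1$, i.e. $2n\theta<1-\theta$, so the factor in parentheses is strictly positive and $h(u)\neq h(v)$. This is the one place where the precise hypothesis on $\theta$ is used, and is the main technical obstacle — everything else is formal.

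Finally, $\intint{-n,n}^\NN$ is compact, totally disconnected, perfect and metrizable for the cylindric topology. A continuous injection from a compact space into a Hausdorff space is a homeomorphism onto its image, so $h(\intint{-n,n}^\NN)$ is a compact, totally disconnected, perfect subset of $\RR$, hence a Cantor set. For the Lebesgue measure, I would cover the image at level $N$ by the $(2n+1)^N$ images of cylinders fixing the first $N$ coordinates; each such image has diameter at most $\tfrac{2na_N}{1-\theta}\leq \tfrac{2na_0}{1-\theta}\theta^N$, so the total length is at most $\tfrac{2na_0}{1-\theta}\bigl((2n+1)\theta\bigr)^N$, which tends to zero since $(2n+1)\theta<1$.
\end{proofabstract}
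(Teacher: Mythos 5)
Your proof is correct and follows essentially the same route as the paper: the key injectivity estimate comparing $a_{k_0}$ against the tail $2n\sum_{k>k_0}a_k\leq\tfrac{2n\theta}{1-\theta}a_{k_0}$ is identical (phrased as a lower bound rather than a contradiction), and you merely spell out the "standard arguments" the paper invokes for the Cantor and measure-zero claims.
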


\begin{proof}
	It is clear that $h$ is continuous. Assume that there exists two distinct sequences $u,v$ in $\intint{-n,n}^\NN$ with $h(u)=h(v)$. Let $k_0$ be the smallest integer which satisfies $u_{k_0}\neq v_{k_0}$. Then we have:
	$$a_{k_0}|u_{k_0}-v_{k_0}|
			= \left|\sum_{k>k_0} a_k(u_k-v_k)\right|
			\leq 2n\sum_{k>k_0}\theta^{k-k_0}a_{k_0}
			=\tfrac{2n\theta}{1-\theta}a_{k_0}
			<a_{k_0}$$
	which contradicts $|u_{k_0}-v_{k_0}|\geq 1$. The injectivity follows. The fact that $\im(h)$ is a Cantor set of zero measure follows from standard arguments.
\end{proof}

\begin{lemma}
	For any recurrence chain~$R$, we have $g(R)<f(R)<g(R)+1$.
\end{lemma}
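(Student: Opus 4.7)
The plan is to use the $\alpha$-equivariance of $f$ and $g$ (both shift by $k\in\ZZ$ under the deck action, identified with $\ZZ$ via the primitive class $\alpha$) to reduce to the case $g(R)=0$: once $0<f(R)<1$ is established there, the general inequality $g(R)<f(R)<g(R)+1$ follows by replacing $R$ with $(-g(R))\cdot R$. With $g(R)=0$, Lemma~\ref{lem-L-estimate-1} gives $H_n(R)=h_n(R)\in\{0,1\}$ for every $n$, so $f(R)=\sum_n a_n H_n(R)\in[0,1]$; and since $a_n>0$ with $\sum_n a_n=1$, the endpoints $0$ and $1$ are achieved exactly when all the $H_n(R)$ share a common value.

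It therefore suffices to exhibit indices $n_0$ and $n_1$ with $H_{n_0}(R)=0$ and $H_{n_1}(R)=1$. Since $h_n$ is built through Lemma~\ref{lem-preL-atr-rep} for some attractor $A_n\in\mathbb{A}$, is identically $0$ near $A_n$ and $1$ near $A_n^*$, and since every recurrence chain in $\wb M_\alpha$ lies in exactly one of $A_n$ or $A_n^*$, one has $h_n(R)=0$ iff $R\subset A_n$ and $h_n(R)=1$ iff $R\subset A_n^*$. So the task reduces to producing one attractor in $\mathbb{A}$ that contains $R$ and a second one whose associated repeller contains $R$; the enumeration $(A_n,m_n)$ then provides the desired indices.

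To construct these two attractors I would apply Lemma~\ref{lem-attractor-existance} in $\wb M_\alpha$: for the first, to the disjoint compacts $\Rec^-\cup\{-\infty\}\cup R$ and $\Rec^+\cup\{+\infty\}$; for the second, to $\Rec^-\cup\{-\infty\}$ and $\Rec^+\cup\{+\infty\}\cup R$. The hard part will be verifying the no-$\recto$-chain hypothesis in each case. Pairs contained entirely in $\wh M_\alpha$ are separated by the pre-Lyapunov inequalities $g(\Rec^-)\leq -1<0=g(R)<1\leq g(\Rec^+)$; pairs involving $+\infty$ are handled by Lemma~\ref{lem-quasiL-infty-char}, which gives $x\not\recto+\infty$ for every $x\in\wh M_\alpha$ and $-\infty\not\recto+\infty$; and pairs of the form $-\infty\recto y$ with $y\in\wh M_\alpha$ are excluded by applying Lemma~\ref{lem-quasiL-infty-char} to the time-reversed flow with cohomology class $-\alpha$ (for which the labels $\pm\infty$ of $\wb M_\alpha$ are swapped, and $-\alpha$ is quasi-Lyapunov for the reversed flow iff $\alpha$ is quasi-Lyapunov for $\varphi$). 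Granted both attractors, $f(R)$ is a convex combination of $0$'s and $1$'s with at least one of each, so $0<f(R)<1$ and the general inequality follows by $\alpha$-equivariance.
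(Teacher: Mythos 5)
Your proposal follows the paper's proof essentially step for step: reduce to $g(R)=0$ by equivariance, use Lemma~\ref{lem-L-estimate-1} to place $f(R)$ in $[0,1]$, and then exhibit via Lemma~\ref{lem-attractor-existance} one attractor in $\mathbb{A}$ containing $R$ (giving some $H_n(R)=0$) and one whose repeller contains $R$ (giving some $H_n(R)=1$), which pins $f(R)$ strictly inside $(0,1)$. The only place you go beyond the paper is in spelling out the verification of the hypothesis of Lemma~\ref{lem-attractor-existance}; there, note that when you separate $\Rec^-\cup R$ from $\Rec^+$ using the pre-Lyapunov inequality for $g$, the map $g$ is pre-Lyapunov for the flow on $\wh M_\alpha$ while the $\recto$ in Lemma~\ref{lem-attractor-existance} is taken on $\wb M_\alpha$; one should invoke Lemma~\ref{lem-rec-identify} (whose proof shows the $\recto$ relation between points of $\wh M_\alpha$ is the same in both spaces when $\alpha$ is quasi-Lyapunov) to close this small gap, which the paper also leaves implicit.
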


\begin{proof}
	Up to replacing~$R$ by $k\cdot R$ for $k=-g(R)$, we may assume that $g(R)=0$ holds true. It follows from Lemma~\ref{lem-L-estimate-1} that $H_n(R)$ belongs to $\{0,1\}$ for all~$n$, so $f(R)=\sum_na_nH_n(R)$ belongs to $[0,1]$. 
	
	We prove now that $f(R)$ is not equal to 0. According to Lemma~\ref{lem-attractor-existance}, there exists an attractor $A_m$ in $\mathbb{A}$, whose associated repeller $A_m^*$ contains~$R$. Then the associated maps $H_m$ and $h_m$ satisfy $H_m(R)=h_m(R)=1$. It follows $\sum_na_nH_n(R)\geq a_m>0$ from Lemma~\ref{lem-L-estimate-1}. Thus, $f(R)>0$ holds. Similarly, there exists an attractor $A_p$ that contains~$R$. It follows $H_p(R)=0$. And since $\sum_na_n=1$ and $H_n(R)\leq 1$ hold for all~$n$, we have $f(R)<1$.
\end{proof}

\begin{lemma}\label{lem-strongL-eq}
	Let $R_1,R_2$ be two recurrence chains. If $f(R_1)=f(R_2)$ holds, then $R_1$ and $R_2$ are equal.
\end{lemma}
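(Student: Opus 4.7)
My plan is to argue by contradiction. Assume distinct recurrence chains $R_1 \neq R_2$ satisfy $f(R_1) = f(R_2)$; I will construct an attractor $A \in \mathbb{A}$ that separates them, and then derive a contradiction from the combination of Lemmas~\ref{lem-spec-tech-1} and~\ref{lem-L-estimate-1}.

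The first move is to reduce to the case $g(R_1) = g(R_2) = 0$. The preceding lemma gives $g(R) < f(R) < g(R) + 1$, so $f(R_1) = f(R_2)$ together with $g(R_i) \in \ZZ$ forces $g(R_1) = g(R_2) = k$ for some integer $k$. Translating each $R_i$ by $-k$ via the deck transformation, and using the $\alpha$-equivariance of $f$ and of every $H_n$, preserves both the distinctness of the chains and the equalities $f(R_1) = f(R_2)$ and $H_n(R_1) = H_n(R_2)$ for all $n$, while normalizing the $g$-values to zero. Since the chains remain distinct, one of $R_1 \not\recto R_2$ or $R_2 \not\recto R_1$ must hold, and up to relabeling I may assume $R_1 \not\recto R_2$.

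The main construction is to apply Lemma~\ref{lem-attractor-existance} in $\wb M_\alpha$ to the compact disjoint subsets
$$S = R_1 \cup \Rec^- \cup \{-\infty\}, \qquad T = R_2 \cup \Rec^+ \cup \{+\infty\}.$$
Compactness of $R_i$ is given by Proposition~\ref{prop-alpha-rec-chain}, and $\Rec^\pm \cup \{\pm\infty\}$ is closed (hence compact) in $\wb M_\alpha$ since the recurrence chains in $\Rec^\pm$ have $|g|$-values going to infinity and thus accumulate precisely at $\pm\infty$. The resulting attractor–repeller pair $(A, A^*)$ with $A \supset S$ and $A^* \supset T$ automatically lies in $\mathbb{A}$ and separates $R_1$ from $R_2$. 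Verifying the no-chain hypothesis of the lemma is the core step. Whenever $x \in R_1 \cup \Rec^-$ and $y \in \Rec^+$, or $x \in \Rec^-$ and $y \in R_2$, a chain $x \recto y$ violates the pre-Lyapunov inequality $g(x) \geq g(y)$ since $g(x) \leq 0 < 1 \leq g(y)$; the borderline case $x \in R_1$, $y \in R_2$ (both of $g$-value zero) is excluded by our standing assumption $R_1 \not\recto R_2$. All chains into $y = +\infty$ from $x \in \wh M_\alpha$ or from $x = -\infty$ are excluded by Lemma~\ref{lem-quasiL-infty-char}. The delicate case, which I expect to be the main obstacle, is excluding $-\infty \recto y$ for $y \in R_2 \cup \Rec^+ \subset \wh M_\alpha$. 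For this I use a time-reversal symmetry: the definitions of $\alpha$-equivariance and $C$-quasi-Lyapunov are preserved by $(f, \varphi, \alpha) \mapsto (-f, \varphi^{-1}, -\alpha)$, so $\alpha$ is quasi-Lyapunov for $\varphi$ if and only if $-\alpha$ is quasi-Lyapunov for $\varphi^{-1}$. Applying Lemma~\ref{lem-quasiL-infty-char} to the latter, and noting that $+\infty_{-\alpha}$ and $-\infty_\alpha$ denote the same point in the common compactification (reversing $\alpha$ reverses the $\ZZ$-action and swaps the labels at infinity), yields exactly $-\infty \not\recto_\varphi y$ for every $y \in \wh M_\alpha$.

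To conclude, the enumeration $(h_n)$ contains some $n_0$ with $h_{n_0} = h_{A, 1}$ for the attractor $A$ produced above; by definition $h_{n_0}(R_1) = 0 \neq 1 = h_{n_0}(R_2)$. Lemma~\ref{lem-L-estimate-1}, which applies since $g(R_i) = 0$, upgrades this to $H_{n_0}(R_1) = h_{n_0}(R_1) \neq h_{n_0}(R_2) = H_{n_0}(R_2)$, contradicting the conclusion of Lemma~\ref{lem-spec-tech-1} that $H_n(R_1) = H_n(R_2)$ for every $n$. Hence $R_1 = R_2$.
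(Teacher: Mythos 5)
Your proof is correct and follows the paper's own approach: normalize to $g(R_1)=g(R_2)=0$ (via the preceding lemma and $\alpha$-equivariance), argue by contradiction, build a separating attractor in $\mathbb{A}$ via Lemma~\ref{lem-attractor-existance}, and contradict Lemma~\ref{lem-spec-tech-1} using Lemma~\ref{lem-L-estimate-1}. The only difference is that you spell out the verification of the no-chain hypothesis of Lemma~\ref{lem-attractor-existance}, in particular the time-reversal argument for $-\infty \not\recto y$, which the paper leaves implicit (it invokes the same fact elsewhere, e.g.\ when showing $\mathbb{A}\neq\emptyset$, by simply citing the $C$-quasi-Lyapunov property); your explicit justification is a welcome addition.
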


\begin{proof}
	Up to replacing $R_1$ and $R_2$ by $n\cdot R_1$ and $n\cdot R_2$ for $n=-g(R_1)$, we may assume that $g(R_1)=0$ holds true. It follows from above that $g(R_2)$ is an integer that lie in the open interval in $]f(R_1), f(R_1)+1[$. The length of this interval is one, so it contains at most one integer. It follows that we have $g(R_2)=g(R_1)=0$.

	We now reason by contradiction. Assume that $R_1$ and $R_2$ are not equal. Then up to swapping them, we may assume that $R_1\not\recto R_2$ holds. According to Lemma~\ref{lem-attractor-existance}, there exists a pair of attractor-repeller $A_n,A_n^*$ so that $A_n$ contains $\Rec^-\cup R_1$, and $A_n^*$ contains $\Rec^+\cup R_2$. Then the associated functions $H_n$ and $h_n$ satisfy $H_n(R_1)=h_n(R_1)=0$ and $H_n(R_2)=h_n(R_2)=1$. So $H_n(R_1)\neq H_n(R_2)$ holds, which contradicts Lemma~\ref{lem-spec-tech-1}.
\end{proof}

\begin{proof}[Proof of the second claim in Theorem~\ref{thm-spectral-decomp}]
	Let $\alpha$ be any quasi-Lyapunov classes in $H^1(M,\ZZ)$.
	When $\alpha$ is not primitive, and is equal to $n\beta$ for some $n\geq 1$, we have $\wh M_\alpha=\wh M_\beta$ and $f\colon\wh M_\alpha\to\RR$ is $\beta$-equivariant if and only if $nf$ is $\alpha$-equivariant. Thus, it is enough to prove the conclusion of the theorem when $\alpha$ is primitive.

	Let us now assume $\alpha$ primitive.
	We use the notations defined above in this section. The function $f\colon\wh M_\alpha\to\RR$ constructed above is $\alpha$-equivariant and constant on recurrence chains. It follows from Lemma~\ref{lem-strongL-eq} that $f$ sends two distinct recurrences chains on two distinct values. 
	
	Take a point~$x$ in $\wh M_\alpha$ outside the recurrent set. The orbit of~$x$, by the flow, converges in the future and in the past toward two recurrence chains $R_1,R_2$ in $\wb M_\alpha$. These recurrence chains are distinct since~$x$ is not recurrent. It follows that $f$ is not constant on that orbit, otherwise $f(R_1)$ and $f(R_2)$ would be equal. 
	
	We prove that $f$ is decreasing along the flow close to~$x$.
	We write $k=-g(R_1)$, so that $g(k\cdot R_1)=0$ and $g(k\cdot R_2)>0$. Lemma~\ref{lem-attractor-existance} implies that there exists an attractor $A$ in $\mathbb{A}$ that contains $k\cdot R_1$ but not $k\cdot R_2$. Then $k\cdot x$ is included neither in $A$ not in its associated repeller $A^*$. 
	Take $m\geq 1$ large enough so that $k\cdot x$ is at distance at least $\tfrac{1}{m}$ from $A\cup A$. Then the map $h_{A,m}$ constructed above is decreasing along the flow on a neighborhood of~$x$. Recall that all $h_n$ are non-increasing along the flow, and one of them is equal to $h_{A,m}$. It follows that their combination $f$ is decreasing along the flow close to~$x$. Thus, $f$ is Lyapunov.
	
	The fact that $f(\wh\Rec_\alpha)$ has Lebesgue measure zero follows from Lemma~\ref{lem-injective-cantor}.
	The smoothness conclusion follows from choosing the sequence $a_n^*$ to decrease fast enough. 
\end{proof}

\subsection{Pre-Lyapunov maps from Conley's order}\label{sec-pre-Conly-order}

We discuss here a similar result to the equivariant spectral decomposition, but with some level of control on the $\alpha$-recurrent chains. It is not a strong result in terms of dynamics, but it is very helpful for the classification of partial cross-sections.

Denote by $\wh\Rec_\alpha^*$ the set of recurrence chains, with the quotient topology. It comes with the partial order $\recto$, called the Conley's order. Note that when a continuous map $g\colon\wh M_\alpha\to\RR$ is $\alpha$-equivariant and pre-Lyapunov, it induces a continuous, $\alpha$-equivariant and map $\wh\Rec_\alpha^*\xrightarrow{g}\RR$, which is non-increasing for the Conley order.

\begin{theorem}
	Let $\alpha$ in $H^1(M,\ZZ)$ be quasi-Lyapunov, and $f\colon\wh\Rec_\alpha^*\to\ZZ$ be a non-increasing and continuous map. There exists a continuous, $\alpha$-equivariant and pre-Lyapunov map $g\colon\wh M_\alpha\to\RR$ whose restriction to $\wh\Rec_\alpha$ coincide with $f$, and that is decreasing outside $g^{-1}(\ZZ)$. When~$M$ is smooth and $\varphi$ is generated by a continuous and uniquely integrable vector field $X$, then $g$ can additionally be chosen smooth, and with $dg(X)<0$ outside $g^{-1}(\ZZ)$.
\end{theorem}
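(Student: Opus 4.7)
The plan is to extend the construction from the proof of Theorem~\ref{thm-spectral-decomp}, but with a single attractor-repeller pair in $\wb M_\alpha$ chosen to encode the prescribed integer values of $f$. As in that earlier proof, I reduce to the case where $\alpha$ is primitive. Implicit in the statement is that $f$ is $\alpha$-equivariant, for otherwise no $\alpha$-equivariant extension $g$ could restrict to $f$ on $\wh\Rec_\alpha$. Since $f$ takes values in the discrete set $\ZZ$ and is continuous on $\wh\Rec_\alpha^*$, the sets $\mathcal{A}_n \defi \{R \col f(R) \geq n\}$ and $\mathcal{B}_n \defi \{R \col f(R) < n\}$ are clopen in $\wh\Rec_\alpha^*$. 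By $\alpha$-equivariance, $\mathcal{A}_n$ is the image of $\mathcal{A}_{n-1}$ under the generator of the deck group, so it suffices to work at level $n=1$.

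Consider the closed subsets $R_1 = \mathcal{B}_1 \cup \{-\infty\}$ and $R_2 = \mathcal{A}_1 \cup \{+\infty\}$ of $\wb M_\alpha$. I claim no $\recto$-relation connects $R_1$ to $R_2$: the case $\mathcal{B}_1 \recto \mathcal{A}_1$ is excluded by the monotonicity of $f$ along Conley's order; the cases $\mathcal{B}_1 \recto +\infty$ and $-\infty \recto +\infty$ are forbidden by Lemma~\ref{lem-quasiL-infty-char}; and the case $-\infty \recto \mathcal{A}_1$ is ruled out by applying the same lemma to the time-reversed flow, for which $-\alpha$ is also quasi-Lyapunov. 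Lemma~\ref{lem-attractor-existance} then produces an attractor-repeller pair $(A, A^*)$ in $\wb M_\alpha$ with $R_1 \subseteq A$ and $R_2 \subseteq A^*$.

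For each $m \geq 1$, Lemma~\ref{lem-preL-atr-rep} applied to $(A, A^*)$ with the $\tfrac{1}{m}$-neighborhood of $A \cup A^*$ yields a pre-Lyapunov function $h^{(m)} \colon \wb M_\alpha \to [0, 1]$ equal to $0$ near $A$, to $1$ near $A^*$, and decreasing along the flow outside that neighborhood (with $dh^{(m)}(X) < 0$ there in the smooth case). Lemma~\ref{lem-Z-equiv} promotes each $h^{(m)}$ to an $\alpha$-equivariant pre-Lyapunov function $H^{(m)} \colon \wh M_\alpha \to \RR$. For any recurrence chain $R$, the $\alpha$-equivariance of $f$ gives $h^{(m)}(k \cdot R) = \mathbb{1}_{k \geq 1 - f(R)}$, and a direct computation from the formula of Lemma~\ref{lem-Z-equiv} yields $H^{(m)}(R) = f(R)$. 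Fix positive scalars $a_m$ with $\sum_m a_m = 1$, decreasing fast enough to guarantee convergence and, in the regular case, smoothness of $g \defi \sum_m a_m H^{(m)}$. Since the coefficients sum to one, $g$ is $\alpha$-equivariant; as a sum of pre-Lyapunov maps, $g$ is pre-Lyapunov; and $g(R) = f(R)$ on $\wh\Rec_\alpha$.

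For the strict decrease off $g^{-1}(\ZZ)$, suppose $g \circ \varphi$ is constant on a subinterval $I$ of some orbit. Since each $H^{(m)} \circ \varphi$ is non-increasing, each must be constant on $I$, so every $h^{(m)}(k \cdot \varphi_t(x))$ is constant in $t \in I$. The regions on which $h^{(m)}$ is constant along the flow shrink to $A \cup A^*$ as $m$ grows (the thresholds in the construction of Lemma~\ref{lem-preL-atr-rep} tend to zero), so $\varphi_t(x) \in A \cup A^* \subset \wb\Rec_\alpha$ and hence $g(\varphi_t(x)) \in \ZZ$ for $t \in I$. The same reasoning applied pointwise, using $dh^{(m)}(X) < 0$, gives the smooth version. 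The principal obstacle lies in the bookkeeping for the series $\sum_m a_m H^{(m)}$: the number of non-zero terms in the series defining $H^{(m)}$ via Lemma~\ref{lem-Z-equiv} may grow with $m$ as the transition zone widens, so a sufficiently fast decay of $a_m$ together with a careful choice of the $h^{(m)}$ must be made, along the lines of the series-of-attractors construction at the end of Section~\ref{sec-quasi-L-criterion}.
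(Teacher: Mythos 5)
Your overall architecture — a single attractor-repeller pair $(A,A^*)$ in $\wb M_\alpha$ encoding the level $n=1$ threshold, then lifting to an $\alpha$-equivariant map via Lemma~\ref{lem-Z-equiv} — matches the paper's. The paper, however, uses just one pre-Lyapunov function $h$ from Lemma~\ref{lem-preL-atr-rep}, not a series $\sum_m a_m H^{(m)}$. The reason a single $h$ suffices is that the $h$ produced by the proof of that lemma (the composition $h_0\circ g$ with a plateau function $h_0$) is actually decreasing along the flow \emph{exactly} outside $h^{-1}(\{0,1\})$, so the resulting equivariant $g$ is decreasing exactly outside $g^{-1}(\ZZ)$ — no series is needed. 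You seem to have noticed that the \emph{stated} conclusion of Lemma~\ref{lem-preL-atr-rep} (decreasing only outside $U\cup U^*$) is not enough by itself, and compensated with the shrinking-$V_m$ trick; that is a legitimate workaround, but it carries the extra bookkeeping you flag at the end and never resolve.

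There is a concrete error in your closing paragraph: the inclusion $A\cup A^*\subset\wb\Rec_\alpha$ is false in general. An attractor can, and typically does, contain wandering orbits (e.g.\ heteroclinic connections between two recurrence chains both inside $A$), so concluding ``$\varphi_t(x)\in A\cup A^*\subset\wb\Rec_\alpha$, hence $g(\varphi_t(x))\in\ZZ$'' does not follow as written. The conclusion $g(x)\in\ZZ$ can still be salvaged without that inclusion: once you know $k\cdot x\in A\cup A^*$ for every $k$, each $h^{(m)}(k\cdot x)$ lies in $\{0,1\}$ and its value is determined purely by whether $k\cdot x$ lies in $A$ or $A^*$, hence is independent of $m$; consequently all $H^{(m)}(x)$ coincide and $g(x)=\sum_m a_m H^{(m)}(x)=H^{(1)}(x)\in\ZZ$. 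You should replace the false inclusion with this argument. Finally, you announce a reduction to $\alpha$ primitive but never carry it out (the paper does it via the maps $f_i=\lfloor(f+i)/n\rfloor$), and you omit the degenerate case $\alpha=0$, which the paper treats separately.
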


We split the proof in two cases, depending on if $\alpha$ is zero or not.

\begin{proof}[Proof in the case $\alpha\neq 0$.]
	Assume $\alpha$ primitive first.
	Let $R_1\subset\wb M_\alpha$ be the union of $\{-\infty\}$ and of the recurrence chains in $f^{-1}(\intintl{-\infty,0})$. Let $R_2$ be the union of $\{+\infty\}$ and of the recurrence chains in $f^{-1}(\intintr{1,+\infty})$. 
	By continuity of $f$, $R_1$ and $R_2$ are disjoint compact subsets of $\wb M_\alpha$.
	Since $\alpha$ is quasi-Lyapunov, we have $-\infty\not\recto R_2$ and $R_1\not\recto+\infty$, and so $R_1\not\recto R_2$ (see Lemma~\ref{lem-quasiL-infty-char}).

	It follows from Lemma~\ref{lem-attractor-existance} and Lemma~\ref{lem-preL-atr-rep} that there exists a pre-Lyapunov map $h\colon\wb M_\alpha\to[0,1]$ that satisfies $h\equiv 0$ close to $R_1$ and $h\equiv 1$ close to $R_2$. Denote by $g\colon\wh M_\alpha\to\RR$ the continuous and $\alpha$-equivariant map given in Lemma~\ref{lem-Z-equiv} applied to the map $h$.

	Let~$R$ be a recurrence chain that satisfies $f(R)=0$. For any $k>0$, we have $f(k\cdot R)\geq 0$ so $k\cdot R$ lies in $R_2$ and $h(k\cdot R)=1$ holds. Similarly, we have $h(k\cdot R)=0$ for any $k\leq0$. It follows $g(R)=0$ from the definition of $g$. Take now any recurrence chain~$R$ and $k=-f(R)$. Since $\alpha$ is primitive, we have $f(k\cdot R)=0$, and thus 
	$g(R)=g(k\cdot R)-k=f(R)$
	holds. So $g$ and $f$ coincide on $\wh\Rec_\alpha$.

	We now consider the general case on $\alpha$. We write $\alpha=n\beta$ for some $n\geq 1$ and some primitive class $\beta$ in $H^1(M,\ZZ)$. 
	For any $i$ in $\intint{0,n-1}$, we define a function $f_i\colon\wh\Rec_\alpha^*\to\ZZ$ by
	$$f_i=\left\lfloor\tfrac{f+i}{n}\right\rfloor$$
	which is $\beta$-equivariant and continuous.
	Using the Euclidian division by~$n$, it is an exercise to see that 
	$$\sum_{i=0}^{n-1}f_i=\sum_{i=0}^{n-1}\lfloor\tfrac{f+i}{n}\rfloor=f$$
	holds.
	From the primitive case, there exists a $\beta$-equivariant and pre-Lyapunov map $g_i\colon\wh M_\alpha\to\RR$ which coincides with $f_i$ in restriction to $\wh\Rec_\alpha$. We define the map $g=\sum_ig_i$, which is $\alpha$-equivariant and pre-Lyapunov. It follows from above that $g$ coincides with $f$ on $\wh\Rec_\alpha$. 

	The smoothness conclusion follows from Lemma~\ref{lem-preL-atr-rep}.
\end{proof}

We now prove the case $\alpha=0$, which is essentially already known.

\begin{proof}[Proof in the case $\alpha=0$.]
	In the case $\alpha=0$, $\wh M_\alpha$ is equal to~$M$, and $\wh\Rec_\alpha$ is simply the recurrent set of $\varphi$. When $\wh\Rec_\alpha$ is equal to~$M$, $f$ is constant, and $g=f$ suffice. Thus, we can now assume that $\varphi$ admits at least two recurrence chains.

	Up to adding a constant to $f$, we may assume that its minimum is zero. For any~$n$ in $\intint{1,\max f}$, denote by $R_n$ the union of recurrence chains~$R$ of~$\varphi$ that satisfy $f(R)<n$, and $R_n^*=\Rec\setminus R_n$. 
	It follows from Lemma~\ref{lem-attractor-existance} and Lemma~\ref{lem-preL-atr-rep} that there exists a pre-Lyapunov map $h_n\colon M\to[0,1]$ that satisfies $h_n(R_n)=0$ and $h_n(R_n^*)=1$.
	
	We define the pre-Lyapunov map $g=\sum_n h_n$. 
	For any recurrence chain~$R$ of $\varphi$, $g(R)$ is equal to the number of $n\geq 1$ that satisfy $f(R)\geq n$, that is~$f(R)$. Therefore, $g$ and $f$ coincide on the recurrent set.
	
	The smoothness conclusion follows from Lemma~\ref{lem-preL-atr-rep}.
\end{proof}

\section{Quasi-Lyapunov with real coefficients}\label{sec-general-quasiL}

We explore several questions about quasi-Lyapunov cohomology classes with real coefficients. We start with the dependence of $\Rec_\alpha$ on $\alpha$.

\subsection{Approximation by rational quasi-Lyapunov classes}

The first difficulty to study quasi-Lyapunov classes with real coefficients is that $\ker(\alpha)$ in $H_1(M,\ZZ)$ is not necessarily a co-dimension 1 sub-module of $H_1(M,\ZZ)$. So the covering $\wh M_\alpha\to M$ does not behave as well as in the integer case. Our strategy is to first approximate any quasi-Lyapunov classes by ones with rational coefficients, on which we can work more easily.

Denote by \emph{$\ker_\QQ(\alpha)$} the kernel of $\alpha$ inside $H_1(M,\QQ)$, which we identify with a subset of $H_1(M,\RR)$. 
Recall that $D_\varphi\subset H_1(M,\RR)$ is the set of asymptotic pseudo-directions of~$\varphi$.

\begin{lemma}\label{lem-ker-approx}
	Let $\alpha$ in $H^1(M,\RR)$ be quasi-Lyapunov, and take $\epsilon>0$. For any $\delta$ in $\ker(\alpha)\cap D_\varphi$, there exist finitely many scalars $a_1\cdots a_n>0$ and~$\epsilon$-pseudo-orbits $\gamma_1\cdots\gamma_n$ which satisfy $\delta=\sum_ia_i[\gamma_i]$ and $\alpha([\gamma_i])=0$ for all $i$.
\end{lemma}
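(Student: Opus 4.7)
The plan is to use the finite decomposition from Section~\ref{sec-reduction-pso} to write $\delta$ exactly as a nonnegative real combination of homology classes of finitely many periodic $\epsilon$-pseudo-orbits, and then to exploit the quasi-Lyapunov inequality to force $\alpha$ to vanish on every summand carrying a strictly positive coefficient.

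First I would shrink $\epsilon$ if necessary, which is harmless because any pseudo-orbit with a smaller jump parameter is also an $\epsilon$-pseudo-orbit; by Lemma~\ref{lem-quasi-L-char} I may then assume $\alpha(D_{\varphi,\epsilon,T})\leq 0$. Next I would apply Lemma~\ref{lem-cut-pso-finite} with, say, $c=2$, producing some $\epsilon'\in\,]0,\epsilon[$ and finitely many periodic $\epsilon$-pseudo-orbits $\delta_1,\dots,\delta_n$ into which every periodic $\epsilon'$-pseudo-orbit decomposes homologously with nonnegative integer coefficients. By construction each $[\delta_i]$ already satisfies $\alpha([\delta_i])\leq 0$.

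Since $\delta\in D_\varphi\subset D_{\varphi,\epsilon',T}$, I would then pick a sequence of periodic $\epsilon'$-pseudo-orbits $\gamma_k$ with $[\gamma_k]/\len(\gamma_k)\to\delta$ and write $[\gamma_k]=\sum_i a_i^{(k)}[\delta_i]$ with $a_i^{(k)}\in\NN$. The rescaled coefficients $b_i^{(k)}:=a_i^{(k)}/\len(\gamma_k)\geq 0$ satisfy $\sum_i b_i^{(k)}\len(\delta_i)\in[c^{-1},c]$, so each sequence $(b_i^{(k)})_k$ is bounded; a diagonal extraction produces limits $b_i\geq 0$ with $\delta=\sum_i b_i[\delta_i]$ in $H_1(M,\RR)$. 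Applying $\alpha$ to this identity gives $0=\sum_i b_i\alpha([\delta_i])$, where every summand is $\leq 0$, so each one vanishes. Discarding the indices with $b_i=0$ and relabelling then yields the desired strictly positive scalars $a_i$ and periodic $\epsilon$-pseudo-orbits $\gamma_i:=\delta_i$ satisfying $\alpha([\gamma_i])=0$.

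The main delicate point I anticipate is the boundedness of the rescaled coefficients $b_i^{(k)}$, which is what allows the extraction of an honest finite nonnegative combination rather than only a limit in the closed cone generated by the $[\delta_i]$. Fortunately this boundedness is exactly what the quantitative length estimate in Lemma~\ref{lem-cut-pso-finite} provides, so once that lemma is granted the remainder reduces to a compactness argument followed by a straightforward sign analysis.
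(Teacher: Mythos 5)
Your proposal is correct and follows essentially the same route as the paper: the paper simply cites Corollary~\ref{cor-pso-decomposition} (itself derived from Lemma~\ref{lem-cut-pso-2} via the same compactness and length-control arguments you redo by hand from Lemma~\ref{lem-cut-pso-finite}) to write $\delta$ as a nonnegative finite combination of $[\gamma_i]$ with $\gamma_i$ periodic $\epsilon$-pseudo-orbits, and then concludes exactly as you do from $\alpha(\delta)=0$ and $\alpha([\gamma_i])\leq 0$. Your extraction argument for the bounded rescaled coefficients is the correct justification that the paper leaves implicit inside the corollary, and you also caught the correct sign ($\leq 0$, not the $\geq 0$ misprint in the paper's own proof).
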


\begin{proof}
	Take $\delta$ in $\ker(\alpha)\cap D_\varphi$. According to Corollary~\ref{cor-pso-decomposition}, $\delta$ is the convex combination of some $a_i[\gamma_i]$, for some $a_i>0$ and some periodic~$\epsilon$-pseudo-orbits $\gamma_i$. Since $\alpha$ is quasi-Lyapunov, we have $\alpha([\gamma_i])\geq 0$ for all $i$. And since $\alpha(\delta)=0$ holds, it follows $\alpha([\gamma_i])=0$.
\end{proof}

Denote by $\ker_\RR(\alpha)$ kernel of $\alpha$ in $H_1(M,\RR)$. The lemma above as an immediate consequence.

\begin{corollary}\label{cor-rat-ker}
	Let $\alpha$ in $H^1(M,\RR)$ be quasi-Lyapunov. Then $\ker_\RR(\alpha)\cap D_\varphi$ lies in the $\RR$-vector subspace of $H_1(M,\RR)$ spanned by $\ker_\QQ(\alpha)$.
\end{corollary}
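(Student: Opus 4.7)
The plan is to invoke Lemma~\ref{lem-ker-approx} directly and then identify each of the resulting pseudo-orbit homology classes as an element of $\ker_\QQ(\alpha)$.

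First I would fix some small $\epsilon>0$ (small enough so that the homology class $[\gamma]$ of any periodic $\epsilon$-pseudo-orbit is well-defined as an element of $H_1(M,\ZZ)$) and take an arbitrary $\delta$ in $\ker_\RR(\alpha)\cap D_\varphi$. Applying Lemma~\ref{lem-ker-approx} gives a decomposition $\delta=\sum_{i=1}^n a_i[\gamma_i]$ with $a_i>0$ and $\alpha([\gamma_i])=0$ for each $i$.

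The key observation is then that each $[\gamma_i]$ lies in $H_1(M,\ZZ)$, being the homology class of a closed curve in $M$ (an $\epsilon$-realization of $\gamma_i$). Combined with $\alpha([\gamma_i])=0$, this means $[\gamma_i]\in\ker_\ZZ(\alpha)\subset\ker_\QQ(\alpha)$. Therefore $\delta$, being a real linear combination of elements of $\ker_\QQ(\alpha)$, lies in the $\RR$-subspace of $H_1(M,\RR)$ spanned by $\ker_\QQ(\alpha)$.

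There is no real obstacle here: the substance of the statement is carried by Lemma~\ref{lem-ker-approx}, and the corollary is just the observation that pseudo-orbit homology classes are integral, hence classes on which $\alpha$ vanishes are automatically rational elements of $\ker(\alpha)$. The one place to be a touch careful is to make sure $\epsilon$ is chosen small enough that $[\gamma_i]\in H_1(M,\ZZ)$ is unambiguously defined (as discussed in Section~\ref{sec-ass-ps-dir}), but this is guaranteed by working at a small scale from the start.
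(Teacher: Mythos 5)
Your proof is correct and matches the paper's intended argument; the paper dismisses the corollary as an immediate consequence of Lemma~\ref{lem-ker-approx}, and your observation that each $[\gamma_i]$ is an integral homology class annihilated by $\alpha$, hence lies in $\ker_\ZZ(\alpha)\subset\ker_\QQ(\alpha)$, is exactly the step being left implicit.
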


\begin{lemma}\label{lem-quasiL-stability}
	Let $\alpha$ in $H^1(M,\RR)$ be quasi-Lyapunov. Then for any $\beta$ in $H^1(M,\RR)$ sufficiently close to $\alpha$ and with $\ker_\QQ(\alpha)\subset\ker_\QQ(\beta)$, $\beta$ is quasi-Lyapunov.
\end{lemma}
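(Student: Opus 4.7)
The natural route is through Lemma~\ref{lem-quasi-L-char}: a class $\gamma \in H^1(M,\RR)$ is quasi-Lyapunov if and only if $\gamma(D_{\varphi,\epsilon,T}) \leq 0$ for some small $\epsilon>0$. Since $\alpha$ is quasi-Lyapunov, I pick some small $\epsilon_0>0$ with $\alpha(D_{\varphi,\epsilon_0,T}) \leq 0$, and my goal is to produce one $\epsilon'>0$ such that $\beta(D_{\varphi,\epsilon',T}) \leq 0$ for every $\beta$ close enough to $\alpha$ with $\ker_\QQ(\alpha) \subset \ker_\QQ(\beta)$. The obstacle is that $D_{\varphi,\epsilon',T}$ may be infinite, so one cannot directly transport the estimate for $\alpha$ to $\beta$ by continuity; the idea is to use Corollary~\ref{cor-pso-decomposition} to replace $D_{\varphi,\epsilon',T}$ by a finite set up to positive rescaling.

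Concretely, fix any small $\epsilon \in (0,\epsilon_0)$ and apply Corollary~\ref{cor-pso-decomposition} to get $\epsilon' \in (0,\epsilon)$ and a finite set $\Delta = \{\delta_1,\dots,\delta_n\} \subset D_{\varphi,\epsilon,T}$ with
$$D_{\varphi,\epsilon',T} \subset [1-\epsilon,\,1+\epsilon] \cdot \conv(\Delta),$$
where each $\delta_i = \tfrac{1}{\len(\gamma_i)}[\gamma_i]$ is a positive scalar multiple of an integral class $[\gamma_i] \in H_1(M,\ZZ)$ coming from a periodic $\epsilon$-pseudo-orbit $\gamma_i$. Because the scaling factors are positive, it suffices to prove $\beta(\Delta) \leq 0$. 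Split $\Delta = \Delta^- \sqcup \Delta^0$ according to whether $\alpha(\delta_i)<0$ or $\alpha(\delta_i)=0$; by quasi-Lyapunov-ness of $\alpha$ these are the only possibilities. For $\delta_i \in \Delta^0$, the integral class $[\gamma_i]$ lies in $\ker_\ZZ(\alpha)$, hence (via the natural map $H_1(M,\ZZ) \to H_1(M,\QQ)$) in $\ker_\QQ(\alpha) \subset \ker_\QQ(\beta)$, so $\beta([\gamma_i]) = 0$ and therefore $\beta(\delta_i) = 0$. For $\delta_i \in \Delta^-$, the finiteness of $\Delta^-$ gives a uniform $c>0$ with $\alpha(\delta_i) \leq -c$, and continuity of the evaluation pairing on the finite set $\Delta^-$ ensures $\beta(\delta_i) < 0$ whenever $\beta$ is close enough to $\alpha$ in $H^1(M,\RR)$.

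Combining both cases yields $\beta(\Delta) \leq 0$, hence $\beta(\conv(\Delta)) \leq 0$, hence $\beta(D_{\varphi,\epsilon',T}) \leq 0$ by the convex inclusion above, so $\beta$ is quasi-Lyapunov by Lemma~\ref{lem-quasi-L-char}. The main point (rather than a real obstacle) is the transfer from $\alpha$ vanishing on the real class $\delta_i$ to $\beta$ vanishing on it: this hinges on the fact that $\delta_i$ is a positive multiple of an integral class, so that the $\QQ$-kernel hypothesis applies even though $\delta_i$ itself need not be rational (its denominator is the real length $\len(\gamma_i)$). If one dropped the rational-kernel hypothesis, the argument would break precisely on $\Delta^0$, which is consistent with Example~\ref{ex-linear-flow}.
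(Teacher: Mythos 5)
Your proof is correct and follows essentially the same route as the paper's: the paper applies Lemma~\ref{lem-cut-pso-finite} directly to produce the finite family $\delta_1,\dots,\delta_n$, splits on $\alpha([\delta_i])=0$ (rational-kernel hypothesis) versus $\alpha([\delta_i])<0$ (finiteness plus continuity), and concludes by Lemma~\ref{lem-quasi-L-char}, exactly as you do via Corollary~\ref{cor-pso-decomposition}. Your explicit remark that $\delta_i$ is a positive multiple of an integral class so that the $\QQ$-kernel hypothesis applies even though $\delta_i$ itself is irrational is a helpful clarification of a step the paper leaves implicit, but it is the same argument.
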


\begin{proof}
	Take $\beta$ with $\ker_\QQ(\alpha)\subset\ker_\QQ(\beta)$. By Lemma~\ref{lem-quasi-L-char}, there exists a small $\epsilon>0$ which so that $\alpha([\gamma])\leq 0$ holds for all periodic~$\epsilon$-pseudo-orbits $\gamma$. Let $\epsilon'>0$ and $\delta_1\cdots\delta_n$ be~$\epsilon$-pseudo-orbits as in the conclusion of Lemma~\ref{lem-cut-pso-finite}, that is so that every $\epsilon'$-pseudo-orbit is homologous to a sum of $\delta_i$.

	Whenever $\alpha([\delta_i])=0$ holds, $[\delta_i]$ lies in $\ker_\QQ(\alpha)$, so $\beta([\delta_i])=0$ also holds. If $\beta$ is close enough to~$\alpha$, then $\beta([\delta_i])<0$ holds for the finitely many $\delta_i$ that satisfy $\alpha([\delta_i])<0$. Hence, we have $\beta([\delta_i])\leq 0$ for all $i$. It follows from the choice of $\delta_i$ that $\beta([\gamma])\leq0$ holds for all periodic $\epsilon'$-pseudo-orbits~$\gamma$. Then using Lemma~\ref{lem-quasi-L-char}, we deduce that $\beta$ is quasi-Lyapunov.
\end{proof}

\begin{lemma}\label{lem-general-L-tech}
	Let $E$ be a $\RR$-vector space of finite dimension and $F\subset E$ be a $\QQ$-vector subspace dense inside $E$. For any~$x$ in $E\setminus F$ and any neighborhood $U\subset E$ of~$x$, there exist finitely many vectors $\beta_1\cdots\beta_p$ in $F\cap U$ and scalars $\lambda_1\cdots \lambda_p>0$ which satisfy $x=\sum_i \lambda_i\beta_i$ and so that the two families $(\beta_1\cdots\beta_p)$ and $(\lambda_1\cdots\lambda_p)$ are free over $\QQ$.
\end{lemma}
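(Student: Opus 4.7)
The plan is to construct the decomposition inside the smallest ``$F$-rational'' $\RR$-linear subspace $V_x\subseteq E$ containing $x$.

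First I would reduce to the case $\dim_\QQ F=\dim_\RR E=n$: since $F$ is dense in $E$, it contains an $\RR$-basis $(v_1,\ldots,v_n)$ of $E$, and the $\QQ$-span of this basis is a dense $\QQ$-subspace of $F$ of $\QQ$-dimension exactly $n$, for which a proof suffices. Henceforth I replace $F$ by this smaller subspace, so that any $\QQ$-basis of $F$ is automatically an $\RR$-basis of $E$. Writing $x=\sum x_i v_i$ in such coordinates, I set $d:=\dim_\QQ\QQ\text{-span}(x_1,\ldots,x_n)$. Next I would introduce $V_x\subseteq E$, defined as the intersection of all hyperplanes $\ker\phi$ where $\phi\in E^*$ is \emph{rational} (meaning $\phi(F)\subseteq\QQ$) and $\phi(x)=0$. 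A rank--nullity count applied to the $\QQ$-linear map $(q_i)\mapsto\sum q_i x_i$ yields $\dim_\RR V_x=d$, and shows that $V_x\cap F$ has $\QQ$-dimension $d$ and is dense in $V_x$; in particular any $\QQ$-basis of $V_x\cap F$ is simultaneously an $\RR$-basis of $V_x$.

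The crucial claim is that for any $\QQ$-basis $(\xi_i)$ of $V_x\cap F$, the coordinates $(x_i')$ of $x$ defined by $x=\sum x_i'\xi_i$ are $\QQ$-linearly independent in $\RR$. I would prove this by contradiction: a non-trivial relation $\sum q_i x_i'=0$ with $q_i\in\QQ$ would define a non-zero $\QQ$-linear functional $\psi$ on $V_x$ vanishing at $x$ and taking rational values on $V_x\cap F$. Completing $(\xi_i)$ to a $\QQ$-basis of $F$ (which has $\QQ$-dimension $n$) and extending $\psi$ by zero on the additional generators would produce a rational $\tilde\psi\in E^*$ with $\tilde\psi(x)=0$ but $\tilde\psi|_{V_x}\neq 0$, contradicting $V_x\subseteq\ker\tilde\psi$ by definition of $V_x$.

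Finally, using density of $V_x\cap F$ in $V_x$, I would choose $\beta_1,\ldots,\beta_d\in V_x\cap F\cap U$ forming an $\RR$-basis of $V_x$ with $x$ in the interior of the positive simplicial cone $\{\sum\lambda_i\beta_i:\lambda_i>0\}$. Both conditions are open in the space of $d$-tuples in $V_x$; a concrete realization near $x$ is the ``barycentric'' configuration $\beta_i^\star=x+\varepsilon u_i$ where $(u_i)\subset V_x$ satisfies $\sum u_i=0$, giving $x=\frac{1}{d}\sum\beta_i^\star$ for small $\varepsilon>0$, which I would then perturb into $V_x\cap F\cap U$. The uniquely determined $\lambda_i$ are then positive, and they are $\QQ$-linearly independent because they are obtained from the $\QQ$-independent coordinates $(x_i')$ by an invertible rational change-of-basis matrix, which preserves $\QQ$-independence; the $(\beta_i)$ are $\QQ$-free since they form a $\QQ$-basis of $V_x\cap F$. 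The hard part I anticipate is the minimality argument for $V_x$ in the crucial claim: it relies on the ability to extend rational functionals from $V_x$ to all of $E$, which is made possible precisely by the normalization $\dim_\QQ F=\dim_\RR E$ ensured in the reduction step.
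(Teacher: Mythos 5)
Your proof is correct, and it takes a genuinely different route from the paper's. The paper's argument is more economical: it takes $p$ minimal such that some $\beta_1,\ldots,\beta_p\in F$ have $\RR$-span containing $x$, observes that minimality forces $(\beta_i)$ to be $\QQ$-free, and then proves $\QQ$-freeness of the coefficients $(a_i)$ by the rewriting trick — if $a_j=\sum_{i\neq j}q_ia_i$ with $q_i\in\QQ$, then $x=\sum_{i\neq j}a_i(\beta_i+q_i\beta_j)$ with $\beta_i+q_i\beta_j\in F$, contradicting minimality. You instead make the implicit object explicit: the subspace $V_x$ (which in the paper is just "the $\RR$-span of a minimal family") is characterized as the intersection of all rational hyperplanes through $x$, and the $\QQ$-freeness of the coordinates is proved by a Hahn–Banach-style extension of a rational functional from $V_x$ to $E$ rather than by the substitution trick. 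Both are valid; what your approach buys is transparency about the final step, which the paper treats very tersely (the sentence "we may now choose $a_1\cdots a_p$ and $\beta_1\cdots\beta_p$ in $F\cap U$\ldots so that additionally $x$ belongs to $\conv(\{0,\beta_1\cdots\beta_p\})$" relies on exactly the fact you prove explicitly, namely that $V_x\cap F$ is dense in $V_x$). One small simplification available to you: once $\beta_1,\ldots,\beta_d$ are chosen as a $\QQ$-basis of $V_x\cap F$, the crucial claim applies directly to them, so the change-of-basis argument at the end (passing through an auxiliary basis $(\xi_i)$) is not needed — the coordinates of $x$ in $(\beta_i)$ are the $\lambda_i$ and their $\QQ$-independence is immediate.
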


Given a $\RR$-vector space $E$ and $X\subset E$, we denote by $\conv(X)\subset E$ its convex hull.

\begin{proof}
	Take~$n$ vectors $\beta_1\cdots\beta_p$ in $F$ so that~$x$ belongs to the $\RR$-vector space spanned by the family $(\beta_i)_i$. We additionally take~$n$ minimal with this property. Then $(\beta_1\cdots\beta_p)$ is free over $\QQ$.
	
	Take $a_1\cdots a_p\in\RR$ with $x=\sum_i a_i\beta_i$. We claim that the family $(a_1\cdots a_p)$ is free over $\QQ$. If it was not the case, there exist $1\leq j\leq n$ and rational numbers $q_i$, for $i\neq j$, that satisfy:
	$$a_j=\sum_{i\neq j} q_i a_i.$$
	Note that $\beta_i + q_i\beta_j$ belongs to $F$. So the rewriting
	$$x = \sum_{i\neq j}a_i\beta_i + \left(\sum_{i\neq j} q_i a_i\right)\beta_j 
			= \sum_{i\neq j}a_i(\beta_i + q_i\beta_j)$$
	contradicts the minimality of~$n$. The claim follows.

	We may now choose $a_1\cdots a_p$ and $\beta_1\cdots\beta_p$ in $F\cap U$ as above, so that additionally~$x$ belongs to $\conv(\{0,\beta_1\cdots\beta_p\})$. Then $a_i$ is non-negative, and even positive by minimality of~$n$. 
\end{proof}

\begin{corollary}\label{cor-rat-L-span}
	Let $\alpha$ in $H^1(M,\RR)$ be quasi-Lyapunov and $U$ be a neighborhood of~$\alpha$. Then there exists some quasi-Lyapunov classes $\beta_1\cdots\beta_p$ in $H^1(M,\QQ)\cap U$ and some scalars $\lambda_1\cdots\lambda_p>0$ that satisfies $\sum_i\lambda_i\beta_i=\alpha$ and so that the two families $(\beta_1\cdots\beta_p)$ and $(\lambda_1\cdots\lambda_p)$ are free over $\QQ$.
\end{corollary}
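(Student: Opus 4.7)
The plan is to combine Lemma~\ref{lem-general-L-tech} with Lemma~\ref{lem-quasiL-stability}: use the first to write $\alpha$ as a $\QQ$-free combination of rational cohomology classes, then apply the second to upgrade each such class to a quasi-Lyapunov class. The key conceptual point, which I expect to be the only nontrivial step, is the use of $\QQ$-linear independence of the coefficients $\lambda_i$ to transfer the kernel condition $\ker_\QQ(\alpha)\subset\ker_\QQ(\beta_i)$ required by Lemma~\ref{lem-quasiL-stability}; everything else is direct assembly.

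I first handle $\alpha\in H^1(M,\QQ)$ trivially with $p=1$, $\beta_1=\alpha$, $\lambda_1=1$ (and the empty decomposition when $\alpha=0$). Otherwise, since $H^1(M,\ZZ)$ generates $H^1(M,\RR)$ as a real vector space, $H^1(M,\QQ)$ is a dense $\QQ$-vector subspace of $H^1(M,\RR)$. I let $V$ denote the neighborhood of $\alpha$ provided by Lemma~\ref{lem-quasiL-stability}, and apply Lemma~\ref{lem-general-L-tech} to $E=H^1(M,\RR)$, $F=H^1(M,\QQ)$, $x=\alpha$, with the shrunken neighborhood $U\cap V$. This produces rational classes $\beta_1,\ldots,\beta_p\in H^1(M,\QQ)\cap U\cap V$ and scalars $\lambda_1,\ldots,\lambda_p>0$ such that $\alpha=\sum_i\lambda_i\beta_i$ and the two families $(\beta_i)_i$, $(\lambda_i)_i$ are $\QQ$-linearly independent.

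It remains to show that each $\beta_i$ is quasi-Lyapunov. Since $\beta_i\in V$, by Lemma~\ref{lem-quasiL-stability} it is enough to check $\ker_\QQ(\alpha)\subset\ker_\QQ(\beta_i)$. For any $\delta\in\ker_\QQ(\alpha)$, the pairings $\beta_i(\delta)$ lie in $\QQ$ because $\beta_i\in H^1(M,\QQ)$ and $\delta\in H_1(M,\QQ)$. The relation $\sum_i\lambda_i\beta_i(\delta)=\alpha(\delta)=0$, combined with the $\QQ$-linear independence of the $\lambda_i$ inside $\RR$, forces $\beta_i(\delta)=0$ for every $i$, which is precisely the desired inclusion. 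This concludes the corollary.
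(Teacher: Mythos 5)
Your proof is correct, and it takes a route that is mildly but genuinely different from the paper's. The paper applies Lemma~\ref{lem-general-L-tech} not to the full pair $(H^1(M,\RR),\,H^1(M,\QQ))$ but to the rationally-defined subspace $E=\{\beta:\beta(\ker_\QQ(\alpha))=0\}$ and its rational points $F$, so that every $\beta_i$ produced automatically satisfies $\ker_\QQ(\alpha)\subset\ker_\QQ(\beta_i)$; the only extra observation the paper needs is that $E$ admits a rational basis, hence $F$ is dense in $E$. You instead apply the lemma in the ambient space and then recover the inclusion $\ker_\QQ(\alpha)\subset\ker_\QQ(\beta_i)$ \emph{a posteriori}: for $\delta\in\ker_\QQ(\alpha)$ the values $\beta_i(\delta)$ are rational, so the relation $\sum_i\lambda_i\beta_i(\delta)=0$ together with the $\QQ$-freeness of $(\lambda_i)_i$ forces $\beta_i(\delta)=0$. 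That observation is sound, and it is a clean way to extract the needed kernel compatibility directly from the conclusion of Lemma~\ref{lem-general-L-tech} rather than building it into the hypotheses. You also explicitly dispose of the case $\alpha\in H^1(M,\QQ)$, which Lemma~\ref{lem-general-L-tech} excludes by requiring $x\in E\setminus F$; the paper leaves that trivial case implicit. What the paper's approach buys is that the kernel condition is structurally automatic rather than deduced; what yours buys is that you never need to remark that the subspace cut out by $\ker_\QQ(\alpha)$ is rationally defined. Both are short and correct.
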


\begin{proof}
	The set of classes $\beta$ in $H^1(M,\RR)$ with $\beta(\ker_\QQ(\alpha))=0$ is a $\RR$-vector subspaces described by rational equations, and so it admits a basis formed by elements with rational coefficients, that is in $H^1(M,\QQ)$. We apply Lemma~\ref{lem-general-L-tech} to $x=\alpha$, to a smaller neighborhood $U'\subset U$ of $\alpha$, to
	\begin{align*}
	E&=\{\beta\in H_1(M,\RR),\beta(\ker_\QQ(\alpha))=0\}, \text{ and }\\
	F&=\{\beta\in H_1(M,\QQ),\beta(\ker_\QQ(\alpha))=0\}.
	\end{align*}
	It yields some $\beta_i$ in $H^1(M,\QQ)\cap U'$ and $\lambda_i>0$ which satisfy $\alpha=\sum_i\lambda_i\beta_i$, and so that the family of $\lambda_i$ and $\beta_i$ are free over $\QQ$. If $U'$ is taken small enough, then Lemma~\ref{lem-quasiL-stability} implies that the $\beta_i$ are quasi-Lyapunov.
\end{proof}

\subsection{Cone of quasi-Lyapunov classes}

Denote by $D_\varphi^*\subset H^1(M,\RR)$ the convex cone dual to $D_\varphi$, that is the set of $\alpha$ that satisfy $\alpha(D_\varphi)\geq0$. Also denote by $\QL_\varphi$ the set of quasi-Lyapunov classes for~$\varphi$. It is a convex cone, it contains zero, and included in~$-D_\varphi^*$ (see Lemma~\ref{lem-quasiL-hom-inequality}). In general, $\QL_\varphi$ is neither open nor closed inside $-D_\varphi^*$.

We discuss a combinatorial decomposition of an arbitrary convex cone, not necessarily closed, and apply it to $\QL_\varphi$ and $-D_\varphi^*$. Denote by $\cC\subset X$ a non-empty convex cone in a finite dimensionnal $\RR$-vector space $X$.
Given a subset $E\subset X$, denote by $\langle E\rangle$ the vector subspace of $X$ spanned by~$E$. We call a \emph{open face} of $\cC$, any non-empty subset $F$ of~$\cC$ that satisfies:
\begin{enumerate}
	\item $F$ is a convex cone,
	\item $F$ is open inside $\langle F\rangle$,
	\item $F$ is maximal among the sets that satisfy the first two items. 
\end{enumerate}

Let us insist on the fact that `open' in `open face' means that it does not contains its boundary, not that it is open in $H^1(M,\RR)$.

\begin{example}
	The convex cone $cC$ contains a maximum vector subspace~$F$, which may be reduced to $\{0\}$. The set $F$ is an open face of $cC$.
\end{example}

\begin{example}
	When $-D_\varphi^*$ has a non-empty interior inside $H^1(M,\RR)$, its interior is made of classes $\alpha$ with $\alpha(D_\varphi)<0$, which are thus quasi-Lyapunov. Hence, the interior of $-D_\varphi^*$ is an open face of $\QL_\varphi$. This face, when non-empty, has the following geometric interpretation: $\alpha(D_\varphi)<0$ holds if and only if $-\alpha$ is cohomologous to a global section \cite{Fried82}.
\end{example}

We continue the section with some elementary properties satisfied by open faces, that one would expect from the choice of terminology.

\begin{lemma}\label{lem-unique-quasiL-face}
	For every~$x$ in $\cC$, there exists a unique open face of $\cC$ that contains~$x$.
\end{lemma}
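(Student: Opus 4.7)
The plan is to build, for each $x \in \cC$, an explicit candidate open face $F^*$ containing $x$, and then show that any open face through $x$ must coincide with it. I would let $\mathcal{G}_x$ denote the family of subsets $F \subseteq \cC$ containing $x$ and satisfying conditions (1) and (2) of the definition, i.e.~convex cones open in their linear span. This family is non-empty (for $x \neq 0$ it contains $\RR_{>0}\cdot x$, and for $x=0$ it contains $\{0\}$). I would set $F^* = \bigcup_{F \in \mathcal{G}_x} F$, aiming to prove $F^* \in \mathcal{G}_x$, after which maximality forces $F^*$ to be an open face and uniqueness follows.

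The key technical step is that $\mathcal{G}_x$ is directed under the Minkowski sum $(F_1, F_2) \mapsto F_1 + F_2$. The sum is a convex cone inside $\cC$ (by convexity of $\cC$ and its closure under positive multiples), and contains $x = \tfrac{x}{2}+\tfrac{x}{2}$. Openness in $\langle F_1 + F_2 \rangle = \langle F_1\rangle + \langle F_2\rangle$ is the point that needs care: the linear surjection $\langle F_1\rangle \oplus \langle F_2\rangle \to \langle F_1\rangle + \langle F_2\rangle$ is open by the finite-dimensional open mapping theorem, so summing neighborhoods of $y_i \in F_i$ inside $\langle F_i\rangle$ yields a neighborhood of $y_1+y_2$ inside $F_1+F_2$. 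Next, using finite-dimensionality of $X$, I would pick $F_0 \in \mathcal{G}_x$ of maximal dimension; for any $F \in \mathcal{G}_x$, $F + F_0 \in \mathcal{G}_x$ has dimension at least $\dim F_0$, hence equal to it, so $\langle F\rangle \subseteq \langle F + F_0\rangle = \langle F_0 \rangle$, and in particular $F^* \subseteq \langle F_0\rangle$.

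With everything now inside a common ambient space $\langle F_0\rangle$, the remaining verifications are routine: $F^* = \bigcup_{F \in \mathcal{G}_x}(F + F_0)$ is a union of open subsets of $\langle F_0\rangle$, hence open in $\langle F^*\rangle = \langle F_0\rangle$, and directedness shows $F^*$ is a convex cone. Thus $F^* \in \mathcal{G}_x$, and any strict superset of $F^*$ satisfying (1)--(2) would still contain $x$, lie in $\mathcal{G}_x$, and therefore already be contained in $F^*$, a contradiction; so $F^*$ satisfies the maximality condition (3). For uniqueness, any open face $F$ through $x$ lies in $\mathcal{G}_x$ by definition, hence $F \subseteq F^*$; since $F$ is maximal and $F^*$ also satisfies (1)--(2), one has $F = F^*$. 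The main obstacle is showing $F^*$ itself is open in its linear span, and the trick of absorbing a maximal-dimensional $F_0$ to confine all members of $\mathcal{G}_x$ to a single ambient subspace is what makes this go through cleanly.
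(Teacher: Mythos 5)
Your argument is correct and takes a genuinely different route from the paper's. The paper constructs the candidate face explicitly: it sets $E_x$ to be the linear span of the set of directions $y$ with both $x+y$ and $x-y$ in $\cC$, defines $F_x$ as the interior of $\cC\cap E_x$ relative to $E_x$, verifies $x\in F_x$ by placing a small cross-polytope $\conv(x\pm y_1,\dots,x\pm y_p)$ around $x$ inside $\cC\cap E_x$, and then shows that any $F$ through $x$ satisfying (1)--(2) has $\langle F\rangle\subseteq E_x$. You instead work abstractly with the whole family $\mathcal{G}_x$, arguing it is directed under Minkowski sum and taking a union, with the dimension-maximal $F_0$ used to pin every member of $\mathcal{G}_x$ inside a single ambient subspace. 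One point you should spell out is that directedness needs $F_1,F_2\subseteq F_1+F_2$, which is not automatic for Minkowski sums; here it holds because $x\in F_1\cap F_2$ and each $F_i$ is an open cone in $\langle F_i\rangle$, so for $y\in F_1$ one can write $y=(y-\epsilon x)+\epsilon x\in F_1+F_2$ with $\epsilon>0$ small. With that noted, the verification is complete. What the paper's route buys is a concrete identification of $\langle F_x\rangle$ as the subspace of two-sided directions at $x$, which is exactly what its maximality step reuses; what your route buys is that it never has to guess the answer, at the cost of the open-mapping lemma and the dimension-maximality trick. Both hinge on the same finite-dimensional openness fact, just packaged differently.
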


\begin{corollary}
	The open faces of $\cC$ are pairwise disjoint.
\end{corollary}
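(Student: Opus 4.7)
The plan is to construct, for each $x$ in $\cC$, the unique open face as the union of the family $\Phi$ of all convex subcones $G \subset \cC$ that contain $x$ and are open in $\langle G \rangle$. This family is non-empty: the open ray $\mathbb{R}_{>0}\cdot x$ (or $\{0\}$ when $x=0$) belongs to it.

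The main technical lemma is that $\Phi$ is directed under inclusion: for $G_1, G_2 \in \Phi$, the Minkowski sum $G_1 + G_2$ lies in $\Phi$ and contains both summands. That $G_1 + G_2$ is a convex subcone of $\cC$ is automatic from the convexity of $\cC$, and its openness inside $\langle G_1 \rangle + \langle G_2 \rangle$ follows from the standard observation that a Minkowski sum of sets each open in their respective spans is open in the sum of spans. The subtle point is the inclusion $G_1 \subset G_1 + G_2$: for $y \in G_1$, one picks $\epsilon > 0$ small enough that $y - \epsilon x \in G_1$, possible since $G_1$ is open at $y$ inside $\langle G_1 \rangle$ and $x$ belongs to $\langle G_1 \rangle$; one then writes $y = (y - \epsilon x) + \epsilon x$, noting that $\epsilon x \in G_2$ by the cone property.

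With directedness established, I exploit the finite dimension of $X$: select $G^* \in \Phi$ maximizing $\dim \langle G^* \rangle$. For every $G \in \Phi$, the cone $G + G^* \in \Phi$ has span $\langle G \rangle + \langle G^* \rangle$, which by maximality must equal $\langle G^* \rangle$, whence $\langle G \rangle \subset \langle G^* \rangle$. Setting $F \defi \bigcup \Phi$, this forces $\langle F \rangle = \langle G^* \rangle$. Convexity and the cone property for $F$ are immediate from directedness (any pair of points lies in a common $G_1 + G_2 \subset F$). Openness of $F$ in $\langle F \rangle$ is checked pointwise: any $y \in F$ lies in some $G \in \Phi$, and then $G + G^* \in \Phi$ contains $y$ and is open in $\langle G^* \rangle = \langle F \rangle$, producing the required neighborhood. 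Thus $F \in \Phi$, and $F$ is maximal among convex subcones of $\cC$ open in their span: any such $G'$ with $F \subset G'$ contains $x$, hence $G' \in \Phi$, forcing $G' \subset F$.

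Uniqueness is then immediate: any other open face $F'$ containing $x$ satisfies the defining conditions of $\Phi$, so $F' \subset F$; since $F$ itself is a convex subcone open in its span, the maximality clause for $F'$ gives $F' = F$. The main obstacle is the Minkowski-sum trick in the directedness step, where one exploits the common point $x$ to embed each $G_i$ inside $G_1 + G_2$; once this is in hand, the finite-dimensional argument that produces the maximal $G^*$ and gives $F \in \Phi$ is routine.
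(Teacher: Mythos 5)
Your proposal is correct, and it proves the key Lemma~\ref{lem-unique-quasiL-face} (existence and uniqueness of the open face containing any given $x\in\cC$), from which the corollary follows immediately, as in the paper. But the route you take is genuinely different. The paper constructs the face explicitly: it defines $E_x$ to be the subspace spanned by the directions $y$ with $x\pm y\in\cC$, then sets $F_x$ to be the relative interior of $\cC\cap E_x$ inside $E_x$, and checks directly that $F_x$ contains $x$, satisfies the first two axioms, and is maximal. You instead work with the family $\Phi$ of all convex subcones of $\cC$ containing $x$ that are open in their own span, prove it is directed under Minkowski sum (the key point being the embedding $G_1 \subset G_1 + G_2$ via the perturbation $y = (y - \epsilon x) + \epsilon x$), extract a maximal-dimensional $G^*$ by finite dimensionality, and show $\bigcup\Phi$ is itself a member of $\Phi$ and maximal. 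What each buys: the paper's version is shorter and names the span $\langle F_x\rangle = E_x$ up front, which is convenient in later arguments; your version avoids guessing the ambient span in advance and isolates the Minkowski-sum closure property of $\Phi$, which is a cleaner structural explanation for why the face is unique. Both are sound; the only small gap in the paper's own write-up — that $F\subset F_x$ needs the standard fact that a convex combination of an interior point and an arbitrary point of a convex set, with positive weight on the interior point, is interior — is replaced in your argument by the directedness/openness-of-sum lemma, which you handle carefully.
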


\begin{proof}
	Let~$x$ be in $\cC$. Denote by $E_x\subset X$ the vector subspace generated by the set of $y$ that satisfies that the two elements $x+y$ and $x-y$ lie in $\cC$. Note that $E_x$ contains $\langle x\rangle$, for $y=\tfrac{1}{2}x$, so $E_x$ it is not empty. Denote by $F_x\subset E_x$ the interior (in $E_x$) of $\cC\cap E_x$. 

	By construction, there exists a basis $y_1\cdots y_p$ of $E_x$, so that all $x\pm y_i$ are all in $\cC$. The interior of 
	$$\conv(x+y_1,x-y_1,\cdots,x+y_p,x-y_p)$$
	lies in $\cC$, so~$x$ itself lies in $F_x$. It is clear that $F_x$ satisfies the first two items in the definition of open faces of $\cC$. 
	
	We now prove the last item. Let $F\subset\cC$ be a subset that contains~$x$ and that satisfies the first two items. Then for all $y$ in $F$ and close enough to~$x$, $x+(y-x)$ and $x-(y-x)$ are both very close to~$x$ inside $\langle F\rangle$, so they lie in~$\cC$. It follows that $y-x$ belongs to $E_x$, and so $\langle F\rangle$ is a subset of $E_x$. Therefore, $F$ is a subset of $F_x$. It follows that $F_x$ is maximal and that it is an open face of~$\cC$.
	The uniqueness follows from the same argument.
\end{proof}

From now on, we denote by $F_x$ the \emph{unique open face of $\cC$} that contains a given~$x$ in $\cC$. It follows from definition and from the same arguments as above.

\begin{corollary}\label{cor-quasiL-conv-comb}
	Let $y_1\cdots y_p$ be in $\cC$ and $E=\conv(\{y_1\cdots y_p\})$. There exists a unique open face $F$ of $\cC$ which contains the interior of $E$ (as a subset of~$\langle E\rangle$). Additionally, the $y_i$ lie in the closure of $F$. 
\end{corollary}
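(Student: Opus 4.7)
The plan is to pick a distinguished ``barycentric'' point $x^*$ in $E$, let $F = F_{x^*}$ be the unique open face of $\cC$ containing it (as provided by Lemma~\ref{lem-unique-quasiL-face}), and show that $F$ automatically absorbs the entire relative interior of $E$ while picking up each $y_i$ in its closure. Uniqueness comes for free: two distinct open faces are disjoint by Lemma~\ref{lem-unique-quasiL-face}, so any open face containing a given interior point of $E$ must equal $F$.

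I would first establish the key identification $\langle E \rangle \subset E_{x^*}$. Setting $x^* = \tfrac{1}{p}\sum_i y_i$, for any $v = \sum_i a_i y_i$ with $|a_i| \leq \tfrac{1}{p}$, both $x^* + v = \sum_i(\tfrac{1}{p}+a_i)y_i$ and $x^* - v = \sum_i(\tfrac{1}{p}-a_i)y_i$ are non-negative combinations of elements of $\cC$, hence lie in $\cC$. Thus such $v$ all belong to $E_{x^*}$, and since $E_{x^*}$ is a vector subspace, it contains $\langle y_1,\ldots,y_p\rangle = \langle E\rangle$.

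Next I would show that every $x' = \sum_i \mu_i y_i$ with $\mu_i>0$ (and $\sum \mu_i = 1$) lies in $F$. For $t = 1+\varepsilon$ with $\varepsilon>0$ small, the point
\[
x^{**} \defi x^* + t(x'-x^*) = \sum_i\bigl[\mu_i + \varepsilon(\mu_i - \tfrac{1}{p})\bigr]y_i
\]
has all coefficients strictly positive (since $\mu_i > 0$), so $x^{**} \in \cC \cap \langle E\rangle \subset \cC \cap E_{x^*}$. Because $x^* \in F_{x^*}$, some open neighborhood $U$ of $x^*$ in $E_{x^*}$ lies in $\cC$. Then by convexity of $\cC \cap E_{x^*}$, the set $(1-\tfrac{1}{t})\,U + \tfrac{1}{t}\,x^{**}$ is an open neighborhood of $x' = (1-\tfrac{1}{t})x^* + \tfrac{1}{t}x^{**}$ inside $E_{x^*}$, contained in $\cC$. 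This places $x'$ in the relative interior of $\cC \cap E_{x^*}$ in $E_{x^*}$, i.e.\ in $F_{x^*} = F$. In particular the interior of $E$ in $\langle E\rangle$ (which, if nonempty, consists only of such $x'$) is contained in $F$.

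Finally, for each $y_i$ I would approximate by $z_n = (1-\tfrac{1}{n})y_i + \tfrac{1}{n(p-1)}\sum_{j\neq i} y_j$: these are in the relative interior of $E$, hence in $F$ by the previous step, and $z_n \to y_i$, so $y_i \in \wb{F}$. The main technical hurdle is the third step: being careful that $x'$ lies in the topological interior (not merely in $\cC \cap E_{x^*}$), which the convex-combination construction of a full neighborhood handles cleanly by pushing the neighborhood of $x^*$ along the segment toward $x^{**}$.
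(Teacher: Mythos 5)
The paper leaves this corollary without an explicit proof, remarking only that it ``follows from definition and from the same arguments as above,'' so there is no official argument to compare against; your write-up supplies the missing details and is correct. Choosing the centroid $x^*=\tfrac{1}{p}\sum_i y_i$, verifying $\langle E\rangle\subset E_{x^*}$ via the symmetric perturbations $x^*\pm v$, and then dragging the open neighborhood $U$ of $x^*$ in $E_{x^*}$ along the segment toward the overshoot point $x^{**}=x^*+t(x'-x^*)$ to trap $x'$ in the $E_{x^*}$-interior of $\cC\cap E_{x^*}$ is precisely the mechanism the paper uses to build $E_x$ and $F_x$ in the proof of Lemma~\ref{lem-unique-quasiL-face}, so this is the intended route. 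Uniqueness is indeed free from disjointness of open faces.

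Two small remarks. First, ``interior of $E$ as a subset of $\langle E\rangle$'' has to be read as the relative interior of $E$ in its affine hull: taken literally as the interior inside the linear span $\langle E\rangle$, it is empty whenever $0\notin\text{aff}(E)$, and the uniqueness claim would then be vacuously false. Your argument in fact proves the correct, stronger statement, since the set of strictly positive convex combinations of the $y_i$ always contains $x^*$ and you show it always lies in $F_{x^*}$; the parenthetical ``if nonempty'' is therefore more cautious than needed under the intended reading. Second, the approximating sequence $z_n=(1-\tfrac{1}{n})y_i+\tfrac{1}{n(p-1)}\sum_{j\neq i}y_j$ presupposes $p\geq 2$; for $p=1$ the relative interior of $E$ is $\{y_1\}=\{x^*\}\subset F$ and the closure claim is immediate, so this is only a missing trivial case.
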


\begin{lemma}\label{lem-quasiL-face-inclusion}
	For any open face $F$ of $\cC$ and any $y$ in $\partial F\cap\cC$, the open face $F_y$ is included in $\partial F$.
\end{lemma}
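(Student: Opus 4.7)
The proof reduces to two claims: $F_y \cap F = \emptyset$, and $F_y \subset \bar F$.

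For the first, the hypothesis $y \in \partial F$ gives $y \notin F$, while $y \in F_y$ by definition; by the uniqueness of the open face through a point (Lemma~\ref{lem-unique-quasiL-face}) together with the subsequent corollary, distinct open faces are disjoint, so $F_y \cap F = \emptyset$. Hence $F_y \subset \cC \setminus F$, and it suffices to establish $F_y \subset \bar F$.

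For the second, fix $z \in F_y$ and any $x \in F$. Apply Corollary~\ref{cor-quasiL-conv-comb} to the pair $\{x,z\}$: there exists a unique open face $F^{\star}$ of $\cC$ that contains the relative interior of $\conv(\{x,z\})$, i.e.\ the open segment $(x,z)$, and both $x$ and $z$ lie in $\bar{F^{\star}}$. If I can prove $F^{\star} = F$, then $z \in \bar{F^{\star}} = \bar F$ and we are done. Since distinct open faces are disjoint, $F^{\star} = F$ will follow from producing a common point. The points $p_t = (1-t)x + tz \in F^{\star}$ converge to $x$ as $t \to 0^+$; since $F$ is open in $\langle F\rangle$ and contains $x$, provided $z - x \in \langle F\rangle$ (equivalently $z \in \langle F\rangle$), the point $p_t$ will lie in $F$ for all $t$ small enough, which gives $p_t \in F \cap F^{\star}$, forcing $F^{\star} = F$.

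The heart of the proof is therefore the auxiliary inclusion $F_y \subset \langle F\rangle$ whenever $y \in \bar F \cap \cC$. Using the characterization $F_y = \mathrm{int}_{E_y}(\cC \cap E_y)$ from the proof of Lemma~\ref{lem-unique-quasiL-face}, this reduces further to the statement that $w \in \langle F\rangle$ whenever $y \pm w \in \cC$. I expect this to be the main technical obstacle. My plan is to show that $\bar F \cap \cC$ is a face of $\cC$ in the classical midpoint-closed sense: if $m \in \bar F \cap \cC$ is the midpoint of a segment $[a,b] \subset \cC$, then $a,b \in \bar F$. Applied with $m = y$ and $\{a,b\} = \{y+w,\, y-w\}$, this immediately yields $y \pm w \in \bar F \subset \langle F\rangle$, hence $w \in \langle F\rangle$. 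To verify the midpoint-closedness, I would approximate $y$ by a sequence $x_n \in F$ (possible since $y \in \bar F$), use the openness of $F$ in $\langle F\rangle$ together with the standard line-segment principle for the convex set $\bar F$ to construct sequences $a_n, b_n \in F$ with $(a_n+b_n)/2 = x_n$ converging to $a,b$, and iterate Corollary~\ref{cor-quasiL-conv-comb} on the dimension of $\langle F\rangle$ to close the loop.
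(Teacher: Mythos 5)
Your reduction of the lemma to the disjointness $F_y\cap F=\emptyset$ together with the inclusion $F_y\subset\wb F$ is correct, as is the further reduction to $F_y\subset\langle F\rangle$. But this is where the argument stops being a proof: the ``main technical obstacle'' you flag is essentially the whole content of the lemma, and the plan you offer for it does not work. Note first that your two reformulations are equivalent rather than one being weaker. The proof of Lemma~\ref{lem-unique-quasiL-face} identifies $F$ with $F_{x_0}=\Int_{E_{x_0}}(\cC\cap E_{x_0})$ for any $x_0\in F$, with $\langle F\rangle=E_{x_0}$; since a convex set with nonempty relative interior has the same closure as that interior, $\cC\cap\langle F\rangle\subset\wb F$. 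Thus for $z\in\cC$ one has $z\in\langle F\rangle$ if and only if $z\in\wb F$, so passing from ``$z\in\wb F$'' to ``$z\in\langle F\rangle$'' gains nothing --- and, once you know $\cC\cap\langle F\rangle\subset\wb F$, the detour through $F^\star$ and Corollary~\ref{cor-quasiL-conv-comb} is also unnecessary.

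The concrete gap is your last paragraph. You propose to verify midpoint-closedness by taking $x_n\in F$ converging to $y$ and ``constructing sequences $a_n,b_n\in F$ with $(a_n+b_n)/2=x_n$ converging to $a,b$''; but a sequence inside $F$ can only converge to a point of $\wb F$, which is exactly what is in question, so such sequences cannot exist unless $a,b\in\wb F$ already. Nothing in the openness of $F$ or in Corollary~\ref{cor-quasiL-conv-comb} lets you steer $a_n,b_n$ toward prescribed limits outside $\wb F$, and the appeal to ``iterating on dimension'' is not an argument. To actually close the gap one can reason pointwise: given $w$ with $y\pm w\in\cC$, pick $x_0\in F$ and set $m_\delta=(1-\delta)y+\delta x_0$, which lies in $F$ for small $\delta>0$; then $m_\delta\pm(1-\delta)w=(1-\delta)(y\pm w)+\delta x_0\in\cC$, so $(1-\delta)w\in E_{m_\delta}=\langle F_{m_\delta}\rangle=\langle F\rangle$, hence $w\in\langle F\rangle$. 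The paper instead argues by contradiction: if some $z\in F_y$ escaped $\partial F$ then $z\notin\wb F$, hence (by the equivalence above) $z\notin\langle F\rangle$; setting $z_\pm=y\pm\epsilon(z-y)\in F_y\subset\cC$, the interior in its own span of the convex cone generated by $F\cup\{z_+,z_-\}$ is a convex cone inside $\cC$, open in a span strictly larger than $\langle F\rangle$, and containing $F$ --- contradicting the maximality in the definition of an open face.
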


\begin{proof}
	Assume that $F_y$ is not included inside $\partial F$ and take some $z$ in $F_y$ that is not inside $\partial F$. Then $z$ does not lie in $F$ since open faces of $\cC$ are pairwise disjoint. For some $\epsilon>0$, the elements $z_\pm=y\pm\epsilon(z-y)$ lie in $\cC$ by openness of $F_y$. So the interior of the convex cone spanned by $F_y\cup\{z_-,z_+\}$ is larger than $F$, and it satisfies the first two items in the definition of open faces of~$\cC$. It contradicts the maximality of $F$.
\end{proof}

We now go back to $\QL_\varphi$ and $-D_\varphi^*$.

\begin{lemma}\label{lem-rat-L-dense}
	For any $\alpha$ in $\QL_\varphi$ and any neighborhood $U$ of $\alpha$, there exists an element $\beta$ in $H^1(M,\QQ)\cap U$ that satisfies $F_\beta=F_\alpha$.
\end{lemma}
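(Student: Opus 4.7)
The plan is to apply Corollary~\ref{cor-rat-L-span} and then perturb the coefficients rationally while staying in the same open face. Fix a small convex neighborhood $U'\subset U$ of $\alpha$, and apply Corollary~\ref{cor-rat-L-span} to produce quasi-Lyapunov classes $\beta_1,\ldots,\beta_p$ in $H^1(M,\QQ)\cap U'$ and scalars $\lambda_1,\ldots,\lambda_p>0$ with $\alpha=\sum_i\lambda_i\beta_i$. Using density of $\QQ$ in $\RR$, I will pick positive rational numbers $\mu_i$ arbitrarily close to $\lambda_i$ and define $\gamma=\sum_i\mu_i\beta_i$. By construction $\gamma\in H^1(M,\QQ)$, $\gamma\in\QL_\varphi$ (as a positive combination of classes in the convex cone $\QL_\varphi$), and $\gamma$ lies as close to $\alpha$ as we wish, in particular inside $U$.

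The main step is to check that $\gamma\in F_\alpha$. Setting $v=\gamma-\alpha$, I will show that $\alpha\pm\epsilon v\in\QL_\varphi$ for some small $\epsilon>0$. First, $\alpha+\epsilon v=(1-\epsilon)\alpha+\epsilon\gamma$ lies in $\QL_\varphi$ by convexity. Second, I rewrite
\[
\alpha-\epsilon v=(1+\epsilon)\alpha-\epsilon\gamma=\sum_i\bigl((1+\epsilon)\lambda_i-\epsilon\mu_i\bigr)\beta_i,
\]
and since each $\lambda_i>0$ and each $\mu_i>0$, the coefficients are positive for $\epsilon>0$ small enough, so $\alpha-\epsilon v$ is a positive combination of quasi-Lyapunov classes and hence in $\QL_\varphi$.

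Recalling from the proof of Lemma~\ref{lem-unique-quasiL-face} that $\langle F_\alpha\rangle$ coincides with the span $E_\alpha$ of $\{w:\alpha\pm w\in\QL_\varphi\}$, the above yields $v\in\langle F_\alpha\rangle$, and together with $\alpha\in\langle F_\alpha\rangle$ this gives $\gamma\in\langle F_\alpha\rangle$. Since $F_\alpha$ is open inside $\langle F_\alpha\rangle$ and contains $\alpha$, choosing the $\mu_i$ sufficiently close to $\lambda_i$ forces $\gamma\in F_\alpha$, and then Lemma~\ref{lem-unique-quasiL-face} gives $F_\gamma=F_\alpha$, as required. I do not expect any genuine obstacle: the technical heart is already in Corollary~\ref{cor-rat-L-span}, and the rest is a standard cone-geometric perturbation argument.
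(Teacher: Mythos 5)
Your proof is correct and follows the paper's route: invoke Corollary~\ref{cor-rat-L-span} to get a rational decomposition $\alpha=\sum_i\lambda_i\beta_i$ with $\beta_i\in H^1(M,\QQ)\cap\QL_\varphi$, perturb the $\lambda_i$ to positive rationals, and check that the perturbed class lies in the same open face. Where the paper cites Corollary~\ref{cor-quasiL-conv-comb} to finish, you instead unwind the definitions of $E_\alpha$ and $F_\alpha$ from the proof of Lemma~\ref{lem-unique-quasiL-face}, which is the same argument carried out inline.
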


\begin{proof}
	It follows from Corollary~\ref{cor-rat-L-span} that there exist finitely many quasi-Lyapunov $\beta_i$ in $H^1(M,\QQ)$ and scalars $\lambda_i>0$ that satisfy $\sum_i\lambda_i\beta_i=\alpha$. 
	Take some rational $\eta_i>0$ very close to~$\lambda_i$, so that $\beta=\sum_i\eta_i\beta_i$ belongs to $U$. It follows from Corollary~\ref{cor-quasiL-conv-comb} that $F_\beta=F_\alpha$ holds true. 
\end{proof}

It has two immediate consequences:

\begin{corollary}
	For any open face $F$ of $\QL_\varphi$, $\langle F\rangle$ is given by rational equations.
\end{corollary}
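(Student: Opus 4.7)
The plan is to exhibit an $\RR$-basis of $\langle F\rangle$ consisting of classes in $H^1(M,\QQ)$; once this is done, $\langle F\rangle$ is automatically cut out by rational linear forms (take for instance the rational orthogonal of the rational span of such a basis with respect to a rational scalar product on $H^1(M,\RR)$).

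First I would set $d = \dim_\RR \langle F \rangle$ and pick $\alpha_1, \dots, \alpha_d \in F$ that form an $\RR$-basis of $\langle F\rangle$. This is possible because $F$ is open inside $\langle F\rangle$, hence contains a non-empty open ball there, which in particular spans $\langle F\rangle$ over $\RR$. Fixing any norm on $H^1(M,\RR)$, by continuity of the determinant there exists $\eta > 0$ such that any $d$-tuple $(\beta_1,\dots,\beta_d)$ with $\|\beta_i-\alpha_i\| < \eta$ for all $i$ is still linearly independent.

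Next I would apply Lemma~\ref{lem-rat-L-dense} to each $\alpha_i$ with the neighborhood $U_i$ equal to the open $\eta$-ball around $\alpha_i$. This produces $\beta_i \in H^1(M,\QQ) \cap U_i$ with $F_{\beta_i} = F_{\alpha_i} = F$. In particular each $\beta_i$ lies in $F \subset \langle F\rangle$, and by the choice of $\eta$ the family $(\beta_1,\dots,\beta_d)$ is linearly independent in $\langle F\rangle$. Being $d$ independent vectors in a $d$-dimensional space, they form a basis of $\langle F\rangle$ consisting of rational classes, and the corollary follows.

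There is no serious obstacle to anticipate: the argument is pure linear algebra, and all the dynamical content is already packed into Lemma~\ref{lem-rat-L-dense}, which guarantees that rational quasi-Lyapunov classes are dense in each open face. The only small care needed is to choose the rational approximations simultaneously close enough to preserve linear independence, and this is handled by a standard continuity-of-determinant argument.
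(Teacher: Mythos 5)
Your proof is correct and follows what the paper treats as the ``immediate consequence'' of Lemma~\ref{lem-rat-L-dense}: pick an $\RR$-basis $\alpha_1,\dots,\alpha_d$ of $\langle F\rangle$ inside the open set $F$, use the determinant-continuity argument to fix a radius $\eta$, and apply Lemma~\ref{lem-rat-L-dense} to replace each $\alpha_i$ by a rational class $\beta_i$ with $F_{\beta_i}=F$ (hence $\beta_i\in F\subset\langle F\rangle$), yielding a rational basis of $\langle F\rangle$, which is equivalent to being cut out by rational equations. This matches the paper's intent; no gap.
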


\begin{corollary}
	$\QL_\varphi$ has at most countably many open faces.
\end{corollary}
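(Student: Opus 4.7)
The plan is to combine the previous corollary with an injectivity argument: I will show that the map $F\mapsto\langle F\rangle$, from the set of open faces of $\QL_\varphi$ to the set of vector subspaces of $H^1(M,\RR)$, is injective. Once this is established, the result follows because the previous corollary tells us that every such $\langle F\rangle$ is defined by rational equations, hence belongs to the countable set of rational linear subspaces of the finite-dimensional space $H^1(M,\RR)$.

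For the injectivity, suppose $F_1$ and $F_2$ are two open faces with the same linear span $V\subset H^1(M,\RR)$. Both $F_1$ and $F_2$ are then convex cones inside $\QL_\varphi\cap V$, each open in $V$. The set $\QL_\varphi\cap V$ is convex as an intersection of convex sets, so its interior $G$ in $V$ is a convex cone, open in $V=\langle G\rangle$, and contained in $\QL_\varphi$. Since $F_1$ is open in $V$ and included in $\QL_\varphi\cap V$, we have $F_1\subset G$. But $G$ itself satisfies the first two items of the definition of an open face, so by maximality of $F_1$ we obtain $F_1=G$. The same argument yields $F_2=G$, hence $F_1=F_2$.

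The finite-dimensional $\QQ$-vector space $H^1(M,\QQ)$ has only countably many $\QQ$-subspaces (each being determined by a finite basis of vectors in the countable set $H^1(M,\QQ)$), and each rational linear subspace of $H^1(M,\RR)$ is the $\RR$-span of such a $\QQ$-subspace. Hence the set of rational linear subspaces of $H^1(M,\RR)$ is countable, and by the injectivity just established together with the previous corollary, the set of open faces of $\QL_\varphi$ is at most countable.

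No serious obstacle is expected; the only point requiring care is verifying that distinct open faces indeed have distinct linear spans, which is where the maximality clause in the definition of open face is used.
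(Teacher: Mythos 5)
Your proof is correct, but it takes a genuinely different route than the paper intends. The paper presents this corollary as an immediate consequence of Lemma~\ref{lem-rat-L-dense}: that lemma guarantees that every open face $F=F_\alpha$ contains a class $\beta\in H^1(M,\QQ)$, and since open faces are pairwise disjoint, assigning to each face such a rational point gives an injection from the set of open faces into the countable set $H^1(M,\QQ)$. You instead route through the other corollary (that $\langle F\rangle$ is a rational subspace) and supply the additional structural lemma that the map $F\mapsto\langle F\rangle$ is injective on open faces. Your injectivity argument is sound: if $F_1,F_2$ share a span $V$, both are open subsets of $V$ contained in the convex cone $\QL_\varphi\cap V$, so both lie inside its relative interior $G$, which is itself a non-empty convex cone contained in $\QL_\varphi$, open in $V=\langle G\rangle$; maximality of $F_1$ and $F_2$ then forces $F_1=G=F_2$. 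The countability of rational subspaces of $H^1(M,\RR)$ then finishes the argument. What your approach buys is a standalone fact worth recording --- distinct open faces of any convex cone have distinct linear spans --- at the cost of being less economical than picking a rational representative per face directly.
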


Note that $-D_\varphi^*$ may itself have uncountably many open faces as illustrated in the following example. When it happens, at most countably many open faces of $-D_\varphi^*$ contain quasi-Lyapunov classes.

\begin{example}
	We take $M=\bfrac{\RR^4}{\ZZ^4}$ with the coordinate system $(x,y,z,w)$. Let $\gamma\colon\bfrac{\RR}{\ZZ}\to\bfrac{\RR^3}{\ZZ^3}$ be a parametrization of a circle $S\subset\RR^3$. We assume that $S$ is not centered at zero. Denote by $\varphi$ the following flow on~$M$. For any $w$ in $\bfrac{\RR}{\ZZ}$, in restriction to the torus $\bfrac{\RR^3}{\ZZ^3}\times\{w\}$, $\varphi$ is a linear flow with slope $\gamma(w)$. 

	Identify $H_1(M,\RR)$ with $\RR^4$. Then the set of asymptotic pseudo-directions contains $S\times\{0\}$, and it satisfies $\conv(D_\varphi)=\conv(S)\times\{0\}$. 

	Take $t$ in $\bfrac{\RR}{\ZZ}$, $\lambda$ in $\RR$, and $\alpha\colon\RR^3\to\RR$ linear that satisfies $\alpha\circ\gamma(s)\leq 0$ with equality exactly in $s=t$. Note that $\alpha+\lambda dw$ lies in $-D_\varphi^*$. We denote by $F_t$ the set of classes of the form $\alpha+\lambda dw$ as above.
	Then the uncountably many sets $F_t$ are all open faces of $D_\varphi^*$, but no element in $F_t$ is quasi-Lyapunov, as of Example~\ref{ex-linear-flow}.
\end{example}

\subsection{Dependence of $\Rec_\alpha$ on $\alpha$}\label{sec-dependence-reca}

Recall that $\Rec_\alpha$ is the $\alpha$-recurrent set of $\alpha$.
We study in this section the dependence of $\Rec_\alpha$ on $\alpha$.
Denote by $\wb F$ the closure of a set $F\subset H^1(M,\RR)$.
Recall from Lemma~\ref{lem-quasiL-face-inclusion} that if $F_1,F_2$ are two open faces of $\QL_\varphi$ so that~$F_1$ intersects $\partial F_2$, then $F_1$ is contained in $\partial F_2$.

\begin{theorem}\label{thm-dep-face}
	Assume that~$M$ is compact and $\varphi$ is a continuous flow on~$M$. 
	For any open face $F$ of $\QL_\varphi$ of $D_\varphi^*$, there exists a $\varphi$-invariant compact subset $\Rec_F\subset M$, possibly empty, which satisfy the following properties:
	\begin{enumerate}
		\item for any open face $F$ of $\QL_\varphi$ and any $\alpha$ in $F$, we have $\Rec_\alpha=\Rec_F$,
		\item for any two open faces $F_1,F_2$ of $\QL_\varphi$, we have $F_1\subset\wb F_2$ if and only if $\Rec_{F_2}\subset\Rec_{F_1}$ holds,
		\item as a consequence, $\Rec_{F_1}=\Rec_{F_2}$ implies $F_1=F_2$, and $F_1\subset\partial F_2$ implies $\Rec_{F_2}\varsubsetneq\Rec_{F_1}$,
	\end{enumerate}
\end{theorem}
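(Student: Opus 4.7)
The plan is to define $\Rec_F := \Rec_\alpha$ for any $\alpha \in F$, establish well-definedness together with the forward half of claim~(2) via a common basis-perturbation argument, then prove the reverse half of claim~(2) via a construction based on finitely many short pseudo-orbits on which $\alpha_2$ vanishes but $\alpha_1$ does not, and finally deduce claim~(3) formally.

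First, given $\alpha_1 \in F_1 \subset \wb F_2$ and $\alpha_2 \in F_2$, openness of $F_2$ in $\langle F_2 \rangle$ yields a basis $v_1, \ldots, v_k$ of $\langle F_2 \rangle$ with $\alpha_2 \pm v_i \in F_2$. Lemma~\ref{lem-quasi-L-char} applied to the finite family $\{\alpha_2 \pm v_i\}$ gives an $\epsilon > 0$ such that each of these classes is non-positive on every periodic $\epsilon$-pseudo-orbit $\gamma$, whence $\alpha_2([\gamma]) = 0$ forces $v_i([\gamma]) = 0$ for each~$i$. Since $\alpha_1 - \alpha_2 \in \langle F_2 \rangle$ is a linear combination of the $v_i$'s, we conclude $\alpha_1([\gamma]) = 0$ too, so every $\alpha_2$-balanced periodic pseudo-orbit at small scale is also $\alpha_1$-balanced, whence $\Rec_{\alpha_2} \subset \Rec_{\alpha_1}$. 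Specializing both classes to a single face and symmetrizing proves well-definedness of $\Rec_F$, whose compactness and $\varphi$-invariance inherit from the corresponding properties of $\Rec_\alpha$ established in Section~\ref{sec-spectral-decomposition}.

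For the reverse direction of claim~(2), assume $F_1 \not\subset \wb F_2$ and use Lemma~\ref{lem-rat-L-dense} to pick rational representatives $\alpha_i \in F_i$, rescaled to lie in $H^1(M, \ZZ)$. The failure of the basis argument, combined with Lemma~\ref{lem-cut-pso-finite}, produces for every small $\epsilon > 0$ a periodic $\epsilon$-pseudo-orbit $\delta_\epsilon$ of uniformly bounded length with $\alpha_2([\delta_\epsilon]) = 0$ but $\alpha_1([\delta_\epsilon]) \leq -1$ (by integrality); otherwise $\alpha_1$ would vanish on the face $\langle F_2 \rangle^\perp \cap \conv(D_\varphi)$ of the asymptotic directions via Corollary~\ref{cor-pso-decomposition}, and, being quasi-Lyapunov, would lie in $\wb F_2$, contradicting the hypothesis. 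Taking $\epsilon_k \to 0$, extract an accumulation point $x \in M$ of base points on the $\delta_{\epsilon_k}$ and denote by~$K$ the resulting compact $\varphi$-invariant accumulation set. A local cut-and-paste modification --- inserting any point of $K$ into $\delta_{\epsilon_k}$ via two small contractible jumps --- preserves the homology class and exhibits arbitrarily small $\alpha_2$-balanced periodic pseudo-orbits through every point of $K$, so $K \subset \Rec_{F_2}$. To produce $x \in K \setminus \Rec_{F_1}$, lift $\delta_{\epsilon_k}$ to the integer cover $\wh M_{\alpha_1}$ and observe that each lift drops by at least~$1$ in the $\ZZ$-direction; iterating gives pseudo-orbits from any lift~$\tilde x$ down to~$-\infty$ in the compactification $\wb M_{\alpha_1}$. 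Since recurrence chains on $\wh M_{\alpha_1}$ are compact by Proposition~\ref{prop-alpha-rec-chain}, a careful diagonal selection of the lifts will produce a limit point whose every lift avoids all compact recurrence chains upstairs, hence $x \notin \Rec_{F_1}$.

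Claim~(3) follows formally from the iff in claim~(2): if $\Rec_{F_1} = \Rec_{F_2}$ then $F_1 \subset \wb F_2$ and $F_2 \subset \wb F_1$, hence $\langle F_1 \rangle = \langle F_2 \rangle$, and the symmetry-space construction of Lemma~\ref{lem-unique-quasiL-face} forces $F_1 = F_2$; and if $F_1 \subset \partial F_2$ then $\wb F_1 \subset \partial F_2$ so $F_2 \not\subset \wb F_1$, and the iff applied to the swapped pair yields $\Rec_{F_1} \not\subset \Rec_{F_2}$, giving the strict inclusion $\Rec_{F_2} \varsubsetneq \Rec_{F_1}$. The main obstacle is the diagonal step in paragraph~3: the limit point in $K$ could a priori admit unrelated $\alpha_1$-balanced periodic pseudo-orbits that trap all its lifts inside compact recurrence chains of $\wh M_{\alpha_1}$. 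Ruling this out should exploit the uniform length bound on the $\delta_\epsilon$ from Lemma~\ref{lem-cut-pso-finite} and the fact that only countably many recurrence chains fit in any bounded region of $\wh M_{\alpha_1}$, but the precise diagonal argument is the technical heart of the theorem.
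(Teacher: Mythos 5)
Your forward half of claim~(2) --- well-definedness of $\Rec_F$ and the implication $F_1 \subset \wb F_2 \Rightarrow \Rec_{F_2} \subset \Rec_{F_1}$ --- is correct. The basis-perturbation argument around $\alpha_2$, using openness of $F_2$ in $\langle F_2 \rangle$ together with Lemma~\ref{lem-quasi-L-char} applied to the finitely many classes $\alpha_2 \pm v_i$, is a clean direct version of what the paper achieves with Lemma~\ref{lem-Rec-comp} plus the easy half of Proposition~\ref{prop-rec-set-inclusion}. As you note, $\alpha_1 - \alpha_2 \in \langle F_2 \rangle$ because vector subspaces are closed, so the spanning argument is sound.

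The reverse half of claim~(2) has a genuine gap, which you correctly identify but cannot close. The obstacle you name --- that a limit point $x \in K$ might admit $\alpha_1$-balanced periodic pseudo-orbits unrelated to the $\delta_{\epsilon_k}$ --- is fatal to the lift-dropping-plus-diagonal strategy as stated: producing pseudo-orbits from $\tilde x$ toward $-\infty$ in $\wb M_{\alpha_1}$ only shows $\tilde x \recto -\infty$, and says nothing about whether some \emph{other} pseudo-orbit closes up at $\tilde x$. Countability of recurrence chains and the uniform length bound from Lemma~\ref{lem-cut-pso-finite} do not by themselves rule this out. The missing ingredient is Lemma~\ref{lem-close-rec-is-ker}, which you never invoke: it asserts that for $\alpha$ quasi-Lyapunov there is $\epsilon > 0$ such that every $\epsilon$-pseudo-orbit lying in the $\epsilon$-neighborhood of $\Rec_\alpha$ satisfies $\alpha([\gamma]) = 0$. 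This rigidity near the $\alpha$-recurrent set (proved in the rational case via the equivariant Lyapunov map of Theorem~\ref{thm-spectral-decomp}, then extended by rational approximation) is precisely what lets the paper turn your construction into a clean contradiction: it builds periodic $\gamma_\epsilon$ with $\alpha([\gamma_\epsilon]) = 0$ and $\beta([\gamma_\epsilon]) < 0$, shows they accumulate near $\Rec_\alpha$, and then, assuming $\Rec_\alpha \subset \Rec_\beta$, applies the rigidity lemma to $\beta$ to force $\beta([\gamma_\epsilon]) = 0$, a contradiction. No limit point or diagonal argument is needed. A secondary issue: your parenthetical justification for the existence of $\delta_\epsilon$ (that otherwise $\alpha_1$ would vanish on $\langle F_2 \rangle^\perp \cap \conv(D_\varphi)$ and hence lie in $\wb F_2$) is not argued; the paper instead phrases the whole reverse direction through the ray condition of Proposition~\ref{prop-rec-set-inclusion}, whose contrapositive is what delivers the $\gamma_\epsilon$ via Lemma~\ref{lem-cut-pso-finite}.

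Your derivation of claim~(3) from the iff in claim~(2) is correct.
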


We add a special case, which corresponds to the case of global sections.

\begin{theorem}\label{thm-dec-face-empty}
	With the same hypothesis, $\Rec_F$ is empty for at most one open face $F$ of $\QL_\varphi$. When it is empty, $F$ is open inside $H^1(M,\RR)$.
\end{theorem}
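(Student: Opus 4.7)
The plan is to identify the face with empty recurrence set explicitly, using Proposition~\ref{prop-rec-empty}, and then invoke the uniqueness of the open face containing a given point (Lemma~\ref{lem-unique-quasiL-face}).

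First I would recall the content of Proposition~\ref{prop-rec-empty}: for $\alpha \in H^1(M,\RR)$, one has $\Rec_\alpha=\emptyset$ if and only if $\alpha \in \QL_\varphi$ and $\alpha(D_\varphi)<0$. In particular the set
\[
U \defi \{\alpha \in H^1(M,\RR) \col \alpha(D_\varphi)<0\}
\]
is entirely contained in $\QL_\varphi$. Since $D_\varphi$ is compact, $U$ is precisely the topological interior of the dual cone $-D_\varphi^*$ inside $H^1(M,\RR)$. Thus $U$ is an open convex cone in $H^1(M,\RR)$ (possibly empty).

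Next I would check that, whenever $U$ is non-empty, $U$ is an open face of $\QL_\varphi$ in the combinatorial sense used in Section~\ref{sec-general-quasiL}. Items~1 and~2 of the definition are immediate: $U$ is a convex cone and it is open in $\langle U\rangle = H^1(M,\RR)$. For item~3 (maximality), suppose $V \subset \QL_\varphi$ is a convex cone, open in $\langle V\rangle$, and contains $U$. Then $\langle V\rangle \supset \langle U\rangle = H^1(M,\RR)$, so $V$ is open in $H^1(M,\RR)$ and contained in $\QL_\varphi \subset -D_\varphi^*$; being open in $H^1(M,\RR)$, it must in fact lie in the interior $U$ of $-D_\varphi^*$. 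Hence $V=U$, which shows maximality.

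Finally I would close the argument as follows. Let $F$ be any open face of $\QL_\varphi$ with $\Rec_F=\emptyset$ and pick $\alpha \in F$. Property~1 of Theorem~\ref{thm-dep-face} gives $\Rec_\alpha = \Rec_F = \emptyset$, so Proposition~\ref{prop-rec-empty} yields $\alpha(D_\varphi)<0$, i.e.\ $\alpha \in U$. Thus $F$ and $U$ are both open faces of $\QL_\varphi$ containing $\alpha$, and Lemma~\ref{lem-unique-quasiL-face} forces $F=U$. This gives both assertions at once: any face with $\Rec_F=\emptyset$ equals $U$ (uniqueness), and $U$ is by construction open in $H^1(M,\RR)$. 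The only subtle point, and the one I would double-check, is the maximality of $U$ as an open face; everything else is a direct translation between Proposition~\ref{prop-rec-empty} and the definition of open face.
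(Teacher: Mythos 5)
Your proof is correct and follows essentially the same approach as the paper: both hinge on Proposition~\ref{prop-rec-empty} to characterize the face with empty recurrence set via the open condition $\alpha(D_\varphi)<0$, and both use Theorem~\ref{thm-dep-face} to pass between faces and recurrence sets, with the paper invoking its item~3 directly for uniqueness where you instead identify $U$ explicitly and appeal to Lemma~\ref{lem-unique-quasiL-face}. One minor caveat worth noting: the claim that $U$ equals $\Int(-D_\varphi^*)$ fails when $0\in D_\varphi$ (where $U=\emptyset$ but the dual cone can have nonempty interior), but this causes no harm since in that degenerate case no open face has empty recurrence set and the statement is vacuous.
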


When $F$ is open, we do not necessarily have $\Rec_F=\emptyset$. For instance, take a flow $\varphi$ and a face $F$ which satisfies $\Rec_F=\emptyset$. Slow $\varphi$ down to obtain a flow $\psi$ that has unique fixed point $p$. Then $\QL_\psi$ is equal to $\QL_\varphi$, and the open face $F$ satisfies $\Rec_F=\{p\}$. 

Let us prove the second theorem, assuming the first one.

\begin{proof}
	The third item of Theorem~\ref{thm-dep-face} implies that $\Rec_F=\emptyset$ may hold for at most one open face.
	Assume that $\Rec_F=\emptyset$ holds. It follows from Proposition~\ref{prop-rec-empty} that $\alpha(D_\varphi)<0$ holds for any $\alpha$ in~$F$. The condition $\alpha(D_\varphi)<0$ is open in~$\alpha$, and it implies that $\alpha$ lies in $\QL_\varphi$. So $F$ is open in $H^1(M,\RR)$.
\end{proof}

We now start the proof of Theorem~\ref{thm-dep-face}. The first step is this simple observation:

\begin{lemma}\label{lem-Rec-comp}
	For any $\alpha,\beta$ in $\QL_\varphi$, we have $\Rec_{\alpha+\beta}\subset\Rec_\alpha\cap\Rec_\beta$.
\end{lemma}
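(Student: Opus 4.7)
The plan is to unfold the definition of the $\alpha$-recurrent set and use the quasi-Lyapunov characterization from Lemma~\ref{lem-quasi-L-char}. Fix a point $x$ in $\Rec_{\alpha+\beta}$; I want to show that $x$ belongs to both $\Rec_\alpha$ and $\Rec_\beta$.

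First I would use Lemma~\ref{lem-quasi-L-char} applied to $\alpha$ and to $\beta$: since each is quasi-Lyapunov, there exists a small $\epsilon_0 > 0$ such that every periodic $\epsilon_0$-pseudo-orbit $\gamma$ satisfies both $\alpha([\gamma]) \leq 0$ and $\beta([\gamma]) \leq 0$. (One simply takes the minimum of the two values of $\epsilon$ produced by the lemma, shrinking further if needed so that $\epsilon_0$ is still small enough for the homology class $[\gamma]$ to be well-defined.)

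Then, for any $\epsilon \in (0, \epsilon_0)$ and any $T > 0$, the hypothesis $x \in \Rec_{\alpha+\beta}$ produces a periodic $\epsilon$-pseudo-orbit $\gamma$ through $x$ with $(\alpha+\beta)([\gamma]) = 0$. Combining this equality with the two inequalities $\alpha([\gamma]) \leq 0$ and $\beta([\gamma]) \leq 0$ forces $\alpha([\gamma]) = 0$ and $\beta([\gamma]) = 0$ separately. Thus the same $\gamma$ simultaneously witnesses the $\alpha$-recurrence and the $\beta$-recurrence of $x$ at scale $(\epsilon, T)$. Since $\epsilon < \epsilon_0$ and $T > 0$ were arbitrary, $x$ lies in $\Rec_\alpha \cap \Rec_\beta$.

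There is no real obstacle here; the only mild subtlety is to make sure the threshold $\epsilon_0$ works uniformly for both $\alpha$ and $\beta$, which is immediate since quasi-Lyapunov is a condition that only improves as $\epsilon$ decreases.
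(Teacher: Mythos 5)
Your proof is correct and matches the paper's argument essentially verbatim: both start from a periodic $\epsilon$-pseudo-orbit $\gamma$ through $x$ with $(\alpha+\beta)([\gamma])=0$, invoke the quasi-Lyapunov property of $\alpha$ and $\beta$ (via Lemma~\ref{lem-quasi-L-char}) to get $\alpha([\gamma])\leq 0$ and $\beta([\gamma])\leq 0$, and then conclude that both pairings vanish. The only cosmetic difference is that you cite Lemma~\ref{lem-quasi-L-char} explicitly while the paper invokes the quasi-Lyapunov hypothesis directly.
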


The reciprocal inclusion is not true in general. 
We describe a counterexample of the reciprocal inclusion after the proof.

\begin{proof}
	Take~$x$ in $\Rec_{\alpha+\beta}$. For any small $\epsilon>0$, there exists a periodic~$\epsilon$-pseudo-orbit $\gamma$ starting on~$x$ and with $(\alpha+\beta)([\gamma])=0$. When~$\epsilon$ is small enough, we have $\alpha([\gamma])\leq0$ and $\beta([\gamma])\leq 0$ since $\alpha$ and $\beta$ are quasi-Lyapunov. It follows that $\alpha([\gamma])$ and $\beta([\gamma])$ are equal to $0$. Therefore,~$x$ lies in $\Rec_\alpha\cap\Rec_\beta$.
\end{proof}

\begin{example} 
	Take two points $p=(0,0)$ and $q=(-\tfrac{1}{2},-\tfrac{1}{2})$ in $\RR^2$ and define a $\ZZ^2$-periodic smooth vector field on $\RR^2$, illustrated in Figure~\ref{fig-rec-inclusion}, that satisfies:
	\begin{itemize}
		\item $p$ and $q$ are fixed points,
		\item there exists an orbit $\wt\gamma_0$ going from $p$ to $q$,
		\item there exists an orbit $\wt\gamma_1$ going from $q$ to $p-(0,1)$ and an orbit $\wt\gamma_2$ from $q$ to $p-(1,0)$,
		\item every other orbit is either the image of one of the $\wt\gamma_i$ by the $\ZZ^2$-action, or it goes from a point $q+(n+1,m+1)$ to the point $p+(n,m)$, and it is transverse to the lines $x+y=\cste$.
	\end{itemize}
	Take $M=\bfrac{\RR^2}{\ZZ^2}$. We consider the quotient vector field on~$M$ and its corresponding flow. We denote by $\gamma_i$ the image of $\wh\gamma_i$ in~$M$. Take two cohomology classes $\alpha=dx$ and $\beta=dy$. They are both quasi-Lyapunov. 

	The union $\wt\gamma_0\cup\wt\gamma_1$ can be approximated by pseudo-orbits from $p$ to $p-(1,0)$, so $\gamma_0$ lies inside $\Rec_{dx}$. In fact, we have $\Rec_{dx}=\{p,q\}\cup\gamma_0\cup\gamma_1$.
	Similarly, we have $\Rec_{dy}=\{p,q\}\cup\gamma_0\cup\gamma_2$.
	
	The orbit $\gamma_0$ lies in $\Rec_{dx}\cap\Rec_{dy}$, but not in $\Rec_{dx+dy}$. It is a consequence of Lemma~\ref{lem-blocking-box} applied to the value $r=\tfrac{1}{2}$ and to the map $f\colon\wh M_{dx+dy}\to\RR$ defined by $f(x,y)=x+y$.
\end{example}

\begin{figure}
    \begin{center}
        \begin{picture}(60,60)(0,0)
        \put(0,0){\includegraphics[width=60mm]{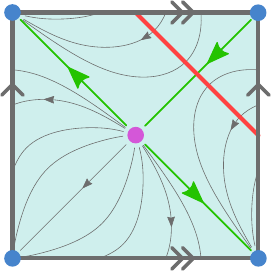}}
        \put(59.3,56){$p$}
        \put(27.8,25.5){$q$}
        \put(38,35){$\gamma_0$}
        \put(38,23){$\gamma_1$}
        \put(26,35){$\gamma_2$}
        \end{picture}
    \end{center}
    \caption{Flow on the torus which satisfies $\Rec_{dx+dy}\varsubsetneq\Rec_{dx}\cap\Rec_{dy}$.}
    \label{fig-rec-inclusion}
\end{figure}

Recall that being quasi-Lyapunov is a convex property. The main ingredient to prove the theorem is the following:

\begin{proposition}\label{prop-rec-set-inclusion}
	Let $\alpha,\beta$ be in $\QL_\varphi$. Then $\Rec_\alpha\subset\Rec_\beta$ holds if and only if for some $t>0$, the class $\alpha+t(\alpha-\beta)$ is quasi-Lyapunov.
\end{proposition}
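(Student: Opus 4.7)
The converse implication is straightforward. Assume $\gamma_t:=\alpha+t(\alpha-\beta)=(1+t)\alpha-t\beta$ is quasi-Lyapunov for some $t>0$, and fix $\epsilon>0$ small enough that $\alpha$, $\beta$, and $\gamma_t$ are all quasi-Lyapunov at scale $\epsilon$ (Lemma~\ref{lem-quasi-L-char}). Given $x\in\Rec_\alpha$, take a periodic $\epsilon$-pseudo-orbit $\delta$ through $x$ with $\alpha([\delta])=0$. Then $\gamma_t([\delta])=-t\beta([\delta])\leq 0$, which combined with $\beta([\delta])\leq 0$ forces $\beta([\delta])=0$, so $x\in\Rec_\beta$.

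For the forward direction, assume $\Rec_\alpha\subset\Rec_\beta$. Pick $\epsilon>0$ small enough that $\alpha$ and $\beta$ are quasi-Lyapunov at that scale, and apply Lemma~\ref{lem-cut-pso-finite} to extract some $\epsilon'\in(0,\epsilon)$ and periodic $\epsilon$-pseudo-orbits $\delta_1,\dots,\delta_N$ such that every periodic $\epsilon'$-pseudo-orbit is homologous to a non-negative integer combination of them. Then $\gamma_t$ is quasi-Lyapunov at scale $\epsilon'$ as soon as $\gamma_t([\delta_i])\leq 0$ for each $i$, and since $\alpha([\delta_i]),\beta([\delta_i])\leq 0$, a direct one-variable analysis of $t\mapsto(1+t)\alpha([\delta_i])-t\beta([\delta_i])$ shows such a $t>0$ exists if and only if
\[\alpha([\delta_i])=0\ \Longrightarrow\ \beta([\delta_i])=0\quad\text{for every }i,\]
the scalar $t$ being then taken small enough to absorb the indices where $\alpha([\delta_i])<0$.

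The crux is therefore to show that $\epsilon$ can be chosen small enough for this implication to hold. I plan to argue by contradiction: if for a sequence $\epsilon_n\to 0$ there remains some periodic $\epsilon_n$-pseudo-orbit $\delta_n$ of bounded length (Lemma~\ref{lem-cut-pso-2}) with $\alpha([\delta_n])=0$ and $\beta([\delta_n])<0$, pick $x_n\in\delta_n$ and, up to a subsequence, set $x=\lim x_n$. A shift-and-jump argument routing $\delta_n$ through $x$ shows that $x$ lies on periodic $\eta$-pseudo-orbits with $\alpha$-charge zero for every $\eta>0$, so $x\in\Rec_\alpha\subset\Rec_\beta$, yielding periodic $\eta$-pseudo-orbits $\nu_\eta$ through $x$ with $\beta([\nu_\eta])=0$.

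The main obstacle will be deriving an actual contradiction from the coexistence of the $\delta_n$ and the $\nu_\eta$, since mere passage through a $\beta$-recurrent point does not on its own force the $\beta$-charge of a pseudo-orbit to vanish. My plan to overcome this is first to reduce to the rational case via Corollary~\ref{cor-rat-L-span}, so that $\beta([\delta_n])\in\tfrac{1}{D}\ZZ$ for some fixed $D$ and hence $\beta([\delta_n])\leq-\tfrac{1}{D}$, giving a uniform lower bound on the negative $\beta$-charge. Concatenating many iterates of $\delta_n$ (shifted through $x$) with appropriately chosen $\nu_\eta$, and confronting the resulting periodic $\eta$-pseudo-orbit with the pre-Lyapunov function afforded by Section~\ref{sec-pre-Conly-order}, I expect to reach a pseudo-orbit whose $\beta$-charge is incompatible with $x\in\Rec_\beta$ at small enough scales. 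The passage back from rational to real coefficients will rely on Lemma~\ref{lem-quasiL-stability}, which propagates quasi-Lyapunovness under small perturbations preserving the rational kernel of $\alpha$.
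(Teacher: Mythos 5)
Your reduction of the forward direction via Lemma~\ref{lem-cut-pso-finite} is exactly right and matches the paper: everything hinges on showing that for $\epsilon$ small enough there is no periodic $\epsilon$-pseudo-orbit $\delta$ with $\alpha([\delta])=0$ and $\beta([\delta])<0$. Your converse argument is also correct, and a bit more direct than the paper's (the paper rewrites $\alpha=\tfrac{t}{1+t}\beta+\tfrac{1}{1+t}\delta$ with $\delta=\alpha+t(\alpha-\beta)$ and invokes Lemma~\ref{lem-Rec-comp}).

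However, your plan for the crux has a real gap. You rightly flag the obstacle — extracting a single $\alpha$-recurrent accumulation point $x$ from the $\delta_n$ does not force $\beta([\delta_n])=0$ — but the proposed fix (quantizing $\beta$-charge by rationality, concatenating iterates of $\delta_n$ with $\nu_\eta$, and ``confronting'' the result with a pre-Lyapunov function from Section~\ref{sec-pre-Conly-order}) is not a complete argument, and it is not clear it can be made to produce a contradiction. The paper resolves the crux with two ingredients you do not use. First, it shows that the \emph{entire} pseudo-orbit $\delta_n$, not just one point on it, is eventually contained in an arbitrarily small neighborhood of $\Rec_\alpha$: otherwise $\delta_n$ would accumulate on a point $z$ at fixed distance from $\Rec_\alpha$, but (because $\alpha([\delta_n])=0$) $z$ would itself be $\alpha$-recurrent, a contradiction. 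Second, and crucially, it applies Lemma~\ref{lem-close-rec-is-ker} to $\beta$: for $\beta$ quasi-Lyapunov there is $\epsilon>0$ such that every $\epsilon$-pseudo-orbit contained in the $\epsilon$-neighborhood of $\Rec_\beta$ has zero $\beta$-charge. Since $\Rec_\alpha\subset\Rec_\beta$, the pseudo-orbits $\delta_n$ eventually lie in such a neighborhood, so $\beta([\delta_n])=0$, contradicting $\beta([\delta_n])<0$. Lemma~\ref{lem-close-rec-is-ker} is the missing key; its proof is itself substantial (a level-set barrier argument on $\wh M_\beta$ using the equivariant Lyapunov map from Theorem~\ref{thm-spectral-decomp} in the rational case, then Corollary~\ref{cor-rat-L-span} plus the already-proven converse implication for the general case), and it cannot be replaced by the ad hoc concatenation you sketch. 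Note also that invoking Corollary~\ref{cor-rat-L-span} to ``rationalize $\beta$'' with control on $\Rec_\beta$ would require Theorem~\ref{thm-dep-face}, whose proof depends on the very proposition you are proving, so that route is circular.
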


We only prove one implication for now. The direct implication is prove further down. 

\begin{proof}[Proof of the converse implication]
	Write $\delta=\alpha+t(\alpha-\beta)$ for some $t>0$, and assume that it is quasi-Lyapunov. Then we have	$\alpha=\tfrac{t}{1+t}\beta + \tfrac{1}{1+t}\delta$. So according to Lemma~\ref{lem-Rec-comp}, $\Rec_\alpha$ is a subset of $\Rec_\beta$.
\end{proof}

To prove the direct implication, we need to characterize the $\alpha$-recurrent set when $\alpha$ has non-rational coefficient. The following lemma help up with that.

\begin{lemma}\label{lem-close-rec-is-ker}
	Let $\alpha$ be in $\QL_\varphi$. There exists a small $\epsilon>0$ so that for any~$\epsilon$-pseudo-orbit $\gamma$ in the~$\epsilon$-neighborhood of $\Rec_\alpha$, we have $\alpha([\gamma])=0$.
\end{lemma}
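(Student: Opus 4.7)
My plan is to reduce to the case where $\alpha \in H^1(M,\ZZ)$ is primitive, and then convert the statement into a winding-number assertion in $\RR/\ZZ$ by means of the equivariant Lyapunov map provided by Theorem~\ref{thm-spectral-decomp}. First I would invoke Corollary~\ref{cor-rat-L-span} to write $\alpha = \sum_i \lambda_i \beta_i$ with $\lambda_i > 0$ and $\beta_i \in H^1(M,\QQ)$ quasi-Lyapunov; after multiplying each $\beta_i$ by a positive integer (which does not change $\Rec_{\beta_i}$) we may take each $\beta_i$ primitive integer. Iterating Lemma~\ref{lem-Rec-comp} gives $\Rec_\alpha \subset \bigcap_i \Rec_{\beta_i}$, so if the lemma holds for each $\beta_i$, then for $\epsilon$ smaller than the constant obtained for each we get $\beta_i([\gamma]) = 0$ for every $i$ and hence $\alpha([\gamma]) = 0$.

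It therefore suffices to prove the lemma when $\alpha \in H^1(M,\ZZ) \setminus \{0\}$ is primitive. I would apply Theorem~\ref{thm-spectral-decomp} to obtain an $\alpha$-equivariant Lyapunov map $f \col \wh M_\alpha \to \RR$ with $f(\wh\Rec_\alpha)$ of Lebesgue measure zero, and descend $f$ to a continuous $\bar f \col M \to \RR/\ZZ$ cohomologous to $\alpha$. Any periodic $\epsilon$-pseudo-orbit through an $\alpha$-recurrent point of $\alpha$-class $0$ lifts to a closed pseudo-orbit in $\wh M_\alpha$, so $\pi_\alpha^{-1}(\Rec_\alpha) \subset \wh\Rec_\alpha$. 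Consequently $\bar C \defi \bar f(\Rec_\alpha)$ is contained in $f(\wh\Rec_\alpha) \bmod \ZZ$, and is therefore a compact subset of $\RR/\ZZ$ of Lebesgue measure zero. In particular there is $r > 0$ with $V_\omega(\bar C) \subsetneq \RR/\ZZ$ for every $\omega < r$, and I would fix $\epsilon$ so small that the modulus of continuity of $\bar f$ at $2\epsilon$ is bounded by $r$.

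Given a periodic $\epsilon$-pseudo-orbit $\gamma$ in the $\epsilon$-neighborhood of $\Rec_\alpha$, close its jumps by short curves lying inside the $\epsilon$-balls around the discontinuity points to produce an $\epsilon$-realization $\hat\gamma$: a continuous loop with image in $V_{2\epsilon}(\Rec_\alpha)$ and $[\hat\gamma] = [\gamma]$. Then $\alpha([\gamma])$ equals the degree of the circle map $\bar f \circ \hat\gamma \col \RR/\len(\gamma)\ZZ \to \RR/\ZZ$. Its image sits inside $V_\omega(\bar C) \subsetneq \RR/\ZZ$, so this map misses a point and therefore has degree zero; hence $\alpha([\gamma]) = 0$.

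The main obstacle, I expect, is bridging the dynamical hypothesis that $\gamma$ hugs $\Rec_\alpha$ with the homological conclusion. The quasi-Lyapunov property alone gives only $\alpha([\gamma]) \leq 0$, and attempts at the reverse inequality by direct pseudo-orbit manipulation (e.g.\ via the Eulerian decomposition of Lemma~\ref{lem-cut-pso-2}, or by concatenating $\gamma$ with $\alpha$-class-zero pseudo-orbits at nearby $\alpha$-recurrent points) run into the difficulty that two distinct $\alpha$-recurrence chains need not be connectable by a pseudo-orbit of $\alpha$-class zero. The winding-number detour through $\bar f$ sidesteps this entirely: because $\bar f$ collapses $\Rec_\alpha$ into a measure-zero subset of the circle, a loop that stays close to $\Rec_\alpha$ is topologically unable to wind around $\RR/\ZZ$.
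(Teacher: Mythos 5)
Your proposal is correct and follows essentially the same approach as the paper: both reduce to the rational/integer case via Corollary~\ref{cor-rat-L-span} together with $\Rec_\alpha\subset\Rec_{\beta_i}$ (you get this from Lemma~\ref{lem-Rec-comp} directly; the paper goes through the converse implication of Proposition~\ref{prop-rec-set-inclusion}, which itself rests on Lemma~\ref{lem-Rec-comp}), and then invoke Theorem~\ref{thm-spectral-decomp} to get an $\alpha$-equivariant Lyapunov map $f$ with $f(\wh\Rec_\alpha)$ of measure zero, so that a pseudo-orbit hugging $\Rec_\alpha$ cannot wind. Your ``degree of $\bar f\circ\hat\gamma$ is zero because the circle map misses an arc'' is precisely the dual formulation of the paper's ``the lift of $\gamma$ to $\wh M_\alpha$ stays in a single component of $\RR\setminus(J+\ZZ)$.''
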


\begin{proof}
	We first assume that $\alpha$ has rational coefficients. Then $\wh M_\alpha$ is a $\ZZ$-covering over~$M$. Denote by $\wh\Rec_\alpha$ the recurrent set in $\wh M_\alpha$. Following Theorem~\ref{thm-spectral-decomp}, there exists an $\alpha$-equivariant Lyapunov map $f\colon \wh M_\alpha\to\RR$ so that $f(\wh\Rec_\alpha)$ has empty interior. 
	
	Let $I\subset\RR$ be a complementary component of $f(\wh\Rec_\alpha)$, which is an open interval. Choose a second open interval $J$ whose closure lies inside $I$.
	For any $\epsilon>0$, we denote by $V_\epsilon$ the~$\epsilon$-neighborhood of $\wh\Rec_\alpha$. When~$\epsilon$ is small enough, $f(V_\epsilon)$ is disjoint from $J$, and no pair of points $x,y$ in $\wh M_\alpha$ with $d(x,y)<\epsilon$ satisfies $f(x)<J$, $f(y)>J$. It follows that for any~$\epsilon$-pseudo-orbit $\gamma$ in $V_\epsilon$, $f(\gamma)$ remains in a single connected component of $\RR\setminus(J+\ZZ)$. 
	
	Let $\gamma$ be a periodic~$\epsilon$-pseudo-orbit in the~$\epsilon$-neighborhood of $\Rec_\alpha$. Its preimages in $\wh M_\alpha$ lies outside $f^{-1}(J+\ZZ)$. So $\gamma$ lifts to a periodic~$\epsilon$-pseudo-orbit that remains inside $V_\epsilon$. It follows that $\alpha([\gamma])=0$ holds.

	We now consider the general case. According to Corollary~\ref{cor-rat-L-span}, $\alpha$ can be written as a sum $\sum_i\lambda_i\beta_i$, for $1\leq i\leq n$, where the $\lambda_i$ are positive and with $\sum_i\lambda_i=1$, and the~$\beta_i$ are quasi-Lyapunov and with rational coefficients.
	When $t>0$ is small enough, $\alpha+t(\alpha-\beta_i)$ lies in $E$, so it is quasi-Lyapunov. Therefore, we have $\Rec_\alpha\subset\Rec_{\beta_i}$ for all $i$ (see the converse implication of Proposition~\ref{prop-rec-set-inclusion}). 
	
	We can apply the rational case to $\beta_i$. It yields some $\epsilon_i>0$ so that every periodic $\epsilon_i$-pseudo-orbit $\gamma$ at distance at most $\epsilon_i$ from $\Rec_\beta$ satisfies $\beta_i([\gamma])=0$. Let $\epsilon>0$ be the minimum of the $\epsilon_i$, and $\gamma$ be a periodic~$\epsilon$-pseudo-orbit in the~$\epsilon$-neighborhood of $\Rec_\alpha$. It follows from $\Rec_\alpha=\Rec_{\beta_i}$ that $\beta_i([\gamma])=0$ holds for all~$i$. Hence, $\alpha([\gamma])=0$ holds true. 
\end{proof}

\begin{proof}[Proof of the direct implication in Proposition~\ref{prop-rec-set-inclusion}]
	We reason by contraposition. Assume that for all $t>0$, $\alpha+t(\alpha-\beta)$ is not quasi-Lyapunov. 
	
	We prove by contradiction that for any $\epsilon>0$ small enough, there exists a periodic~$\epsilon$-pseudo-orbit $\gamma_\epsilon$ that satisfies $\alpha([\gamma_\epsilon])=0$ and $\beta([\gamma_\epsilon])<0$. Let us assume that it does not hold.
	According to Lemma~\ref{lem-cut-pso-finite}, there exists $\epsilon'>0$ and finitely many periodic~$\epsilon$-pseudo-orbit $\delta_i$ that satisfies that any $\epsilon'$-pseudo-orbit is homologous to a sum $\sum_ia_i\delta_i$ with $a_i>0$. 
	Since $\alpha$ and $\beta$ are quasi-Lyapunov, when~$\epsilon$ is small enough, we have $\alpha([\delta_i])\leq0$ and $\beta([\delta_i])\leq0$ for all $i$.
	From our last assumption, we have either $\alpha([\delta_i])<0$ or $\beta([\delta_i])=0$ for any $i$. It follows that
	$$(\alpha+t(\alpha-\beta))([\delta_i])\leq0$$
	holds true for all $i$ and for all $t$ small enough. So $(\alpha+t(\alpha-\beta))(D_{\varphi,\epsilon',2T})\leq 0$ holds, and $ \alpha+t(\alpha-\beta)$ is quasi-Lyapunov (see Lemma~\ref{lem-quasi-L-char}), which contradicts our first assumption. The claim follows.

	Denote by $\gamma_\epsilon$ a periodic~$\epsilon$-pseudo-orbit that satisfies $\alpha([\gamma_\epsilon])=0$ and $\beta([\gamma_\epsilon])<0$. We claim that when~$\epsilon$ is small enough, $\gamma_\epsilon$ remain arbitrarily close to $\Rec_\alpha$. To prove the claim, assume that there exist $\eta>0$ and arbitrarily small $\epsilon>0$ for which $\gamma_\epsilon$ does not remain in the $\eta$-neighborhood of $\Rec_\alpha$. When~$\epsilon$ goes to zero, $\gamma_\epsilon$ accumulates on a point $z$ in~$M$ at distance at least $\eta$ from~$\Rec_\alpha$. The point $z$ is $\alpha$-recurrent by construction (recall that $\alpha([\gamma_\epsilon])=0$). So $z$ lies in $\Rec_\alpha$, which contradict the line above. 

	Therefore, $\gamma_\epsilon$ remains arbitrarily close to $\Rec_\alpha$ when~$\epsilon$ is small enough. If we had $\Rec_\alpha\subset\Rec_\beta$, then Lemma~\ref{lem-close-rec-is-ker} would implies $\beta([\gamma_\epsilon])=0$ for all small enough~$\epsilon$. By construction, we have $\beta([\gamma_\epsilon])<0$ for all~$\epsilon$, so $\Rec_\alpha$ is not included in $\Rec_\beta$. The conclusion follows by contraposition.
\end{proof}

\begin{proof}[Proof of Theorem~\ref{thm-dep-face}]
	Let $\alpha$ and $\beta$ be in $\QL_\varphi$.
	We first assume that $F_\beta\subset\wb F_\alpha$ holds. By openness of $F_\alpha$, there exists $t>0$ for which $\alpha+t(\alpha-\beta)$ belongs to $F_\alpha$. Then Proposition~\ref{prop-rec-set-inclusion} implies $\Rec_\alpha\subset\Rec_\beta$. 

	It follows that when $F_\alpha=F_\beta$ holds, we have $\Rec_\alpha=\Rec_\beta$.
	Therefore, for any open face $F$ of $\QL_\varphi$, we can set $\Rec_F=\Rec_\alpha$ for any $\alpha$ in $F$. The first items of the theorem follows.
	
	Assume now that we have $\Rec_\alpha\subset\Rec_\beta$. By Proposition~\ref{prop-rec-set-inclusion}, there exists $t>0$ for which $\alpha+t(\alpha-\beta)$ is quasi-Lyapunov. Then $\alpha$ lies in the interior of the set $\conv(\{\beta,\alpha+t(\alpha-\beta)\})$. It follows from Corollary~\ref{cor-quasiL-conv-comb} that $\beta$ lies in $\wb F_\alpha$, or equivalently $F_\beta\subset\wb F_\alpha$. The second and third items of the theorem follow.
\end{proof}

\subsection{Spectral decomposition: the real case}\label{sec-spect-decomp-real}

We prove a similar result to Theorem~\ref{thm-spectral-decomp} for quasi-Lyapunov classes with real coefficients. Recall that $\wh\Rec_\alpha$ is the recurrent set inside $\wh M_\alpha$.

\begin{theorem}\label{thm-quasiL-map-non-rat}
	For any quasi-Lyapunov class $\alpha$ in $H^1(M,\RR)$, there exists an $\alpha$-equivariant and strongly-Lyapunov map $f\colon\wh M_\alpha\to\RR$, for which the Lebesgue measure of $f(\wh\Rec_\alpha)$ is zero. 
	When~$M$ is smooth and $\varphi$ is generated by a uniquely integrable vector field $X$, then $f$ can be chosen smooth and satisfying $df(X)<0$ outside $\wh\Rec_\alpha$.
\end{theorem}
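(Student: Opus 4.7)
I would reduce to the rational case (Theorem~\ref{thm-spectral-decomp}) by applying Corollary~\ref{cor-rat-L-span} to write $\alpha=\sum_{i=1}^p\lambda_i\beta_i$ with $\lambda_i>0$, $\beta_i\in H^1(M,\QQ)$ quasi-Lyapunov, the families $(\lambda_i)$ and $(\beta_i)$ free over~$\QQ$, and the $\beta_i$ taken so close to $\alpha$ that they all lie in $F_\alpha$. Theorem~\ref{thm-dep-face} then gives $\Rec_{\beta_i}=\Rec_\alpha$ for every $i$. The $\QQ$-freeness of the $\lambda_i$ yields $\ker_\ZZ(\alpha)=\bigcap_i\ker_\ZZ(\beta_i)$, so the abelian coverings fit into factorizations $\pi_i\colon\wh M_\alpha\to\wh M_{\beta_i}$ with $\pi_\alpha=\pi_{\beta_i}\circ\pi_i$. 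For each $i$, Theorem~\ref{thm-spectral-decomp} provides a $\beta_i$-equivariant, strongly-Lyapunov map $f_i\colon\wh M_{\beta_i}\to\RR$, smooth if desired, with $f_i(\wh\Rec_{\beta_i})$ of Lebesgue measure zero. Lifting to $\tilde f_i=f_i\circ\pi_i$, the candidate is $f=\sum_i\lambda_i\tilde f_i$. Its $\alpha$-equivariance follows from $\alpha=\sum_i\lambda_i\beta_i$, pre-Lyapunovity from that of each $\tilde f_i$ (since pseudo-orbits project under $\pi_i$), and the identity $\pi_i^{-1}(\wh\Rec_{\beta_i})=\pi_\alpha^{-1}(\Rec_\alpha)=\wh\Rec_\alpha$ implies $f$ is strictly decreasing along the flow outside $\wh\Rec_\alpha$.

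\paragraph{Chain separation and strict inequality.} The key algebraic fact is $\bigcap_iG_i=\{0\}$ in $G=H_1(M,\ZZ)/\ker_\ZZ(\alpha)$, where $G_i=\ker_\ZZ(\beta_i)/\ker_\ZZ(\alpha)$. Moreover, the $G$-stabilizer of any recurrence chain $R\subset\wh M_\alpha$ is trivial: if $s\in G$ stabilizes $R$ then $\alpha(s)=f(s\cdot x)-f(x)=0$ for $x\in R$ (using that $f$ is constant on~$R$), so $s=0$ since $\alpha$ is injective on~$G$. Therefore, two distinct recurrence chains $R_1\neq R_2$ of $\wh M_\alpha$ lying over the same $\alpha$-recurrence chain in $M$ differ by a unique nonzero $g\in G$; since $g\notin G_i$ for some $i$, we get $\pi_i(R_1)\neq\pi_i(R_2)$ as chains in $\wh M_{\beta_i}$. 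The strongly-Lyapunov property of $f_i$ then gives $\tilde f_i(R_1)>\tilde f_i(R_2)$ when $R_1\recto R_2$, and combining with pre-Lyapunovity for the other indices yields $f(R_1)>f(R_2)$ (the case where $R_1,R_2$ project to different $\alpha$-chains in $M$ is analogous and simpler). When $x$ or $y$ is non-recurrent in the condition $x\recto y$, $y\not\recto x$, any pseudo-orbit from $x$ to $y$ contains a non-recurrent orbit arc (the first one if $x$ is non-recurrent, the last one if $y$ is non-recurrent), on which $f$ strictly decreases; pre-Lyapunovity elsewhere concludes $f(x)>f(y)$.

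\paragraph{Main obstacle.} The most delicate step is the measure-zero statement. A priori $f(\wh\Rec_\alpha)$ is contained in the Minkowski sum $\sum_i\lambda_if_i(\wh\Rec_{\beta_i})$ of Cantor sets of measure zero, which could in principle acquire positive Lebesgue measure. My plan is to revisit the countable-series construction inside the proof of Theorem~\ref{thm-spectral-decomp} and choose the weights $a_n^{(i)}$ defining each $f_i$ so that the combined sequence $\{\lambda_ia_n^{(i)}\}_{i,n}$, reordered decreasingly, still decays geometrically with ratio smaller than $1/(2p+1)$; an adaptation of Lemma~\ref{lem-injective-cantor} to this combined sequence then yields that $f(\wh\Rec_\alpha)$ is itself a Cantor set of Lebesgue measure zero. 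The smoothness and $df(X)<0$ outside $\wh\Rec_\alpha$ conclusions are inherited from the corresponding properties of the~$f_i$, provided the weights decay fast enough.
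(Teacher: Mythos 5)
Your plan matches the paper's overall strategy -- decompose $\alpha=\sum_i\lambda_i\beta_i$ via Corollary~\ref{cor-rat-L-span}, lift along $\pi_i\colon\wh M_\alpha\to\wh M_{\beta_i}$, and sum. But there is a genuine gap in your argument that $f$ assigns distinct values to distinct recurrence chains, and it is exactly the point the paper identifies as ``the difficulty is that this property does not behave well under sum.''

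Your chain-separation argument is complete only in the two favorable situations. When $R_2=g\cdot R_1$ for $g\neq 0$ in $G$, $\alpha$-equivariance gives $f(R_2)-f(R_1)=\alpha(g)\neq 0$ directly, no issue. When $R_1\recto R_2$ and $R_2\not\recto R_1$, projecting to $\wh M_{\beta_i}$ for an index $i$ with $\pi_i(R_1)\neq\pi_i(R_2)$ gives a strict inequality $\tilde f_i(R_1)>\tilde f_i(R_2)$, while all other terms are $\geq$, so the sum is strictly positive; this is precisely the paper's Lemma~\ref{lem-strongL-step1}. But you must also treat two chains $R_1,R_2$ lying over \emph{different} $\alpha$-recurrence chains in $M$ that are \emph{incomparable} under Conley's order ($R_1\not\recto R_2$ and $R_2\not\recto R_1$). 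In that case every $\pi_i(R_1)\neq\pi_i(R_2)$, so every $\tilde f_i(R_1)\neq\tilde f_i(R_2)$, but the signs of these differences may vary with $i$, and the weighted sum $\sum_i\lambda_i(\tilde f_i(R_1)-\tilde f_i(R_2))$ can vanish. Nothing in strong Lyapunovity of the individual $f_i$ prevents this; it is a genuine cancellation problem. This is why the paper does not simply take $f_i$ off the shelf from Theorem~\ref{thm-spectral-decomp}, but re-opens the construction to write $f_i=\sum_k a_k H^i_k$ with a shared sequence $(a_k)$, rewrites $f(P)-f(R)$ as a sum over $\sum_i\lambda_i\ZZ$ plus a Cantor-set term, and then constructs $(a_k)$ in Lemma~\ref{lem-precise-Cantor} so that the Cantor set avoids the countable set $\sum_i\lambda_i\ZZ\setminus\{0\}$. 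That avoidance condition is what makes $f(P)=f(R)$ force all $\wt H^i_k(P)=\wt H^i_k(R)$, hence $\pi_i(P)=\pi_i(R)$ for all $i$, hence $P=R$.

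Your concluding remark about the measure-zero obstacle is on the right track, but the fix you sketch (choose the combined weights $\lambda_ia_n$ to decay geometrically with ratio $<1/(2p+1)$) is not strong enough to close the separation gap: geometric decay yields a Cantor image, but it does not by itself guarantee that the Cantor set misses the dangerous countable set $\sum_i\lambda_i\ZZ\setminus\{0\}$. The Lemma~\ref{lem-precise-Cantor} construction is designed to guarantee both simultaneously, by choosing $a_n$ inductively so that at each stage the $n$-th element of $\sum_i\lambda_i\ZZ\setminus\{0\}$ is excluded. Until you build the $f_i$ with that kind of coordination, the proposal does not establish that $f$ is strongly-Lyapunov.
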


We put the emphasis on the `strongly'-Lyapunov property, since in the real coefficients case, it is a priori a strictly stronger than the Lyapunov property.


Contrary to the integer coefficients case, we do not have a converse. 
The lack of compactness of the pre-images $f^{-1}(t)$ prevent us from applying the same arguments as in the integer coefficients case. 
In fact, there exists a pre-Lyapunov map for some non-quasi-Lyapunov classes. We do not know any counterexample with a strongly-Lyapunov map.

\begin{example}
	Let $M=\bfrac{\RR^2}{\ZZ^2}$ be the standard torus, $\varphi$ be a linear flow with irrational slope $(u,v)$, and $\alpha=vdx-udy$. Then we have $\wh M_\alpha=\RR^2$. The lifted flow on $\wh M_\alpha$ is linear, so its recurrent set is empty.
	Thus, the map $f\colon\wh M_\alpha\to\RR$ defined by $f(x,y)=vx-udy$ is pre-Lyapunov and $\alpha$-equivariant, but $\alpha$ is not quasi-Lyapunov.
\end{example}

Before proving the theorem, let us explain why trying to build $f$ by hand, as in the integer coefficient case, does not work. When $\alpha$ has integer coefficients, $\wh M_\alpha$ admits a co-compact action by $\bfrac{H^1(M,\ZZ)}{\ker_\ZZ(\alpha)}\simeq\ZZ$. So it can be compactified by adding two points $\pm\infty$ at infinity. The flow extends on the compactification. A key point is that the boundary at infinity is disconnected, and that $-\infty\not\recto+\infty$ holds, so we can build attractors that contains $-\infty$ but not $+\infty$. When $\alpha$ does not have rational coefficients,~$\bfrac{H^1(M,\ZZ)}{\ker_\ZZ(\alpha)}$ has a higher rank than $\ZZ$, so no good compactification allows a generalization of the arguments. A natural compactification would be to add the visual boundary of $\bfrac{H^1(M,\ZZ)}{\ker_\ZZ(\alpha)}$ to $\wh M_\alpha$. Then the flow extends as a constant flow on the visual boundary. The visual boundary is connected, so the flow is recurrent on it. Hence, we can not choose an attractor that contains a part of the visual boundary but not all of it.
To compensate this issue, we use the tools developed in the previous section.

We now start the proof. 
According to Corollary~\ref{cor-rat-L-span}, there exist finitely many quasi-Lyapunov classes $\beta_1\cdots\beta_p$ in $H^1(M,\QQ)$, and $\lambda_1\cdots\lambda_p>0$ which satisfy $\alpha=\sum_i\lambda_i\beta_i$ and so that $(\beta_1\cdots\beta_p)$ and $(\lambda_1\cdots\lambda_p)$ are free of $\QQ$. Up to choosing $\beta_i$ closer to $\alpha$, we may assume that $\beta_i$ lies in $F_\alpha$. Up to replacing $\beta_i$ and $\lambda_i$ by $r_i^{-1}\beta_i$ and $r_i\lambda_i$ for the unique positive rational $r_i$ that satisfies $\beta_i(H_1(M,\ZZ))=r_i\ZZ$, we may assume that $\beta_i$ has integer coefficients, and that it is primitive. 

In that section, we fix $p\geq 1$, $\beta_i$ in $H_1(M,\ZZ)$ and $\lambda_i>0$ for $p$ in $\intint{1,p}$ which satisfies the following:
\begin{itemize}
	\item $\alpha=\sum_{i=1}^p\lambda_i\beta_i$, 
	\item $\beta_i$ lies in $F_\alpha$ and it is primitive,
	\item the two families $(\beta_i)_i$ and $(\lambda_i)_i$ are free over $\QQ$.
\end{itemize}

We build a strongly-Lyapunov and $\alpha$-equivariant function in several steps. The first step is to build $\beta_i$-equivariant functions.

\begin{lemma}
	For all $1\leq i\leq n$, we have $\ker_\QQ(\alpha)\subset\ker_\QQ(\beta_i)$.
\end{lemma}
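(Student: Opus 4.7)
The plan is a direct linear-algebra argument using the $\QQ$-freeness of $(\lambda_i)_i$. Let $\delta$ be an element of $\ker_\QQ(\alpha) \subset H_1(M,\QQ)$. From the decomposition $\alpha = \sum_{i=1}^p \lambda_i \beta_i$, evaluating on $\delta$ yields
$$0 = \alpha(\delta) = \sum_{i=1}^p \lambda_i \, \beta_i(\delta).$$
The key observation is that each $\beta_i$ has integer coefficients and $\delta$ has rational coefficients, so the scalar $q_i \defi \beta_i(\delta)$ is rational. The relation above is therefore a $\QQ$-linear dependence $\sum_i q_i \lambda_i = 0$ among the $\lambda_i$. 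Since the family $(\lambda_1, \ldots, \lambda_p)$ is free over $\QQ$ by construction, we must have $q_i = 0$ for every $i$, i.e.\ $\beta_i(\delta) = 0$. Thus $\delta \in \ker_\QQ(\beta_i)$ for all $i$, which proves the claimed inclusion $\ker_\QQ(\alpha) \subset \ker_\QQ(\beta_i)$.

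There is no real obstacle here: the whole point of invoking Corollary~\ref{cor-rat-L-span} was precisely to obtain a decomposition with $(\lambda_i)_i$ free over $\QQ$, which makes this linearity argument automatic. The only subtlety worth flagging is the rationality of $\beta_i(\delta)$, which uses that $\beta_i$ takes integer values on $H_1(M,\ZZ)$ and hence rational values on $H_1(M,\QQ)$.
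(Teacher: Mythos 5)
Your proof is correct and is essentially the same argument as the paper's (the paper phrases it by contradiction, you phrase it directly, but both evaluate $\alpha=\sum_i\lambda_i\beta_i$ on $\delta\in\ker_\QQ(\alpha)$, note that the coefficients $\beta_i(\delta)$ are rational, and invoke the $\QQ$-freeness of $(\lambda_i)_i$). Your explicit flagging of why $\beta_i(\delta)\in\QQ$ is a helpful clarification that the paper leaves implicit.
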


\begin{proof}
	If the inclusion did not hold, there would exist some~$x$ in $\ker_\QQ(\alpha)$ outside $\ker_\QQ(\beta_i)$ for some $i$. Then we would have 
	$$\sum_{i=1}^n\lambda_i\beta_i(x)=\alpha(x)=0$$
	where $(\beta_1(x)\cdots\beta_p(x))$ is not identically zero. It contradicts that $(\lambda_1\cdots\lambda_p)$ is free over $\QQ$.
\end{proof}

Recall that $\wh M_\alpha$ is the quotient of $\wh M$ by the action of $\ker_\ZZ(\alpha)$. It follows from the lemma that there is a natural covering map $\pi_i\colon\wh M_\alpha\to\wh M_{\beta_i}$. 
To prove Theorem~\ref{thm-spectral-decomp}, we build $\beta_i$-equivariant and Lyapunov map $f^i\colon\wh M_{\beta_i}\to\RR$. 
The composition $f_i\circ\pi_i\colon\wh M_\alpha\to\RR$ is clearly $\beta_i$-equivariant and pre-Lyapunov. Note that it is not necessarily strongly-Lyapunov. 
We define the map 
$$f=\sum_{i=1}^n\lambda_i f_i\circ\pi_i,$$
which is smooth, $\alpha$-equivariant and pre-Lyapunov. 
Proving that $f$ is strongly-Lyapunov require more work. The first step is the following lemma. The second step requires reusing the construction from Section~\ref{sec-quasi-L-criterion}.

By assumption, $\beta_i$ lies inside $F_\alpha$. So we have $\Rec_{\beta_i}=\Rec_\alpha$ for all~$i$ (see Theorem~\ref{thm-dep-face}). The recurrence chains are the connected components of the recurrence set, so a set in~$M$ is an $\alpha$-recurrence chain if and only it is a $\beta_i$-recurrence chains. It has the following consequence.

\begin{lemma}\label{lem-strongL-step1}
	For any points $x,y$ in $\wh M_\alpha$ with $x\recto y$ and $y\not\recto x$, we have $f(x)>f(y)$. 
\end{lemma}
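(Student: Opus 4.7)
The plan is to assume for contradiction that $f(x)=f(y)$ and derive a contradiction in two cases, by reducing to the integer-coefficient strongly-Lyapunov property already established for each $f_i$ and then exploiting the $\alpha$-recurrence chain structure in $M$.

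First, I would pin down that $\pi(x)$ and $\pi(y)$ share a common $\alpha$-recurrence chain. Each summand $\lambda_i f_i\circ\pi_i$ is pre-Lyapunov, so $x\recto y$ forces $f_i(\pi_i(x))\geq f_i(\pi_i(y))$ for every $i$; with $\lambda_i>0$, the equality $f(x)=f(y)$ forces termwise equality. Because $\beta_i$ has integer coefficients, Lemma~\ref{lem-L-def} upgrades $f_i$ to strongly-Lyapunov on $\wh M_{\beta_i}$. Pseudo-orbits in $\wh M_\alpha$ descend, via the covering $\pi_i$, to pseudo-orbits in $\wh M_{\beta_i}$, so $\pi_i(x)\recto\pi_i(y)$; combined with $f_i(\pi_i(x))=f_i(\pi_i(y))$, the strongly-Lyapunov property forces $\pi_i(y)\recto\pi_i(x)$ as well. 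Hence $\pi_i(x)$ and $\pi_i(y)$ sit in a common $\beta_i$-recurrence chain of $\wh M_{\beta_i}$, whose projection is a common $\beta_i$-recurrence chain of $M$ containing $\pi(x)$ and $\pi(y)$. Since $\beta_i\in F_\alpha$, Theorem~\ref{thm-dep-face} identifies $\beta_i$-recurrence chains with $\alpha$-recurrence chains, so $\pi(x)$ and $\pi(y)$ lie in a common $\alpha$-recurrence chain $\wb R\subset M$.

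Next, by the defining property of $\wb R$, for every small $\epsilon>0$ there is a periodic $\epsilon$-pseudo-orbit $\gamma$ in $M$ passing through $\pi(x)$ and $\pi(y)$ with $\alpha([\gamma])=0$. Since $[\gamma]\in H_1(M,\ZZ)$, the condition $\alpha([\gamma])=0$ means $[\gamma]\in\ker_\ZZ(\alpha)$, so $\gamma$ lifts to a \emph{closed} $\epsilon$-pseudo-orbit $\wt\gamma$ in $\wh M_\alpha$ based at $x$. Then $\wt\gamma$ passes through some lift $y^\ast=g\cdot y$ of $\pi(y)$ for some $g$ in $H_1(M,\ZZ)/\ker_\ZZ(\alpha)$.

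Finally I would split on $g$. If $g=0$, then $y^\ast=y$, and traversing the remainder of $\wt\gamma$ from $y$ back to $x$ produces an $\epsilon$-pseudo-orbit from $y$ to $x$; letting $\epsilon\to 0$ gives $y\recto x$, contradicting the hypothesis. If $g\neq 0$, the closed pseudo-orbit $\wt\gamma$ yields both $x\recto g\cdot y$ and $g\cdot y\recto x$; applying the pre-Lyapunov inequality and the $\alpha$-equivariance $f(g\cdot y)=f(y)+\alpha(g)$ in both directions forces $\alpha(g)=0$, but the induced map $H_1(M,\ZZ)/\ker_\ZZ(\alpha)\to\RR$ is injective by construction, so $g=0$, contradicting $g\neq 0$. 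The main obstacle is arranging for $\gamma$ to satisfy $\alpha([\gamma])=0$ so that the lift closes up in $\wh M_\alpha$; this is exactly the extra structure encoded by an $\alpha$-recurrence chain (as opposed to an arbitrary recurrence chain of $\varphi$), and it is the crucial bridge between the integer-coefficient case handled by Theorem~\ref{thm-spectral-decomp} and the real-coefficient situation here.
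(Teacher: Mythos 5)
Your first half follows the paper exactly: from $f(x)=f(y)$, positivity of the $\lambda_i$, and pre-Lyapunov monotonicity you get termwise equality $f_i\circ\pi_i(x)=f_i\circ\pi_i(y)$; the strongly-Lyapunov property of each $f_i$ then forces $\pi_i(x)$ and $\pi_i(y)$ into a common recurrence chain of $\wh M_{\beta_i}$, and the identification $\Rec_{\beta_i}=\Rec_\alpha$ (via $\beta_i\in F_\alpha$) puts $\pi(x)$ and $\pi(y)$ into a common $\alpha$-recurrence chain $\wb R$ of $M$. That part is correct.

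The final step, however, has a real gap. For each small $\epsilon>0$ you pick a closed $\epsilon$-pseudo-orbit $\gamma$ in $M$ through $\pi(x)$ and $\pi(y)$ with $\alpha([\gamma])=0$, and you lift it to a closed $\epsilon$-pseudo-orbit $\wt\gamma$ based at $x$ in $\wh M_\alpha$; the lift meets some $g\cdot y$, but the deck element $g=g_\epsilon$ depends on $\epsilon$ (it is the class of the $\pi(x)\!\to\!\pi(y)$ segment of $\gamma$ modulo $\ker_\ZZ(\alpha)$, and this segment varies with $\gamma$). Your case $g\neq 0$ then invokes ``$x\recto g\cdot y$ and $g\cdot y\recto x$'' and applies the pre-Lyapunov inequality, but $\recto$ requires pseudo-orbits at \emph{all} scales with a \emph{fixed} target, which a single $\epsilon$-pseudo-orbit does not provide. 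From one pseudo-orbit you can only extract a quasi-Lyapunov estimate $|\alpha(g_\epsilon)|\leq C$, not $\alpha(g_\epsilon)=0$, and since $\alpha$ has irrational coefficients here, a bound on $\alpha(g_\epsilon)$ does not force $g_\epsilon$ into a finite set. You would need an extra argument (e.g., bounding each $\beta_i(g_\epsilon)$ using the quasi-Lyapunov property of the corresponding $f_i$, and then using $\bigcap_i\ker_\ZZ(\beta_i)=\ker_\ZZ(\alpha)$ to pin $g_\epsilon$ into a finite set and extract a constant subsequence) before you can run your two-case split. The paper sidesteps all of this: since $\pi(x)$ is $\alpha$-recurrent, lifting those closed pseudo-orbits shows $x$ itself is recurrent in $\wh M_\alpha$; its recurrence chain $R$ projects onto $\wb R$, so $R$ contains some $\delta\cdot y$, and then $f(y)=f(R)=f(\delta\cdot y)=f(y)+\alpha(\delta)$ forces $\alpha(\delta)=0$, hence $\delta\cdot y=y$ in $\wh M_\alpha$, giving $y\in R$ and $y\recto x$ directly with no $\epsilon$-dependence to control.
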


\begin{proof}
	We prove the lemma by contraposition. Take $x,y$ in $\wh M_\alpha$ that satisfy $x\recto y$ and $f(x)=f(y)$. For all $i$, we have $f_i\circ\pi_i(x)\geq f_i\circ\pi_i(y)$, and there sum are equal. So $f_i\circ\pi_i(x)=f_i\circ\pi_i(y)$ holds for all $i$. 
	
	Since $f_i$ is strongly-Lyapunov, it implies that $\pi_i(x)$ and $\pi_i(y)$ are in the same recurrence chain inside $\wh M_{\beta_i}$. This recurrence chain projects onto a $\beta_i$-recurrence chain inside~$M$, which is also an $\alpha$-recurrence chain as discuss above.
	Let~$R$ be the recurrence chain in $\wh M_\alpha$ that contains~$x$. It follows from above that there exists $\delta$ in $H_1(M,\ZZ)$ which satisfies that $\delta\cdot y$ lies in~$R$. Since $f$ is $\alpha$-equivariant, we have 
	$$f(y)=f(x)=f(R)=f(\delta\cdot y)=\alpha([\delta])+f(y)$$
	which implies $\alpha([\delta])=0$. Since $\wh M_\alpha$ is obtained after a quotient by~$\ker_\ZZ(\alpha)$, we have $\delta\cdot y=y$. Thus,~$x$ and $y$ are in the same recurrence chain, and $y\recto x$ holds. The conclusion follows.
\end{proof}

It remains to prove one point: $f$ sends two distinct recurrence chains onto two distinct values. The difficulty is that this property does not behave well under sum.
To obtain this property, we need to choose the functions $f_i$ more carefully. 

Let us recall the construction of $f_i$ from the Section~\ref{sec-quasi-L-criterion}. 
We built countably many $\beta_i$-equivariant and pre-Lyapunov maps $H^i_k\colon\wh M_{\beta_i}\to\RR$, for all~$k$ in~$\NN$, which send the recurrent set of $\wh M_{\beta_i}$ inside $\ZZ$. Additionally, we have $|H^i_k-H^i_{k'}|\leq 1$ for all $i,k,k'$.
Fix some $\theta$ in $]0,\tfrac{1}{2}[$ so that $\tfrac{2\theta}{1-\theta}<1$ holds.
Then we built a sequence $a_k>0$ with $\sum_k a_k=1$, and $a_{k+1}\leq\theta a_k$. We then prove that the sum $\sum_ka_kH^i_k$ is $\wh\beta_i$-equivariant and strongly-Lyapunov. The map $f_i$ is obtained as $f_i=\sum_ka_kH^i_k$. 
The fact that $f_i$ sends two distinct recurrence chains on two distinct values is a consequence of the fast decreasing assumption on $a_k$. 

Let $a_k>0$ and $H_k^i$ be as above. Note that we choose $a_k$ independent on~$i$. Then we have by definition of $f$: 
$$f=\sum_{i=1}^p\lambda_ia_kH^i_k\circ\pi_i.$$
Let us rewrite $f$ in a form that we can control.
We write $\wt H^i_k=H^i_k\circ\pi_i$ and $\wt H^i_{k,0}=\wt H^i_k-\wt H^i_0$. By construction, we have 
\begin{align*}
	f 	&= \sum_{i=1}^p\sum_{k\geq 0}\lambda_ia_k\wt H^i_k \\
	 	&= \sum_{i=1}^p\lambda_ia_0\wt H^i_0 + \sum_{i=1}^p\sum_{k\geq 1}\lambda_ia_k(\wt H^i_{k,0}+\wt H^i_0) \\
		&= \sum_{i=1}^p\lambda_i \wt H^i_0 + \sum_{i=1}^p\sum_{k\geq 1}\lambda_ia_k\wt H^i_{k,0}.
\end{align*}
So given two recurrence chains $P,R$, we have the following equation:
\begin{equation}\label{eq-strongL-cond}
	f(P)-f(R)= \sum_{i=1}^p\lambda_i (\wt H^i_0(P)-\wt H^i_0(R)) + \sum_{i=1}^p\sum_{k\geq 1}\lambda_ia_k(\wt H^i_{k,0}(P)-\wt H^i_{k,0}(R))
\end{equation}
The left sum remains inside the countable set $\sum_i\lambda_i\ZZ$. After choosing $a_k$ well, the right sum will remain in a Cantor set of zero Lebesgue measure, which contains $0$. So it is likely that these two sets intersects only on $0$. If this holds true, we immediately deduce when $f(P)$ and $f(R)$. Then $P=R$ follows quickly. 

To control the right sum, we choose the values of $\lambda_i a_k$ to decrease fast enough. The first step is to control $\lambda_i$.

\begin{lemma}
	Given $\theta>0$, there exist finitely many primitive classes $\wh\beta_i$ in $H_1(M,\ZZ)\cap F_\alpha$ and scalars $\wh\lambda_i>0$, with $i$ in $\intint{1,n}$ which satisfy:
	\begin{itemize}
		\item $\alpha=\sum_{i=1}^n\wh\lambda_i\wh\beta_i$,
		\item the two families $(\wh\beta_i)_i$ and $(\wh\lambda_i)_i$ are free over $\QQ$,
		\item for all $i$, we have $\wh\lambda_{i+1}<\theta\wh\lambda_i$.
	\end{itemize}
\end{lemma}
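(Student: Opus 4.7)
My plan is to prove this by induction on $p$. The base case $p=1$ is trivial: I would take $\widehat\beta_1=\beta_1$ and $\widehat\lambda_1=\lambda_1$, and the fast-decrease condition is vacuous.

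For the inductive step, I would first reorder so that $\lambda_1>\lambda_2>\cdots>\lambda_p>0$. The heart of the argument is to build a single class $\widehat\beta_1$ as a positive integer combination of the $\beta_i$'s which is almost parallel to $\alpha$. The mechanism is Weyl equidistribution: since $(\lambda_i)_i$ is $\QQ$-free, the reals $1,\lambda_2/\lambda_1,\ldots,\lambda_p/\lambda_1$ are $\QQ$-linearly independent (any $\QQ$-relation among them would give a nontrivial $\QQ$-relation among the $\lambda_i$ after multiplication by $\lambda_1$). So for any fixed $\theta'\in(0,\theta)$ I can find arbitrarily large positive integers $c_1$ with $\{c_1\lambda_j/\lambda_1\}\in(0,\theta')$ simultaneously for all $j\geq 2$. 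I would then set $c_j:=\lfloor c_1\lambda_j/\lambda_1\rfloor$ and
\[
\widehat\beta_1:=c_1\beta_1+\sum_{j\geq 2}c_j\beta_j;
\]
this is a nonnegative integer combination of elements of $F_\alpha$, hence lies in the convex cone $F_\alpha$, and I would secure primitivity in $H^1(M,\ZZ)$ by arranging $c_1$ to be coprime with the resulting $c_j$'s (always possible among the infinitely many admissible $c_1$).

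Rewriting $\alpha$ in the $\QQ$-basis $(\widehat\beta_1,\beta_2,\ldots,\beta_p)$ would give $\widehat\lambda_1=\lambda_1/c_1$ and $\mu_j=\{c_1\lambda_j/\lambda_1\}\,\widehat\lambda_1$ for $j\geq 2$; the $\mu_j$'s are positive (as $\lambda_j/\lambda_1$ is irrational by $\QQ$-freeness) and bounded above by $\theta'\widehat\lambda_1<\theta\,\widehat\lambda_1$. A short check using the $\QQ$-freeness of $(\lambda_i)$ shows $(\mu_j)_{j\geq 2}$ is again $\QQ$-free. I would then apply the inductive hypothesis to the reduced decomposition $\alpha-\widehat\lambda_1\widehat\beta_1=\sum_{j\geq 2}\mu_j\beta_j$. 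This residual lies in $F_\alpha$ as a positive combination of elements of $F_\alpha$ and, since $F_\alpha$ is open in its linear span, its own open face coincides with $F_\alpha$, so the induction applies with the same face. It delivers $\widehat\beta_2,\ldots,\widehat\beta_p$ primitive in $H^1(M,\ZZ)\cap F_\alpha$ and positive $\QQ$-free $\widehat\lambda_2,\ldots,\widehat\lambda_p$ with $\widehat\lambda_{i+1}<\theta\,\widehat\lambda_i$ for $i\geq 2$, and with $\widehat\lambda_2\leq\max_j\mu_j<\theta\,\widehat\lambda_1$ so that the fast-decrease bridges the new first pair.

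The remaining verifications --- positivity of the coefficients and $\QQ$-freeness of the combined families $(\widehat\beta_i)_{i=1}^p$ and $(\widehat\lambda_i)_{i=1}^p$ --- I expect to be routine, using that $\widehat\beta_1$ has a nonzero $\beta_1$-component while the other $\widehat\beta_j$'s are supported in $\mathrm{span}(\beta_2,\ldots,\beta_p)$, which produces a triangular structure. The hard part will be the equidistribution step: securing a single large $c_1$ that simultaneously lands all $p-1$ fractional parts in $(0,\theta')$ and remains coprime to the resulting $c_j$'s. The $\QQ$-freeness of $(\lambda_i)_i$ is precisely the hypothesis that makes the multi-dimensional Weyl argument available; without it the construction collapses.
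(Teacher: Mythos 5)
Your approach is genuinely different from the paper's. The paper keeps the coefficients $\lambda_i$ essentially untouched and instead perturbs each $\beta_i$ separately to a nearby rational class $\beta_i'\in H^1(M,\QQ)\cap F_\alpha$ chosen outside a finite union of rational lines through a large ball in $H^1(M,\ZZ)$: then the primitive class $\widehat\beta_i$ with $\beta_i'=r_i'\widehat\beta_i$ is forced to be large, $r_i'$ is forced to be small, and choosing the $\beta_i'$ inductively so that $r_{i+1}'\leq\eta\theta r_i'$ yields the geometric decrease of $\widehat\lambda_i=\lambda_i'r_i'$. You instead keep the original $\beta_i$ and build a single new class $\widehat\beta_1=\sum_i c_i\beta_i$ as a positive integer combination, using multidimensional Weyl equidistribution of $\bigl(c\,\lambda_j/\lambda_1\bigr)_j$ mod $1$ (available precisely because $(\lambda_i)_i$ is $\QQ$-free) to ensure the residual coefficients $\mu_j$ are simultaneously positive and tiny relative to $\widehat\lambda_1=\lambda_1/c_1$, then recurse. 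Your route is longer but gives a transparent triangular structure making the $\QQ$-freeness of $(\widehat\beta_i)_i$ and $(\widehat\lambda_i)_i$ essentially automatic; the paper's proof is shorter but does not actually address those two freeness conclusions (they presumably follow from genericity of the $\beta_i'$, but this is not said).

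There is a genuine gap in your primitivity step. Coprimality of $c_1$ with the $c_j$'s does \emph{not} imply that $\widehat\beta_1=\sum_i c_i\beta_i$ is primitive in $H^1(M,\ZZ)$: the sublattice $\Lambda=\langle\beta_1,\ldots,\beta_p\rangle$ need not be saturated, and e.g.\ if $\beta_1,\beta_2$ span an index-$2$ sublattice then $\beta_1+\beta_2$ may be divisible by $2$ even though $\gcd(1,1)=1$. The fix is not hard: with $\gcd(c_i)=1$ one checks the content $m$ of $\widehat\beta_1$ divides the index $D$ of $\Lambda$ in its saturation, so one may replace $\widehat\beta_1$ by its primitive part $\widehat\beta_1/m$ and $\widehat\lambda_1$ by $m\lambda_1/c_1$; since $m\leq D$ is bounded and $c_1$ is free to be large, one still gets $\widehat\lambda_1\leq\lambda_1$ and $\mu_j<\theta'\lambda_1/c_1\leq\theta\widehat\lambda_1$. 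But this is a real adjustment, not a routine one, and should be spelled out. Secondly, your bridge $\widehat\lambda_2\leq\max_j\mu_j<\theta\widehat\lambda_1$ silently uses that the inductive construction returns a first coefficient bounded by the largest input one; this is true (it equals $\max_j\mu_j/c_2'$ for some integer $c_2'\geq 1$, up to the same content correction), but it is not part of the lemma's statement and so must be added to the induction hypothesis.
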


\begin{proof}
	Fix some $\eta>0$ smaller than all the ratios $\tfrac{\lambda_i}{\lambda_j}$.
	Take some elements $\beta_i'$ in $H_1(M,\QQ)\cap F_\alpha$ and very close to $\beta_i$, and $\lambda_i'>0$, very close to $\lambda_i$, so that $\alpha=\sum_i\lambda_i'\beta_i'$ is satisfied. We choose them so that $\tfrac{\lambda_j'}{\lambda_i'}\geq \eta$ holds for all $i,j$.
	Write $\beta_i'=r_i'\wh\beta_i$ where $r_i'$ is a positive rational, and where $\wh\beta_i$ lies in $H_1(M,\ZZ)$ and is primitive. Also write $\wh\lambda_i=\lambda_i'r_i'$, so that $\alpha=\sum_i\wh\lambda_i\wh\beta_i$ holds true.
	
	We claim that we can choose $\beta_i'$ so that $r_i'$ is arbitrarily small. For that, take a very large ball $B$ inside $H^1(M,\ZZ)$. It is finite, so we may choose $\beta_i'$ outside the finite union of lines $\RR\cdot B$. Then $\wh\beta_i$ does not lie in $B$, so it is very large, and $r_i'$ is very small.

	Lastly, by choosing $\beta_i'$ inductively on $i$, we may assume that $r_{i+1}'\leq\eta\theta r_i'$ holds for all $i$. Then we have 
	$$\wh\lambda_{i+1}=\lambda_{i+1}'r_{i+1}'\leq\lambda_{i+1}'\eta\theta r_i'\leq\tfrac{\lambda_{i+1}'}{\lambda_i'}\eta\theta\wh\lambda_i\leq\theta\wh\lambda_i$$
	which concludes the proof.
\end{proof}

Up to replacing $\beta_i$ by $\wh\beta_i$ from the previous lemma, we choose some $\theta$ in~$]0,\tfrac{1}{3}[$ and assume that $\lambda_{i+1}<\theta_i\lambda_i$ holds for all $i$. 

Let use discuss the idea of the rest of the proof. Recall that $p$ is the number of $\beta_i$ we are given. We write $b_n=\lambda_ia_k$ where $n=pk+i$ is the Euclidian division of~$n$ by $p$.
In Equation~\ref{eq-strongL-cond} the right sum can be written as the sum $\sum_{n\geq p}b_nu_n$ where $u_n=\wt H^i_{k,0}(R_1)-\wt H^i_{k,0}(R_2)$ lies in $\intint{-2,2}$. 
If the sequence $b_n$ satisfies $b_{n+1}<\tfrac{1}{3}b_n$, then it follows from Lemma~\ref{lem-injective-cantor} that the sum $\sum_{n}b_nu_n$ lies in a Cantor set. 

We can describe that Cantor set as follows. Let $C_0$ be an interval centered at $0$. We inductively build a compact set $C_{n+1}$ by removed to $C_n$ the union of $4^n$ intervals of the same size. Note that the size of the removed interval is given by $b_n$. 

Let us enumerate the elements $\sum_i\lambda_i\ZZ\setminus\{0\}$ as a sequence $x_n$ for $n\geq 1$. Then we choose $C_n$ to have a very small Lebesgue measure, so that $x_n$ lies outside $C_n$. Doing it inductively yield a Cantor set $\cap_nC_n$ which is disjoint from $\sum_i\lambda_i\ZZ\setminus\{0\}$. At the step~$n$, we need to ensure that $\partial C_n$ lies outside $\sum_i\lambda_i\ZZ\setminus\{0\}$, since once $b_n$ fixed, we may not be able to disjoint $\partial C_m$ and~$x_m$ for $m>n$.

We now start the technical proof. 

\begin{lemma}\label{lem-precise-Cantor}
	For any $\eta>0$ and any sequence $\theta_n>0$ for $n\geq 1$, there exists a sequence $a_k>0$, for $k\geq 0$, that satisfies the following:
	\begin{enumerate}
		\item $\sum_ka_k=1$, 
		\item $a_{k}\leq\eta a_{k-1}$ and $a_k\leq\theta_k$ hold for all $k\geq 1$,
		\item for any sequence $u_{i,k}$, for $1\leq i\leq p$, $k\geq 1$, with value in $\intint{-2,2}$, the sum 
		$$\sum_{i=1}^p\sum_{k\geq 1}\lambda_ia_ku_{i,k}$$
		does not lie in $\sum_i\lambda_i\ZZ\setminus\{0\}$.
	\end{enumerate}
\end{lemma}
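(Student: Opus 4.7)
The idea is to build $(a_k)_{k \geq 1}$ inductively and then set $a_0 \defi 1 - \sum_{k \geq 1} a_k$ at the end to enforce property (1); the auxiliary cap $a_k \leq 2^{-k-1}$ will force $a_0 \geq 1/2$. Property (2) will then be guaranteed by requiring $a_k \leq \min(\eta a_{k-1}, \theta_k, 2^{-k-1})$ at every step (for $k = 1$, by $a_1 \leq \eta/2$, which suffices since $a_0 \geq 1/2$).

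The crux is property (3). Enumerate $\{x_n\}_{n \geq 1} = \sum_i \lambda_i \ZZ \setminus \{0\}$, let $V = \{\sum_i \lambda_i u_i : u \in \intint{-2,2}^p\}$ (the $\QQ$-linear independence of $(\lambda_i)$ ensures every element of $V \setminus \{0\}$ is nonzero), and write
$$E_k = \left\{\sum_{i,\, m \leq k} \lambda_i a_m u_{i,m} : u \in \intint{-2,2}^{pk}\right\},$$
with $E_0 = \{0\}$ and $E_k = E_{k-1} + a_k V$. Inductively, at step $k$, maintain a positive reserve $R_{k-1}$ (start with $R_0 = 1/2$) and the invariant that $x_n \notin E_{k-1}$ for \emph{every} $n \geq 1$. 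I would choose $a_k$ in the uncountable interval $(0, c_k]$, with $c_k = \min(\eta a_{k-1}, \theta_k, 2^{-k-1}, R_{k-1}/2)$, avoiding the \emph{countable} set
$$B_k = \left\{\frac{x_n - e}{v} : n \geq 1,\ e \in E_{k-1},\ v \in V \setminus \{0\}\right\}.$$
Then $x_n \in E_k$ would mean $x_n = e + a_k v$ for some $e \in E_{k-1}$ and $v \in V$; the case $v = 0$ forces $x_n \in E_{k-1}$, ruled out by induction, while $v \neq 0$ is excluded by $a_k \notin B_k$. Thus $x_n \notin E_k$ for all $n$. I would then set
$$R_k = \min\left(R_{k-1} - a_k,\ \min_{n \leq k} \frac{d(x_n, E_k)}{4 \sum_i \lambda_i}\right),$$
which is positive since $a_k \leq R_{k-1}/2$ and each $d(x_n, E_k) > 0$.

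Property (3) is then verified as follows. Any sum $\sum_{i, k} \lambda_i a_k u_{i, k}$ lies in $I \defi \bigcap_{k \geq 1}(E_k + [-T_k, T_k])$ where $T_k = 2 \sum_i \lambda_i \sum_{j > k} a_j$. The inequality $a_j + R_j \leq R_{j-1}$ telescopes to $\sum_{j > k} a_j \leq R_k$, so $T_k \leq 2 \sum_i \lambda_i \cdot R_k$. For each $n$, taking $k = n$ yields $T_n \leq d(x_n, E_n)/2 < d(x_n, E_n)$, so $x_n \notin E_n + [-T_n, T_n]$ and therefore $x_n \notin I$. The main obstacle is subtle but essential: once any $x_n$ enters $E_k$, the monotonicity $E_{k-1} \subseteq E_k$ traps it in every $E_{k'}$ for $k' \geq k$, making the situation uncorrectable. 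This is precisely why $B_k$ must handle all $n \geq 1$ simultaneously (hence must be countable) rather than only a finite prefix $n \leq k$; the $\QQ$-freeness of $(\lambda_i)$ guarantees the denominators in $B_k$ are nonzero, keeping the construction well-defined.
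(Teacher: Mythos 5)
Your proof is correct, but it takes a genuinely different route from the paper's. The paper builds $a_n$ inductively and at step~$n$ avoids only the single element~$x_n$, using a nested family of compact sets $K_n$ (the exact finite sums thickened by the full remaining reserve $\wb\lambda\wb a_n$); because $K_{n+1}\subset K_n$, once $x_n$ escapes $K_n$ it stays excluded forever, and the separation $x_n\notin K_n$ is achieved by choosing the $a_k$ generically so that $\{\lambda_i\}\cup\{\lambda_ia_k\}$ is $\QQ$-free, then showing the boundary case $\wb a_n=0$ already separates and perturbing. You instead keep the exact finite-sum sets $E_k$ (which \emph{increase}) and the tail bound $T_k$ separate, maintain the stronger invariant that \emph{every} $x_n$ avoids $E_k$, and enforce it by explicitly excluding the countable bad set $B_k=\{(x_n-e)/v\}$ at each step; the reserve $R_k$ then feeds the tail estimate. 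The two proofs are dual in this sense: the paper's decreasing slabs $K_n$ let it handle one $x_n$ per step, while your increasing $E_k$ forces you to protect all $x_n$ at once — a point you correctly identify as essential. Both rely on a countable-avoidance argument at each step, but yours replaces the somewhat delicate $\QQ$-freeness of the products $\lambda_ia_k$ by a fully explicit countable set, which is arguably cleaner. One minor inaccuracy in your closing remark: the $\QQ$-freeness of $(\lambda_i)$ is not what keeps the denominators in $B_k$ nonzero — that is built into the restriction $v\in V\setminus\{0\}$ — and in fact your argument never really uses that freeness; what matters is only that $v=0$ forces $x_n\in E_{k-1}$, which holds regardless.
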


\begin{proof}
	We build inductively the sequence $a_n>0$, for~$n$ in $\NN$, so that the following holds. Write $\wb\lambda=\sum_i\lambda_i$. At the step~$n$, we set the value of $a_n$. Then we define 
	$$\wb a_n=1-\sum_{k=1}^na_k$$
	and the subset $K_0=[-2\wb\lambda\wb a_0,2\wb\lambda\wb a_0]$ and
	$$K_n=\left\{\sum_{i=1}^p\sum_{k=1}^n\lambda_ia_ku_{i,k} + \wb\lambda\wb a_nv \;\middle|\; \begin{array}{l}
		u_{i,k}\in\intint{-2,2},\\
		v\in[-2,2]
	\end{array}\right\}.$$
	Note that $K_n$ corresponds to the compact set $C_{pn}$ described informally above, whose intersection is the Cantor set.
	Denote by $x_n$, for~$n$ in $\NN$, the elements in $\sum_i\lambda_i\ZZ\setminus\{0\}$.
	We will build $a_n$ so that the three following conditions are satisfied:
	\begin{enumerate}
		\item the family of the $\lambda_i$ and of the $\lambda_ia_k$ is free over $\QQ$, 
		\item $x_n$ is not in $K_n$,
		\item $0<\wb a_n\leq \min(\theta_{n+1},\tfrac{a_n}{\eta})$.
	\end{enumerate}

	Let us now start to build the sequence $a_n$. 
	At the step~$n$, if $a_n$ is chosen generic, then the families of $\lambda_i$ and of the $\lambda_ia_k$, for $k\leq n$ is free over $\QQ$. So choosing $a_n$ inductively generic ensure that the first item is satisfied.

	For $n=0$, $K_0$ is an interval centered at zero, of length $4\wb\lambda(1-a_0)$, so taking $a_0$ close enough to 1 ensure that $x_0$ does not lie in $K_0$. Similarly, the third item is satisfied when $a_0$ is close enough to 1.
	

	Take $n\geq 1$ so that $a_{n-1}$ has been built, satisfying the three conditions.
	Note that the Lebesgue measure of $K_n$ is no more than 
	$$\Leb(K_n)\leq 5^{pn}\times 4\wb\lambda\wb a_n$$
	which can be chosen arbitrarily small if $a_n$ is chosen close enough to $\wb a_{n-1}$. 

	Set temporally $a_n=\wb a_{n-1}$, so that $\wb a_n=0$ holds and $K_n$ is finite. Any element $y$ in~$K_n$ can then be written as: 
	\begin{align*}
		y	&=\sum_{i=1}^p\sum_{k=1}^n\lambda_ia_ku_{i,k} \\
			&=\sum_{i=1}^p\sum_{k=1}^n\lambda_ia_k(u_{i,k}-u_{i,n})+\sum_{i=1}^p\sum_{k=1}^n\lambda_ia_ku_{i,n} \\
			&=\sum_{i=1}^p\sum_{k=1}^{n-1}\lambda_ia_k(u_{i,k}-u_{i,n})+(1-a_0)\sum_{i=1}^p\lambda_iu_{i,n} 		
	\end{align*}
	with $u_{i,k}$ in $\intint{-2,2}$. Assume that such an element $y$ is equal to $x_n$. The first induction hypothesis, that is the freedom of $(\lambda_i,\lambda_ia_k)_{i,k}$, implies that $u_{i,n}=0$ and $u_{i,k}-u_{i,n}=0$ holds for all $i$ and $1\leq k<n$. It yields $y=0$, which contradicts that fact that $x_n$ is not zero. Thus $x_n$ lies at a positive distance from $K_n$.
	
	Notice that $K_n$ is continuous in $a_n$. So if $a_n$ is smaller and very close to~$\wb a_{n-1}$, then $K_n$ is disjoint from $x_n$. Choosing $a_n$ generic and even closer to~$\wb a_{n-1}$ ensure that the first and third points are satisfied.
	
	By induction, there exists a sequence $a_n$ that satisfies the three conditions above.
	Up to making $\theta_n$ smaller, we may assume that $\wb a_n$ converges toward zero, so that $\sum_ia_i=1$ holds. Then the first and the second items in the conclusion of the lemma are satisfied.

	It remains to prove the third item: no sum of the form $\sum_i\sum_k\lambda_ia_ku_{i,k}$ lies in $\sum_i\lambda_i\ZZ\setminus\{0\}$.
	Take a sequence $u_{i,k}$ in $\intint{-2,2}$ for $k\geq 1$, and write
	$$x=\sum_{i=1}^p\sum_{k\geq1}\lambda_ia_ku_{i,k}.$$
	We claim that for all~$n$,~$x$ belongs to $K_n$. Indeed, we have:
\begin{align*}
	\left|x-\sum_{i=1}^p\sum_{k=1}^n\lambda_ia_ku_{i,k}\right|
		&\leq\sum_{i=1}^p\sum_{k>n}\left|\lambda_ia_ku_{i,k}\right| \\
		&\leq 2\wb\lambda\wb a_n
\end{align*}
so~$x$ belongs to the interval
$$\sum_{i=1}^p\sum_{k=1}^n\lambda_ia_ku_{i,k}+[-2\wb\lambda\wb a_n,2\wb\lambda\wb a_n],$$
which is a subset of $K_n$.
It follows from the second condition that~$x$ is different from $x_n$. Therefore,~$x$ does not lie in $\sum_i\lambda_i\ZZ\setminus\{0\}$.
\end{proof}

\begin{proof}[Proof of Theorem~\ref{thm-quasiL-map-non-rat}]
	We reuse the elements $\beta_i,\lambda_i,H^i_k$ and the notations $\wt H^i_k,\wt H^i_{k,0}$ defined above.
	Choose $\eta$ in $]0,\tfrac{1}{3}[$. 
	When~$M$ is smooth and $\varphi$ is generated by a continuous and uniquely integrable vector fields, the maps $H^i_k$ can be chosen smooth. In that case, we take a sequence $\theta_n>0$ which satisfies that all sequences $(a_n)_n$ with $|a_n|\leq\theta_n$, the $p$ functions $\sum_ka_kH^i_k$ are smooth. In the other case, we set $\theta_n=1$. 
	
	Let $(a_n)_n$ be the sequence given by Lemma~\ref{lem-precise-Cantor}, and $f=\sum_i\lambda_ia_iH^i_k\circ\pi_i$ be the corresponding function.
	It follows that $f$ is $\alpha$-equivariant and pre-Lyapunov. Lemma~\ref{lem-strongL-step1} ensure that for any $x,y$ in $\wh M_\alpha$ that satisfy $x\recto y$ and $y\not\recto x$, we have $f(x)>f(y)$.
	
	It remains to prove that $f$ takes distinct values on distinct recurrence chains.
	Take two recurrence chains $P,R$ in $\wh M_\alpha$ with $f(P)=f(R)$. Recall Equation~\ref{eq-strongL-cond}:
	\begin{equation}\label{eq-bis}
		f(P)-f(R)= \sum_{i=1}^p\lambda_i (\wt H^i_0(P)-\wt H^i_0(R)) + \sum_{i=1}^p\sum_{k\geq 1}\lambda_ia_k(\wt H^i_{k,0}(P)-\wt H^i_{k,0}(R)).
	\end{equation}
	The third item in Lemma~\ref{lem-precise-Cantor} and the Equation~\ref{eq-strongL-cond} imply that the two sums on the right are equal to zero. Since the family $(\lambda_i)_i$ is free over $\QQ$, we have $\wt H^i_0(P)=\wt H^i_0(R)$ for all $i$. Applying Lemma~\ref{lem-injective-cantor} to the second sum implies that $\wt H^i_{k,0}(P)=\wt H^i_{k,0}(R)$ for all $i,k$, and thus $\wt H^i_k(P)=\wt H^i_k(R)$ holds. 
	
	Recall that $\Rec_{\beta_i}=\Rec_\alpha$ holds by construction of $\beta_i$. So the image by $\wh M_\alpha\xrightarrow{\pi_i} M_{\beta_i}$ of $P$ and $Q$ are recurrence chains. It follows from above that the image by $f^i=\sum_ka_kH^i_k$ of the two recurrences chains $\pi_i(P)$ and $\pi_i(R)$ are equal.
	According to Lemma~\ref{lem-strongL-eq}, $\pi_i(P)$ and $\pi_i(R)$ are equal. 
	Hence, $P$ and~$R$ lie above the same $\alpha$-recurrence chain. So there exists $\delta$ in $H_1(M,\ZZ)$ that satisfies $\delta\cdot P=R$. The map $\pi_i$ is $H_1(M,\ZZ)$-equivariant, so $\beta_i(\delta)=0$ holds for all $i$. Hence, we have $\alpha(\delta)=\sum_i\lambda_i\beta_i(\delta)=0$. So $\delta$ lies in $\ker_\ZZ(\alpha)$. Together with $\delta\cdot P=R$, it implies $P=R$. It concludes the proof that $f$ is strongly-Lyapunov.

	The fact that the image by $f$ of the recurrent set has zero Lebesgue measure is a consequence of Equation~\ref{eq-bis} and Lemma~\ref{lem-injective-cantor}.

	The smoothness conclusion follows from the construction.
\end{proof}

\appendix

\section{Equivalence of asymptotic directions}\label{app-asymptotic-cone}

In this appendix, we prove that the set of asymptotic directions, defined by Fried \cite{Fried82}, and the set of asymptotic pseudo-directions, defined in Section~\ref{sec-ass-ps-dir}, span the same convex set. We additionnal prove that this convex set is given by the cohomology classes of invariant probability measures. Let us first introduce the sets in play.

Fried worked under some regularity assumptions, that is on smooth Riemann manifolds and with $\Class^1$ flows. In order to remains general, we will rephrase his definition with less regularity. Let $B$ be a cover of~$M$ by open, connected and contractible subsets. Recall that $\wh M\xrightarrow{\wh\pi}M$ is the universal Abelian covering of~$M$. A continuous curve $\gamma\colon[0,1]\to M$ is said \emph{$B$-bounded} if given a lift $\wt\gamma$ of $\gamma$ in $\wh M$, for any $U$ in $B$, $\wt\gamma$ intersects at most one lift of $U$ in $\wh M$. We let the reader verify the following two facts:
\begin{enumerate}
	\item Any two points in~$M$ are connected by a $B$-bounded curve.
	\item There exists a finite set $E$ in $H_1(M,\ZZ)$, so that for any two $B$-bounded curves $\gamma$ and $\delta$ that share their end points, the homology class of $\gamma\cup\delta$ lies in $E$. 
\end{enumerate}

From now on, we fix a cover $B$ as above.
For~$x$ in~$M$, denote by $\gamma_{x,t}$ a closed curve obtained as the concatenation of the orbit arc $\varphi_{[0,t]}(x)$ of length $t$, with a $B$-bounded curve from $\varphi_t(x)$ to~$x$.

\begin{definition}
	Let $D^F_\varphi\subset H_1(M,\RR)$, $F$ for Fried, be the set of accumulation points of the homology classes $\tfrac{1}{t}[\gamma_{x,t}]$ for~$x$ in~$M$ and $t$ that goes $+\infty$. Or more precisely:
	$$D^F_\varphi=\bigcap_{T>0}\clos\left(\set{\tfrac{1}{t}[\gamma_{x,t}]\in H_1(M,\RR),x\in M, t\geq T}\right).$$
	The set $D^F_\varphi$ is called the set of \emph{asymptotic directions} of $\varphi$. 
\end{definition}

Note that $D^F_\varphi$ depends neither on the choice of $B$-bounded curves used to close $\gamma_{x,t}$, nor on the choice of cover $B$ (by open, connected and contractible subsets). In fact, $D^F_\varphi$ does not depend on the choice of metric compatible with the topology of~$M$.
The set $D^F_\varphi$ was defined by Fried \cite{Fried82} to classify global section, but using curves of bounded length instead of $B$-bounded curves. We let the reader check that the two definitions coincide in Fried's setting.

Sullivan studied a similar set, which we defined below.
Assume temporally that $\varphi$ is of class $\Class^1$. Let $\mu$ be a $\varphi$-invariant probability measure. There exists a unique element in $H_1(M,\RR)$, denoted by $[\mu]_\varphi$, which satisfies that for all closed 1-form $\alpha$ on~$M$, we have 
\begin{equation}\label{eq-cohom-class}
	\alpha([\mu]_\varphi)=\int_M\alpha\left(\tfrac{\partial\varphi_t}{\partial t}\right)d\mu.
\end{equation}
When $\varphi$ is only continuous, the right side is undefined, so we can not take this equation as a definition. So let us define $[\mu]_\varphi$ in the general case.

Denote $\wh\varphi$ the lifted flow on $\wh M$. Take a continuous map $h\colon\wh M\to H_1(M,\RR)$ which we assume to be equivariant under the actions of $H_1(M,\ZZ)$ on the two sides. Such a function can be obtained as follows. Given a basis $(\alpha_1\cdots\alpha_n)$ of $H_1(M,\ZZ)$, and an $\alpha_i$-equivariant function $f_i\colon\wh M\to\RR$ (that can be build from a map $M\to\bfrac{\RR}{\ZZ}$ cohomologous to $\alpha_i$) for all $i$, the map $h=\sum_if_i\alpha_i$ is $H_1(M,\ZZ)$-equivariant. For any $t$ in $\RR$, the map $h\circ\wh\varphi_t-h$ is invariant under the action of $H_1(M,\ZZ)$, so it can be factorized by a function $g_t\colon M\to H_1(M,\RR)$ with $g_t\circ\wh\pi=h\circ\wh\varphi_t-h$.

\begin{lemma}
	For any $\varphi$-invariant signed measure $\mu$, the integral $\int_Mg_td\mu$ is linear in $t$. It is additionally independent on the choice of $h$ (among all $H_1(M,\ZZ)$-equivariant continuous functions). 
\end{lemma}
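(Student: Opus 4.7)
The plan is to prove both assertions via the cocycle identity satisfied by $g_t$, combined with the $\varphi$-invariance of $\mu$.

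First, I would establish the cocycle identity
\[
g_{t+s} = g_t \circ \varphi_s + g_s.
\]
To see this, note that $\wh\pi \circ \wh\varphi_s = \varphi_s \circ \wh\pi$, hence
\[
g_{t+s} \circ \wh\pi
= h \circ \wh\varphi_{t+s} - h
= (h \circ \wh\varphi_t - h) \circ \wh\varphi_s + (h \circ \wh\varphi_s - h)
= g_t \circ \varphi_s \circ \wh\pi + g_s \circ \wh\pi.
\]
Since $\wh\pi$ is surjective, the identity follows. Integrating against $\mu$ and using that $\mu$ is $\varphi$-invariant yields
\[
\int_M g_{t+s}\, d\mu = \int_M g_t \circ \varphi_s\, d\mu + \int_M g_s\, d\mu = \int_M g_t\, d\mu + \int_M g_s\, d\mu,
\]
so $t \mapsto \int_M g_t\, d\mu$ is an additive map $\RR \to H_1(M,\RR)$.

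Next I would verify continuity in $t$, which together with additivity yields linearity. Fix a compact fundamental domain $K \subset \wh M$ for the action of $H_1(M,\ZZ)$. Since $h$ is continuous and $\wh\varphi$ is a continuous flow, the map $(t,x) \mapsto h(\wh\varphi_t(x)) - h(x)$ is uniformly continuous in $t$ on compact sets, so $t \mapsto g_t$ is continuous in the sup-norm on~$M$. Because $\mu$ is a finite signed measure, $t \mapsto \int_M g_t\, d\mu$ is continuous. An additive continuous map $\RR \to H_1(M,\RR)$ is linear, giving the first assertion.

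For independence of $h$, let $h'$ be another $H_1(M,\ZZ)$-equivariant continuous function $\wh M \to H_1(M,\RR)$. The difference $h' - h$ is invariant under $H_1(M,\ZZ)$, so it descends to a continuous function $u \colon M \to H_1(M,\RR)$ with $u \circ \wh\pi = h' - h$. Writing $g'_t$ for the map associated to $h'$, we get
\[
(g'_t - g_t) \circ \wh\pi = (h' - h) \circ \wh\varphi_t - (h' - h) = (u \circ \varphi_t - u) \circ \wh\pi,
\]
hence $g'_t - g_t = u \circ \varphi_t - u$. Integrating and using $\varphi$-invariance of $\mu$ gives
\[
\int_M g'_t\, d\mu - \int_M g_t\, d\mu = \int_M u \circ \varphi_t\, d\mu - \int_M u\, d\mu = 0,
\]
which proves independence. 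The only mild subtlety here is the continuity step for linearity; every other ingredient is essentially a formal consequence of equivariance and invariance.
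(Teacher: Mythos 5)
Your proof takes the same route as the paper: derive the cocycle identity $g_{t+s}=g_t\circ\varphi_s+g_s$ from the definition of $g_t$, integrate and use $\varphi$-invariance to get additivity, and handle independence of $h$ by factoring the invariant difference $h'-h$ through a function on $M$. You additionally spell out the continuity-in-$t$ argument needed to upgrade additivity of $t\mapsto\int_M g_t\,d\mu$ to genuine linearity, a step the paper leaves implicit; that is a correct and worthwhile addition, not a different method.
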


\begin{proof}
	For any $t,s$ in $\RR$, we have
	\begin{align*}
		h\circ\wh\varphi_{t+s}-h
			&=(h\circ\wh\varphi_{t+s}-h\circ\wh\varphi_{t})+(h\circ\wh\varphi_{t}-h) \\
			&=g_s\circ\varphi_t\circ\wh\pi + g_t\circ\wh\pi		
	\end{align*}
	so we have $g_{t+s}=g_s\circ\varphi_t+g_t$. It immediately follows that 
	$$\int_Mg_{t+s}d\mu=\int_Mg_s\circ\varphi_t d\mu+\int_Mg_td\mu=\int_Mg_sd\mu+\int_Mg_td\mu.$$

	To prove that it is independent on the choice of $h$, take $h'\colon\wh M\to H_1(M,\RR)$ another continuous and $H_1(M,\RR)$-equivariant function. Then $h'-h$ is invariant by the $H_1(M,\ZZ)$ action. Thus, it can be factorizes by a function $f\colon M\to H_1(M,\RR)$, which satisfies $f\circ\wt\pi=h'-h$. Denote by $g_t'$ the map associated to $h'$. Then we have $g_t'-g_t=f\circ\varphi_t-f$, which immediately implies $\int_M(g_t'-g_t)d\mu=0$.
\end{proof}

We define the homology class of $\mu$, relatively to $\varphi$, by $[\mu]_\varphi=\int_Mg_1d\mu$ in $H_1(M,\RR)$. It is clearly linear and continuous in $\mu$. We let the reader verify that when $\varphi$ is of class $\Class^1$, Equation~\ref{eq-cohom-class} is satisfied. Denote by $\MM_p(M)$ the set of probability measure on~$M$, equipped with the weak convergence topology. Also denote by $\MM_p(\varphi)$ the set of $\varphi$-invariant probability measures.

\begin{definition}
	Let $D^S_\varphi\subset H_1(M,\RR)$, $S$ for Sullivan, be the convex set:
	$$D^S_\varphi=\{[\mu]_\varphi\in H_1(M,\RR),\mu\in\MM_p(\varphi)\}$$
\end{definition}

This set is defined in the manner of Sullivan \cite{Sullivan1976}, though Sullivan did not used this definition.
Recall that $D_\varphi$ is the cone of asymptotic pseudo-directions, defined in Section~\ref{sec-ass-ps-dir}.

\begin{theorem}\label{thm-equiv-ass-dir}
	The three sets $\conv(D_\varphi)$, $\conv(D^F_\varphi)$ and $D^S_\varphi$ are equal. Additionally, if $\mu$ in $\MM_p(\varphi)$ is ergodic, then $[\mu]_\varphi$ lies in $D_\varphi\cap D^F_\varphi$.
\end{theorem}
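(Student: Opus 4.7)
The plan is to establish simultaneously all three equalities and the ergodic claim via the circular chain $\conv(D_\varphi)\subset D^S_\varphi$, $\conv(D^F_\varphi)\subset D^S_\varphi$, and $D^S_\varphi\subset\conv(D_\varphi)\cap\conv(D^F_\varphi)$, the last inclusion coming from ergodic decomposition. All three sets are compact convex subsets of the finite-dimensional space $H_1(M,\RR)$: $\MM_p(\varphi)$ is weak-$*$ compact and $\mu\mapsto[\mu]_\varphi$ is continuous, while $D_\varphi$ and $D^F_\varphi$ are compact (bounded and defined as intersections of closed sets), so their convex hulls are compact by Carathéodory.

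The key identity underlying the first two inclusions is the following telescoping computation for a continuous orbit arc of length $t$ starting at $x$ with lift $\wt x$: from $g_1\circ\wh\pi=h\circ\wh\varphi_1-h$ and Fubini,
$$\int_0^t g_1(\varphi_s(x))\,ds = h(\wh\varphi_t(\wt x)) - h(\wt x) + O(1),$$
where the $O(1)$ is uniform in $(x,t)$ because $h\circ\wh\varphi_s-h$ is $H_1(M,\ZZ)$-invariant and therefore factors through a continuous bounded function on $M$ for $s\in[0,1]$. To prove $\conv(D_\varphi)\subset D^S_\varphi$, take $v\in D_\varphi$ and, using independence of $D_\varphi$ from $T$, choose $(\epsilon_n,T_n)$-pseudo-orbits $\gamma_n$ of length $L_n$ with $\epsilon_n\to 0$, $T_n\to\infty$, and $L_n^{-1}[\gamma_n]\to v$. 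Let $\mu_n$ be the uniform arclength probability measure on $\gamma_n$ and extract a weak-$*$ limit $\mu_n\to\mu$. Since the number of jumps $k_n$ satisfies $k_n\leq L_n/T_n$, for any continuous $f$ and $s>0$ we get $|\mu_n(f\circ\varphi_s)-\mu_n(f)|\leq 2s\|f\|_\infty/T_n\to 0$, so $\mu$ is $\varphi$-invariant. Applying the telescoping identity on each continuity interval of $\gamma_n$ and reassembling via an $\epsilon_n$-realization $\gamma_n^*$ lifted to $\wh M$ yields $\int g_1\,d\mu_n = L_n^{-1}[\gamma_n]+O(1/T_n)\to v$, hence $[\mu]_\varphi=v$. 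The same (simpler) argument applied to orbit arcs closed by $B$-bounded curves gives $\conv(D^F_\varphi)\subset D^S_\varphi$.

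For the ergodic claim, let $\mu\in\MM_p(\varphi)$ be ergodic. Birkhoff's theorem applied to the continuous function $g_1$ gives, for $\mu$-a.e.\ $x$, $t^{-1}\int_0^t g_1(\varphi_s(x))\,ds\to[\mu]_\varphi$, and the telescoping identity rewrites this as $t^{-1}[\gamma_{x,t}]+O(1/t)$, proving $[\mu]_\varphi\in D^F_\varphi$. For $D_\varphi$, apply Poincaré recurrence to select $t_n\to\infty$ with $d(\varphi_{t_n}(x),x)\to 0$ for the same $\mu$-typical $x$; then $\varphi_{[0,t_n]}(x)$ closed by a single jump back to $x$ is a periodic $\epsilon_n$-pseudo-orbit $\sigma_n$ of length $t_n$, and using a $B$-bounded closing inside a single $B$-cell gives $[\sigma_n]=[\gamma_{x,t_n}]$, so $t_n^{-1}[\sigma_n]\to[\mu]_\varphi$ and $[\mu]_\varphi\in D_\varphi$. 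Finally, for an arbitrary $\mu\in\MM_p(\varphi)$, ergodic decomposition plus linearity of $[\cdot]_\varphi$ and compactness of $\conv(D_\varphi)$ and $\conv(D^F_\varphi)$ place $[\mu]_\varphi$ in both hulls, closing the circular chain.

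The main obstacle is the telescoping bookkeeping in the first step: one must match the lift of the $\epsilon_n$-realization against the per-interval orbit lifts, controlling both a per-jump error of order $\omega_h(\epsilon_n)$ (from the realization) and a per-interval boundary error of size $O(1)$ (from the integration), which sum to $O(k_n)=O(L_n/T_n)=o(L_n)$. This is precisely where the freedom to send $T_n\to\infty$ is used — both for invariance of $\mu$ and for the homology identification — and it is why we need that $D_\varphi$ does not depend on $T$. Once this step is in place, the remaining arguments (Birkhoff, Poincaré recurrence, ergodic decomposition) are standard.
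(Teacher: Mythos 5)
Your proof is correct and follows the same overall architecture as the paper: identify $D^S_\varphi$ as the pivot, derive the telescoping identity relating $\int g_1\,d\mu$ to lift displacements, use Birkhoff to handle ergodic measures, and close the loop via ergodic decomposition. Two of your technical choices genuinely differ from the paper's, and both are valid; in fact they are somewhat cleaner.

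First, for $\conv(D_\varphi)\subset D^S_\varphi$, the paper keeps $T$ fixed and obtains $\varphi$-invariance of the limit measure by letting $\epsilon\to 0$ and invoking uniform continuity of $x\mapsto\Leb(x,0,s)$ in the weak-$*$ sense, which requires a per-jump continuity estimate summed against the $1/\len(\gamma)$ normalization. You instead exploit the $T$-independence of $D_\varphi$ to send $T_n\to\infty$, so the number of jumps per unit length is $O(1/T_n)$ and the crude total-variation bound $\ldrivert\varphi_s^*\mu_n-\mu_n\rdrivert\leq 2s/T_n$ suffices. The trade-off: your bound is simpler and makes the invariance argument and the telescoping error count (both $O(k_n/L_n)=O(1/T_n)$) bookkeep in the same unit; the paper's route never needs the $T$-independence of $D_\varphi$ at this stage, but pays with a modulus-of-continuity argument. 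Second, for $D^S_\varphi\subset\conv(D_\varphi)$, the paper localizes $\supp\mu$ inside a recurrence chain $R$, covers $R$ by $\epsilon$-balls around landmarks $p_1\cdots p_r$, and closes a long Birkhoff-typical orbit arc by an $\epsilon$-pseudo-orbit $c_{j,i}$ connecting landmarks; you instead intersect the Birkhoff-typical set with the Poincaré-recurrent set (both full $\mu$-measure) and close the orbit arc at a near-return time $t_n$ with a single jump of size $d(\varphi_{t_n}(x),x)\to 0$, choosing the $B$-bounded closing of $\gamma_{x,t_n}$ to live in the same small ball so that $[\sigma_n]=[\gamma_{x,t_n}]$. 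Your closing is more direct, avoids the construction of connecting pseudo-orbits between landmarks, and makes the two asymptotic-direction sets appear on an even footing (both reduce to the same Birkhoff limit). Everything else — the telescoping identity $\int_0^t g_1(\varphi_s(x))\,ds=h(\wh\varphi_t(\wt x))-h(\wt x)+O(1)$, the boundedness and closedness giving compactness of $D_\varphi$, $D^F_\varphi$ and the convex hulls, and the barycenter/ergodic-decomposition step — is the same in substance as the paper's Appendix~\ref{app-asymptotic-cone}.
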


We prove the theorem in several steps. The sets $D^F_\varphi$ and $D_\varphi$ are difficult to compare directly, so we use the third set as an intermediate step.

Given~$x$ in~$M$ and $t_1<t_2$, let $\gamma$ be a parametrization of $\varphi_{[t_1,t_2]}(x)$ given by $\gamma(s)=\varphi_s(x)$ for $s$ in $[t_1,t_2]$. We denote by $\Leb(x,t_1,t_2)$, the measure on~$M$ obtained as the push-back by $\gamma$ of the Lebesgue measure on $[t_1,t_2]$. It is supported on the orbit arc $\varphi_{[t_1,t_2]}(x)$

\begin{proof}[Proof of $\conv(D^F_\varphi)\subset D^S_\varphi$]
	Take two sequences $x_n$ in~$M$ and $t_n>0$ that goes to $+\infty$. We claim that the sequence of measure $\tfrac{1}{t_n}\Leb(x_n,0,t_n)$ accumulates on $\MM_p(\varphi)$ for the weak topology. Recall that $\MM_p(M)$ is compact for the weak topology, since~$M$ is compact. So up to an extraction, we may assume that the sequence converges toward some $\mu$ in $\MM_p(M)$. Let us take a continuous function $f\colon M\to\RR$ and compute:
	\begin{align*}
		\left|\int_M(f\circ\varphi_s-f)d\left(\tfrac{1}{t_n}\Leb(x_n,0,t_n)\right)\right|
			&= \tfrac{1}{t_n}\left|\int_0^{t_n}(f\circ\varphi_s-f)\circ\varphi_tdt\right| \\
			&= \tfrac{1}{t_n}\left|\int_0^{t_n}f\circ\varphi_{t+s}dt-\int_0^{t_n}f\circ\varphi_tdt\right| \\
			&= \tfrac{1}{t_n}\left|\int_s^{t_n+s}f\circ\varphi_tdt-\int_0^{t_n}f\circ\varphi_tdt\right| \\
			&= \tfrac{1}{t_n}\left|\int_{t_n}^{t_n+s}f\circ\varphi_tdt-\int_0^sf\circ\varphi_tdt\right| \\
			&\leq \tfrac{2s}{t_n}\|f\|_\infty\xrightarrow[n\to+\infty]{}0 \\
	\end{align*}
	Hence at the limit in~$n$, we obtain $\int_M(f\circ\varphi_s-f)d\mu=0$, so $\mu$ is $\varphi$-invariant.

	Recall that $\gamma_{x_n,t_n}$ is the concatenation of $\varphi_{[0,t_n]}(x_n)$ with a $B$-curve. We claim that the homology class of $\tfrac{1}{t_n}\gamma_{x_n,t_n}$ converges toward $[\mu]_\varphi$. To prove it, take $h\colon\wh M\to H_1(M,\RR)$ and $g_t\colon M\to H_1(M,\RR)$ as above, and take a lift $\wh x_n$ of $x_n$ in $\wh M$. Then we have:
	\begin{align}\label{eq-measure-approx}
		\frac{1}{t_n}\int_Mg_1d\Leb(x_n,0,t_n)
			&= \frac{1}{t_n}\int_0^{t_n}g_1\circ\varphi_s(x_n)ds \\
			&= \frac{1}{t_n}\int_0^{t_n}(h\circ\varphi_1-h)\circ\wh\varphi_s(\wh x_n)ds \\
			&= \frac{1}{t_n}\int_{t_n}^{t_n+1}h\circ\wh\varphi_s(\wh x_n)ds - \frac{1}{t_n}\int_0^1h\circ\wh\varphi_s(\wh x_n)ds
	\end{align}
	
	The left term converges toward $[\mu]_\varphi$. 
	For any $t_n\leq s\leq t_n+1$, the point $\wh\varphi_s(\wh x_n)$ remains at bounded distance from the point $[\gamma_{x_n,t_n}]\cdot\wh x_n$. So the integral $\int_{t_n}^{t_n+1}h\circ\wh\varphi_s(\wh x_n)ds$ remains at bounded distance from $[\gamma_{x_n,t_n}]+h(\wh x_n)$. Similarly, $\int_{0}^{1}h\circ\wh\varphi_s(\wh x_n)ds$ remains at bounded distance from $h(\wh x_n)$. So the right term get closer and closer to $\tfrac{1}{t_n}[\gamma_{x_n,t_n}]$ when~$n$ goes to $+\infty$.
	It follows that $\tfrac{1}{t_n}[\gamma_{x_n,t_n}]$ converges toward $[\mu]_\varphi$. 
	Therefore, we have $D^F_\varphi\subset D^S_\varphi$. The conclusion follows.	
\end{proof}

We prove the converse inclusion. We actually prove a stronger result:

\begin{lemma}\label{lem-ergodic-approx}
	Let $\mu$ be an ergodic measure. Then for $\mu$-almost all~$x$ in~$M$, the homology class $\tfrac{1}{t}[\gamma_{x,t}]$ converges toward $[\mu]_\varphi$ when $t$ goes to $+\infty$.
\end{lemma}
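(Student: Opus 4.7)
The plan is to express $\tfrac{1}{T}[\gamma_{x,T}]$ as a Birkhoff average of a fixed continuous function, then invoke the pointwise ergodic theorem for flows. Concretely, I would pick a continuous $H_1(M,\ZZ)$-equivariant map $h\colon\wh M\to H_1(M,\RR)$ and the associated continuous $H_1(M,\RR)$-valued cocycle $g_t\colon M\to H_1(M,\RR)$ characterized by $g_t\circ\wh\pi=h\circ\wh\varphi_t-h$, so that by definition $[\mu]_\varphi=\int_M g_1\,d\mu$.

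Since $M$ is compact, $g_1$ is bounded, hence each of its coordinates (in any fixed basis of $H_1(M,\RR)$) lies in $L^1(\mu)$. Applying the Birkhoff ergodic theorem for flows component by component, and using that $\mu$ is ergodic, I get that for $\mu$-almost every $x$ in $M$,
$$\lim_{T\to+\infty}\frac{1}{T}\int_0^T g_1\circ\varphi_s(x)\,ds = \int_M g_1\,d\mu = [\mu]_\varphi.$$
I then reuse the telescoping identity already exploited in the proof of $\conv(D^F_\varphi)\subset D^S_\varphi$: for a chosen lift $\wh x$ of $x$, the relation $g_1\circ\varphi_s(x)=h\circ\wh\varphi_{s+1}(\wh x)-h\circ\wh\varphi_s(\wh x)$ integrates to
$$\int_0^T g_1\circ\varphi_s(x)\,ds = \int_T^{T+1}h\circ\wh\varphi_s(\wh x)\,ds - \int_0^1 h\circ\wh\varphi_s(\wh x)\,ds.$$

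The last step is to identify the right-hand side, after dividing by $T$, with $\tfrac{1}{T}[\gamma_{x,T}]$ up to an $o(1)$ error. The lift of the closed curve $\gamma_{x,T}$ starting at $\wh x$ ends at the deck-translate $[\gamma_{x,T}]\cdot\wh x$; its orbit portion ends at $\wh\varphi_T(\wh x)$, while the closing $B$-bounded piece has bounded length (the homology classes of $B$-bounded loops form a finite set, so a fixed uniform lift exists). Therefore $\wh\varphi_T(\wh x)$ stays at distance bounded uniformly in $T$ and $x$ from $[\gamma_{x,T}]\cdot\wh x$, and by continuity of $h$ together with its equivariance $h(\delta\cdot y)=\delta+h(y)$, the quantity $h(\wh\varphi_s(\wh x))-h(\wh x)-[\gamma_{x,T}]$ remains bounded for $s\in[T,T+1]$. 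Dividing by $T$, the error and the second integral both vanish, so combining with the Birkhoff limit gives $\tfrac{1}{T}[\gamma_{x,T}]\to [\mu]_\varphi$.

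The only nontrivial step is the very last bounded-correction estimate in $\wh M$: one must really use that $B$ is a cover by contractible open sets (so that $B$-bounded curves have uniformly bounded length in a deck-transformation-invariant metric on $\wh M$) to compare the orbit endpoint $\wh\varphi_T(\wh x)$ with the deck translate $[\gamma_{x,T}]\cdot\wh x$. Once that comparison is granted, the proof is a direct Birkhoff-plus-telescoping argument.
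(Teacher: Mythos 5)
Your proof is correct and follows essentially the same route as the paper: apply Birkhoff's pointwise ergodic theorem to the bounded continuous cocycle $g_1$ to get $\tfrac{1}{T}\int_0^T g_1\circ\varphi_s(x)\,ds\to[\mu]_\varphi$ a.e., then use the telescoping identity (Equation~\ref{eq-measure-approx}) together with the bounded-correction estimate in $\wh M$ to identify this limit with $\lim_T\tfrac{1}{T}[\gamma_{x,T}]$. The paper's proof is more terse (it just cites the Birkhoff theorem and ``Equation~\ref{eq-measure-approx} and the above arguments'') but is the same argument.
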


\begin{proof}[Proof of the lemma and of $D^S_\varphi\subset\conv(D^F_\varphi)$]
	Let $\mu$ be an ergodic $\varphi$-invariant probability measure. 
	Denote by $h\colon\wh M\to H_1(M,\RR)$ and $g_1\colon M\to H_1(M,\RR)$ the map used above. Then it follows from Birkhoff ergodic theorem that the integral
	$$I_t=\int_Mg_1d\left(\tfrac{1}{t}\Leb(x,0,t)\right)$$
	converges toward $\int_Mg_1d\mu=[\mu]_\varphi$ for $\mu$-almost all~$x$ in~$M$. Using Equation~\ref{eq-measure-approx} and the above arguments, the integral $I_t$ get closer to $\tfrac{1}{t}[\gamma_{x,t}]$ when $t$ goes to $+\infty$, so $\tfrac{1}{t}[\gamma_{x,t}]$ converges in $t$ toward $[\mu]_\varphi$ for $\mu$-almost all~$x$. Thus, $[\mu]_\varphi$ belongs to $D^F_\varphi$. Since the set $\MM_p(\varphi)$ is the closure of the convex hull of the ergodic measures, it follows that $D^S_\varphi\subset\conv(D^F_\varphi)$ holds.
\end{proof}

Given a periodic pseudo-orbit $\gamma\colon\bfrac{\RR}{l\ZZ}\to M$, we denote by $\Leb_\gamma=\gamma^*\Leb$ the push-back measure of the Lebesgue measure on $\bfrac{\RR}{l\ZZ}$. 
The proof of $\conv(D_\varphi)\subset D^S_\varphi$ is very similar to the one of $\conv(D^F_\varphi)\subset D^S_\varphi$.

\begin{proof}[Proof of $\conv(D_\varphi)\subset D^S_\varphi$]
	Let $\gamma$ be a periodic~$\epsilon$-pseudo-orbit. 
	We will prove that $\tfrac{1}{\len(\gamma)}[\gamma]$ accumulates on $\MM_p(\varphi)$ as~$\epsilon$ goes to zero. It is clear when $\gamma$ is taken among periodic orbits. 
	
	We treat the case when $\gamma$ is not continuous.
	Denote by $\Delta$ its discontinuity set. For any $t$ in $\Delta$, we write $x_t=\gamma(t)$. Let us denote by $l_t$ the length of the continuity interval of $\gamma$ that comes immediately after $t$, and denote by $I_t=[t,t+l_t[$ that interval. Note that $\Leb_\gamma$ is equal to the sum of the measures $\Leb(x_t,t,t+l_t)$ for all $t$ in $\Delta$. Take $s$ in $\RR$ and let us compute:
	\begin{align*}
		\tfrac{1}{\len(\gamma)}\left(\varphi_s^*\Leb_{\gamma}-\Leb_{\gamma}\right)
			& = \tfrac{1}{\len(\gamma)}\left(\sum_{t\in\Delta}\Leb(x_t,s,s+l_t)-\sum_{t\in\Delta}\Leb(x_t,0,l_t)\right) \\  
			& = \tfrac{1}{\len(\gamma)}\left(\sum_{t\in\Delta}\Leb(x_t,l_t,s+l_t)-\sum_{t\in\Delta}\Leb(x_t,0,s)\right) \\  
			& = \tfrac{1}{\len(\gamma)}\left(\sum_{t\in\Delta}\Leb(\varphi_{l_t}(x_t),0,s)-\sum_{t\in\Delta}\Leb(x_{t+l_t},0,s)  \right)
	\end{align*}
	where in the last line, we use the reparametrization $t\mapsto t+l_t$ of $\Delta$ on the right most term.
	As~$\epsilon$ goes to zero, $\varphi_{l_t}(x_t)$ converges uniformly toward $x_{t+l_t}$. 
	Note that at fixed $s$, the measure $\Leb(y,0,s)-\Leb(x,0,s)$ converges uniformly toward zero when the distance between~$x$ and $y$ goes to zero.
	Therefore, the measure $\Leb(\varphi_{l_t}(x_t),0,s)-\Leb(x_{t+l_t},0,s)$ converges uniformly toward zero when~$\epsilon$ goes to zero.
	Therefore, any accumulation point of the measures of the type $\tfrac{1}{\len(\gamma)}\Leb_{\gamma}$, when~$\epsilon$ goes to zero, is a $\varphi$-invariant measure. 
	
	For all $n\geq 1$, let $\gamma_n$ be a $\tfrac{1}{n}$-pseudo-orbit. Up to an extraction, we can assume that $\tfrac{1}{\len(\gamma_n)}\Leb_{\gamma_n}$ converges toward some $\mu$ in $\MM_p(\varphi)$. 
	Using Equation~\ref{eq-measure-approx}, and the same arguments that we used before, yields that $\tfrac{1}{\len(\gamma_n)}[\gamma_n]$ converges toward~$[\mu]_\varphi$. So $D_\varphi\subset D^S_\varphi$ holds true. The conclusion follows.
\end{proof}

\begin{proof}[Proof of $D^S_\varphi\subset\conv(D_\varphi)$]
	Take an ergodic invariant probability measure~$\mu$. Note that $\mu(M\setminus\Rec)$ is equal to zero. Indeed, $M\setminus\Rec$ is included in the wandering set: the set of point~$x$ in~$M$ which admit a neighborhood $U$ so that $\varphi_t(U)\cap U$ is empty for all $t$ large enough. And the wandering set has measure zero for any $\varphi$-invariant probability measure. The recurrence chains of $\varphi$ are disjoint and $\varphi$-invariant. So by ergodicity, all but one recurrence chain have measure zero of $\mu$. Thus, the support of $\mu$ is included in one recurrence chain~$R$~of~$\varphi$.
	
	The recurrence chain~$R$ is compact. Let us fix some small $\epsilon>0$ and take finitely many points $p_1\cdots p_r$ in~$R$, whose~$\epsilon$-neighborhoods cover~$R$. We may additionally assume that the~$\epsilon$-neighborhood of $p_i$ is included in one set of the cover $B$. For every pair $(p_i,p_j)$, take an~$\epsilon$-pseudo-orbits $c_{i,j}$ from $p_i$ to~$p_j$. We can assume that the length of the last orbit arc in~$c_{i,j}$ is also at least $T$. For instance, take a pseudo-orbit from $p_i$ to $\varphi_{-T}(p_j)$ and concatenate the orbit arc $\varphi_{[-T,0]}(p_j)$). Let $L$ be the largest length of the pseudo-orbits~$c_{i,j}$. 
	Denote by $E\subset H_1(M,\RR)$ the set of the homology classes of the curves obtained by closing any curve $c_{i,j}$ with any $B$-bounded curve from $p_j$ to $p_i$. Clearly $E$ is finite.

	Using Lemma~\ref{lem-ergodic-approx}, there exists~$x$ in~$R$ for which the homology class $\tfrac{1}{t}[\gamma_{x,t}]$ converges toward $[\mu]_\varphi$ when $t$ goes to $+\infty$. Recall that $\gamma_{x,t}$ is the orbit arc $\varphi_{[0,t]}(x)$ concatenated with a $B$-bounded curve.

	Take some $t>0$, and $i,j$ that satisfy $d(p_i, x)<\epsilon$ and $d(p_j, \varphi(t,x))<\epsilon$. The concatenation of $\varphi_{[0,t]}(x)$ and $c_{i,j}$ yields a periodic~$\epsilon$-pseudo-orbit, which we denote by $\delta_t$. It satisfies $t\leq\len(\delta_t)\leq t + L$ and $[\gamma_{x,t}]-[\delta_t]$ lies in $E$. So when $t$ goes to $+\infty$, the distance between $\tfrac{1}{\len(\delta_t)}[\delta_t]$ and $\tfrac{1}{t}[\gamma_{x,t}]$ goes to zero. It follows that $\tfrac{1}{\len(\delta_t)}[\delta_t]$ converges toward $[\mu]_\varphi$ when $t$ goes to~$+\infty$. It implies that $[\mu]_\varphi$ lies in $D_\varphi$. Therefore  $D^S_\varphi\subset\conv(D_\varphi)$ holds true.
\end{proof}

\addcontentsline{toc}{section}{References}
\bibliographystyle{alpha}
\bibliography{ref}

\end{document}